\documentclass[11pt,a4paper]{amsart}

\usepackage{amsfonts,amssymb,amsmath,amsthm}
\usepackage{mathtools,esint}
\usepackage{lmodern}
\usepackage[noadjust]{cite}
\usepackage{enumitem}
\usepackage{xcolor}
\usepackage[plainpages=false,pdfpagelabels,backref=page]{hyperref}
\hypersetup{
 colorlinks=true,
 linkcolor={cyan!90!black},
 citecolor={magenta},
 urlcolor={green!40!black},
 pdftitle={Solvability of the L2 Dirichlet problem for parabolic operators with spatially periodic coefficients},
 pdfauthor={Kaj Nystr\"om},
 pdfsubject={Parabolic measure, reverse Holder estimates, and spatially periodic coefficients}
}

\newtheorem{thm}{Theorem}[section]
\newtheorem{lem}[thm]{Lemma}
\newtheorem{prop}[thm]{Proposition}
\newtheorem{cor}[thm]{Corollary}

\theoremstyle{definition}
\newtheorem{defn}[thm]{Definition}
\newtheorem{rem}[thm]{Remark}
\numberwithin{equation}{section}

\usepackage{tikz}
\usetikzlibrary{arrows.meta}

\newcommand{\R}{\mathbb R}
\newcommand{\cH}{\mathcal H}
\newcommand{\cC}{\mathcal C}

\title[The \(\mathrm L^2\) Dirichlet problem for spatially periodic parabolic operators]
{Solvability of the \(\mathrm L^2\) Dirichlet problem for parabolic operators
with spatially periodic coefficients}
\author{Kaj Nystr\"om}
\address{Department of Mathematics, Uppsala University, Box 480\\
SE-751 06 Uppsala, Sweden}
\email{kaj.nystrom@math.uu.se}
\makeatletter
\@namedef{subjclassname@2020}{\textup{2020} Mathematics Subject Classification}
\makeatother
\subjclass[2020]{Primary 35K10, 35K20; Secondary: 35B27, 26A33, 42B25}
\keywords{Second-order parabolic operators, parabolic measure, Poisson kernel,
reverse H\"older estimates, spatially periodic coefficients, uniform temporal
continuity, half-order BMO regularity, square-Dini continuity}
\date{}

\begin{document}

\begin{abstract}
We prove global reverse H\"older estimates in \(\mathrm L^2\) for the Poisson kernel of
scalar divergence-form parabolic operators in the upper half-space.  The
coefficient matrix is real, symmetric, and periodic in every spatial
variable \(X=(\lambda,x)\), while no periodicity in time is assumed.  If its boundary trace \(A^0=A^0(x,t)\) has a quantitative uniform modulus
of continuity in time and its convergence to \(A^0\) in the transverse
variable is controlled by a square-Dini modulus, then the Poisson kernel
satisfies a uniform \(\mathrm{RH}_2\) estimate at scales below the period.
Full spatial periodicity is then used
to propagate this estimate to every
scale.  The main new ingredient in the large-scale argument is a positive
adjoint height coordinate \(\Phi^*=\lambda+O(1)\).  Boundary comparison with \(\Phi^*\) yields the
required large-scale decay of the Green function and hence the global
\(\mathrm{RH}_2\) estimate.  Consequently, the \(\mathrm L^2\) Dirichlet problem is uniquely
solvable among weak solutions \(u\) satisfying \(N_\ast u\in\mathrm L^2\).  We also
establish, for fully spatially periodic coefficients, a nonsymmetric
\(A_\infty\) theorem under square-Dini convergence to a
transverse-independent boundary trace, with no temporal regularity beyond
measurability.  We explain how the results transfer to time-independent
Lipschitz graph domains under the stated coefficient and
periodicity-compatibility hypotheses, and we derive scale-uniform boundary
estimates for the periodic homogenization families considered below.  The condition
\(D_t^{1/2}A^0\in\mathrm L^\infty_x(\mathrm{BMO}_t)\) provides a natural
sufficient criterion for the temporal continuity required in the
symmetric theorem.
\end{abstract}

\maketitle


\section{Introduction and statement of the main results}
\label{sec:introduction}


A classical theorem of Dahlberg~\cite{Dahlberg} states that harmonic
measure in a Lipschitz domain is absolutely continuous with respect to
surface measure and that the associated Poisson kernel satisfies a
scale-invariant reverse H\"older estimate in \(\mathrm L^2\).
Equivalently, the Dirichlet problem with boundary data in
\(\mathrm L^2\) is solvable with \(\mathrm L^2\)-control of the
non-tangential maximal function.  Jerison and Kenig gave another proof of
Dahlberg's theorem and extended the argument to uniformly elliptic
symmetric operators with smooth coefficients in nonsmooth domains
\cite{JerisonKenig}.  Further developments for divergence-form
operators with coefficients independent of the transverse variable can
be found, for example, in
\cite{AlfonsecaAuscherAxelssonHofmannKim,
HofmannKenigMayborodaPipher}.

Kenig and Shen~\cite{KenigShen}, building on unpublished work of Dahlberg,
weakened the assumption of independence from the transverse variable.  They
considered real
symmetric elliptic operators whose coefficients are periodic in the
transverse variable and satisfy a quantitative regularity condition in that
variable.  Their proof separates estimates below the period from a
large-scale reduction based on transverse, or normal, differences.  In particular, it gives
the \(\mathrm L^2\) Dirichlet estimate under a suitable square-Dini condition in the
transverse direction.  More broadly, the fine properties of elliptic measure
have motivated an extensive literature.  We refer, for example,
to~\cite{KenigKirchheimPipherToro} and the references therein.

The corresponding theory for parabolic measure is less complete.  Fabes and
Salsa~\cite{FabesSalsa} proved a parabolic version of Dahlberg's theorem for
the heat equation in time-independent Lipschitz cylinders.  Related half-space
results can be found in~\cite{CastroNystromSande}.  Parabolic measure associated with the heat equation
in time-dependent Lipschitz-type domains was studied by Lewis and
Silver~\cite{LewisSilver}, Lewis and Murray~\cite{LewisMurray}, and Hofmann
and Lewis~\cite{HofmannLewis}.  These works identified the appropriate parabolic regularity of a
time-dependent graph and demonstrated the importance of quantitative
control of its variation in time.  The related work
\cite{HofmannLewis1}, which treats parabolic operators with singular
drift terms, should also be mentioned in this context.

For time-independent coefficients, a parabolic analogue of the periodic
elliptic argument of Kenig and Shen follows from the structural theorem of Litsg\aa rd
and the author~\cite{LitsgardNystrom}, which shows that a scale-invariant reverse
H\"older estimate for the elliptic Poisson kernel transfers to the parabolic
Poisson kernel of the corresponding time-independent parabolic operator.
The present work is concerned instead with coefficients that genuinely
depend on time.

For operators in the parabolic upper half-space whose coefficients are
independent of the transverse variable, substantial progress has been made
through square-function, layer-potential, and first-order methods.  We refer
to \cite{CastroNystromSande,NystromSquare,NystromL2,AEN,AENDirichlet} and the references
therein.  In particular, Auscher, Egert, and the author~\cite{AENDirichlet}
considered
\[
 \partial_tu-\operatorname{div}_{\lambda,x}
 \bigl(A(x,t)\nabla_{\lambda,x}u\bigr)=0
 \quad\text{in }\mathbb R^{n+2}_+,
\]
where \(A\) is real, bounded, measurable, uniformly elliptic, and not
necessarily symmetric.  They proved that the associated parabolic measure
belongs to \(A_\infty(\mathrm d x\,\mathrm d t)\).  Consequently, its Poisson kernel satisfies a reverse H\"older estimate
with some exponent \(p>1\), and the Dirichlet problem is solvable in
\(\mathrm L^q\), where \(q=p/(p-1)\).  In addition, the first-order
\(\mathrm L^2\) theory developed in~\cite{AEN} provides stability of the
adjoint regularity problem under small \(\mathrm L^\infty\)-perturbations
of \(\lambda\)-independent coefficients.  This is used in our small-scale
argument, while the transverse dependence is restored by the classical
small-Carleson perturbation theorem for parabolic measure recorded in
Proposition~\ref{prop:parabolic-Carleson-perturbation}\textup{(i)} below.

In this paper we study the operator
\begin{equation}\label{eq:operator}
 \cH u
 =\partial_tu-\operatorname{div}_{\lambda,x}
      \bigl(A(\lambda,x,t)\nabla_{\lambda,x}u\bigr),\quad\text{in }\mathbb R^{n+2}_+.
\end{equation}
In our main theorem, which concerns the class
\(B_2(\mathrm d x\,\mathrm d t)\), the coefficient matrix is assumed
real, bounded, measurable, uniformly elliptic, and symmetric.  Its boundary trace \(A^0(x,t)\) is assumed to be uniformly
continuous in time with values in \(\mathrm L^\infty(\mathbb R^n)\), with a
quantitative modulus, and its variation in the transverse variable is controlled
by a square-Dini modulus.  The condition
\[
 D_t^{1/2}A^0
 \in \mathrm L^\infty\bigl(\mathbb R^n;\mathrm{BMO}(\mathbb R)\bigr),
\]
which arises naturally in the first-order framework of~\cite{AEN},
implies the required temporal regularity and therefore remains an
important sufficient hypothesis.  For the large-scale part of the argument we also assume that
\(A\) is one-periodic in every spatial coordinate \(X=(\lambda,x)\).  No
periodicity in time is imposed.

Our main result states that the forward parabolic measure belongs to
\(B_2(\mathrm d x\,\mathrm d t)\) uniformly at every scale.  Equivalently,
its Poisson kernel satisfies a scale-invariant reverse H\"older estimate in
\(\mathrm L^2\).  Since the hypotheses are preserved by transposition and
time reversal, the same conclusion holds for the time-reversed adjoint
operator.  The standard forward-adjoint uniqueness criterion then gives
unique solvability of the \(\mathrm L^2\) Dirichlet problem
\cite[Remark~1.4]{AENDirichlet}.

The proof separates the two scale regimes.  Below the period, the uniform time
modulus permits the boundary trace to be frozen at the central time.
The small \(\mathrm L^\infty\)-perturbation theorem of~\cite{AEN}, applied
after time reversal to the adjoint regularity problem, together with
regularity-Dirichlet duality, gives uniform \(\mathrm L^2\) Dirichlet
solvability and hence, by Theorem~\ref{thm:Bp-Dq}, a uniform
\(B_2\)-estimate for the resulting \(\lambda\)-independent coefficient.
The original transverse
dependence is then restored by the classical small-Carleson \(B_2\)-perturbation
theorem, Proposition~\ref{prop:parabolic-Carleson-perturbation}\textup{(i)}.

Above the period, the direct elliptic transverse-difference argument originating
in the unpublished work of Dahlberg (see Kenig and Shen~\cite{KenigShen}) encounters a
genuinely parabolic obstruction.  A Green function, regarded as a function of
its source variables, solves the adjoint equation.  However, the parabolic
measure in the forward comparison argument represents solutions of the forward
equation.  Symmetry of the spatial coefficient matrix does not remove the
opposite time orientations.

We overcome this obstruction by constructing a positive adjoint height
coordinate \(\Phi^*\) satisfying
\[
 \cH^*\Phi^*=0,
 \qquad
 \Phi^*=0\quad\text{on }\{\lambda=0\},
 \qquad
 \bigl|\Phi^*(\lambda,x,t)-\lambda\bigr|\leq C.
\]
It is at this point that periodicity in the tangential spatial variables
first becomes essential in the large-scale argument.  Indeed, a nonautonomous adjoint corrector problem on the
spatial torus is used to construct \(\Phi^*\).  Once
\(\Phi^*\) is available, its subsequent use requires only its positivity,
its vanishing boundary trace, and the comparison
\[
 \Phi^*(\lambda,x,t)\simeq\lambda,
 \qquad \lambda\geq1.
\]
Boundary comparison between localized Green functions and \(\Phi^*\) then gives the
unit-height Green-function decay needed in the large-scale summation.

The construction and use of \(\Phi^*\) constitute the principal new
ingredient in the large-scale argument.  It allows us to implement a
parabolic analogue of the Green-function strategy in Dahlberg's
unpublished elliptic proof, which is reproduced in the appendix
of~\cite{KenigShen}.

The small-scale theorem does not require periodicity.  Transverse periodicity
alone still implies that the difference
\[
 u(\lambda+1,x,t)-u(\lambda,x,t)
\]
is a solution, but the present argument does not establish the global theorem without
periodicity in the tangential spatial variables.  The corresponding Neumann
and regularity problems are not addressed here.

\subsection{The coefficients and the parabolic geometry}\label{subsec:setting}

Throughout, \(n\geq1\), and
\[
 A:\R^{n+2}_+\longrightarrow\mathbb R^{(n+1)\times(n+1)},
\]
where
\[
 \R^{n+2}_+=\{(\lambda,x,t):\lambda>0,\ x\in\R^n,\ t\in\R\},
 \qquad \partial\R^{n+2}_+=\R^n\times\R.
\]
We consider the scalar operator \(\cH\) introduced in \eqref{eq:operator}.  The matrix \(A\) is assumed real and measurable, and there are constants
\(0<\mu\leq \Lambda<\infty\) such that, for almost every
\((\lambda,x,t)\) and every \(\xi\in\R^{n+1}\),
\begin{equation}\label{eq:ellipticity}
 \mu |\xi|^2\leq A(\lambda,x,t)\xi\cdot\xi,
 \qquad |A(\lambda,x,t)\xi|\leq \Lambda|\xi|.
\end{equation}

A function \(u\) is a weak solution of \(\cH u=0\) in an open set
\(\Omega\subset\R^{n+2}_+\) if
\[
 u\in\mathrm L^2_{\mathrm{loc}}(\Omega),
 \qquad
 \nabla_{\lambda,x}u\in
 \mathrm L^2_{\mathrm{loc}}(\Omega;\mathbb R^{n+1}),
\]
and
\begin{equation}\label{eq:weak-solution}
 \iiint_{\Omega}
 A\nabla_{\lambda,x}u\cdot\nabla_{\lambda,x}\varphi
 -u\,\partial_t\varphi
 \,\mathrm d \lambda\,\mathrm d x\,\mathrm d t=0
\end{equation}
for every \(\varphi\in C_0^\infty(\Omega)\).  The adjoint operator is
\[
 \cH^*=-\partial_t-\operatorname{div}_{\lambda,x}
       (A^{\mathsf T}\nabla_{\lambda,x}),
\]
with the analogous weak formulation.

For the transverse-difference discussion it is enough to assume
\begin{equation}\label{eq:periodicity}
 A(\lambda+1,x,t)=A(\lambda,x,t)
 \quad\text{for almost every }(\lambda,x,t).
\end{equation}
The large-scale theorem proved below uses the stronger assumption of full
spatial periodicity.  Whenever \eqref{eq:periodicity} is assumed, we identify
\(A\) with its one-periodic extension in \(\lambda\) to all
\(X=(\lambda,x)\in\mathbb R^{n+1}\).
Full spatial periodicity then means that this extension is real, bounded,
measurable, uniformly elliptic, and satisfies
\begin{equation}\label{eq:full-periodicity}
 A(X+k,t)=A(X,t)
 \quad\text{for almost every }(X,t)
 \text{ and every }k\in\mathbb Z^{n+1}.
\end{equation}
In particular, \eqref{eq:full-periodicity} implies
\eqref{eq:periodicity}.  Here ``full'' refers to all spatial variables,
including the transverse variable \(\lambda\), and period one is only a
normalization.  No periodicity in \(t\) is required.

We use the boundary parabolic quasi-distance
\[
 d_p((x,t),(y,s))=|x-y|+|t-s|^{1/2}.
\]
For \((x,t)\in\R^n\times\R\) and \(r>0\), set
\begin{equation}\label{eq:boundary-cubes}
 \begin{aligned}
  Q_r(x)&=B(x,r),
  &I_r(t)&=(t-r^2,t+r^2),\\
  \Delta_r(x,t)&=Q_r(x)\times I_r(t),
  &\ell(\Delta_r)&=r.
 \end{aligned}
\end{equation}
If \(c>0\), then \(cQ_r(x)=B(x,cr)\),
\(c^2I_r(t)=(t-c^2r^2,t+c^2r^2)\), and
\[
 c\Delta_r(x,t)
 :=cQ_r(x)\times c^2I_r(t)=\Delta_{cr}(x,t).
\]
Thus \(|\Delta_r(x,t)|=2|B(0,1)|r^{n+2}\), and the homogeneous
dimension of the boundary space is \(n+2\).  We also use the oriented
halves
\begin{equation}\label{eq:oriented-boundary-boxes}
 \Delta_r^-(x,t)=Q_r(x)\times(t-r^2,t),
 \qquad
 \Delta_r^+(x,t)=Q_r(x)\times(t,t+r^2),
\end{equation}
with the convention
\[
 c\Delta_r^\pm(x,t):=\Delta_{cr}^\pm(x,t).
\]
We write
\[
 T_r(x,t)=(0,r)\times\Delta_r(x,t),
 \qquad
 T_r^\pm(x,t)=(0,r)\times\Delta_r^\pm(x,t).
\]
The symmetric cube \(\Delta_r\) is the default boundary cube.  The
superscripts \(+\) and \(-\) are retained whenever the time orientation
matters.  In particular, a symmetric space-time box is an ambient
region and is not, by itself, the incoming parabolic boundary of a
forward problem.

\subsection{Parabolic measure and reverse H\"older estimates}\label{subsec:parabolic-measure}

For any coefficient matrix \(C\) and admissible pole \(P\), write
\[
 \cH_C
 :=
 \partial_t-\operatorname{div}_{\lambda,x}
 \bigl(C\nabla_{\lambda,x}\bigr),
 \qquad
 \omega_C^P:=\omega_{\cH_C}^P.
\]
Whenever \(\omega_C^P\ll\mathrm d x\,\mathrm d t\), denote its Poisson
kernel by
\[
 k_C^P
 :=
 \frac{\mathrm d\omega_C^P}{\mathrm d x\,\mathrm d t}.
\]
For the principal coefficient matrix \(A\), so that
\(\cH=\cH_A\), we abbreviate
\[
 \omega^P:=\omega_A^P,
 \qquad
 k^P:=k_A^P.
\]
\begin{defn}\label{defn:admissible-pole}
Fix structural constants \(0<c_a<1<C_a\).  Given
\(\Delta_r=\Delta_r(x_0,t_0)\) and \(M>4\), we call
\(P=(\lambda_P,x_P,t_P)\) an \(M\)-admissible pole if
\begin{equation}\label{eq:admissible-pole}
 \lambda_P\geq Mr,\qquad
 |x_P-x_0|\leq C_a\lambda_P,\qquad
 c_a\lambda_P^2\leq t_P-t_0\leq C_a\lambda_P^2.
\end{equation}
\end{defn}
\begin{rem}\label{rem:admissible-pole}
In Definition~\ref{defn:admissible-pole}, the constant \(M\) is chosen sufficiently large relative to all fixed
geometric parameters used below.  In particular, we require
\(c_aM^2>64\).  Thus \(P\) lies quantitatively in the parabolic future
of \(\Delta_r\), and every fixed enlargement of \(T_r(x_0,t_0)\) used
below is separated from \(P\).
The same terminology will be used for poles joined to this region by a
uniformly bounded, correctly oriented Harnack chain.
\end{rem}

\begin{defn}\label{defn:local-global-rh2}
We say that the local \(\mathrm{RH}_2\) estimate holds up to scale one if there are
\(M>4\) and \(C_0<\infty\) such that, for every
\((x_0,t_0)\in\R^n\times\R\), every \(0<r\leq1\), and every
\(M\)-admissible pole \(P\) for \(\Delta_r(x_0,t_0)\), one has
\(\omega_{\cH}^P\ll\mathrm d x\,\mathrm d t\), and its Poisson kernel
satisfies
\begin{equation}\label{eq:local-rh}
 \left(
 \frac{1}{|\Delta_r|}
 \iint_{\Delta_r(x_0,t_0)}(k^P)^2\,\mathrm d x\,\mathrm d t
 \right)^{1/2}
 \leq
 \frac{C_0}{|\Delta_r|}
 \iint_{\Delta_r(x_0,t_0)}k^P\,\mathrm d x\,\mathrm d t.
\end{equation}
The global estimate is the same assertion, including absolute
continuity, for every \(r>0\).
\end{defn}

Lemma~\ref{lem:cube-conventions} below shows that the formulations with
symmetric cubes \(\Delta_r\) and with the oriented halves
\(\Delta_r^-\) are quantitatively equivalent.  We use symmetric cubes in
the statements and backward cubes in the large-scale argument when their
causal orientation is useful.

\subsection{The small-scale regularity assumptions}\label{subsec:small-assumptions}

For the small-scale theorem we assume in addition that
\(A=A^{\mathsf T}\) and that \(A\) has a bounded, symmetric, uniformly
elliptic boundary trace \(A^0=A^0(x,t)\).  We assume that the map
\[
 t\longmapsto A^0(\cdot,t)\in\mathrm L^\infty(\R^n)
\]
has a uniformly continuous representative.  For this representative, set
\begin{equation}\label{eq:boundary-time-modulus}
 \varpi(\rho)
 :=
 \sup_{\substack{s,t\in\R\\ |t-s|^{1/2}\leq\rho}}
 \bigl\|A^0(\cdot,t)-A^0(\cdot,s)\bigr\|_{\mathrm L^\infty(\R^n)},
 \qquad \rho>0,
\end{equation}
and assume that
\begin{equation}\label{eq:boundary-time-continuity}
 \lim_{\rho\downarrow0}\varpi(\rho)=0.
\end{equation}
The function \(\varpi\) is nondecreasing and bounded by a constant depending
only on \(\Lambda\).  The representative may be chosen so that
\(A^0(\cdot,t)\) is symmetric and has the same ellipticity bounds for every
\(t\in\R\).

A useful sufficient condition for
\eqref{eq:boundary-time-continuity} is the half-order BMO condition
\begin{equation}\label{eq:boundary-fiber-bmo}
 K_0:=
 \operatorname*{ess\,sup}_{x\in\R^n}
 \bigl\|D_t^{1/2}A^0(x,\cdot)\bigr\|_{\mathrm{BMO}(\R_t)}
 <\infty.
\end{equation}
Here \(D_t^{1/2}\) denotes the half-order derivative in time, defined by
the Fourier multiplier \(|\tau|^{1/2}\), and \(H_t\) denotes the Hilbert
transform in the time variable, with its sign chosen so that \(\partial_t=D_t^{1/2}H_tD_t^{1/2}\).  Lemma~\ref{lem:temporal-freezing} below shows that
\eqref{eq:boundary-fiber-bmo} implies
\begin{equation}\label{eq:bmo-time-modulus}
 \varpi(\rho)\leq CK_0\rho,
 \qquad \rho>0.
\end{equation}
Thus \eqref{eq:boundary-fiber-bmo} remains an important sufficient
hypothesis, but it is not required in the main theorem.  Notice that it is an
\(\mathrm L^\infty_x(\mathrm{BMO}_t)\)-condition and not merely a joint
parabolic BMO condition in \((x,t)\).

The oscillation in the transverse variable is measured by
\begin{equation}\label{eq:normal-modulus}
 \eta(\rho)
 :=
 \operatorname*{ess\,sup}_{0<\lambda\leq\rho}
 \bigl\|
 A(\lambda,\cdot,\cdot)-A^0
 \bigr\|_{\mathrm L^\infty(\R^n\times\R)}.
\end{equation}
We impose the square-Dini condition
\begin{equation}\label{eq:normal-square-Dini}
 \mathfrak D(1)
 :=
 \left(
  \int_0^1\eta(\rho)^2\,\frac{\mathrm d \rho}{\rho}
 \right)^{1/2}
 <\infty.
\end{equation}
More generally, for \(0<R\leq1\), define
\begin{equation}\label{eq:Dini-tail-zero}
 \mathfrak D(R)
 :=
 \left(
  \int_0^R\eta(\rho)^2\,\frac{\mathrm d \rho}{\rho}
 \right)^{1/2}.
\end{equation}
The absolute continuity of the integral in
\eqref{eq:normal-square-Dini} implies
\[
 \lim_{R\downarrow0}\mathfrak D(R)=0.
\]
Moreover, since \(\eta\) is nondecreasing, for
\(0<\rho\leq1/2\) we have
\[
 \eta(\rho)^2\log 2
 \leq
 \int_\rho^{2\rho}\eta(s)^2\,\frac{\mathrm d s}{s}
 \leq
 \mathfrak D(2\rho)^2.
\]
Consequently,
\[
 \eta(\rho)
 \leq
 \frac{\mathfrak D(2\rho)}{\sqrt{\log 2}}
 \longrightarrow0
 \qquad\text{as }\rho\downarrow0.
\]

Since \(\eta(\rho)\to0\) as \(\rho\downarrow0\), we have
\begin{equation}\label{eq:uniform-essential-trace}
 \lim_{\rho\downarrow0}
 \operatorname*{ess\,sup}_{0<\lambda\leq\rho}
 \bigl\|
 A(\lambda,\cdot,\cdot)-A^0
 \bigr\|_{\mathrm L^\infty(\R^n\times\R)}
 =0.
\end{equation}
Equivalently, for every \(\varepsilon>0\), there is
\(\rho_\varepsilon>0\) such that
\[
 \bigl\|
 A(\lambda,\cdot,\cdot)-A^0
 \bigr\|_{\mathrm L^\infty(\R^n\times\R)}
 <\varepsilon
\]
for almost every \(0<\lambda<\rho_\varepsilon\).  We define the
coefficient on the boundary by \(A(0,x,t)=A^0(x,t)\).  In this essential
one-sided sense, \(A^0\) is the uniform boundary trace of \(A\).

\subsection{Statement of the main results}\label{subsec:main-results}

The following is the main result of the paper.

\begin{thm}\label{thm:global}
Let \(A\) be real, bounded, measurable, symmetric, and uniformly elliptic,
and assume the full spatial periodicity \eqref{eq:full-periodicity}.
Suppose that there is a bounded, symmetric, uniformly elliptic boundary
trace \(A^0=A^0(x,t)\) satisfying the temporal-continuity hypotheses
\eqref{eq:boundary-time-modulus}-\eqref{eq:boundary-time-continuity} and
that \(A\) converges to \(A^0\) according to the transverse square-Dini
hypothesis \eqref{eq:normal-square-Dini}.  Then there are \(M>4\) and
\(C<\infty\) such that, for every
\((x_0,t_0)\in\R^n\times\R\), every \(r>0\), and every
\(M\)-admissible pole \(P\) for \(\Delta_r=\Delta_r(x_0,t_0)\), one has
\(\omega_{\cH}^P\ll\mathrm d x\,\mathrm d t\), and the forward
parabolic Poisson kernel satisfies
\[
 \left(
 \frac{1}{|\Delta_r|}
 \iint_{\Delta_r}(k^P)^2\,\mathrm d x\,\mathrm d t
 \right)^{1/2}
 \leq
 \frac{C}{|\Delta_r|}
 \iint_{\Delta_r}k^P\,\mathrm d x\,\mathrm d t
\]
uniformly in the center, \(r\), and \(P\).  The constants depend only on
\(n,\mu,\Lambda\), the moduli \(\varpi\) and \(\mathfrak D\), and the fixed
geometric parameters.
\end{thm}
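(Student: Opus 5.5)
The proof splits into two scale regimes, as the introduction announces: first the local \(\mathrm{RH}_2\) estimate up to scale one in the sense of Definition~\ref{defn:local-global-rh2}, then its propagation to \(r>1\) using full spatial periodicity.

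\emph{Small scales.} Fix \(\Delta_r=\Delta_r(x_0,t_0)\) with \(r\leq1\) and an \(M\)-admissible pole \(P\). First freeze the boundary trace at the central time, setting \(B(x):=A^0(x,t_0)\). By \eqref{eq:boundary-time-modulus}, on a fixed enlargement of \(T_r(x_0,t_0)\), and after a Harnack chain to a slightly larger time window,
\[
\|A^0-B\|_{\mathrm L^\infty}\leq\varpi(Cr).
\]
This is small once \(r\leq r_0\); scales in \([r_0,1]\) are handled by doubling and Harnack with constants depending on \(r_0\). The \(\lambda\)-independent coefficient \(B\) is a small \(\mathrm L^\infty\) perturbation of \(A^0\), and \(A^0\) itself is frozen. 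I would apply the perturbation theory of~\cite{AEN} after time reversal to the adjoint regularity problem, then pass to \(\mathrm L^2\) Dirichlet solvability via regularity--Dirichlet duality, and conclude a uniform \(B_2\) bound for \(\omega_{A^0}\) at scale \(r\) via Theorem~\ref{thm:Bp-Dq}. Next I would restore the \(\lambda\)-dependence. The discrepancy \(\epsilon(\lambda,x,t)=\sup|A-A^0|\) over a Whitney region is bounded by \(\eta(\lambda)\). The Carleson norm of \(\epsilon^2/\lambda\) on boxes of size \(\leq r\) is therefore at most \(\mathfrak D(r)^2\), which is small for \(r\) small. Proposition~\ref{prop:parabolic-Carleson-perturbation}\textup{(i)} then transfers the \(B_2\) estimate to \(\omega_A\), with localization handled by the usual comparison principle.

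\emph{Large scales.} The key tool is the adjoint height coordinate \(\Phi^*\). To build it, I would solve the nonautonomous adjoint corrector problem
\[
-\partial_t\chi-\operatorname{div}_X\bigl(A(\nabla\chi+e_\lambda)\bigr)=0
\]
on the torus \(\mathbb T^{n+1}\), taken backward in time with bounded solution. Setting \(\lambda+\chi\) gives a global adjoint solution with \(|\lambda+\chi-\lambda|\leq C\). Boundedness comes from energy estimates plus Nash/De Giorgi on the torus, uniformly in \(t\) because the mean-zero part decays exponentially by the spectral gap. I then correct the boundary values on \(\{\lambda=0\}\) by subtracting a bounded adjoint solution with data \(\chi(0,\cdot)\); this keeps the bound \(O(1)\), and the maximum principle gives \(\Phi^*>0\) and \(\Phi^*\simeq\lambda\) for \(\lambda\geq1\).

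With \(\Phi^*\) in hand, boundary comparison (boundary Harnack for the adjoint) between the Green function \(G(P,\cdot)\) in its source variables and \(\Phi^*\) in a box of size \(r\) yields the decay
\[
G(P,(1,y,s))\lesssim r^{-1}G(P,(r,\cdot))\simeq \omega^P(\Delta_r)\,r^{-n-1}
\]
for unit-height points. Then I would run Dahlberg's summation: cover \(\Delta_r\) by unit cubes, apply the local \(\mathrm{RH}_2\) on each with pole pushed to height \(\sim1\), and bound \(\int_{\Delta_1}(k^P)^2\) by \(\omega^P(\Delta_1)^2\), which is comparable to \(G(P,\text{unit point})^2\). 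Summing, and using the decay together with periodic translation invariance (the constants are uniform because the estimates are translation invariant under \(\mathbb Z^{n+1}\) and need no time periodicity), gives \(\mathrm{RH}_2\) at scale \(r\). The cube conventions are handled by Lemma~\ref{lem:cube-conventions}.

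The main obstacle is constructing \(\Phi^*\) with uniform \(O(1)\) bounds, together with the correctly oriented boundary comparison. In the summation step it may also be necessary to control \(G\) near the boundary uniformly via \(\Phi^*\) to get \(\sum\omega^P(\Delta_1)^2\lesssim r^{-(n+2)}\omega^P(\Delta_r)^2\). For that I would use the comparison \(G(P,\cdot)/\Phi^*\) being roughly constant on \(T_r\).
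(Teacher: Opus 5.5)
Your large-scale argument is sound, but the small-scale step has a genuine gap. Both perturbation results you invoke are global, and you apply them to coefficient pairs whose discrepancy is only locally small.

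\textbf{The AEN step.} The $\mathrm L^\infty$-openness theorem of~\cite{AEN}, and Theorem~\ref{thm:Bp-Dq} after it (a global equivalence), require a globally defined $\lambda$-independent $B'$ with $\|B'-B\|_{\mathrm L^\infty(\R^n\times\R)}\leq\varepsilon_\infty$. But $A^0(x,t)$ is within $\varpi(Cr)$ of $B(x)=A^0(x,t_0)$ only for $|t-t_0|\lesssim r^2$; for large $|t-t_0|$ the difference need not be small. A Harnack chain compares values of solutions, not coefficients, so it cannot enlarge this window. Hence nothing yields ``$B_2$ for $\omega_{A^0}$ at scale $r$.'' Note also that the base of the regularity/Rellich theory is the frozen $t$-independent symmetric $B$, not $A^0$.

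\textbf{The Carleson step.} Proposition~\ref{prop:parabolic-Carleson-perturbation}\textup{(i)} needs $\omega_{\cH_{B_0}}\in B_2$ globally and the \emph{full} Carleson norm $\|B_1-B_0\|_*$, a supremum over all boxes, to be small. For $B_1=A$, $B_0=A^0$ this norm is in general infinite. By transverse periodicity the Whitney discrepancy does not decay as $\lambda\to\infty$, so a box of side $\ell$ contributes on the order of $\log\ell$. Estimating only boxes of size $\leq r$ does not verify the hypothesis.

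\textbf{The missing construction.} Before perturbing, you must build globally defined coefficients that agree with $A$ near the box.
\begin{enumerate}[label=\textup{(\roman*)},leftmargin=2.5em]
\item Time cutoff: $B_r=(1-\vartheta_r)B+\vartheta_rA^0$ satisfies $\|B_r-B\|_\infty\leq\varpi(2Lr)$ on all of $\R^n\times\R$. Lemma~\ref{lem:lambda-independent-B2-openness} then gives $\omega_{\cH_{B_r}}\in B_2$ globally.
\item Transverse cutoff: $\widetilde A_r=(1-\vartheta_r)B+\vartheta_r[(1-\chi_r)A^0+\chi_rA]$ differs from $B_r$ only for $\lambda<2Lr$. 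Its global Carleson discrepancy is therefore $\lesssim\mathfrak D(C_Lr)$, and Proposition~\ref{prop:parabolic-Carleson-perturbation}\textup{(i)} applies.
\item Return to $A$: since $\widetilde A_r=A$ on $\mathcal U_{\kappa r}(x_0,t_0)$, Corollary~\ref{cor:local-change-operator} transfers $\mathrm{RH}_2$ back to $A$, after enlarging $M$ so that $t(P)>t_0+\kappa^2r^2$.
\end{enumerate}
Your phrase ``localization handled by the usual comparison principle'' has to become exactly this construction. With it, together with a covering and change of pole for $r_0<r\leq1$, the small-scale part closes.

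\textbf{Large scales.} Your direct comparison of $w=G_{\cH}(P;\cdot)$ with $\Phi^*$, giving $w(A_k)\lesssim r^{-1}w(A_r^\circ)$ followed by the summation, is essentially Lemma~\ref{lem:height-global-Bp}. Section~\ref{sec:large-scale-proof} instead takes a longer two-comparison route through $v_1,v_2$ and Lemma~\ref{lem:unit-height-decay}, but the direct route suffices for $p=2$.

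Building $\Phi^*=\lambda+\chi^*-u$, with $u$ the bounded adjoint half-space solution with data $\chi^*(0,\cdot)$, is a legitimate alternative to the paper's finite periodic slabs. Two points need more than you wrote:
\begin{enumerate}[label=\textup{(\roman*)},leftmargin=2.5em]
\item Positivity is not just ``the maximum principle.'' You need a minimum principle for bounded complete adjoint solutions in a slab $\{0<\lambda<H\}$ that is unbounded in $x$ and $t$, for instance via $\mathbb Z^n$-periodicity of $u$ plus exponential decay in time. The paper instead obtains $0\leq\Phi_N^*\leq N$ on spatially compact slabs.
\item The lower bound $\Phi^*\gtrsim\lambda$ on $1\leq\lambda\leq H_0$ requires a Harnack chain.
\end{enumerate}
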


\begin{cor}
\label{cor:global-D2}
Under the hypotheses of Theorem~\ref{thm:global}, parabolic measure
belongs to \(B_2(\mathrm d x\,\mathrm d t)\), and the Dirichlet problem
\(D_2\) is uniquely solvable in \(\R^{n+2}_+\).
More precisely, the solution is unique among weak solutions \(u\) for
which \(N_\ast u\in\mathrm L^2(\R^n\times\R)\).
\end{cor}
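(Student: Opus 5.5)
The corollary follows from Theorem~\ref{thm:global} in three steps: reformulate the global reverse H\"older estimate as membership in \(B_2\), apply the same theorem to the adjoint operator, and then invoke the existence and uniqueness criterion of \cite[Remark~1.4]{AENDirichlet}.

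\emph{Step 1: \(\omega\in B_2\) and existence.} Theorem~\ref{thm:global} gives, for every boundary cube \(\Delta_r\) and every \(M\)-admissible pole \(P\), absolute continuity \(\omega^P\ll\mathrm d x\,\mathrm d t\) and the scale-invariant \(\mathrm{RH}_2\) inequality. This is exactly the definition of \(B_2(\mathrm d x\,\mathrm d t)\) for parabolic measure. The constants are uniform in the center, in \(r\), and in the admissible pole, as the definition requires. Lemma~\ref{lem:cube-conventions} lets me pass between symmetric cubes and oriented halves if the criterion in \cite{AENDirichlet} is phrased with backward cubes. By Theorem~\ref{thm:Bp-Dq} with \(p=2\), hence \(q=2\), we obtain \(\mathrm L^2\) Dirichlet solvability. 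Concretely, for \(f\in\mathrm L^2\cap C_c\) the solution \(u(P)=\int f\,\mathrm d\omega^P\) satisfies \(\|N_\ast u\|_{2}\lesssim\|f\|_2\). A density argument extends this to all \(f\in\mathrm L^2\), with nontangential convergence almost everywhere.

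\emph{Step 2: the adjoint.} For uniqueness I need the same estimate for \(\cH^*\). The change of variables \(t\mapsto -t\) converts \(\cH^*\) into a forward operator with coefficients \(\tilde A(\lambda,x,t)=A^{\mathsf T}(\lambda,x,-t)=A(\lambda,x,-t)\). I check that this matrix satisfies every hypothesis of Theorem~\ref{thm:global}:
\begin{itemize}
\item symmetry and the ellipticity bounds are unchanged;
\item full spatial periodicity is unchanged, since no periodicity in time was assumed;
\item the temporal modulus \(\varpi\) is invariant under \(t\mapsto-t\);
\item \(\eta\), and hence \(\mathfrak D\), is unchanged.
\end{itemize}
Consequently the time-reversed adjoint parabolic measure also belongs to \(B_2\), with constants depending on the same data.

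\emph{Step 3: uniqueness.} The forward--adjoint criterion of \cite[Remark~1.4]{AENDirichlet} states that if both the forward and the time-reversed adjoint parabolic measures are in \(B_2\), then a weak solution with \(N_\ast u\in\mathrm L^2\) and zero boundary trace vanishes. The underlying argument represents \(u(P)\) by pairing against the adjoint Green function, or adjoint solutions with \(\mathrm L^2\) data. It then uses the adjoint \(\mathrm L^2\) estimate together with a cutoff argument to show that the boundary terms vanish.

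The main point to verify is that the hypotheses of that remark match our setting exactly. That remark is stated for \(\lambda\)-independent coefficients, whereas here the coefficients depend on \(\lambda\). So I would check that its proof uses only the following ingredients, all of which hold for general real bounded measurable coefficients:
\begin{itemize}
\item interior estimates;
\item boundary H\"older continuity;
\item existence of Green functions;
\item the two \(B_2\) properties.
\end{itemize}
If it does, uniqueness among weak solutions with \(N_\ast u\in\mathrm L^2\) follows.
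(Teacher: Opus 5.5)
Your three steps follow the paper: $B_2$ from Theorem~\ref{thm:global}, existence from Theorem~\ref{thm:Bp-Dq} with $p=q=2$, and uniqueness by applying Theorem~\ref{thm:global} to $\widetilde A(Y,\tau)=A^{\mathsf T}(Y,-\tau)$ and invoking the forward--adjoint criterion recorded after Theorem~\ref{thm:Bp-Dq}. Your hypothesis check for $\widetilde A$ matches the paper's. The paper also treats that criterion as available in this generality, so Step~3 is not where your argument falls short.

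The gap is in Step~1: the conclusion of Theorem~\ref{thm:global} is \emph{not} exactly the definition of $B_2$.
\begin{itemize}
\item Definition~\ref{defn:parabolic-Bp} fixes the canonical pole $P_0=A_{4r_0}^+(x_0,t_0)=(16r_0,x_0,t_0+256r_0^2)$. It requires the estimate on every $\Delta\subset\Delta_0$, including $\Delta_0$ itself.
\item $M$-admissibility of $P_0$ for $\Delta_0$ would need $16r_0\geq Mr_0$. But $M$ is taken large: $c_aM^2>\kappa^2$ with $\kappa>16$ and $c_a<1$ forces $M>16$.
\item Hence $P_0$ is not $M$-admissible for subcubes of radius comparable to $r_0$, and Theorem~\ref{thm:global} says nothing about $k^{P_0}$ on them. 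The same issue arises for the time-reversed adjoint measure in Step~2.
\end{itemize}

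The paper closes this with a pole transfer, in four moves.
\begin{itemize}
\item Take $P=P^{\mathrm{far}}_{\Delta_0}$ from Lemma~\ref{lem:universal-subcube-pole}. It is $M$-admissible for every subcube of $\Delta_0$, and it lies in a common forward pole region with $P_0$.
\item Lemma~\ref{lem:change-of-pole} at scale $r_0$ gives $\omega_{\cH}^{P_0}(E)/\omega_{\cH}^{P_0}(\Delta_0)\simeq\omega_{\cH}^{P}(E)/\omega_{\cH}^{P}(\Delta_0)$ for all Borel $E\subset\Delta_0$. This yields absolute continuity of $\omega_{\cH}^{P_0}$ on $\Delta_0$.
\item Lebesgue differentiation then gives $k^{P_0}/\omega_{\cH}^{P_0}(\Delta_0)\simeq k^{P}/\omega_{\cH}^{P}(\Delta_0)$ a.e.\ on $\Delta_0$.
\item Insert this into the $\mathrm{RH}_2$ inequality for $k^P$ on $\Delta$. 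Then use the same measure comparison with $E=\Delta$ to return to $\omega_{\cH}^{P_0}(\Delta)$. This gives the estimate for the canonical pole with uniform constants.
\end{itemize}
Your appeal to Lemma~\ref{lem:cube-conventions} is unnecessary, since Theorem~\ref{thm:global} and Definition~\ref{defn:parabolic-Bp} both use symmetric cubes.
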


The proof of Theorem~\ref{thm:global} is separated into two results.  The first is a large-scale
reduction which requires no symmetry or quantitative coefficient regularity.

\begin{thm}
\label{thm:large-scale}
Assume \eqref{eq:ellipticity} and \eqref{eq:full-periodicity}.  If the local
reverse H\"older estimate \eqref{eq:local-rh} holds uniformly for
\(0<r\leq1\), then it holds uniformly for every \(r>0\).  The global
constant depends only on \(n,\mu,\Lambda,M,C_0\) and the fixed geometric
parameters.  Since all spatial periods have been normalized
to one, no additional period parameter occurs.
\end{thm}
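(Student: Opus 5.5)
The plan follows Dahlberg's Green-function strategy as reproduced in the appendix of \cite{KenigShen}, with the positive adjoint height coordinate \(\Phi^*\) of the introduction replacing the elliptic transverse-difference argument. The proof has four steps.

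\emph{Step 1 (adjoint height coordinate).} Using \eqref{eq:full-periodicity}, solve a nonautonomous adjoint corrector problem on the spatial torus: find \(\chi^*(X,t)\), one-periodic in \(X\) and bounded uniformly in \(t\), with
\[
 -\partial_t\chi^*-\operatorname{div}_X\bigl(A^{\mathsf T}\nabla_X(\chi^*+\lambda)\bigr)=0 .
\]
Since the equation is backward in time, we solve on \((-\infty,T)\) with zero data at \(T\), and pass to the limit \(T\to\infty\) using exponential decay of mean-zero solutions on the torus (spectral gap). An \(\mathrm L^\infty\) bound then follows from the Moser and Nash estimates. The function \(\lambda+\chi^*\) solves \(\cH^*=0\) in all of \(\R^{n+2}\) but does not vanish at \(\lambda=0\). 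We correct it by subtracting the adjoint solution in \(\R^{n+2}_+\) with boundary data \(\chi^*(0,x,t)\). This correction is bounded by the maximum principle, and it tends to the periodic-in-\(x\) profile as \(\lambda\to\infty\); alternatively one uses that bounded solutions with bounded data remain bounded. The result is \(\Phi^*\) with \(\cH^*\Phi^*=0\), \(\Phi^*=0\) on \(\{\lambda=0\}\), and \(|\Phi^*-\lambda|\le C\). Positivity comes from the maximum principle applied to the corrected function on slabs. In particular, \(\Phi^*\simeq\lambda\) for \(\lambda\ge1\).

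\emph{Step 2 (Green function decay).} Let \(G(P,\cdot)\) be the Green function with pole \(P\); in its second variable it solves the adjoint equation. For a unit backward box \(T_1^-\) near the boundary, far from \(P\), apply the boundary comparison principle (boundary Harnack) for adjoint solutions vanishing on the boundary to \(G(P,\cdot)\) and \(\Phi^*\). This gives \(G(P,(\lambda,y,s))\simeq \lambda\, G(P,A_1)\) for \(\lambda\le1\), where \(A_1\) is a corkscrew point at height one. Combined with the standard identification \(\omega^P(\Delta_1)\simeq G(P,A_1)\), we get a unit-height comparability: the unit-scale mass of \(\omega^P\) is controlled by \(G\) at height one. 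By positivity of \(\Phi^*\) and \(\Phi^*\simeq\lambda\), comparison against \(\Phi^*\) propagates to all heights in a large box: for \(\Delta_r\) with \(r\ge1\) and \(1\le\lambda\le r\),
\[
 G(P,(\lambda,y,s))\simeq\frac{\Phi^*(\lambda,y,s)}{\Phi^*(A_r)}\,G(P,A_r)\simeq\frac{\lambda}{r}\,G(P,A_r).
\]
Together with \(\omega^P(\Delta_r)\simeq r^{n}G(P,A_r)\), this yields \(\omega^P(\Delta_1(y,s))\simeq r^{-(n+2)}\omega^P(\Delta_r)\) for every unit subcube of \(\Delta_r\). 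In other words, \(\omega^P\) is comparable to a constant multiple of Lebesgue measure at unit scale inside \(\Delta_r\).

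\emph{Step 3 (summation).} Cover \(\Delta_r\) by unit cubes \(\Delta_1(y_j,s_j)\), applying Lemma~\ref{lem:cube-conventions} where orientation matters. For each cube, the pole \(P\) is joined to a \(M\)-admissible pole for \(\Delta_1(y_j,s_j)\) by a correctly oriented Harnack chain, as allowed by Remark~\ref{rem:admissible-pole}. Changing poles costs a factor \(\omega^P(\Delta_1(y_j,s_j))\): by Harnack and the comparison principle, \(k^P\simeq \omega^P(\Delta_1(y_j,s_j))\,k^{P_j}\) on that cube. The local estimate \eqref{eq:local-rh} at \(r=1\) then gives
\[
 \iint_{\Delta_1(y_j,s_j)}(k^P)^2\lesssim \omega^P(\Delta_1(y_j,s_j))^2 .
\]
Summing over \(j\) and using Step 2,
\[
 \iint_{\Delta_r}(k^P)^2\lesssim r^{n+2}\bigl(r^{-(n+2)}\omega^P(\Delta_r)\bigr)^2,
\]
which is exactly the global estimate. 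Absolute continuity is inherited cubewise from the local hypothesis.

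\emph{Main obstacle.} The difficulty lies in Step 1 and in the use of boundary comparison in Step 2 with the correct time orientation. Two points need care: the uniform bound on the nonautonomous corrector, which requires decay estimates that are uniform in the starting time, and the positivity of \(\Phi^*\) near \(\lambda=0\). For the latter, the first thing I would try is to define \(\Phi^*\) as a limit of adjoint solutions in the slabs \(0<\lambda<N\) with data \(0\) and \(N+\chi^*\). Positivity then follows from the maximum principle, and the \(O(1)\) comparison with \(\lambda\) follows from comparing with \(\lambda+\chi^*\pm C\).
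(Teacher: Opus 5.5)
Your argument is correct in outline, but for this theorem the paper takes a longer route. In Section~\ref{sec:large-scale-proof} it follows Dahlberg's scheme literally, in three stages:
\begin{itemize}
\item $w=G_{\cH}(P;\cdot)$ is first compared with two auxiliary cylinder Green functions $v_1,v_2$ (Lemma~\ref{lem:two-comparison});
\item the sum $\sum_k v_1(A_k)v_2(A_k)$ is converted into $\iint v_2(1,y,s)\,\mathrm d\omega^{P_1}_{\cH,\Omega_1}$;
\item only then is $\Phi^*$ used, in a second boundary comparison, to prove the unit-height decay $v_2(1,y,s)\lesssim r^{-(n+2)}$ (Lemma~\ref{lem:unit-height-decay}), which gives $\sum_k w(A_k)^2\lesssim r^n w(A_r^*)^2$.
\end{itemize}
You instead compare $w$ with $\Phi^*$ directly, in a single scale-$r$ comparison box whose reduced region contains all the unit corkscrews. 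This gives $w(A_k)\lesssim r^{-1}w(A_r)$, and hence $\omega^P(\Delta_k)\lesssim r^{-(n+2)}\omega^P(\Delta_r)$ cube by cube. That is precisely the argument of Lemma~\ref{lem:height-global-Bp}, which the paper uses later for the nonsymmetric $A_\infty$ theorem.

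Your route is shorter. It needs no auxiliary poles $P_1,P_2$ and no integration against forward parabolic measure in $\Omega_1$, and it works for every exponent $p>1$. The Section~\ref{sec:large-scale-proof} route mainly buys a closer structural match with Dahlberg's elliptic proof, and it isolates the missing factor $r^{-1}$ that the transverse-difference approach cannot supply.

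Your construction of $\Phi^*$ also differs from the paper's slab construction: you subtract the bounded half-space adjoint solution with data $\chi^*(0,\cdot)$. This works, but nonnegativity then needs a maximum principle for bounded complete solutions in an infinite-time slab $\{0<\lambda<H\}$, i.e.\ exponential decay of the Dirichlet adjoint evolution there. The paper's slabs $D_N$ instead give $0\le\Phi_N^*\le N$ directly from the bounded-domain maximum principle.

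Several details need correcting.
\begin{itemize}
\item The measure--Green relation is $\omega^P(\Delta_r)\simeq r^{n+1}G(P;A_r)$, not $r^{n}$. With $r^n$, Step~2 would give only $r^{-(n+1)}$; the $r^{-(n+2)}$ you state is the exponent consistent with $r^{n+1}$.
\item The claim $G(P,(\lambda,y,s))\simeq\lambda\,G(P,A_1)$ for $\lambda\le1$ is unjustified, since $\Phi^*\simeq\lambda$ holds only for $\lambda\ge1$. It is also unnecessary: the unit-scale bound $\omega^P(\Delta_1)\lesssim G(P;A_1^-)$ is simply Lemma~\ref{lem:cfms} together with doubling.
\item Uniformity in the change of pole cannot come from Harnack chains. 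Their length from $P$ down to unit scale grows like $\log\lambda(P)$, and $\lambda(P)$ is unbounded. The uniformity comes from the normalized change-of-pole Lemma~\ref{lem:change-of-pole}, i.e.\ from boundary comparison.
\item The orientation issue you flag in Step~2 is resolved by controlling the balance parameters: Lemma~\ref{lem:green-boundary-time-comparison} handles $w$, and $\Phi^*(A_{\rho_0}^\pm)\simeq\rho_0$ handles $\Phi^*$.
\item The range $1<r<R_0$ must be treated separately, by a bounded covering and doubling, because Step~2 needs the scale-$r$ corkscrews to lie at height at least one.
\end{itemize}
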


The second result supplies the estimate below the period scale.  It uses
symmetry and the regularity assumptions in
Subsection~\ref{subsec:small-assumptions}, but no periodicity.

\begin{thm}
\label{thm:parabolic-KS}
Assume \eqref{eq:ellipticity}, \(A=A^{\mathsf T}\), and the boundary-trace,
temporal-continuity, and transverse square-Dini hypotheses of
Subsection~\ref{subsec:small-assumptions}.  Then there are \(M>4\) and
\(C<\infty\) such that, for every
\((x_0,t_0)\in\R^n\times\R\), every \(0<r\leq1\), and every
\(M\)-admissible pole \(P\) for \(\Delta_r=\Delta_r(x_0,t_0)\), one has
\(\omega_{\cH}^P\ll\mathrm d x\,\mathrm d t\), and
\begin{equation}\label{eq:KS-small-rh}
 \left(
 \frac{1}{|\Delta_r|}
 \iint_{\Delta_r}(k^P)^2\,\mathrm d x\,\mathrm d t
 \right)^{1/2}
 \leq
 \frac{C}{|\Delta_r|}
 \iint_{\Delta_r}k^P\,\mathrm d x\,\mathrm d t.
\end{equation}
The constant depends only on \(n,\mu,\Lambda\), the moduli \(\varpi\) and
\(\mathfrak D\), and the fixed geometric parameters.  It is independent of
the center and radius of \(\Delta_r\).
\end{thm}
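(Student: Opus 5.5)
The plan follows the two-step route described in the introduction: freeze the boundary trace in time, obtain an \(\mathrm L^2\) Dirichlet estimate for the frozen \(\lambda\)-independent operator from the first-order theory of~\cite{AEN}, and then restore the transverse dependence by a small-Carleson perturbation.

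\textbf{Reductions.} Fix \(\Delta_r=\Delta_r(x_0,t_0)\) with \(0<r\le1\) and an \(M\)-admissible pole \(P\). By Lemma~\ref{lem:cube-conventions} and the localization principle behind Theorem~\ref{thm:Bp-Dq}, it suffices to prove a local \(B_2\) estimate in a fixed enlargement \(T_{Kr}(x_0,t_0)\). In that region the pole only enters through comparison and Harnack-chain arguments. Parabolic rescaling \((\lambda,x,t)\mapsto(\lambda/r,x/r,t/r^2)\) maps the problem to unit scale. Under this rescaling:
\begin{itemize}
\item the temporal modulus becomes \(\varpi(r\,\cdot)\le\varpi(\cdot)\);
\item \(\eta\) becomes \(\eta(r\,\cdot)\);
\item the square-Dini quantity over the relevant transverse range is at most \(\mathfrak D(Kr)\le\mathfrak D(1)\).
\end{itemize}
All hypotheses therefore hold uniformly in \(r\le1\), with moduli no worse than the original ones.

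\textbf{Freezing and the \(\lambda\)-independent estimate.} At unit scale, replace \(A\) by the \(\lambda\)-independent matrix \(\tilde A(x,t):=A^0(x,t_0)\) inside the window, and by \(A^0(x,t)\) far outside it. Alternatively, use a temporal cutoff interpolating between these two, so that the resulting coefficient is globally defined.

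On the rescaled window the \(\mathrm L^\infty\)-distance between \(A^0(\cdot,t)\) and \(A^0(\cdot,t_0)\) is at most \(\varpi(CKr)\). For small \(r\) this is below the perturbation threshold \(\varepsilon_0\) of~\cite{AEN}. For \(r\) comparable to one, I would instead split \(\Delta_r\) into finitely many subcubes of side \(\rho_0\), chosen so that \(\varpi(C\rho_0)<\varepsilon_0\), and use the doubling of \(\omega\) to patch the estimates. This is where only uniform continuity, and not a rate, is needed.

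The frozen operator is real, symmetric and \(t\)-independent. For such operators the adjoint regularity problem is solvable by the \(t\)-independent theory: after time reversal it becomes a parabolic Rellich/first-order estimate. The small \(\mathrm L^\infty\)-perturbation result of~\cite{AEN} then transfers regularity solvability to the \(\lambda\)-independent coefficient \(A^0\) on the window. Regularity--Dirichlet duality gives \(D_2\) for the forward operator with coefficient \(A^0\), and Theorem~\ref{thm:Bp-Dq} converts \(D_2\) into a uniform \(B_2\) estimate for \(k_{A^0}^P\).

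\textbf{Restoring transverse dependence (expected main obstacle).} Apply Proposition~\ref{prop:parabolic-Carleson-perturbation}(i) to the disagreement
\[
\delta(\lambda,x,t):=\sup\bigl\{|A-A^0| \text{ over the Whitney region of }(\lambda,x,t)\bigr\}\le\eta(2\lambda).
\]
The Carleson norm of \(\delta^2\,\mathrm d\lambda\,\mathrm dx\,\mathrm dt/\lambda\) over boxes of size \(\le Kr\) is bounded by \(C\mathfrak D(2Kr)^2\). This is small for \(r\) small, which yields the \(B_2\) estimate at scales \(r\le r_0\).

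The remaining scales \(r_0<r\le1\) are the delicate point, because the Carleson norm is then only bounded, not small. I would try the following: at height \(\lambda\ge r_0\), replace \(A\) by the interpolated coefficient \(A^0\) only in the strip \(\lambda<r_0\) and keep \(A\) above it. Then the portion of \(\Delta_r\) at scale between \(r_0\) and \(1\) is handled by covering with \(\sim(1/r_0)^{n+2}\) small cubes, combined with boundary Harnack/doubling, which gives \(B_2\) with a constant depending on \(r_0\). Since \(r_0\) depends only on \(\varpi\) and \(\mathfrak D\), the final constant depends only on the allowed data. Absolute continuity follows from the \(B_2\) estimate at every scale.
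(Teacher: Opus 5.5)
Your outline has the same architecture as the paper: freeze in time, use the \(\mathrm L^\infty\)-openness of~\cite{AEN} with duality and Theorem~\ref{thm:Bp-Dq}, apply a small-Carleson perturbation, and cover for \(r_0<r\le1\). However, the step that makes the two perturbation theorems applicable is missing, and as written it fails.

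Both the openness theorem and Proposition~\ref{prop:parabolic-Carleson-perturbation}\textup{(i)} are \emph{global} half-space statements. The first needs a \(\lambda\)-independent coefficient that is \(\varepsilon_\infty\)-close everywhere to a time-independent symmetric one. The second needs a reference with a global \(B_2\)-characteristic and a discrepancy whose Carleson norm over \emph{all} boundary cubes is small.

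Your temporal construction, as stated, is reversed. A coefficient equal to \(A^0(x,t_0)\) in the window and \(A^0(x,t)\) outside it is neither globally close to \(A^0(\cdot,t_0)\) nor equal to the trace in the window. Nothing gives \(B_2\) for the global trace operator \(\cH_{A^0}\), since \(A^0(\cdot,t)\) may drift by \(O(1)\) over long times. Similarly, \(A-A^0\) has no small global Carleson norm, and in general not even a finite one. The modulus \(\eta\) controls only \(\lambda\le1\); above that the discrepancy is of size one, so the Carleson integral diverges logarithmically. Restricting to boxes of size \(\le Kr\) is not a hypothesis Proposition~\ref{prop:parabolic-Carleson-perturbation} accepts.

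The missing idea is a pair of globally defined surrogates. Use a time cutoff \(\vartheta_r\), equal to one for \(|t-t_0|\le(Lr)^2\), and a transverse cutoff \(\chi_r\), equal to one for \(\lambda\le Lr\). The reference is \(B_r=(1-\vartheta_r)A^0(x,t_0)+\vartheta_rA^0(x,t)\), which satisfies \(\|B_r-A^0(\cdot,t_0)\|_\infty\le\varpi(2Lr)\) everywhere. The perturbed coefficient is \(\widetilde A_r=(1-\vartheta_r)A^0(x,t_0)+\vartheta_r[(1-\chi_r)A^0+\chi_rA]\). Its discrepancy from \(B_r\) is \(\vartheta_r\chi_r(A-A^0)\), whose global Carleson norm is \(\lesssim\mathfrak D(C_Lr)\).

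The time cutoff must multiply the transverse term as well. Outside the time window \(B_r=A^0(x,t_0)\), while \(A-A^0(x,t_0)\to A^0(x,t)-A^0(x,t_0)\neq0\) as \(\lambda\downarrow0\), which would make the Carleson norm infinite.

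To return to \(A\), take \(L>\kappa\) so that \(\widetilde A_r=A\) on \(\mathcal U_{\kappa r}(x_0,t_0)\), and choose \(M\) with \(c_aM^2>\kappa^2\) so that every admissible pole lies beyond the terminal face. Then Corollary~\ref{cor:local-change-operator} and Lemma~\ref{lem:change-of-pole} transfer the estimate from \(\widetilde A_r\) to \(A\). This is the precise content of your ``localization principle behind Theorem~\ref{thm:Bp-Dq}'', but it must be applied to these globally defined operators.

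With that in place, the range \(r_0<r\le1\) is not delicate. Cover by \(\lesssim r_0^{-(n+2)}\) cubes of radius \(\simeq r_0\) and use change of pole, bounded overlap and doubling. This gives the estimate with constant \(\lesssim r_0^{-(n+2)/2}\), and no further coefficient modification in the strip \(\lambda<r_0\) is needed.
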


\subsection{Discussion of the proofs}\label{subsec:outline} We describe the main points of the argument.  The proof decomposes into the large-scale estimate and the small-scale estimate.

{\it The large-scale estimate.} Periodicity in the transverse direction implies that
\(u(\lambda+1,x,t)-u(\lambda,x,t)\) is again a solution and gives one
additional inverse power of the scale in the interior.  This observation suggests a direct parabolic analogue of Dahlberg's
elliptic comparison scheme, but the forward-adjoint mismatch described
in Remark~\ref{rem:normal-periodicity-obstruction} prevents that argument
from closing.  Under full spatial periodicity, we overcome this
obstruction by solving a nonautonomous adjoint cell problem on the
spatial torus and using the resulting height coordinate to close the
boundary summation.  The resulting function
\(\Phi^*\) vanishes on \(\{\lambda=0\}\), is positive in the half-space,
and satisfies \(\Phi^*=\lambda+O(1)\).   Comparing the Green function (in the adjoint variables) with \(\Phi^*\) yields \(v_2(1,y,s)=G_{\cH}^{\Omega_2}(P_2;(1,y,s))
\lesssim r^{-(n+2)}\) throughout the comparison region.  This improves the general bound
\(v_2(1,y,s)\lesssim r^{-(n+1)}\) by the required factor \(r^{-1}\),   and this
bound closes the parabolic version of the discrete Dahlberg sum and propagates the unit-scale
reverse H\"older estimate to every \(r>1\).

{\it The small-scale estimate.} At a boundary box of radius \(r\), the
 definition of \(\varpi\) shows that the boundary trace differs by
at most \(\varpi(2Lr)\) from the time-independent matrix
\(A^0(x,t_0)\).  A time cutoff produces a globally defined, \(\lambda\)-independent
coefficient \(B_r\) whose \(\mathrm L^\infty\)-distance from the frozen
matrix \(A^0(x,t_0)\) is at most \(\varpi(2Lr)\), and hence is small for
all sufficiently small \(r\).  The
\(\mathrm L^2\) regularity theory for the frozen matrix, the small
\(\mathrm L^\infty\)-perturbation theorem of~\cite{AEN}, applied after
time reversal, and regularity-Dirichlet duality give uniform
\(\mathrm L^2\) Dirichlet solvability and hence, by
Theorem~\ref{thm:Bp-Dq}, a uniform \(B_2\)-estimate for the parabolic
measure associated with \(B_r\).
A transverse cutoff then produces a coefficient \(\widetilde A_r\)
agreeing with \(A\) in the relevant box.  Its discrepancy from \(B_r\)
has Carleson norm bounded by \(\mathfrak D(Cr)\), where \(Cr\leq1\).  The small-Carleson
perturbation theorem, Proposition~\ref{prop:parabolic-Carleson-perturbation}\textup{(i)}, preserves \(B_2\), and
a local change-of-operator argument yields \(\mathrm{RH}_2\) for all
sufficiently small \(r\).  A finite covering gives the remaining range
\(r\leq1\).

Finally, Appendix~\ref{sec:smooth-coefficient-reduction} records a
boundary-compatible smooth-approximation argument.  It shows that, in
proving the main theorem and constructing the complete corrector, the
finite-slab solutions, and the adjoint height coordinate, one may first
assume that \(A\) is smooth.  The approximating matrices preserve the
ellipticity, spatial periodicity, symmetry, temporal modulus, and
square-Dini control uniformly.  Smoothness is used only qualitatively: all estimates must be independent of derivatives of the approximating
coefficients.  Weak energy compactness and convergence of parabolic
measures then remove the smoothness assumption.
\subsection{Organization of the paper}

Section~\ref{sec:technical-tools} records the potential-theoretic tools
used throughout.  Section~\ref{sec:normal-comparison} establishes the
transverse-difference and two-comparison estimates.  The obstruction to
a direct argument under transverse periodicity alone is explained in
Remark~\ref{rem:normal-periodicity-obstruction}.
Section~\ref{sec:height-coordinate} constructs the adjoint height
coordinate and proves the resulting Green-function summation estimate.
Sections~\ref{sec:large-scale-proof} and~\ref{sec:parabolic-KS} prove the
large- and small-scale theorems, respectively.  Section~\ref{sec:completion}
establishes a nonsymmetric \(A_\infty\) extension, describes the
transfer to time-independent Lipschitz graph domains, and formulates the
abstract height-coordinate condition underlying the large-scale
argument.  For nonsymmetric coefficients, the argument appears to yield
a new result even in the elliptic setting; see
Remark~\ref{rem:ellipticresult}.  The final subsection records
consequences for scale-uniform boundary estimates for the families
\(A(X/\varepsilon,t)\) and, when \(A\) is also time-periodic,
\(A(X/\varepsilon,t/\varepsilon^\alpha)\).

Appendix~\ref{sec:smooth-coefficient-reduction} proves the
smooth-approximation principle used to remove auxiliary coefficient
regularity from the corrector, slab, and height-coordinate constructions,
as well as from the proof of the main theorem.
Appendix~\ref{Homoderv} gives a detailed derivation of the effective
operator for the spatially oscillating family \(A(X/\varepsilon,t)\).
In Section~\ref{sec:completion}, the space-time periodic families
\(A(X/\varepsilon,t/\varepsilon^\alpha)\) are discussed only at the
level of their cell problems and effective matrices, without detailed
proofs. Appendix~\ref{app:comparison-geometry} illustrates
the geometry of the two-comparison construction used in
Section~\ref{sec:normal-comparison}.

\section{Technical tools}\label{sec:technical-tools}

This section records the potential-theoretic results used below and fixes
their causal orientation.  Unless otherwise stated, the coefficients are
real, bounded, measurable, and uniformly elliptic.  The constants depend
only on \(n,\mu,\Lambda\) and the geometric parameters displayed in the
statements.  We refer to
\cite{Aronson,FabesGarofaloSalsa,FabesSafonov,FabesSafonovYuan,Nystrom}
for the underlying theory.  We also record the localization and
Carleson-perturbation statements used in the small-scale argument.

\medskip
\noindent
{\bf Convention on constants.} Unless otherwise stated,
\(c\) and \(C\) denote positive constants which may change from line to
line and depend only on the dimension, ellipticity, and the quantitative
constants explicitly appearing in the hypotheses.  We write \(A\lesssim
B\) if \(A\leq CB\), and \(A\simeq B\) if both \(A\lesssim B\) and
\(B\lesssim A\).

\subsection{Parabolic geometry and causal orientation}
\label{subsec:parabolic-geometry-tools}

Write \(X=(\lambda,x)\in\R^{n+1}_+\) and \(Z=(X,t)\).  In addition to
the boundary cubes in \eqref{eq:boundary-cubes}, define
\begin{equation}\label{eq:standard-corkscrews}
 A_r^\pm(x_0,t_0)=(4r,x_0,t_0\pm16r^2),
 \qquad
 A_r(x_0,t_0)=(4r,x_0,t_0).
\end{equation}
For \(\kappa\geq1\), let
\begin{align}
 \mathcal P_{\kappa,r}^+(x_0,t_0)
 &:=\left\{(\lambda,x,t)\in\R^{n+2}_+:
  |x-x_0|+\lambda\leq\kappa(t-t_0)^{1/2},\quad
  t-t_0\geq64r^2\right\},
 \label{eq:forward-pole-region}\\
 \mathcal P_{\kappa,r}^-(x_0,t_0)
 &:=\left\{(\lambda,x,t)\in\R^{n+2}_+:
  |x-x_0|+\lambda\leq\kappa(t_0-t)^{1/2},\quad
  t_0-t\geq64r^2\right\}.
 \label{eq:adjoint-pole-region}
\end{align}
The first is a forward pole-comparison region with fixed aperture and causal
separation.  The second is its adjoint counterpart.  The numerical
constants in \eqref{eq:standard-corkscrews}-\eqref{eq:adjoint-pole-region}
are fixed normalizations.  Replacing them by other fixed constants changes
only the comparison constants.

\begin{lem}\label{lem:universal-subcube-pole}
Let \(\Delta_0=\Delta_{r_0}(x_0,t_0)\), fix \(M>4\), and set
\(\theta=(c_a+C_a)/2\), where \(c_a,C_a\) are the constants in Definition~\ref{defn:admissible-pole}.  If \(K\geq M\) is sufficiently large depending
only on \(c_a,C_a,M\), then
\[
 P_{\Delta_0}^{\mathrm{far}}
 =\bigl(Kr_0,x_0,t_0+\theta K^2r_0^2\bigr)
\]
is \(M\)-admissible for every boundary cube
\(\Delta_\rho(y,s)\subset\Delta_0\).  Moreover,
\(P_{\Delta_0}^{\mathrm{far}}\) and \(A_{4r_0}^+(x_0,t_0)\) lie in one
common forward pole region
\(\mathcal P_{\kappa_0,r_0}^+(x_0,t_0)\), where \(\kappa_0\) is fixed.
\end{lem}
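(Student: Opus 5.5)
The plan is to verify the three conditions of \eqref{eq:admissible-pole} directly for a subcube \(\Delta_\rho(y,s)\subset\Delta_0\), using only elementary containment facts, and then check the pole-region membership. First record what the inclusion \(\Delta_\rho(y,s)\subset\Delta_0\) gives: \(\rho\leq r_0\), \(|y-x_0|\leq r_0\), and \(|s-t_0|\leq r_0^2\). Write \(P=P_{\Delta_0}^{\mathrm{far}}\), so that \(\lambda_P=Kr_0\).

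The first condition is immediate: \(\lambda_P=Kr_0\geq Mr_0\geq M\rho\), since \(K\geq M\). For the second, \(|x_P-y|=|x_0-y|\leq r_0\), and \(r_0\leq C_a\lambda_P\) holds as soon as \(K\geq 1/C_a\). This is automatic because \(C_a>1\) and \(K\geq M>4\).

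The third condition is the only one that needs care, and it is the reason \(\theta\) is chosen strictly between \(c_a\) and \(C_a\). We have
\[
 t_P-s=\theta K^2r_0^2+(t_0-s)\in\bigl[\theta K^2r_0^2-r_0^2,\ \theta K^2r_0^2+r_0^2\bigr].
\]
Hence \(c_a\lambda_P^2\leq t_P-s\leq C_a\lambda_P^2\) follows once
\[
 (\theta-c_a)K^2\geq1\quad\text{and}\quad(C_a-\theta)K^2\geq1,
\]
that is, once
\[
 K^2\geq \frac{2}{C_a-c_a}.
\]
So it suffices to take
\[
 K\geq\max\Bigl\{M,\ \bigl(2/(C_a-c_a)\bigr)^{1/2}\Bigr\},
\]
which depends only on \(c_a,C_a,M\).

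For the second claim we need a fixed aperture \(\kappa_0\) such that both points lie in \(\mathcal P_{\kappa_0,r_0}^+(x_0,t_0)\).
\begin{itemize}[leftmargin=*]
\item The point \(A_{4r_0}^+(x_0,t_0)=(16r_0,x_0,t_0+256r_0^2)\) has time lag \(256r_0^2\geq64r_0^2\). Its spatial part satisfies \(16r_0\leq\kappa\cdot16r_0\) for every \(\kappa\geq1\).
\item The point \(P\) has time lag \(\theta K^2r_0^2\), which is at least \(64r_0^2\) provided \(K^2\geq64/\theta\); we enlarge \(K\) if needed. Its spatial part satisfies \(Kr_0\leq\kappa\theta^{1/2}Kr_0\) whenever \(\kappa\geq\theta^{-1/2}\).
\end{itemize}
Taking \(\kappa_0=\max\{1,\theta^{-1/2}\}\) therefore works. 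This constant depends only on \(c_a,C_a\), so it is fixed, as the statement requires.

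I expect no real obstacle in this argument. The only point to watch is the bookkeeping: all lower bounds on \(K\) must be collected so that they depend only on \(c_a,C_a,M\), and \(\kappa_0\) must not depend on \(K\). Both are achieved by the explicit choices above.
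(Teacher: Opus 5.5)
Your proof is correct and follows the paper's argument essentially step for step: the same containment bounds $\rho\le r_0$, $|y-x_0|\le r_0$, $|s-t_0|\le r_0^2$, the same condition $K^{-2}\le (C_a-c_a)/2$ for the two temporal inequalities, and the same requirement $\theta K^2\ge 64$ for the common pole region. Your explicit aperture $\kappa_0=\max\{1,\theta^{-1/2}\}$ simply spells out the paper's step of enlarging $\kappa_0$ by a fixed amount so that it also contains $A_{4r_0}^+(x_0,t_0)$.
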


\begin{proof}
For such a subcube,
\[
 \rho\leq r_0,
 \qquad |y-x_0|\leq r_0,
 \qquad |s-t_0|\leq r_0^2.
\]
Thus the height and spatial requirements in
\eqref{eq:admissible-pole} follow once \(K\geq M\) and
\(C_aK\geq1\).  Moreover,
\[
 \theta K^2r_0^2-r_0^2
 \leq t(P_{\Delta_0}^{\mathrm{far}})-s
 \leq \theta K^2r_0^2+r_0^2.
\]
Taking \(K\) so that
\(K^{-2}\leq(C_a-c_a)/2\) proves the two temporal inequalities.  We may
also require \(\theta K^2\geq64\).  Then
\[
 \lambda(P_{\Delta_0}^{\mathrm{far}})
 =\theta^{-1/2}
 \bigl(t(P_{\Delta_0}^{\mathrm{far}})-t_0\bigr)^{1/2},
\]
so the asserted common pole region follows from
\eqref{eq:forward-pole-region}, after increasing \(\kappa_0\) by a fixed
amount to include \(A_{4r_0}^+(x_0,t_0)\).
\end{proof}

If \(D\subset\R^{n+1}\) and \(a<b\), the forward and adjoint parabolic
boundaries of \(D\times(a,b)\) are, respectively,
\begin{align}
 \partial_p(D\times(a,b))
 &=\bigl(\overline D\times\{a\}\bigr)
   \cup\bigl(\partial D\times(a,b)\bigr),
 \label{eq:forward-parabolic-boundary}\\
 \partial_p^*(D\times(a,b))
 &=\bigl(\overline D\times\{b\}\bigr)
   \cup\bigl(\partial D\times(a,b)\bigr).
 \label{eq:adjoint-parabolic-boundary}
\end{align}
Thus a forward problem takes data on the initial time face and the lateral
boundary.  An adjoint problem takes data on the terminal time face and the
lateral boundary.  Causality further restricts the forward representing
measure to earlier boundary times and the adjoint representing measure to
later boundary times.

\subsection{Local estimates and Harnack inequalities}
\label{subsec:local-parabolic-tools}

For \(Z_0=(X_0,t_0)\) and \(r>0\), set
\[
 \mathcal Q_r^-(Z_0)=B(X_0,r)\times(t_0-r^2,t_0),
 \qquad
 \mathcal Q_r^+(Z_0)=B(X_0,r)\times(t_0,t_0+r^2).
\]

\begin{lem}
\label{lem:local-parabolic-estimates}
Let \(u\) be a weak solution of \(\cH u=0\) in
\(\mathcal Q_r^-(Z_0)\).  Then
\begin{align}
 \iiint_{\mathcal Q_{r/2}^-(Z_0)}
 |\nabla_Xu|^2\,\mathrm d \lambda\,\mathrm d x\,\mathrm d t
 &\leq
 \frac{C}{r^2}
 \iiint_{\mathcal Q_r^-(Z_0)}
 |u|^2\,\mathrm d \lambda\,\mathrm d x\,\mathrm d t,
 \label{eq:caccioppoli-standard}\\
 \|u\|_{\mathrm L^\infty(\mathcal Q_{r/2}^-(Z_0))}^2
 &\leq
 \frac{C}{r^{n+3}}
 \iiint_{\mathcal Q_r^-(Z_0)}
 |u|^2\,\mathrm d \lambda\,\mathrm d x\,\mathrm d t.
 \label{eq:local-boundedness-standard}
\end{align}
Solutions also satisfy the scale-invariant interior H\"older estimate.  The
same estimates hold in flat half-cylinders when \(u\) has zero trace on the
enlarged flat boundary portion.  More precisely, there are
\(\beta\in(0,1)\) and \(C<\infty\), depending only on
\(n,\mu,\Lambda\), such that, if
\[
 \mathcal Q_{r,+}^-(x_0,t_0)
 =
 \bigl(B((0,x_0),r)\cap\{\lambda>0\}\bigr)
 \times(t_0-r^2,t_0),
\]
and \(u\) is a solution there with zero Sobolev trace on
\(\{\lambda=0\}\cap\partial\mathcal Q_{r,+}^-(x_0,t_0)\), then
\[
 |u(\lambda,x,t)|
 \leq C\left(\frac{\lambda}{r}\right)^\beta
 \left(
 \fint_{\mathcal Q_{r,+}^-(x_0,t_0)}|u|^2
 \,\mathrm d\lambda\,\mathrm d x\,\mathrm d t
 \right)^{1/2}
\]
in \(\mathcal Q_{r/2,+}^-(x_0,t_0)\).  For
\(\cH^*u=0\), replace past cylinders by future cylinders.
This boundary decay follows by iteration of
\cite[Corollary~4.2]{SafonovYuan}, together with local boundedness, in
the time-dependent divergence-form setting.
\end{lem}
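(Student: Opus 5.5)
The statement is Lemma~\ref{lem:local-parabolic-estimates}, a compilation of standard local estimates. The plan is to prove each part with the usual energy and De Giorgi--Nash--Moser machinery for divergence-form parabolic equations with bounded measurable coefficients. By the scaling \((\lambda,x,t)\mapsto(r\lambda,rx,r^2t)\), which preserves \eqref{eq:ellipticity}, and by translation, we may take \(r=1\) and \(Z_0=0\).

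For the Caccioppoli inequality \eqref{eq:caccioppoli-standard}, choose a cutoff \(\psi\in C_0^\infty\) with \(\psi=1\) on \(\mathcal Q_{1/2}^-\) and \(|\nabla\psi|+|\partial_t\psi|\lesssim1\). We want \(\psi\) to vanish near the lateral boundary and near the bottom time \(t=-1\), but not near the top time \(t=0\). Test the weak formulation with \(u\psi^2\), using Steklov averages in time because \(\partial_tu\) is only in the dual energy space. Integrating up to an arbitrary time \(\tau\in(-1/4,0)\) gives
\[
\tfrac12\int u^2\psi^2(\tau)+\mu\iint|\nabla u|^2\psi^2
\le C\iint u^2\bigl(|\nabla\psi|^2+|\partial_t\psi|\bigr).
\]
Here Young's inequality absorbs the cross term \(2\iint A\nabla u\cdot\nabla\psi\,u\psi\). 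This proves \eqref{eq:caccioppoli-standard}, together with an \(\mathrm L^\infty_t\mathrm L^2_X\) bound.

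For local boundedness \eqref{eq:local-boundedness-standard}, apply the same energy inequality to the truncations \((u-k)_\pm\), which are subsolutions. Combine it with the parabolic Sobolev embedding \(\mathrm L^\infty_t\mathrm L^2\cap\mathrm L^2_t H^1\subset \mathrm L^{2(n+3)/(n+1)}\) in space dimension \(n+1\). A De Giorgi iteration over shrinking cylinders and increasing levels then gives \(\sup_{\mathcal Q_{1/2}^-}|u|\le C\|u\|_{\mathrm L^2(\mathcal Q_1^-)}\), and rescaling produces the factor \(r^{-(n+3)}\). Interior H\"older continuity follows from Moser's parabolic Harnack inequality, or equivalently De Giorgi's oscillation decay, applied to \(u-\inf\) and \(\sup-u\) on nested cylinders. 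This is classical; we would cite Aronson and Moser and not reprove it.

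Boundary version.
\begin{itemize}
\item When \(u\) has zero Sobolev trace on the flat portion \(\{\lambda=0\}\), the truncations \((u-k)_+\) with \(k\ge0\) also have zero trace there. They are therefore admissible test functions after extension by zero across \(\lambda=0\), so the energy and De Giorgi arguments run verbatim in the half-cylinders.
\item For the decay \((\lambda/r)^\beta\), first prove a one-step oscillation reduction. If \(0\le u\le M\) near the boundary, then \(\sup_{\mathcal Q_{\rho/2,+}^-}u\le\theta\sup_{\mathcal Q_{\rho,+}^-}u\) for some \(\theta<1\). This is the boundary estimate of \cite[Corollary~4.2]{SafonovYuan}. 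It follows because \(M-u\), extended by \(M\) across \(\lambda=0\), is a nonnegative supersolution that equals \(M\) on a fixed fraction of the cylinder, and the weak Harnack inequality then bounds it below by \(cM\).
\item Apply the reduction separately to \(u_+\) and \(u_-\). Iterate it over dyadic scales \(\rho=2^{-j}r\) down to \(\rho\approx\lambda\), starting from the sup bound on \(\mathcal Q_{r/2,+}^-\) supplied by local boundedness. This gives \(|u|\le C\theta^{j}(\fint|u|^2)^{1/2}\) with \(2^{-j}\approx\lambda/r\), that is, \(\beta=\log_2(1/\theta)\).
\end{itemize}

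The adjoint statements follow by the time reversal \(t\mapsto-t\), \(A\mapsto A^{\mathsf T}\), which turns \(\cH^*\) into a forward operator of the same class and exchanges past and future cylinders.

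The main obstacle is technical rather than conceptual. The test functions must be justified in the weak framework, where \(u\) has only \(\mathrm L^2_{\mathrm{loc}}\) time regularity, via Steklov averaging or the smooth-coefficient reduction of Appendix~\ref{sec:smooth-coefficient-reduction}. The boundary truncations must also be checked for admissibility at the flat boundary, since this is what makes the boundary oscillation step legitimate.
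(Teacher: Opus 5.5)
Your proposal is correct and follows the route the paper indicates. The interior Caccioppoli, local boundedness, and H\"older estimates are the standard De Giorgi--Nash--Moser results the paper cites. The boundary decay is obtained, as in the paper, by iterating the one-step boundary reduction of \cite[Corollary~4.2]{SafonovYuan}, starting from the local boundedness bound; you justify that reduction via zero extension of \(u_\pm\) across \(\{\lambda=0\}\) and the weak Harnack inequality. The only routine adjustment is to center the dyadic iteration at the boundary projection \((x,t)\) of the evaluation point, not at \((x_0,t_0)\). Start it at a slightly smaller radius such as \(r/4\), so that every cylinder stays inside \(\mathcal Q_{3r/4,+}^-(x_0,t_0)\), where the sup bound is available.
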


\begin{lem}
\label{lem:parabolic-harnack}
Let \(u\geq0\) solve \(\cH u=0\).  Suppose that two interior
subcylinders \(\mathcal Q^-\) and \(\mathcal Q^+\), whose radii are
comparable to \(r\), are joined by a forward Harnack chain contained in
the domain.  If \(\mathcal Q^-\) occurs before \(\mathcal Q^+\), then
\begin{equation}\label{eq:forward-harnack}
 \sup_{\mathcal Q^-}u
 \leq C\inf_{\mathcal Q^+}u.
\end{equation}
If \(u\geq0\) solves \(\cH^*u=0\), the causal orientation is reversed and
\begin{equation}\label{eq:adjoint-harnack}
 \sup_{\mathcal Q^+}u
 \leq C\inf_{\mathcal Q^-}u.
\end{equation}
The constant \(C\) depends only on \(n,\mu,\Lambda\) and the quantitative
geometry and length of the Harnack chain.
\end{lem}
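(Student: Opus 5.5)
The statement to prove is Lemma~\ref{lem:parabolic-harnack}, the forward/adjoint Harnack chain inequality.  The plan is to reduce it to the basic Moser parabolic Harnack inequality for one pair of cylinders and then iterate along the chain.

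The basic inequality, due to Moser and valid for real bounded measurable uniformly elliptic divergence-form coefficients, reads as follows.  If \(u\geq0\) solves \(\cH u=0\) in \(B(X_0,2\rho)\times(t_0-4\rho^2,t_0+4\rho^2)\), then
\[
 \sup_{B(X_0,\rho/2)\times(t_0-3\rho^2,t_0-2\rho^2)}u
 \leq C_H
 \inf_{B(X_0,\rho/2)\times(t_0+2\rho^2,t_0+3\rho^2)}u ,
\]
with \(C_H=C_H(n,\mu,\Lambda)\).  This follows from the local boundedness estimate \eqref{eq:local-boundedness-standard} applied to \(u^p\) for small \(p>0\), from the weak Harnack inequality for supersolutions, and from the John--Nirenberg/Bombieri crossover lemma for \(\log u\).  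I would cite it from \cite{Aronson,FabesSafonov} rather than reprove it.  The time gap between the two cylinders is essential: the inequality holds only from past to future.

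A forward Harnack chain is a finite sequence of such cylinder configurations.  Its radii are comparable to \(r\), consecutive cylinders overlap (the future cylinder of one link meets the past cylinder of the next), each enlarged cylinder lies in the domain, and the number of links \(N\) is bounded by the chain geometry.  Fix \(Z\in\mathcal Q^-\) and \(Z'\in\mathcal Q^+\).  Applying the basic inequality link by link gives \(u(Z)\leq C_H^{N}u(Z')\).  Since \(Z\) and \(Z'\) are arbitrary, taking the supremum over \(Z\) and the infimum over \(Z'\) yields \eqref{eq:forward-harnack} with \(C=C_H^N\).  The covering of \(\mathcal Q^-\) and \(\mathcal Q^+\) by finitely many normalized sub-cylinders is absorbed into the count \(N\), which depends only on the quantitative geometry.

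For \(\cH^*u=0\), set \(\tilde u(X,t)=u(X,-t)\).  Then \(\tilde u\) solves \(\partial_t\tilde u-\operatorname{div}(A^{\mathsf T}(X,-t)\nabla\tilde u)=0\), a forward equation with the same ellipticity constants.  Time reversal maps a chain running from \(\mathcal Q^+\) backward to \(\mathcal Q^-\) onto a forward chain, so the forward case gives \eqref{eq:adjoint-harnack}.

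The main point to handle carefully is the bookkeeping that makes "quantitative geometry of the chain" precise.  One has to require a fixed time lag, proportional to \(r^2\), between consecutive links, and keep each link's doubled cylinder inside the domain.  This makes the constant independent of everything except \(n\), \(\mu\), \(\Lambda\), the radius ratios, and \(N\).
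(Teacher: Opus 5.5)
Your proposal is correct. The paper gives no separate proof of this lemma: it records it as the standard time-oriented Harnack inequality and cites the literature. Your argument is the standard derivation behind those citations: Moser's inequality for one past/future pair of cylinders, iterated along a chain of bounded length with overlapping links, and the time reversal \(\tilde u(X,t)=u(X,-t)\), which turns \(\cH^*\) into a forward operator with coefficient \(A^{\mathsf T}(X,-t)\) and the same ellipticity constants.
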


\begin{rem}
Lemma~\ref{lem:parabolic-harnack} contains only the ordinary
time-oriented Harnack inequality.  No structural reverse estimate
between \(u(A_r^+)\) and \(u(A_r^-)\) is asserted for an arbitrary
nonnegative solution.
\end{rem}
\begin{rem}
The preceding estimates are invariant under translation and parabolic
dilation.  Their boundary versions are used only on lateral spatial
boundaries, never across an initial or terminal time face.
\end{rem}
\subsection{Parabolic measure and Green functions}
\label{subsec:green-package}

\begin{lem}
\label{lem:parabolic-measure-representation}
For each \(P=(\lambda,x,t)\in\R^{n+2}_+\), there is a positive regular
Borel measure \(\omega_{\cH}^P\) on \(\R^n\times\R\).  If
\(f\in C_0(\R^n\times\R)\) and \(u\) is the bounded continuous solution
with boundary data \(f\), then
\begin{equation}\label{eq:parabolic-measure-representation}
 u(P)=
 \iint_{\R^n\times\R}
 f(y,s)\,\mathrm d \omega_{\cH}^P(y,s).
\end{equation}
Moreover,
\begin{equation}\label{eq:measure-causal-support}
 \omega_{\cH}^P\bigl(\mathbb R^n\times[t,\infty)\bigr)=0,
 \qquad
 \operatorname{supp}\omega_{\cH}^P
 \subset\{(y,s):s\leq t\}.
\end{equation}
The adjoint measure \(\omega_{\cH^*}^P\) satisfies the analogous
representation and
\[
 \omega_{\cH^*}^P\bigl(\mathbb R^n\times(-\infty,t]\bigr)=0,
 \qquad
 \operatorname{supp}\omega_{\cH^*}^P
 \subset\{(y,s):s\geq t\}.
\]
\end{lem}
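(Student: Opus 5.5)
The construction of \(\omega_{\cH}^P\) will go through the Riesz representation theorem, combined with the maximum principle on a sequence of exhausting bounded cylinders.

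\emph{Existence of the solution operator.} For \(f\in C_0(\R^n\times\R)\), I would fix a continuous extension \(F\) of \(f\) to \(\overline{\R^{n+2}_+}\) with compact support, for instance \(F(\lambda,x,t)=f(x,t)\chi(\lambda)\) with \(\chi\) a cutoff equal to \(1\) near \(0\). On the bounded cylinders \(U_j=(0,j)\times B(0,j)\times(-j^2,j^2)\) I would solve the forward Dirichlet problem with data \(F\) on the forward parabolic boundary \eqref{eq:forward-parabolic-boundary}. These problems are solvable for bounded measurable coefficients via Perron's method or energy methods; every boundary point is regular because the lateral boundary of a Lipschitz cylinder satisfies the exterior cone condition, and the initial face is trivially regular. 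The maximum principle then gives \(\|u_j\|_\infty\le\|f\|_\infty\). Interior Hölder estimates (Lemma~\ref{lem:local-parabolic-estimates}) together with the boundary Hölder decay at \(\{\lambda=0\}\) give local equicontinuity up to \(\{\lambda=0\}\). A diagonal subsequence therefore converges locally uniformly to a bounded continuous weak solution \(u\) with \(u=f\) on the boundary.

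\emph{Uniqueness.} This is the main obstacle: bounded solutions in the unbounded half-space with zero data must vanish. I would prove it with a barrier. Given a bounded solution \(w\) with zero boundary values that vanishes as \(t\to-\infty\) (this decay is automatic for the limit of the \(u_j\), since \(f\) has compact support), I would compare \(|w|\) on \(U_j\) with \(\|w\|_\infty\,\omega^{(\cdot)}_{U_j}(\text{far part of }\partial_pU_j)\). That harmonic-measure quantity tends to \(0\) as \(j\to\infty\) at any fixed point. One way to see this is to use Gaussian (Aronson) upper bounds for the Dirichlet heat kernel to estimate the mass of \(\omega\) on the lateral side \(|x|=j\) and on the top \(\lambda=j\); the vanishing as \(t\to-\infty\) handles the bottom face \(t=-j^2\). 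The same argument shows that the limit \(u\) does not depend on the extension \(F\) or on the subsequence. The map \(f\mapsto u(P)\) is therefore a well-defined positive linear functional of norm at most \(1\), and Riesz representation gives a positive regular Borel measure \(\omega_{\cH}^P\) satisfying \eqref{eq:parabolic-measure-representation}.

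\emph{Causal support.} If \(f\) is supported in \(\{s\ge t\}\), then each \(u_j\) restricted to \(\{\text{time}<t\}\) solves a problem with zero data on the truncated forward parabolic boundary. By uniqueness on \(U_j\cap\{\text{time}<t\}\), this gives \(u_j(P)=0\), and hence \(u(P)=0\). Passing to the limit over nonnegative \(f\) approximating indicator functions of compact subsets of \(\R^n\times[t,\infty)\), inner regularity then yields \(\omega_{\cH}^P(\R^n\times[t,\infty))=0\). Here I use that a solution at time \(t\) is determined by its data at times strictly less than \(t\): data on the slice \(s=t\) has measure zero in the lateral boundary and does not affect \(u(P)\), because the limits are continuous. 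The support inclusion follows. The adjoint statement follows by applying the whole argument to \(\widetilde u(\lambda,x,t)=u(\lambda,x,-t)\), which solves a forward equation with coefficients \(A^{\mathsf T}(\lambda,x,-t)\); this reverses the causal direction.
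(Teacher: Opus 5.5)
Your construction of $\omega_{\cH}^P$ (exhaustion by bounded cylinders, maximum principle, Riesz representation, time reversal for the adjoint) follows the standard route the paper relies on. The paper gives no proof of this lemma and only cites the literature, and the remark that follows it uses the same ingredients. The causal-support step, however, has a genuine gap at the slice $\R^n\times\{t\}$, where $t=t(P)$.

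What your argument proves is that $\iint f\,\mathrm d\omega_{\cH}^P=0$ for every $f\in C_0(\R^n\times\R)$ vanishing on $\{s<t\}$. By inner regularity this gives $\omega_{\cH}^P(\R^n\times(t,\infty))=0$, because a compact set in $\{s>t\}$ is dominated by such an $f$. A compact $K\subset\R^n\times\{t\}$ is not: every continuous $f\geq\mathbf 1_K$ is positive somewhere in $\{s<t\}$.

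Your sentence about the slice does not repair this. That the slice is Lebesgue-null says nothing about whether $\omega_{\cH}^P$ charges it. The principle that $u(P)$ is determined by the data at times $s<t$ also cannot detect mass on the slice. A continuous datum is itself determined on $\{s=t\}$ by its values on $\{s<t\}$, so a point mass $\delta_{(y_0,t)}$ satisfies the same principle.

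The closed half-line in \eqref{eq:measure-causal-support} is genuinely needed. After the proof of Theorem~\ref{thm:large-scale}, it is what disposes of the Lebesgue-null slice $\R^n\times\{t_P\}$ when local absolute continuity is upgraded to the whole boundary.

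The missing ingredient is a quantitative estimate. For compact $K\subset\R^n\times\{t\}$, take $f_\varepsilon$ with $0\leq f_\varepsilon\leq1$, $f_\varepsilon=1$ on $K$, and support in $\{|s-t|<\varepsilon\}$. Your causality argument gives $u_\varepsilon\equiv0$ at times below $t-\varepsilon$, and $0\leq u_\varepsilon\leq1$. Since $u_\varepsilon$ solves the equation in the whole half-space, apply \eqref{eq:local-boundedness-standard} in a past cylinder of radius $\lambda_P/2$ whose top lies just above time $t$. This yields
\[
 \omega_{\cH}^P(K)\leq u_\varepsilon(P)
 \lesssim\bigl(\lambda_P^{-(n+3)}\lambda_P^{n+1}\varepsilon\bigr)^{1/2}
 =\frac{\varepsilon^{1/2}}{\lambda_P}\longrightarrow0,
\]
and inner regularity then gives $\omega_{\cH}^P(\R^n\times\{t\})=0$. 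Aronson's Gaussian bound would work equally well.

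A secondary point: your uniqueness proof covers only bounded solutions that vanish as $t\to-\infty$. That suffices to define the functional, but the lemma speaks of \emph{the} bounded continuous solution. Full uniqueness is actually easier than your barrier argument. If $w$ is a bounded solution vanishing on $\{\lambda=0\}$, the boundary H\"older decay in Lemma~\ref{lem:local-parabolic-estimates} at scale $R$ gives $|w(\lambda,x,s)|\leq C(\lambda/R)^\beta\|w\|_\infty$. Letting $R\to\infty$ gives $w\equiv0$ with no decay hypothesis. This is the same device the paper uses right after the lemma to show that the measures have total mass one.
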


\begin{rem}
In the half-space, the representing measures are probability measures:
\begin{equation}\label{eq:parabolic-measure-total-mass}
 \omega_{\cH}^P(\mathbb R^n\times\mathbb R)=1,
 \qquad
 \omega_{\cH^*}^P(\mathbb R^n\times\mathbb R)=1.
\end{equation}
Indeed, the Riesz representation theorem and the maximum principle
initially give only that the total masses are at most one.  To obtain equality, let
\(z_P=(x_P,t_P)\) be the boundary projection of
\(P=(\lambda_P,x_P,t_P)\), and choose an increasing family
\(f_R\in C_0^\infty(\mathbb R^n\times\mathbb R)\) such that
\[
 0\leq f_R\leq1,\qquad
 f_R=1\quad\text{on }\Delta_R(z_P),\qquad
 \operatorname{supp}f_R\subset\Delta_{2R}(z_P).
\]
Then \(f_R(y,s)\uparrow1\) for every
\((y,s)\in\mathbb R^n\times\mathbb R\) as \(R\to\infty\).  If \(u_R\)
denotes the corresponding solution, the boundary H\"older
estimate applied to \(1-u_R\) gives
\[
 0\leq1-u_R(P)
 \leq C\left(\frac{\lambda_P}{R}\right)^\beta.
\]
Hence \(u_R(P)\to1\), and monotone convergence gives
\eqref{eq:parabolic-measure-total-mass}.  The adjoint conclusion follows
by reversing the time orientation.  In a bounded time-oriented cylinder,
the corresponding measures also
have total mass one and are carried by the appropriate parabolic
boundaries in \eqref{eq:forward-parabolic-boundary} and
\eqref{eq:adjoint-parabolic-boundary}.  In particular, their restrictions
to any selected portion of those boundaries have mass at most one.
\end{rem}

\begin{lem}
\label{lem:green-reciprocity}
Let \(\Omega\) be the half-space or a time-oriented cylinder.  Then the
forward and adjoint Dirichlet Green functions in \(\Omega\) exist.  We
use the convention that \((Y,s)\) is the pole of
\(G_{\cH}^{\Omega}(X,t;Y,s)\).  Away from the diagonal, causality gives
\begin{align}
 G_{\cH}^{\Omega}(X,t;Y,s)&=0
 \qquad\text{when }t\leq s,
 \label{eq:green-forward-causality}\\
 G_{\cH^*}^{\Omega}(Y,s;X,t)&=0
 \qquad\text{when }s\geq t.
 \label{eq:green-adjoint-causality}
\end{align}
Moreover, the reciprocity identity
\begin{equation}\label{eq:green-reciprocity}
 G_{\cH}^{\Omega}(X,t;Y,s)
 =
 G_{\cH^*}^{\Omega}(Y,s;X,t)
\end{equation}
holds.  Both kernels in \eqref{eq:green-reciprocity} are taken in
\(\Omega\), with the mutually adjoint Dirichlet conditions on
\(\partial_p\Omega\) and \(\partial_p^*\Omega\).
\end{lem}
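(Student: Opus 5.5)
The plan is to build the Green functions by approximation from smooth coefficients and bounded domains, and then obtain causality and reciprocity from the weak formulations.

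\textbf{Bounded cylinders.} Let \(\Omega=D\times(a,b)\) with \(D\) bounded. By the smooth-approximation principle of the appendix, we may first assume \(A\) smooth and pass to the limit at the end. The estimates used are independent of the smoothness and come from the Aronson-type bounds of \cite{Aronson}.

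\begin{itemize}
\item For a pole \((Y,s)\), define \(G_{\cH}^\Omega(\cdot;Y,s)\) as the weak solution of \(\cH G=\delta_{(Y,s)}\) in \(\Omega\) vanishing on \(\partial_p\Omega\).
\item Concretely, set \(G=0\) for \(t<s\). For \(t>s\), let \(G\) solve the forward problem with initial datum \(\delta_Y\) at time \(s\) and zero lateral data.
\item Mollify \(\delta_Y\) to \(\rho_\epsilon\). Energy estimates and Gaussian upper bounds give uniform local \(\mathrm L^2\)-gradient bounds away from the pole. A weak limit then exists.
\item The adjoint Green function \(G_{\cH^*}^\Omega(\cdot;X,t)\) is constructed the same way, with terminal datum \(\delta_X\) at time \(t\), vanishing for times after \(t\) and on \(\partial_p^*\Omega\).
\end{itemize}

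Causality, \eqref{eq:green-forward-causality} and \eqref{eq:green-adjoint-causality}, is then built into the construction. It is consistent with uniqueness because the energy identity forces any solution with zero initial data and zero lateral data to vanish.

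\textbf{Reciprocity in cylinders.} Take the mollified approximants:
\begin{itemize}
\item \(u_\epsilon\), solving \(\cH u_\epsilon=\rho_\epsilon(\cdot-(Y,s))\) with zero data on \(\partial_p\Omega\);
\item \(v_\delta\), solving \(\cH^*v_\delta=\rho_\delta(\cdot-(X,t))\) with zero data on \(\partial_p^*\Omega\).
\end{itemize}
Test each equation against the other function. The time-boundary terms \([u_\epsilon v_\delta]_a^b\) vanish, since \(u_\epsilon=0\) at time \(a\) and \(v_\delta=0\) at time \(b\). The lateral terms vanish by the zero traces. This gives
\[
\iiint u_\epsilon\,\rho_\delta=\iiint v_\delta\,\rho_\epsilon .
\]
Now let \(\delta\to0\) and then \(\epsilon\to0\). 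Interior Hölder continuity away from the diagonal, from Lemma~\ref{lem:local-parabolic-estimates}, lets us pass to pointwise limits, which yields \eqref{eq:green-reciprocity}.

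\textbf{Half-space and removing smoothness.}
\begin{itemize}
\item For the half-space, exhaust by cylinders \(\Omega_j=(B_j\cap\{\lambda>0\})\times(-j,j)\). The maximum principle makes \(G^{\Omega_j}\) increasing in \(j\), and the Aronson bound \(G\lesssim (t-s)^{-(n+1)/2}e^{-c|X-Y|^2/(t-s)}\) is uniform in \(j\).
\item Monotone convergence together with local Hölder equicontinuity gives limits that solve the respective equations, vanish on \(\{\lambda=0\}\) (by the boundary decay in Lemma~\ref{lem:local-parabolic-estimates}), and inherit causality and reciprocity.
\item The smoothness assumption is removed by the same compactness argument. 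All bounds depend only on \(n,\mu,\Lambda\), and uniqueness of the limit follows from reciprocity.
\end{itemize}

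The main obstacle is justifying the limit in the duality identity near the diagonal and at the time faces. The weak formulation must carry the boundary term \(\int u\,v\,\mathrm dX\) at times \(a\) and \(b\), which requires \(u_\epsilon,v_\delta\in C([a,b];\mathrm L^2)\). I would handle this with Steklov averages in time.
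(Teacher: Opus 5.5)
Your argument is correct, but the paper does not prove this lemma. It imports the statement from the standard theory \cite{Aronson,FabesGarofaloSalsa,Nystrom}. The remark that follows adds only that reciprocity first holds almost everywhere, then pointwise off the diagonal for the H\"older representatives, and that it needs no symmetry. You instead give a self-contained construction with four ingredients:
\begin{enumerate}
\item approximate poles;
\item the duality identity, whose time-boundary terms vanish because \(u_\epsilon\) vanishes on the initial face and \(v_\delta\) on the terminal face;
\item H\"older continuity off the diagonal to pass to pointwise limits;
\item monotone exhaustion for the half-space.
\end{enumerate}
This costs length, but it makes explicit what the remark only asserts. The pairing is between \(\cH\) and \(\cH^*\) with opposite time orientations, so symmetry of \(A\) never enters.

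Two small repairs are needed.

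First, the appendix's smooth-approximation lemma assumes full spatial periodicity, which this lemma does not. Smoothing is unnecessary anyway: the variational theory and Aronson's bounds hold for measurable coefficients. Alternatively, mollify \(A\) directly.

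Second, your two approximation schemes do not match. You define \(G\) through mollified initial data \(\delta_Y\) at time \(s\), but prove reciprocity with space-time mollified sources \(\rho_\epsilon(\cdot-(Y,s))\). Matching the two limits via Duhamel would need continuity of \(G\) in its pole, which is exactly what reciprocity is supposed to provide. There are two ways to fix this.
\begin{itemize}
\item Use the space-time scheme throughout. The duality identity then also shows that subsequential limits do not depend on the subsequence.
\item Keep the initial-data scheme and use the conserved pairing \(\frac{\mathrm d}{\mathrm d\tau}\int_D u_\epsilon v_\delta\,\mathrm dX=0\) on \((s,t)\). This gives \(\int_D u_\epsilon(\cdot,t)\rho_\delta(\cdot-X)\,\mathrm dX=\int_D\rho_\epsilon(\cdot-Y)v_\delta(\cdot,s)\,\mathrm dX\) directly. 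The case \(s\geq t\) is covered by causality.
\end{itemize}
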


\begin{rem}\label{rem:reciprocity-time-dependent}
The reciprocity identity \eqref{eq:green-reciprocity} is initially
understood as an almost-everywhere equality.  After choosing the locally
H\"older continuous representatives of the Green functions, it holds
pointwise away from the diagonal.  For fixed \((Y,s)\), the function
\[
 (X,t)\longmapsto G_{\cH}^{\Omega}(X,t;Y,s)
\]
is a forward solution away from \((Y,s)\).  For fixed \((X,t)\), the
function
\[
 (Y,s)\longmapsto G_{\cH}^{\Omega}(X,t;Y,s)
\]
is an adjoint solution away from \((X,t)\).  The word \emph{pole} is also
used below for the point at which parabolic
measure is evaluated.  Thus, in \(G_{\cH}^{\Omega}(P;Z)\), the point
\(P\) is the parabolic-measure pole.  When this Green function is regarded
as a function of \(Z\), reciprocity shows that \(P\) is its singularity
for the adjoint equation.  Reciprocity remains valid for time-dependent
coefficients and does not
require \(A\) to be symmetric.  Even when \(A=A^{\mathsf T}\), the
operators \(\cH\) and \(\cH^*\) differ because adjunction reverses the
sign of the time derivative.  Consequently, one must use the
forward-adjoint identity \eqref{eq:green-reciprocity}, rather than a
symmetry identity for the same Green function with its two space-time
arguments interchanged.
\end{rem}

\begin{lem}
\label{lem:green-upper}
After extending \(A\) to a real uniformly elliptic matrix on
\(\R^{n+1}\times\R\), the whole-space fundamental solution vanishes when
\(t\leq s\).  When \(t>s\), it satisfies
\begin{equation}\label{eq:aronson-upper}
 0\leq\Gamma_{\cH}(X,t;Y,s)
 \leq
 \frac{C}{(t-s)^{(n+1)/2}}
 \exp\!\left(-\frac{|X-Y|^2}{C(t-s)}\right).
\end{equation}
The Dirichlet Green function in any domain under consideration satisfies
\begin{equation}\label{eq:green-dominated}
 0\leq G_{\cH}^{\Omega}(X,t;Y,s)
 \leq\Gamma_{\cH}(X,t;Y,s).
\end{equation}
Consequently, whenever \((X,t)\neq(Y,s)\),
\begin{equation}\label{eq:green-parabolic-upper}
 G_{\cH}^{\Omega}(X,t;Y,s)
 \leq
 \frac{C\mathbf 1_{\{t>s\}}}
 {\bigl(|X-Y|+|t-s|^{1/2}\bigr)^{n+1}}.
\end{equation}
The adjoint estimates hold with the time orientation reversed.
\end{lem}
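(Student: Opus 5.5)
The plan is to derive \eqref{eq:aronson-upper} from Aronson's theorem, then get \eqref{eq:green-dominated} from the maximum principle, and finally read off \eqref{eq:green-parabolic-upper} by elementary calculus.

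\emph{Step 1: the fundamental solution.} Extend \(A\) to \(\R^{n+1}\times\R\), for instance by even reflection in \(\lambda\) or by setting \(A=\mu I\) for \(\lambda<0\). The extension stays real, measurable and satisfies \eqref{eq:ellipticity}. Aronson's theorem \cite{Aronson} then gives the fundamental solution \(\Gamma_{\cH}\), its nonnegativity, its vanishing for \(t\leq s\), and the Gaussian upper bound \eqref{eq:aronson-upper}. The constants depend only on \(n,\mu,\Lambda\); no symmetry or smoothness is needed. If one prefers to stay inside the paper's framework, one may first assume smooth coefficients, as in Appendix~\ref{sec:smooth-coefficient-reduction}, since the bounds are independent of derivatives. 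Causality comes from the construction via the forward Cauchy problem: for \(t\le s\), the solution with data at time \(s\) has not yet been created.

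\emph{Step 2: comparison with the Green function.} Fix the pole \((Y,s)\) and set \(w=\Gamma_{\cH}(\cdot;Y,s)-G^{\Omega}_{\cH}(\cdot;Y,s)\). Away from the pole, \(w\) solves \(\cH w=0\) in \(\Omega\), and the singularities cancel, since both kernels have the same local singular part. On the forward parabolic boundary \(\partial_p\Omega\), \(w=\Gamma\geq0\). The weak maximum principle then gives \(w\geq0\). For the half-space one applies this on bounded cylinders and passes to the limit, or one obtains \(G^\Omega\) as the increasing limit of Green functions of exhausting cylinders. Nonnegativity of \(G\) follows from the same principle, or from positivity of the approximating Green functions. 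This step is the main technical point: the cancellation of the singularity has to be justified in the weak setting. I would handle it through the approximation by mollified poles. Let \(G_\rho\) and \(\Gamma_\rho\) be the solutions with source \(\rho^{-(n+3)}\mathbf 1_{\mathcal Q_\rho}\). Compare them by the weak maximum principle, which applies to genuine \(\mathrm L^2\)-energy solutions, and then let \(\rho\to0\) using the local Hölder continuity away from the pole.

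\emph{Step 3: the parabolic-distance bound.} Put \(\tau=t-s>0\) and \(d=|X-Y|\). Then
\[
 \tau^{-(n+1)/2}e^{-d^2/(C\tau)}\lesssim(d+\tau^{1/2})^{-(n+1)}.
\]
To see this, note that if \(d\leq\tau^{1/2}\), the bound is immediate. If \(d>\tau^{1/2}\), write the left side as \(d^{-(n+1)}(d^2/\tau)^{(n+1)/2}e^{-d^2/(C\tau)}\) and use \(\sup_{x>0}x^{(n+1)/2}e^{-x/C}<\infty\). The adjoint statements follow by applying the above to \(\cH^*\), which is the same kind of operator after the time reversal \(t\mapsto-t\) and replacing \(A\) by \(A^{\mathsf T}\). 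Alternatively, they follow from reciprocity, \eqref{eq:green-reciprocity}.
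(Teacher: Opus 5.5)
Your proposal is correct and follows the route the paper relies on. The paper gives no separate proof; it cites Aronson and the standard Green-function theory for exactly your three ingredients: the Gaussian upper bound for the fundamental solution of an extended operator, the domination \eqref{eq:green-dominated} by the weak maximum principle (your mollified-pole argument is the standard rigorous version in the energy class), and the elementary conversion to \eqref{eq:green-parabolic-upper}, with the adjoint case obtained by transposition and time reversal or via \eqref{eq:green-reciprocity}.
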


\subsection{Boundary comparisons  and doubling}
\label{subsec:boundary-comparison-tools}

If \(\Omega=D\times(a,b)\) is a time-oriented cylinder, we denote by
\(\omega_{\cH,\Omega}^P\) the forward representing measure on
\(\partial_p\Omega\), and by \(\omega_{\cH^*,\Omega}^P\) the adjoint
representing measure on \(\partial_p^*\Omega\), both evaluated at \(P\).

The comparison results below are local.  In an ambient time-oriented cylinder,
each fixed enlargement below is required to lie in the cylinder and to
remain quantitatively separated from the inappropriate time face.  If a
comparison function is a Green function, its singularity must lie outside
that enlargement and have the stated causal separation.

\begin{lem}
\label{lem:boundary-comparison}
There exist structural constants \(c_0,c_1\geq1\) with the following
property.  Let \(u\) and \(v\) be positive solutions of
\(\cH u=\cH v=0\) in \(T_{2r}(x_0,t_0)\).  Assume that they are
continuous up to the flat boundary and vanish there on
\(2\Delta_r(x_0,t_0)\).  Put \(\rho_0=r/c_0\),
\(\rho_1=\rho_0/c_1\), and define the reference balance by
\begin{equation}\label{eq:forward-balance-parameters}
 \mathcal B_{\rho_0}(u,v)
 =\max\left\{
 \frac{u(A_{\rho_0}^+(x_0,t_0))}
      {u(A_{\rho_0}^-(x_0,t_0))},
 \frac{v(A_{\rho_0}^+(x_0,t_0))}
      {v(A_{\rho_0}^-(x_0,t_0))}
 \right\}.
\end{equation}
If \(\widetilde z\in\Delta_{\rho_1}(x_0,t_0)\),
\(0<\rho<\rho_1\), and \(Z\in T_{\rho/c_1}(\widetilde z)\), then
\begin{equation}\label{eq:normalized-boundary-comparison}
 C^{-1}
 \frac{u(A_\rho(\widetilde z))}
      {v(A_\rho(\widetilde z))}
 \leq
 \frac{u(Z)}{v(Z)}
 \leq
 C
 \frac{u(A_\rho(\widetilde z))}
      {v(A_\rho(\widetilde z))}.
\end{equation}
Here
\begin{equation}\label{eq:boundary-comparison-dependence}
 C=C\bigl(n,\mu,\Lambda,\mathcal B_{\rho_0}(u,v)\bigr).
\end{equation}
\end{lem}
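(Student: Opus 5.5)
The plan is to reduce Lemma~\ref{lem:boundary-comparison} to the standard parabolic boundary Harnack (comparison) principle for time-dependent divergence-form operators in Lipschitz cylinders. We take the form due to Fabes--Safonov--Yuan \cite{FabesSafonovYuan} and Nystr\"om \cite{Nystrom}, in which the comparison constant depends on the solutions only through the ratio of their values at a future and a past corkscrew point. By translation and parabolic dilation invariance (the remark following Lemma~\ref{lem:parabolic-harnack}) we may take \(r=1\) and \((x_0,t_0)=(0,0)\). The argument then has three steps.

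\emph{Step 1: Carleson estimate and backward Harnack control.} For a positive solution \(w\in\{u,v\}\) vanishing on \(2\Delta_1\), Lemma~\ref{lem:local-parabolic-estimates} (boundary H\"older decay) and Lemma~\ref{lem:parabolic-harnack} give the Carleson bound \(w\le C w(A^+_{\rho})\) on \(T_{\rho/c_1}(\widetilde z)\). Bounding in the other direction is the delicate, genuinely parabolic point. We need the two-sided estimate \(w(A^+_\rho(\widetilde z))\simeq w(A^-_\rho(\widetilde z))\) uniformly for all \(\widetilde z\in\Delta_{\rho_1}\) and \(\rho<\rho_1\). The forward Harnack inequality only gives \(w(A^-_\rho)\lesssim w(A^+_\rho)\). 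For the reverse inequality we first use Harnack chains to bound \(w(A^+_\rho(\widetilde z))\) by \(w(A^+_{\rho_0})\), with a factor growing like \((\rho_0/\rho)^{C}\). We combine this with the standard argument of Fabes--Safonov--Yuan: the solution vanishing on the lateral boundary has comparable values at \(A^\pm_\rho\) at all smaller scales, with a constant controlled by the top-scale balance \(\mathcal B_{\rho_0}(w,w)\). That argument compares \(w\) with the parabolic measure of a suitable boundary portion and uses the doubling of that measure. The doubling of parabolic measure relative to a pole in the future follows from boundary H\"older decay plus Harnack. The backward Harnack statement at smaller scales is exactly where the dependence \(C=C(n,\mu,\Lambda,\mathcal B_{\rho_0})\) enters, and it is the step I expect to be the main obstacle. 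I would follow the Fabes--Safonov--Yuan iteration closely rather than reinvent it.

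\emph{Step 2: Comparison with parabolic measure.} With the two-sided corkscrew control in hand, I would show \(w(Z)\simeq w(A_\rho(\widetilde z))\,\omega^{Z}(\Delta_\rho(\widetilde z))/\omega^{A_\rho}(\ldots)\)-type estimates. Concretely, I would establish \(w(Z)\simeq w(A_\rho(\widetilde z))\, G(Z)/G(A_\rho(\widetilde z))\) with a fixed reference Green function \(G\) of the local cylinder \(T_{2}\), whose pole lies at a point causally in the future of the cylinder relevant for \(\rho_0\). The upper bound comes from the maximum principle on \(T_{\rho}(\widetilde z)\). There we bound the lateral and initial data of \(w\) by \(w(A^+_\rho)\) times the parabolic measure of the non-flat part of the parabolic boundary, and then compare that measure with \(G\) using the Carleson estimate for \(G\). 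The lower bound comes symmetrically, from the interior Harnack inequality and boundary H\"older decay, with the backward control of Step 1 absorbing the time shift between \(A^-_\rho\) and \(A^+_\rho\).

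\emph{Step 3: Conclusion.} Applying Step 2 to both \(u\) and \(v\) with the same \(G\) and dividing, the factor \(G(Z)/G(A_\rho(\widetilde z))\) cancels. This gives \eqref{eq:normalized-boundary-comparison}, with \(C\) depending only on \(n,\mu,\Lambda\) and the balances of \(u\) and \(v\), that is, on \(\mathcal B_{\rho_0}(u,v)\).
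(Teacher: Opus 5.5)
Your top-level reduction is the paper's route. The paper gives no independent proof: it obtains the lemma from the local quotient comparison of Safonov--Yuan \cite[Theorem~2.4]{SafonovYuan}, with the balance-dependent backward Harnack estimate \cite[Theorem~2.3]{SafonovYuan} as input, after translation, parabolic scaling and localization. The self-contained argument you build around that reduction, however, breaks down at the step you flag as the main obstacle.

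\emph{Step~1 is circular.} You derive the small-scale backward Harnack inequality from doubling of parabolic measure, and you assert that doubling follows from boundary H\"older decay plus Harnack. It does not. Those tools give only nondegeneracy, \(\omega^{A_{4r}^+}(\Delta_r)\geq c\); this is how the paper proves Lemma~\ref{lem:parabolic-measure-nondegeneracy}, and the paper treats doubling as a separate conclusion.

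By Lemma~\ref{lem:cfms}, \(\omega^P(\Delta_{2r})\lesssim r^{n+1}G(P;A_{4r}^-)\) and \(\omega^P(\Delta_r)\gtrsim r^{n+1}G(P;A_{2r}^+)\). So doubling for a future pole is equivalent, via that lemma, to bounding the adjoint solution \(Z\mapsto G(P;Z)\) at a \emph{past} corkscrew by its value at a \emph{future} one. The adjoint Harnack inequality gives only the opposite inequality. The needed one is the boundary backward Harnack estimate of Lemma~\ref{lem:green-boundary-time-comparison}, which is the same kind of statement Step~1 is supposed to produce. In the elliptic case doubling does follow from CFMS plus Harnack; the parabolic time lag is what breaks this.

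Your Harnack-chain bound \(w(A_\rho^+(\widetilde z))\lesssim(\rho_0/\rho)^Cw(A_{\rho_0}^+)\) does not help either. It is an upper bound, and the Carleson estimate already gives it without loss. What is missing is a lower bound for \(w(A_\rho^-(\widetilde z))\) in terms of \(w(A_\rho^+(\widetilde z))\), uniform in \(\rho<\rho_1\). That estimate, with constant depending on \(\mathcal B_{\rho_0}\), is the real content of the lemma. It must be imported from Safonov--Yuan or Fabes--Safonov--Yuan, not derived from the tools you list.

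\emph{Step~2 has the time orientation reversed.} A function used to control the forward solutions \(u,v\) through the maximum principle must itself be a positive \emph{forward} solution in the comparison box, vanishing on the flat boundary. By \eqref{eq:green-forward-causality}, \(Z\mapsto G(Z;Y)\) vanishes for \(t(Z)\leq t(Y)\). With the pole \(Y\) in the future of the box it is therefore identically zero there, while \(Z\mapsto G(Y;Z)\) solves \(\cH^*\) rather than \(\cH\). The pole has to lie in the causal past of the box, for instance below its initial face.

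The roles of the estimates are also swapped. The Carleson estimate for \(G\) bounds \(G\) from above. It therefore bounds the parabolic measure of the non-flat part of \(\partial_pT_\rho(\widetilde z)\) from \emph{below} by \(G(Z)/G(A_\rho^+)\). The upper bound by \(G(Z)/G(A_\rho^-)\), which your upper estimate for \(w\) needs, is the more delicate comparison lemma.

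With the pole moved to the past, and the backward Harnack estimate imported at every scale \(\rho<\rho_1\) for \(u\), \(v\) and the reference function, the division in Step~3 does yield \eqref{eq:normalized-boundary-comparison} with the stated dependence on \(\mathcal B_{\rho_0}(u,v)\).
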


\begin{rem}
For positive adjoint solutions, Lemma~\ref{lem:boundary-comparison} holds
with the time directions reversed and with
\begin{equation}\label{eq:adjoint-balance-parameters}
 \mathcal B_{\rho_0}^*(u,v)
 =\max\left\{
 \frac{u(A_{\rho_0}^-(x_0,t_0))}
      {u(A_{\rho_0}^+(x_0,t_0))},
 \frac{v(A_{\rho_0}^-(x_0,t_0))}
      {v(A_{\rho_0}^+(x_0,t_0))}
 \right\}.
\end{equation}
The dependence on the reference balance is essential and agrees with the
normalized form of boundary comparison in
\cite[Theorem~4.2]{LitsgardNystrom}.  That theorem is cited here to clarify
the dependence of the constant.  In the time-dependent divergence-form
setting, the local quotient comparison used here follows from
\cite[Theorem~2.4]{SafonovYuan}, after translation, parabolic scaling,
time reversal for the adjoint statement, and localization inside the
ambient cylinder.  The underlying backward Harnack estimates are recorded
in \cite[Theorem~2.3]{SafonovYuan}; see also
\cite{FabesSafonov,FabesSafonovYuan,Nystrom}.
\end{rem}

\begin{rem}\label{rem:domain-convention}
Whenever a result below is stated in a time-oriented cylinder at scale
\(r\) above \(\Delta_r(x_0,t_0)\), we use the following convention.
The domain \(\Omega\) is either \(\R^{n+2}_+\) or
\(\Omega=D\times(a,b)\), where \(D\subset\R^{n+1}_+\) is a bounded,
connected Lipschitz domain.  The domains \(D\) belong to a fixed
quantitative class of Lipschitz uniform domains: their Lipschitz
characters, interior corkscrew constants, and Harnack-chain constants are
uniformly controlled.  No upper bound for \(\operatorname{diam}(D)/r\)
is imposed.  There is a fixed constant \(K_\Omega\geq64\), chosen sufficiently large
relative to all fixed dilation and aperture constants occurring in the
result under consideration, such that
\[
 D\cap
 \bigl((-K_\Omega r,K_\Omega r)
       \times B(x_0,K_\Omega r)\bigr)
 =
 (0,K_\Omega r)\times B(x_0,K_\Omega r).
\]
We require the fixed enlargement of every comparison region used below
to be contained in the corresponding concentric half-box with
\(K_\Omega/2\) in place of \(K_\Omega\).  Consequently, the relevant
part of \(\partial D\) is the flat boundary \(\{\lambda=0\}\), and
these comparison regions are separated by at least a fixed multiple of
\(r\) from the remaining spatial boundary.
We also assume that
\[
 a<t_0-K_\Omega^2r^2<t_0+K_\Omega^2r^2<b,
\]
with
\[
 (t_0-K_\Omega^2r^2)-a\gtrsim r^2,
 \qquad
 b-(t_0+K_\Omega^2r^2)\gtrsim r^2.
\]
Every pole \(P=(X(P),t(P))\) occurring in a cylinder statement is
assumed, in addition to the displayed aperture condition, to satisfy
\[
 \operatorname{dist}\!\left(
 X(P),
 \partial D\setminus
 \bigl(\{0\}\times B(x_0,K_\Omega r)\bigr)
 \right)
 \geq c_\Omega r
\]
and
\[
 t(P)-a\geq c_\Omega r^2,
 \qquad
 b-t(P)\geq c_\Omega r^2,
\]
where \(c_\Omega>0\) is fixed.  Whenever a Harnack comparison between a pole and a scale-\(r\)
reference region is used, the required time-oriented Harnack chain has
uniformly bounded length, and fixed enlargements of all its members
remain in \(\Omega\) and have spatial and temporal clearance comparable
to \(r\) and \(r^2\), respectively.  All constants implicit in this convention are fixed independently of
\(r\), \(x_0\), and \(t_0\).  They are included in what is called the
fixed cylinder geometry.
\end{rem}

\begin{lem}
\label{lem:green-boundary-time-comparison}
Let \(\Delta_r=\Delta_r(x_0,t_0)\), and let \(\Omega\) be either the
half-space or a time-oriented cylinder satisfying the convention in
Remark~\ref{rem:domain-convention} relative to \(\Delta_r\).  If
\(P\in\Omega\cap\mathcal P_{\kappa,r}^+(x_0,t_0)\), then
\begin{equation}\label{eq:green-boundary-time-comparison}
 G_{\cH}^{\Omega}(P;A_r^-(x_0,t_0))
 \simeq
 G_{\cH}^{\Omega}(P;A_r^+(x_0,t_0)).
\end{equation}
The comparison constant depends only on \(n,\mu,\Lambda,\kappa\) and
the fixed cylinder geometry.  If
\(P\in\Omega\cap\mathcal P_{\kappa,r}^-(x_0,t_0)\), the
corresponding estimate for \(G_{\cH^*}^{\Omega}\) holds with the time
orientations reversed.
\end{lem}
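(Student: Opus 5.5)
The plan is to regard $Z\mapsto G_{\cH}^{\Omega}(P;Z)$ as a nonnegative adjoint solution, by Remark~\ref{rem:reciprocity-time-dependent} and reciprocity \eqref{eq:green-reciprocity}. Its singularity is at $P$, which lies in the causal future of both points $A_r^\pm(x_0,t_0)=(4r,x_0,t_0\pm16r^2)$. Since $P\in\mathcal P_{\kappa,r}^+(x_0,t_0)$, we have $t(P)-t_0\geq64r^2$. Hence $t(P)-(t_0+16r^2)\geq48r^2$, so both reference points are separated from the pole by a fixed multiple of $r^2$ in time.

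For an adjoint solution, the interior Harnack inequality \eqref{eq:adjoint-harnack} controls earlier values by later ones. A time-oriented chain from $A_r^+$ down to $A_r^-$, inside a box of height $\sim r$ above $\Delta_{Cr}$ and away from the pole, therefore gives
\[
 G_{\cH}^{\Omega}(P;A_r^-(x_0,t_0))\lesssim G_{\cH}^{\Omega}(P;A_r^+(x_0,t_0)).
\]
This direction is free, and it uses only the fixed cylinder geometry of Remark~\ref{rem:domain-convention}.

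The reverse inequality is the main obstacle, because Harnack runs the wrong way. I would prove it with a backward Harnack (Carleson-type) estimate for adjoint solutions vanishing on the flat boundary. This is the inequality underlying Lemma~\ref{lem:boundary-comparison}, namely \cite[Theorem~2.3]{SafonovYuan} after time reversal. That estimate requires the solution to vanish on a lateral portion $\{0\}\times Q_{Cr}(x_0)\times(t_0-Cr^2,\,t(P)-cr^2)$ and to be positive there, which holds because $G$ has zero Dirichlet data on $\{\lambda=0\}$. It also requires a quantitative link between the solution's values near the boundary box and a reference point far in the causal direction, which here is near the pole.

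Concretely, I would compare both $G(P;A_r^\pm)$ with $G(P;\bar A)$, where $\bar A=(\lambda(P)/2,\,x(P),\,t(P)-c\lambda(P)^2)$ is a point near the pole. A reference point of this kind sits at the top of a Harnack chain of bounded length: the length depends on $\kappa$ and on the ratio $(t(P)-t_0)^{1/2}/r$, and that dependence must be removed by dyadic scaling. The standard route is the Fabes--Safonov backward Harnack theorem for Green functions, applied to each dyadic scale $\rho\in[r,(t(P)-t_0)^{1/2}]$. It yields $G(P;A_\rho^+)\simeq G(P;A_\rho^-)$ at every scale, with constants that depend only on the aperture $\kappa$ and not on the number of scales, because at each scale the pole is in the forward region $\mathcal P^+_{\kappa,\rho}$.

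I would therefore argue by scaling invariance. Translating and dilating so that $t(P)-t_0=1$ reduces the claim to the case where $r$ is small and the pole is at unit distance. Then I would invoke the backward Harnack inequality for nonnegative adjoint solutions vanishing on the lateral boundary, in the form of \cite[Theorem~2.3]{SafonovYuan} or \cite{FabesSafonov}, which gives directly
\[
 G(P;A_r^+)\leq C\,G(P;A_r^-),
\]
with $C$ depending on $n,\mu,\Lambda,\kappa$ and the domain geometry. The cylinder conventions guarantee that the other lateral and time faces stay at distance $\gtrsim r$ and $r^2$, so the localization is legitimate. The adjoint statement then follows by applying the same argument to $G_{\cH^*}$ with time reversed.
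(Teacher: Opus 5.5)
You have the causal orientation reversed, so the nontrivial half of the lemma rests on a Harnack step that cannot give it. By \eqref{eq:adjoint-harnack}, a nonnegative adjoint solution satisfies \(\sup_{\mathcal Q^+}u\leq C\inf_{\mathcal Q^-}u\): later values are bounded by earlier ones. Apply this to \(w(Z)=G_{\cH}^{\Omega}(P;Z)\) along a chain at height \(\simeq r\) joining \(A_r^-\) to \(A_r^+\). Interior Harnack then gives only
\[
 G_{\cH}^{\Omega}(P;A_r^+(x_0,t_0))\lesssim G_{\cH}^{\Omega}(P;A_r^-(x_0,t_0)),
\]
and this is the free direction. The paper uses it exactly this way, for example \(w(A_r^*)\lesssim w(A_\rho^-(\widetilde z))\) in the large-scale proof.

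The inequality you call free, \(G_{\cH}^{\Omega}(P;A_r^-)\lesssim G_{\cH}^{\Omega}(P;A_r^+)\), cannot follow from any interior estimate. Take instead \(w\) to be the Green function with singularity at \((\lambda_s,x_0,t_0)\), where \(0<\lambda_s<r\). This \(w\) is a nonnegative adjoint solution on a Harnack chain at height \(4r\) joining \(A_r^-\) to \(A_r^+\). By causality \(w(A_r^+)=0\), while \(w(A_r^-)>0\). Conversely, the inequality you extract from the backward Harnack theorem at the end, \(G(P;A_r^+)\leq C\,G(P;A_r^-)\), is the trivial one. As written, the proposal never establishes the hard half.

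The repair is to swap the roles. The bound \(G_{\cH}^{\Omega}(P;A_r^-)\lesssim G_{\cH}^{\Omega}(P;A_r^+)\), earlier controlled by later, is the time-reversed boundary backward Harnack estimate of Fabes--Garofalo--Salsa and Fabes--Safonov(--Yuan). It must be specialized to a Green function that vanishes on \(\{\lambda=0\}\) and whose singularity lies in \(\mathcal P^+_{\kappa,r}(x_0,t_0)\). This is exactly what the paper cites for the lemma, and it is the direction used later when the paper passes from \(w(A_k^-)\) to \(w(A_k)\) and from \(w(A_\rho^-)\) to \(w(A_\rho^+)\). With that correction, your plan coincides with the paper's justification.

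Your dyadic-scale paragraph is also circular, since it asserts the conclusion at every scale. Normalizing \(t(P)-t_0=1\) does not by itself remove the dependence on \((t(P)-t_0)^{1/2}/r\). Uniformity in that ratio is precisely what the cited pole-separated estimate supplies, through the Carleson estimate and the boundary Harnack principle. A Harnack chain cannot provide it.
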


\begin{rem}
The constant in \eqref{eq:green-boundary-time-comparison} depends only on
\(n,\mu,\Lambda\), the fixed comparison geometry, and the quantitative
aperture and causal separation of \(P\).  In a truncated cylinder it also
depends on the clearance from the artificial time faces.  This is the
boundary backward Harnack estimate specialized to a pole-separated Green
function.  See
\cite{FabesGarofaloSalsa,FabesSafonov,FabesSafonovYuan}.  When both
functions in Lemma~\ref{lem:boundary-comparison} are
pole-separated Green functions, Lemma~\ref{lem:green-boundary-time-comparison}
bounds the balance in
\eqref{eq:forward-balance-parameters} or
\eqref{eq:adjoint-balance-parameters}.  The constant in
\eqref{eq:normalized-boundary-comparison} is then structural.  At the
large scales where the adjoint height coordinate \(\Phi^*\) is used, the
same conclusion holds when one comparison function is \(\Phi^*\).  Indeed, as we will see, after
increasing the large-scale threshold so that \(\rho_0\geq1\),
\[
 \Phi^*(A_{\rho_0}^\pm)\simeq\rho_0
\]
by Lemma~\ref{lem:height-coordinate} below.
\end{rem}

\begin{lem}
\label{lem:cfms}
Let \(\Delta_r=\Delta_r(x_0,t_0)\), and let \(\Omega\) be either the
half-space or a time-oriented cylinder satisfying the convention in
Remark~\ref{rem:domain-convention} relative to \(\Delta_r\).   If
\(P\in\Omega\cap\mathcal P_{\kappa,r}^+(x_0,t_0)\), then
\begin{equation}\label{eq:cfms-forward-raw}
 C^{-1}r^{n+1}G_{\cH}^{\Omega}(P;A_r^+(x_0,t_0))
 \leq\omega_{\cH,\Omega}^P(\Delta_{r/2}(x_0,t_0))
 \leq Cr^{n+1}G_{\cH}^{\Omega}(P;A_r^-(x_0,t_0)).
\end{equation}
If \(P\in\Omega\cap\mathcal P_{\kappa,r}^-(x_0,t_0)\), then
\begin{equation}\label{eq:cfms-adjoint-raw}
 C^{-1}r^{n+1}G_{\cH^*}^{\Omega}(P;A_r^-(x_0,t_0))
 \leq\omega_{\cH^*,\Omega}^P(\Delta_{r/2}(x_0,t_0))
 \leq Cr^{n+1}G_{\cH^*}^{\Omega}(P;A_r^+(x_0,t_0)).
\end{equation}
\end{lem}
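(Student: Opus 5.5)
We prove Lemma~\ref{lem:cfms} by the classical Caffarelli--Fabes--Mortola--Salsa argument, adapted to the time-oriented setting; only the forward estimate \eqref{eq:cfms-forward-raw} needs proof. The adjoint estimate \eqref{eq:cfms-adjoint-raw} follows by time reversal $t\mapsto -t$, which maps $\cH^*$ to a forward operator with coefficient $A^{\mathsf T}$ and exchanges $A_r^+$ with $A_r^-$.

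\emph{Lower bound.} Let $u(Z)=\omega_{\cH,\Omega}^Z(\Delta_{r/2}(x_0,t_0))$. This is a nonnegative forward solution bounded by one. By the boundary H\"older decay of Lemma~\ref{lem:local-parabolic-estimates}, applied to $1-u$ with the data equal to $1$ on $\Delta_{r/2}$, we get $u\geq c$ on a small past-oriented box near the boundary. A forward Harnack chain (Lemma~\ref{lem:parabolic-harnack}) then gives $u\gtrsim 1$ on the cylinder $\mathcal Q=\mathcal Q_{r}^+(A_r^+)$ of size comparable to $r$ around $A_r^+(x_0,t_0)$; the time shift $16r^2$ leaves room for the chain.

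Next we represent $u$ through the Green function. Take a cutoff $\psi$ adapted to $\mathcal Q$. Green's representation, or equivalently the Riesz decomposition of the supersolution $\min(u,\text{const})$, gives
\[
\omega^P_{\cH,\Omega}(\Delta_{r/2})\ge u(P)\gtrsim\iiint G_{\cH}^\Omega(P;Z)\,\mathrm d\mu(Z),
\]
where $\mu$ is a measure of mass $\simeq r^{n}$ concentrated near $A_r^+$. More concretely, we compare $u$ with the Green potential $w(Z')=r^{-2}\iiint_{\mathcal Q} G(Z';Z)\,\mathrm dZ$. This $w$ solves $\cH w=r^{-2}\mathbf 1_{\mathcal Q}$ and is bounded by $C$ using Lemma~\ref{lem:green-upper}. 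The maximum principle is applied to $Cu-w$ on the complement of $\mathcal Q$ and to the energy version inside. Finally, the interior Harnack inequality for the adjoint solution $Z\mapsto G(P;Z)$ on $\mathcal Q$ (valid since $P$ lies in the future region $\mathcal P^+_{\kappa,r}$, separated from $\mathcal Q$) replaces the average by $G(P;A_r^+)$. This yields $u(P)\gtrsim r^{n+3}r^{-2}G(P;A_r^+)=r^{n+1}G(P;A_r^+)$.

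\emph{Upper bound.} Choose $\varphi\in C_0^\infty$ with $\varphi=1$ on $\Delta_{r/2}$, supported in a slightly larger box $(-r/4\cdot c,\,cr)\times\Delta_{3r/4}$, and with $|\nabla\varphi|\lesssim r^{-1}$ and $|\partial_t\varphi|\lesssim r^{-2}$. Let $\Psi$ be its restriction to the region and $w$ the solution with data $\Psi$. Then $\omega^P(\Delta_{r/2})\le w(P)$. Writing $w=\varphi+(w-\varphi)$ and using the Green representation of $w-\varphi$, which vanishes on the boundary and has $P$ outside $\operatorname{supp}\varphi$, we obtain
\[
w(P)=\iiint \bigl(A\nabla_X G(P;\cdot)\cdot\nabla\varphi+ G(P;\cdot)\,\partial_s\varphi\bigr)\,\mathrm dZ,
\]
with the adjoint variables and signs fixed by \eqref{eq:green-reciprocity}. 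Applying Cauchy--Schwarz, the boundary Caccioppoli inequality for the adjoint solution $G(P;\cdot)$, which vanishes on $\{\lambda=0\}$ (Lemma~\ref{lem:local-parabolic-estimates}, future cylinders), and local boundedness, we bound this by $r^{n+1}\sup_{T_{r}} G(P;\cdot)$.

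The main obstacle is controlling $\sup_{T_r}G(P;\cdot)$ by $G(P;A_r^-)$. For the adjoint solution, the boundary Carleson estimate bounds the supremum near the boundary by its value at a point lying in the past of the box, that is, near $A_r^-$ with a larger shift. Lemma~\ref{lem:green-boundary-time-comparison} (the backward Harnack estimate for pole-separated Green functions) together with an adjoint Harnack chain then relates this value to $G(P;A_r^-)$. Here the hypothesis $P\in\mathcal P^+_{\kappa,r}$ and the cylinder convention of Remark~\ref{rem:domain-convention} ensure causal separation and clearance from the artificial faces. I would cite the Carleson estimate from \cite{FabesSafonovYuan,SafonovYuan} and track the $\kappa$-dependence through Lemma~\ref{lem:green-boundary-time-comparison}.
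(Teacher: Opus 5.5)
The paper does not prove Lemma~\ref{lem:cfms}. It records the lemma as the standard Caffarelli--Fabes--Mortola--Salsa type estimate from \cite{FabesGarofaloSalsa,FabesSafonov,Nystrom}. Your outline is that standard argument: comparison of $u$ with a Green potential for the lower bound; Green representation, Caccioppoli and the adjoint Carleson estimate for the upper bound; time reversal for \eqref{eq:cfms-adjoint-raw}. The time reversal and the upper bound are sound.

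The last step of your lower bound fails as written, because the adjoint Harnack inequality is used in the wrong time direction. Your cylinder $\mathcal Q=\mathcal Q_r^+(A_r^+)=B((4r,x_0),r)\times(t_0+16r^2,t_0+17r^2)$ lies in the future of $A_r^+$. For the nonnegative adjoint solution $v(Z)=G_{\cH}^{\Omega}(P;Z)$, Lemma~\ref{lem:parabolic-harnack} says that earlier values dominate later ones. On this $\mathcal Q$ it therefore gives $v\lesssim v(A_r^+)$, an upper bound for $\iiint_{\mathcal Q}v$, whereas you need $\iiint_{\mathcal Q}v\gtrsim r^{n+3}v(A_r^+)$. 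What decides this step is the causal order of $\mathcal Q$ and $A_r^+$, not the separation of $\mathcal Q$ from $P$. As written, the average only controls $v$ at a point later than $\mathcal Q$. Returning from there to $A_r^+$ would need a backward Harnack estimate.

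The fix is to place the cylinder strictly between the boundary patch and $A_r^+$ in time, for instance $\mathcal Q'=B((4r,x_0),r)\times(t_0+8r^2,t_0+9r^2)$.
\begin{itemize}
\item The forward chain from the region near $\Delta_{r/2}$ where $u\geq1/2$ (times at most $t_0+r^2/4$) still gives $u\gtrsim1$ on $\overline{\mathcal Q'}$.
\item An adjoint chain running backward in time from $A_r^+$ gives $\inf_{\mathcal Q'}v\gtrsim v(A_r^+)$.
\item Set $w=r^{-2}\iiint_{\mathcal Q'}G_{\cH}^{\Omega}(\cdot\,;Z)\,\mathrm dZ\le C$ and compare $Cu$ with $w$ on $\Omega\setminus\overline{\mathcal Q'}$. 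This gives $u(P)\gtrsim r^{-2}r^{n+3}v(A_r^+)=r^{n+1}G_{\cH}^{\Omega}(P;A_r^+)$.
\end{itemize}
Relatedly, the measure $r^{-2}\mathbf 1_{\mathcal Q}\,\mathrm dZ$ has mass $\simeq r^{n+1}$, not $r^n$.

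In the upper bound, the detour through Lemma~\ref{lem:green-boundary-time-comparison} is unnecessary. If you take it at a scale $\rho>r$, it also requires $t(P)-t_0\geq64\rho^2$, which $P\in\mathcal P^+_{\kappa,r}$ does not provide. Since $A_r^-$ sits at time $t_0-16r^2$, you can instead run the adjoint Carleson estimate in the pole-free box $T_{4r}(x_0,t_0)$. Use a past reference point that is later than $A_r^-$, e.g.\ at time $t_0-8r^2$, and finish with an ordinary adjoint Harnack chain. This is why the raw estimate has $A_r^+$ on the left and $A_r^-$ on the right: neither inequality needs backward Harnack.
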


\begin{rem}
By reciprocity,
\[
 G_{\cH^*}^{\Omega}(P;A_r^\pm)
 =G_{\cH}^{\Omega}(A_r^\pm;P).
\]
Thus \eqref{eq:cfms-adjoint-raw} may be written entirely in terms of the
forward Green function.  Different fixed corkscrew normalizations give
equivalent estimates by
Lemma~\ref{lem:green-boundary-time-comparison} and ordinary Harnack
chains.  The constants depend only on \(n,\mu,\Lambda,\kappa\) and the
fixed comparison geometry.  We omit \(\Omega\) from the notation in the
half-space.
\end{rem}

\begin{lem}
\label{lem:parabolic-measure-doubling}
In the half-space, let \(\Delta_0=\Delta_{r_0}(x_0,t_0)\) and
\(P_0=A_{4r_0}^+(x_0,t_0)\).  Then
\begin{equation}\label{eq:parabolic-measure-doubling}
 \omega_{\cH}^{P_0}(2\Delta)
 \leq C\omega_{\cH}^{P_0}(\Delta)
\end{equation}
whenever \(\Delta\) is a boundary parabolic cube contained in
\(4\Delta_0\).  The adjoint statement holds with a
past-oriented pole.
\end{lem}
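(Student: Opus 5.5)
The plan is to derive the doubling estimate \eqref{eq:parabolic-measure-doubling} from the two-sided Green-function bounds of Lemma~\ref{lem:cfms}. The time-comparison Lemma~\ref{lem:green-boundary-time-comparison} and Harnack chains then connect the relevant corkscrew points. Fix a boundary cube \(\Delta=\Delta_\rho(y,s)\subset4\Delta_0\), so \(\rho\leq4r_0\). The pole \(P_0=A_{4r_0}^+(x_0,t_0)=(16r_0,x_0,t_0+256r_0^2)\) is too close to \(\Delta_0\) to be uniformly in \(\mathcal P^+_{\kappa,\rho}(y,s)\) for all such cubes. I therefore split into two regimes: \(\rho\leq r_0/K\) and \(r_0/K<\rho\leq4r_0\), with \(K\) a large fixed constant.

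\emph{Small cubes.} For \(\rho\leq r_0/K\) with \(K\) large, the point \(P_0\) lies in \(\mathcal P^+_{\kappa,4\rho}(y,s)\) for a fixed aperture \(\kappa\). Indeed, \(t(P_0)-s\geq 256r_0^2-16r_0^2\gtrsim r_0^2\geq 64(4\rho)^2\), and \(|x_0-y|+16r_0\lesssim r_0\lesssim (t(P_0)-s)^{1/2}\). The lower bound in \eqref{eq:cfms-forward-raw} at scale \(2\rho\) gives
\[
\omega^{P_0}_{\cH}(\Delta_\rho(y,s))\gtrsim \rho^{n+1}G_{\cH}(P_0;A_{2\rho}^+(y,s)).
\]
The upper bound at scale \(4\rho\) gives
\[
\omega^{P_0}_{\cH}(\Delta_{2\rho}(y,s))\lesssim\rho^{n+1}G_{\cH}(P_0;A_{4\rho}^-(y,s)).
\]
It remains to compare the two Green-function values \(G_{\cH}(P_0;A_{4\rho}^-(y,s))\) and \(G_{\cH}(P_0;A_{2\rho}^+(y,s))\). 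As functions of the second argument, these are adjoint solutions, by Remark~\ref{rem:reciprocity-time-dependent}. Lemma~\ref{lem:green-boundary-time-comparison} at scale \(4\rho\) gives \(G(P_0;A^-_{4\rho})\simeq G(P_0;A^+_{4\rho})\). The point \(A^+_{4\rho}(y,s)\) has time \(s+256\rho^2\), which is later than that of \(A^+_{2\rho}(y,s)\). For an adjoint solution, the adjoint Harnack inequality \eqref{eq:adjoint-harnack} bounds values at earlier times by values at later ones from below. I therefore use the chain in that direction, or equivalently apply the time-comparison lemma once more at scale \(2\rho\) together with an ordinary chain at fixed height comparable to \(\rho\). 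This yields \(G(P_0;A^-_{4\rho})\lesssim G(P_0;A^+_{2\rho})\), and the chain stays a distance \(\gtrsim\rho\) from both the boundary and the pole.

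\emph{Large cubes.} For \(r_0/K<\rho\leq4r_0\), both \(\Delta\) and \(2\Delta\) lie in \(8\Delta_0\), and they have size comparable to \(r_0\). It suffices to show \(\omega^{P_0}(\Delta)\gtrsim1\), since the total mass is at most \(1\) by \eqref{eq:parabolic-measure-total-mass}. Take \(\Delta'=\Delta_{\rho/2}(y,s-\rho^2/2)\subset\Delta\); it is a sub-cube in the past part of \(\Delta\). The bound \(\omega^{P_0}(\Delta')\gtrsim 1\) follows from a standard Bourgain-type estimate. For \(f\) a continuous cutoff equal to \(1\) on \(\Delta'\), the solution \(u\) with data \(f\) satisfies \(u\gtrsim1\) near the boundary above an interior portion of \(\Delta'\). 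This is obtained by applying the boundary H\"older decay of Lemma~\ref{lem:local-parabolic-estimates} to \(1-u\), as in the proof of the total-mass remark. The positivity is then carried forward in time to \(P_0\) by a forward Harnack chain \eqref{eq:forward-harnack}. This chain has bounded length because \(t(P_0)-s\geq 240r_0^2\gtrsim\rho^2\) and all distances are comparable to \(r_0\simeq\rho\).

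The main obstacle is the time-orientation bookkeeping in the small-cube case: the upper bound in \eqref{eq:cfms-forward-raw} involves the past corkscrew, while the lower bound involves the future one. This mismatch is exactly what the boundary backward Harnack inequality, Lemma~\ref{lem:green-boundary-time-comparison}, is designed to absorb. I would verify carefully that \(P_0\) meets the aperture and separation hypotheses uniformly over all \((y,s)\) and \(\rho\leq r_0/K\), choosing \(K\) and \(\kappa\) accordingly. The adjoint statement follows by time reversal, with a past-oriented pole.
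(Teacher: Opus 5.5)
Your argument is correct. The paper gives no proof of this lemma and defers to \cite{FabesGarofaloSalsa,FabesSafonovYuan,Nystrom}, where doubling is obtained just as in your small-cube case: the raw bounds \eqref{eq:cfms-forward-raw} at scales \(4\rho\) and \(2\rho\), linked by Lemma~\ref{lem:green-boundary-time-comparison} and an adjoint Harnack chain from \(A^+_{4\rho}\) back to \(A^+_{2\rho}\). Your large-cube case is the nondegeneracy argument sketched in the remark after Lemma~\ref{lem:parabolic-measure-nondegeneracy}. You also rightly use only the raw inequalities \eqref{eq:cfms-forward-raw} and not the abbreviated form \eqref{eq:cfms}, which the paper derives from this doubling lemma and which would therefore make the argument circular.
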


\begin{lem}
\label{lem:parabolic-measure-nondegeneracy}
There is a structural constant \(c_{\mathrm{nd}}>0\) such that
\begin{equation}\label{eq:parabolic-measure-nondegeneracy}
 \omega_{\cH}^{A_{4r}^+(x_0,t_0)}
       (\Delta_r(x_0,t_0))\geq c_{\mathrm{nd}}
\end{equation}
for every \(r>0\).  The adjoint statement holds with a past-oriented
corkscrew.
\end{lem}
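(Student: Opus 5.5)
The plan is to combine the lower bound in Lemma~\ref{lem:cfms} with a Green-function lower bound coming from the boundary H\"older decay and Harnack chains, everything normalized by parabolic scaling. Translation and the dilation \((\lambda,x,t)\mapsto(\lambda/r,x/r,t/r^2)\) preserve ellipticity, the class of operators, and parabolic measure. So we may assume \((x_0,t_0)=0\) and \(r=1\), and it suffices to prove \(\omega_{\cH}^{A_4^+(0,0)}(\Delta_1(0,0))\geq c\) with \(c\) structural.

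The first route is direct and uses the barrier argument behind the total-mass remark. Choose \(f\in C_0(\R^n\times\R)\) with \(0\leq f\leq1\), \(f=1\) on \(\Delta_{1/2}(0,0)\), and \(\operatorname{supp}f\subset\Delta_1(0,0)\). Let \(u\) be the corresponding solution, so that \(u(P)\leq\omega^P(\Delta_1)\) by \eqref{eq:parabolic-measure-representation}. The function \(1-u\) vanishes on \(\Delta_{1/2}\) and satisfies \(0\leq 1-u\leq1\). By the boundary H\"older estimate of Lemma~\ref{lem:local-parabolic-estimates}, applied in a past half-cylinder \(\mathcal Q^-_{\rho,+}\) centred at a boundary point of \(\Delta_{1/2}\) near the top time, we get \(1-u\leq C(\lambda/\rho)^\beta\). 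Hence \(u\geq 1/2\) at some point \(Z_1=(\delta,0,t_1)\), where \(\delta\) is small and structural and \(t_1\in(0,1/4)\) is close to the top of \(\Delta_{1/2}\).

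Next, \(u\geq0\) solves \(\cH u=0\), and \(A_4^+(0,0)=(16,0,256)\) lies strictly later than \(Z_1\). A forward Harnack chain of bounded length and structural geometry joins a cylinder around \(Z_1\) (of radius \(\sim\delta\)) to one around \(A_4^+\). Lemma~\ref{lem:parabolic-harnack} then gives \(u(A_4^+)\geq C^{-1}u(Z_1)\geq c\). The chain first rises in \(\lambda\) while moving forward in time and stays in \(\{\lambda>\delta/2\}\), so the number of steps depends only on \(\delta\), which is structural.

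The main point to check is the choice of the half-cylinder in the H\"older step. It must lie in \(\{t<t_1\}\) above \(\Delta_{1/2}\) with zero trace there; the time range \((t_1-\rho^2,t_1)\) with \(\rho=1/4\) works and stays inside \(\Delta_{1/2}\). The estimate is on \(\mathcal Q^-_{\rho/2,+}\), which contains \(Z_1\) only if \(t_1\) lies in its range, so \(Z_1\) should be taken slightly before its top time. The mean-square term is \(\leq1\) since \(|1-u|\leq1\), which requires the maximum principle \(0\leq u\leq1\). For the adjoint statement, reverse time: use future half-cylinders, the point \(A_{4r}^-\), and the reversed Harnack inequality \eqref{eq:adjoint-harnack}. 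As an alternative, one could use \eqref{eq:cfms-forward-raw} together with a Green-function lower bound, but the barrier argument avoids pole-region bookkeeping.
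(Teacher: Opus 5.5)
Your proposal is correct and follows the paper's own argument. The paper also takes the solution \(u\) with data \(f\), where \(f=1\) on \(\Delta_{r/2}\) and \(f\) is supported in \(\Delta_r\), and applies boundary H\"older decay to \(1-u\) to get \(u\geq 1/2\) at a low point above \(\Delta_{r/2}\). A forward Harnack chain up to \(A_{4r}^+\) together with \(\omega^{A_{4r}^+}(\Delta_r)\geq u(A_{4r}^+)\) then gives the bound, and time reversal gives the adjoint statement. The only difference is that you spell out the past half-cylinder and the choice of the low point explicitly, whereas the paper simply evaluates at the corkscrew \(A_{\varepsilon r}^+\).
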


\begin{rem}
Doubling and nondegeneracy are separate conclusions.  The
nondegeneracy estimate in
Lemma~\ref{lem:parabolic-measure-nondegeneracy} follows directly from
boundary H\"older continuity and the forward Harnack inequality.  Indeed,
choose \(f\in C_0(\Delta_r(x_0,t_0))\) such that
\[
 0\leq f\leq1,
 \qquad
 f=1\quad\text{on }\Delta_{r/2}(x_0,t_0),
\]
and let \(u\) be the solution with boundary data \(f\).  The function
\(1-u\) is nonnegative and vanishes on
\(\Delta_{r/2}(x_0,t_0)\).  Boundary H\"older continuity therefore gives
\[
 1-u(A_{\varepsilon r}^+(x_0,t_0))
 \leq C\varepsilon^\beta
\]
for a fixed sufficiently small \(\varepsilon>0\).  Hence
\(u(A_{\varepsilon r}^+)\geq1/2\).  A forward Harnack chain from
\(A_{\varepsilon r}^+\) to \(A_{4r}^+\) then gives
\[
 \omega_{\cH}^{A_{4r}^+}(\Delta_r)
 \geq u(A_{4r}^+)
 \geq c.
\]
The factor \(4\) may be replaced by any fixed factor that preserves the
causal separation and the required Harnack-chain geometry.  The lower
bound then depends on that factor and is not uniform for arbitrarily
distant poles.  See
\cite{FabesGarofaloSalsa,FabesSafonovYuan,Nystrom}.
\end{rem}

\begin{rem}\label{rem:green-function-harnack}
Fix \(P=(X,t)\).  As a function of \(Z=(Y,s)\),
\(G_{\cH}^{\Omega}(P;Z)\) solves the adjoint equation away from \(P\).
Consequently, if \(Z_1\) and \(Z_2\) are connected by an adjoint Harnack
chain that stays quantitatively separated from \(P\) and from the
parabolic boundary, then the comparison in
\eqref{eq:adjoint-harnack} applies to the two Green-function values.  In
this adjoint-variable description, \(P\) is the singularity rather than
the second-variable pole in our forward Green-function convention.  If
the second point is fixed and the first Green variable is allowed to
vary, the forward comparison \eqref{eq:forward-harnack} applies instead.
\end{rem}

\begin{lem}
\label{lem:green-function-scale}
Let \(\Omega\) be either the half-space or a time-oriented cylinder in
the fixed quantitative class of Remark~\ref{rem:domain-convention}.
Let \(P=(X,t),Z=(Y,s)\in\Omega\) satisfy
\[
 cr^2\leq t-s\leq Cr^2,
 \qquad |X-Y|\lesssim r.
\]
Assume that there is a point \(P_1=(X_1,t_1)\) such that
\[
 cr^2\leq t_1-s\leq Cr^2,
 \qquad |X_1-Y|\lesssim r,
\]
and a standard time-oriented cylinder
\(\mathcal C_r=B(X_c,\gamma r)\times(\tau_c,\tau_c+\sigma r^2)\), where
\(\gamma,\sigma\) are bounded above and below by fixed positive constants.
Assume that \(Z,P_1\in\mathcal C_r\), that their spatial components are at
distance at least \(cr\) from \(\partial B(X_c,\gamma r)\), that
\(s-\tau_c\geq cr^2\) and
\(\tau_c+\sigma r^2-t_1\geq cr^2\), and that a fixed enlargement of
\(\mathcal C_r\) is contained in \(\Omega\).  Assume further that
\(P_1\) can be joined to \(P\) by a forward Harnack chain of uniformly
bounded length, with \(P_1\) occurring before \(P\).  Every cylinder in
this chain is assumed to have radius comparable to \(r\), and its fixed
enlargement is contained in \(\Omega\), remains at parabolic distance
comparable to \(r\) from \(Z\), and is quantitatively separated from all
spatial and temporal boundary faces.  Then
\begin{equation}\label{eq:green-scale}
 G_{\cH}^{\Omega}(P;Z)\simeq r^{-(n+1)}.
\end{equation}
The comparison constants depend only on \(n,\mu,\Lambda\) and the fixed
geometric constants in the preceding assumptions.  The adjoint statement
holds with the causal direction reversed.
\end{lem}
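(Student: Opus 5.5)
The plan is to prove the two-sided bound \eqref{eq:green-scale} by comparing $G_{\cH}^{\Omega}(P;Z)$ with $G_{\cH}^{\Omega}(P_1;Z)$. The upper bound is immediate and needs no Harnack chain. Since $t-s\geq cr^2>0$ and $|X-Y|\lesssim r$, the Aronson-type estimate \eqref{eq:green-parabolic-upper} of Lemma~\ref{lem:green-upper} gives
\[
 G_{\cH}^{\Omega}(P;Z)\leq \frac{C}{(|X-Y|+|t-s|^{1/2})^{n+1}}\leq C(cr^2)^{-(n+1)/2}\lesssim r^{-(n+1)}.
\]
All the work therefore goes into the lower bound.

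For the lower bound, I first treat the local cylinder $\mathcal C_r$. Let $G^{\mathcal C_r}$ denote the Dirichlet Green function of $\mathcal C_r$. Monotonicity of Dirichlet Green functions in the domain (the maximum principle, since $G^{\Omega}-G^{\mathcal C_r}$ is a nonnegative solution in $\mathcal C_r$ away from the pole) gives $G_{\cH}^{\Omega}(P_1;Z)\geq G^{\mathcal C_r}(P_1;Z)$. It then suffices to show that $G^{\mathcal C_r}(P_1;Z)\gtrsim r^{-(n+1)}$.

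By translation and parabolic dilation $(X,t)\mapsto(X/r,t/r^2)$, which preserves ellipticity constants and multiplies the Green function by $r^{n+1}$, this reduces to the case $r=1$. In that case the claim is a uniform positive lower bound for the Green function of a unit-size cylinder. The bound is evaluated at points separated from the lateral boundary by $c$, from the initial face by $c$ (as $s-\tau_c\geq c$), and from the terminal face by $c$, with time lag $t_1-s\in[c,C]$ and $|X_1-Y|\lesssim 1$.

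I would obtain this lower bound in the standard way, using Aronson's lower bound for the Dirichlet Green function of a cylinder at interior points with bounded time lag; see \cite{Aronson,FabesSafonov}. Alternatively, and more self-containedly, I would combine three ingredients: a near-diagonal lower bound $G^{\mathcal C}(W;Z)\gtrsim \delta^{-(n+1)}$ for $W$ at parabolic distance $\delta$ from $Z$ in its future, which follows from the representation $\int G^{\mathcal C}(\cdot;Z)\,\cdots$ and mass conservation on small subcylinders; the forward Harnack inequality \eqref{eq:forward-harnack}, applied in the first variable along a chain from such a $W$ to $P_1$ inside the interior of $\mathcal C_1$; and the fact that every constant involved is fixed.

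The last step transfers the bound from $P_1$ to $P$. The function $W\mapsto G_{\cH}^{\Omega}(W;Z)$ is a nonnegative forward solution away from $Z$, as noted in Remark~\ref{rem:green-function-harnack}. The assumed forward Harnack chain from $P_1$ to $P$ has bounded length, its cylinders have radii comparable to $r$, its enlargements lie in $\Omega$, and it stays at parabolic distance $\simeq r$ from $Z$. Lemma~\ref{lem:parabolic-harnack} therefore yields
\[
 G_{\cH}^{\Omega}(P;Z)\geq C^{-1}G_{\cH}^{\Omega}(P_1;Z)\gtrsim r^{-(n+1)}.
\]
The adjoint statement follows by time reversal, using reciprocity \eqref{eq:green-reciprocity}.

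I expect the main obstacle to be the uniform local lower bound $G^{\mathcal C_1}(P_1;Z)\gtrsim 1$. Care is needed to keep the constant uniform, using only the clearance from both time faces and the lateral boundary, and to avoid any dependence on the diameter of $D$ or on coefficient smoothness.
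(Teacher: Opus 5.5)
Your proposal is correct and follows essentially the same route as the paper. The upper bound comes from Lemma~\ref{lem:green-upper}; the lower bound at \(P_1\) comes from domain monotonicity and the interior Green-function lower bound in \(\mathcal C_r\); and it is transferred to \(P\) by the forward Harnack inequality in the first Green variable along the assumed chain. Your optional self-contained derivation of the interior lower bound, namely parabolic rescaling, a near-diagonal bound, and an interior Harnack chain, simply unpacks what the paper cites from \cite{Aronson,FabesGarofaloSalsa,Nystrom}.
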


\begin{proof}
The upper estimate follows from Lemma~\ref{lem:green-upper}.  For the
lower estimate, the interior Green-function lower bound in
\(\mathcal C_r\), together with domain monotonicity, gives
\[
 G_{\cH}^{\Omega}(P_1;Z)
 \geq
 G_{\cH}^{\mathcal C_r}(P_1;Z)
 \gtrsim r^{-(n+1)}.
\]
See~\cite{Aronson,FabesGarofaloSalsa,Nystrom}.  Now fix \(Z\) and regard
\[
 Q\longmapsto G_{\cH}^{\Omega}(Q;Z)
\]
as a forward solution.  By hypothesis, every member of the Harnack chain
from \(P_1\) to \(P\) is quantitatively separated from the singularity
\(Z\).  The forward Harnack inequality may therefore be applied
successively along the chain, and it yields
\[
 G_{\cH}^{\Omega}(P_1;Z)
 \lesssim
 G_{\cH}^{\Omega}(P;Z).
\]
Combining the last two estimates proves the lower bound in
\eqref{eq:green-scale}.
\end{proof}

The exponent in \eqref{eq:green-scale} reflects the \(n+1\) spatial
variables of the operator.  At time separation comparable to \(r^2\),
the fundamental solution has size \(r^{-(n+1)}\).  Without the boundary
clearance in Lemma~\ref{lem:green-function-scale}, only the upper bound is
used.

\begin{lem}
\label{lem:change-of-pole}
Let \(\Delta_r=\Delta_r(x_0,t_0)\), and let \(\Omega\) be either the
half-space or a time-oriented cylinder satisfying the convention in
Remark~\ref{rem:domain-convention} relative to \(\Delta_r\).  Set
\(P_r^0=A_{4r}^+(x_0,t_0)\), and assume that
\(P,P_r^0\in\Omega\cap\mathcal P_{\kappa,r}^+(x_0,t_0)\).  Then, for
every Borel set \(E\subset\Delta_r\),
\begin{equation}\label{eq:change-of-pole}
 \frac{\omega_{\cH,\Omega}^P(E)}
      {\omega_{\cH,\Omega}^P(\Delta_r)}
 \simeq
 \frac{\omega_{\cH,\Omega}^{P_r^0}(E)}
      {\omega_{\cH,\Omega}^{P_r^0}(\Delta_r)}.
\end{equation}
The comparison constant depends only on
\(n,\mu,\Lambda,\kappa\) and the fixed cylinder geometry.  The
corresponding adjoint statement holds with
\(P_r^0=A_{4r}^-(x_0,t_0)\),
\(\mathcal P_{\kappa,r}^-\), and
\(\omega_{\cH^*,\Omega}\) in place of
\(\mathcal P_{\kappa,r}^+\) and
\(\omega_{\cH,\Omega}\), respectively.
\end{lem}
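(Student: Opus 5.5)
The plan is the standard change-of-pole argument via boundary comparison of Green functions, carried out with the forward-adjoint orientation made explicit. Everything will be reduced to small boxes: fix \(E\subset\Delta_r\) Borel. By regularity of the measures, it suffices to treat \(E\) closed, and then to compare the measures of small boundary cubes \(\Delta_\rho(\widetilde z)\) with \(\widetilde z\in\Delta_r\) and \(\rho\) small relative to \(r\). A Vitali/Besicovitch-type covering argument, using the doubling of Lemma~\ref{lem:parabolic-measure-doubling} for both poles, then passes from cubes to arbitrary Borel sets. Cubes near the edge of \(\Delta_r\), or with \(\rho\) comparable to \(r\), are handled directly: for these, both normalized ratios are comparable to one by doubling and nondegeneracy (Lemma~\ref{lem:parabolic-measure-nondegeneracy}), after a Harnack chain joining \(P\) to \(P_r^0\).

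For the small cubes, Lemma~\ref{lem:cfms} applies to each pole \(Q\in\{P,P_r^0\}\), since \(Q\in\mathcal P^+_{\kappa',\rho}(\widetilde z)\) after enlarging the aperture. It gives
\[
\omega^Q_{\cH,\Omega}(\Delta_{\rho/2}(\widetilde z))\simeq \rho^{n+1}G^\Omega_{\cH}(Q;A^\pm_\rho(\widetilde z)).
\]
Lemma~\ref{lem:green-boundary-time-comparison} makes the \(\pm\) corkscrews interchangeable. The problem therefore reduces to showing that
\[
\frac{G^\Omega_{\cH}(P;Z)}{G^\Omega_{\cH}(P_r^0;Z)}
\]
is comparable to a constant \(\theta\) uniformly for \(Z\) in a fixed half-box \(T_{cr}(x_0,t_0)\) above \(\Delta_r\). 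One then takes \(\theta\simeq \omega^P(\Delta_r)/\omega^{P_r^0}(\Delta_r)\), evaluated at \(Z=A_{cr}\) via Lemma~\ref{lem:cfms} at scale \(r\).

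That uniform ratio comes from Lemma~\ref{lem:boundary-comparison} in its adjoint form. As functions of \(Z\), both \(u(Z)=G(P;Z)\) and \(v(Z)=G(P_r^0;Z)\) are positive adjoint solutions in \(T_{2cr}\), vanishing on the flat boundary. Both singularities lie in the future, well separated from the region, by the definition of \(\mathcal P^+_{\kappa,r}\). The constant therefore depends on the adjoint balance \(\mathcal B^*_{\rho_0}(u,v)\). By Lemma~\ref{lem:green-boundary-time-comparison}, applied with the pole-separated Green function, we have \(G(Q;A^-_{\rho_0})\simeq G(Q;A^+_{\rho_0})\), so the balance is bounded structurally. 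The comparison \eqref{eq:normalized-boundary-comparison}, together with a covering of \(T_{cr}\) by boxes \(T_{\rho/c_1}(\widetilde z)\) and interior Harnack chains, then gives \(u/v\simeq u(A_{cr})/v(A_{cr})\) throughout.

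The main obstacle I expect is getting the causal orientation right. One must verify that the balance relevant for adjoint solutions (past corkscrew over future corkscrew) is exactly what Lemma~\ref{lem:green-boundary-time-comparison} controls. One must also make sure that the scale-\(\rho\) applications of Lemma~\ref{lem:cfms} stay within the cylinder convention of Remark~\ref{rem:domain-convention} uniformly in \(\rho\). A second delicate point is that the upper and lower bounds in \eqref{eq:cfms-forward-raw} use different corkscrews. That is exactly where the backward-Harnack equivalence is indispensable, so I would apply it at every scale \(\rho\), with constants depending only on the aperture \(\kappa'\).
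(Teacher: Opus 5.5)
Your plan is essentially the paper's proof. The paper also compares \(G_{\cH}^{\Omega}(P;\cdot)\) and \(G_{\cH}^{\Omega}(P_r^0;\cdot)\) as positive adjoint solutions through the adjoint form of Lemma~\ref{lem:boundary-comparison}, with the balance bounded by Lemma~\ref{lem:green-boundary-time-comparison}. It then converts this into comparability on subcubes via Lemma~\ref{lem:cfms} and passes to Borel sets by Vitali differentiation and regularity.

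Two small corrections. First, small cubes near the edge of \(\Delta_r\) do not have normalized ratios comparable to one, but they need no separate treatment, because the same Green-function comparison, taken over a fixed enlargement of \(\Delta_r\), already covers their corkscrews. Second, the covering step only needs doubling for \(\omega^{P_r^0}_{\cH,\Omega}\), which is exactly what Lemma~\ref{lem:parabolic-measure-doubling} provides. Doubling for a general admissible \(P\) is deduced in the paper from this lemma, so you should not invoke it here.
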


\begin{proof}
Set
\[
 g_P(Z)=G_{\cH}^{\Omega}(P;Z),
 \qquad
 g_0(Z)=G_{\cH}^{\Omega}(P_r^0;Z).
\]
Both functions are positive adjoint solutions in the common comparison
box and vanish on its flat boundary.  Their singularities lie outside
that box.  Lemma~\ref{lem:green-boundary-time-comparison} controls their
reference balance parameters.  Hence adjoint boundary comparison applies
with a constant depending only on the structural data and the fixed pole
geometry.  Combining the comparison at the corkscrews above every
subcube of \(\Delta_r\) with Lemma~\ref{lem:cfms} proves
\eqref{eq:change-of-pole} first for subcubes.  Differentiation with respect
to the parabolic Vitali basis gives mutually bounded Radon-Nikodym
derivatives for the normalized restrictions.  Inner and outer regularity
extend the estimate to all Borel sets.
\end{proof}

\begin{rem}
The comparison constant depends on the fixed aperture and causal
separation in the admissibility condition.  It does not depend on the
distance from \(P\) to \(\Delta_r\), which may be much larger than \(r\).
\end{rem}

Lemmas~\ref{lem:parabolic-measure-doubling}
and~\ref{lem:change-of-pole} imply boundary doubling for every admissible
forward pole.  The doubling constant depends only on the structural
constants and the fixed aperture in the admissibility condition.  Moreover,
combining
Lemmas~\ref{lem:parabolic-measure-doubling},~\ref{lem:green-boundary-time-comparison},
and~\ref{lem:cfms} gives the
abbreviated half-space form
\begin{equation}\label{eq:cfms}
 \frac{\omega_{\cH}^P(\Delta_r)}{|\Delta_r|}
 \simeq\frac{G_{\cH}(P;A_r^\pm)}{r}.
\end{equation}
The estimate holds for either choice of sign.  Here \(P\) and the reference
point have the stated forward causal
separation.  The same formula holds in a time-oriented cylinder, with
\(\omega_{\cH,\Omega}^P\) and \(G_{\cH}^{\Omega}\), whenever the fixed
enlargement required in Lemma~\ref{lem:cfms} remains inside the cylinder.
The adjoint formula has the time directions reversed.

\subsection{Localization and change of operator}
\label{subsec:local-change-tools}

Fix \(\kappa>16\) and set
\begin{equation}\label{eq:localization-cylinder}
 \mathcal U_{\kappa r}(x_0,t_0)
 =(0,\kappa r)\times B(x_0,\kappa r)
 \times(t_0-\kappa^2r^2,t_0+\kappa^2r^2).
\end{equation}
Set \(Z_r=A_{4r}^+(x_0,t_0)\).  Since \(\kappa>16\), the point \(Z_r\)
belongs to \(\mathcal U_{\kappa r}(x_0,t_0)\) and lies outside a fixed
smaller comparison box above \(2\Delta_r(x_0,t_0)\).

\begin{lem}
\label{lem:localization-parabolic-measure}
Fix \(\kappa_P\geq1\).  Let
\(P\in\mathcal P_{\kappa_P,r}^+(x_0,t_0)\) and assume that
\(t(P)>t_0+\kappa^2r^2\), so that \(P\) lies beyond the terminal time
face of \(\mathcal U_{\kappa r}(x_0,t_0)\).  Let
\(\omega_{\mathcal U}^{Z_r}\) be parabolic measure for the same operator
but in \(\mathcal U_{\kappa r}(x_0,t_0)\).  Then, for every Borel set
\(E\subset\Delta_r(x_0,t_0)\),
\begin{equation}\label{eq:localization-parabolic-measure}
 \frac{\omega_{\cH}^P(E)}
      {\omega_{\cH}^P(\Delta_r)}
 \simeq
 \frac{\omega_{\mathcal U}^{Z_r}(E)}
      {\omega_{\mathcal U}^{Z_r}(\Delta_r)}.
\end{equation}
The comparison constant depends only on
\(n,\mu,\Lambda,\kappa\), and \(\kappa_P\).
\end{lem}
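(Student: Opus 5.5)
The idea is to reduce both sides of \eqref{eq:localization-parabolic-measure} to comparisons of adjoint Green functions. We then use boundary comparison in a fixed box above \(2\Delta_r(x_0,t_0)\), exactly as in the proof of Lemma~\ref{lem:change-of-pole}, and connect the two Green functions through a maximum-principle argument on \(\mathcal U_{\kappa r}\).

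Set
\[
g(Z)=G_{\cH}(P;Z),\qquad h(Z)=G_{\cH}^{\mathcal U}(Z_r;Z),
\qquad \mathcal U=\mathcal U_{\kappa r}(x_0,t_0).
\]
Both are positive adjoint solutions in \(T_{2r'}(x_0,t_0)\) for a fixed \(r'\simeq r\) (with \(2r'<4r\)). Both vanish on the flat boundary, and both have their singularities outside this box: \(P\) lies beyond the terminal face, and \(Z_r\) lies outside the smaller comparison box. By Lemma~\ref{lem:cfms}, applied in the half-space to \(g\) and in the cylinder \(\mathcal U\) to \(h\), with Lemma~\ref{lem:green-boundary-time-comparison} used to interchange \(A^\pm\), we get for every subcube \(\Delta_\rho(y,s)\subset\Delta_r\)
\[
\omega_{\cH}^P(\Delta_\rho(y,s))\simeq \rho^{n+1}g(A_\rho(y,s)),\qquad
\omega_{\mathcal U}^{Z_r}(\Delta_\rho(y,s))\simeq \rho^{n+1}h(A_\rho(y,s)).
\]
For \(h\), \(\mathcal U\) satisfies the convention of Remark~\ref{rem:domain-convention} relative to \(\Delta_r\) only with \(K_\Omega\) comparable to \(\kappa\). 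I will therefore apply these results at a reduced scale \(r/c\), or note that \(\kappa>16\) is a fixed parameter so that the constants may depend on it.

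The adjoint balance parameters \(\mathcal B^*_{\rho_0}\) of \(g\) and \(h\) are bounded by Lemma~\ref{lem:green-boundary-time-comparison}. For \(g\) this uses \(P\in\mathcal P^+_{\kappa_P,r}\). For \(h\) it uses that \(Z_r\) has forward causal separation \(16r^2\) with fixed aperture from the relevant corkscrews and clearance from the faces of \(\mathcal U\). Lemma~\ref{lem:boundary-comparison} in its adjoint form then gives
\[
\frac{g(A_\rho(y,s))}{h(A_\rho(y,s))}\simeq \frac{g(A_{r'}(x_0,t_0))}{h(A_{r'}(x_0,t_0))}
\]
uniformly over subcubes. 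Dividing by the case \(\rho=r\), which is handled through doubling (Lemmas~\ref{lem:parabolic-measure-doubling} and~\ref{lem:change-of-pole}) and a finite covering of \(\Delta_r\) by cubes of size \(\simeq r'\), yields \eqref{eq:localization-parabolic-measure} for subcubes. As in Lemma~\ref{lem:change-of-pole}, differentiation with respect to the parabolic Vitali basis and inner and outer regularity then extend it to Borel sets \(E\).

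The main obstacle is justifying the boundary comparison for \(g\), whose pole \(P\) lies outside \(\mathcal U\), possibly very far away. Its balance must be controlled purely by the aperture \(\kappa_P\), independently of \(t(P)-t_0\). I would handle this directly with Lemma~\ref{lem:green-boundary-time-comparison} in the half-space, which requires only \(P\in\mathcal P^+_{\kappa_P,r}\); the extra hypothesis \(t(P)>t_0+\kappa^2r^2\) only ensures that the singularity of \(g\) is excluded from \(\mathcal U\). One also has to check that the symmetric fixed enlargement used by Lemma~\ref{lem:boundary-comparison} for \(h\) stays inside \(\mathcal U\), away from its terminal face \(t_0+\kappa^2r^2\) and from \(Z_r\). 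This holds since \(\kappa>16\) while the comparison box has size \(\lesssim r\), after possibly shrinking \(r'\) by a fixed factor.
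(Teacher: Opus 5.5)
This is essentially the paper's proof: you compare $G_{\cH}(P;\cdot)$ with $G_{\cH}^{\mathcal U}(Z_r;\cdot)$ by adjoint boundary comparison, with both balances controlled by Lemma~\ref{lem:green-boundary-time-comparison} (for $P$ through the aperture $\kappa_P$ alone), pass to measures of subcubes via Lemma~\ref{lem:cfms}, and extend to Borel sets by the Vitali covering and regularity argument. You also check that the comparison box fits inside $\mathcal U$, which the paper leaves implicit; since Lemma~\ref{lem:boundary-comparison} only compares on a reduced region, covering all of $\Delta_r$ really needs several centers linked by Harnack chains. Finally, your opening reference to a maximum-principle argument is never used and can be dropped.
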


\begin{proof}
The global Green function
\[
 g_P(Y,s)=G_{\cH}(P;Y,s)
\]
is a positive adjoint solution in the localization cylinder and vanishes
on its flat boundary portion.  Its adjoint singularity lies at \(P\),
outside the cylinder.  Define
\[
 g_{Z_r}(Y,s)
 :=G_{\cH}^{\mathcal U_{\kappa r}(x_0,t_0)}(Z_r;Y,s).
\]
As a function of \((Y,s)\), this is also an adjoint solution.  Its
adjoint-variable singularity is \(Z_r\), which lies outside the fixed
smaller box above \(\Delta_r\).
Lemma~\ref{lem:green-boundary-time-comparison} controls the reference
balance parameters of \(g_P\) and \(g_{Z_r}\).  The adjoint boundary
comparison principle therefore applies with a structural constant and
compares their normalized values at corkscrews above every subcube
\(\Delta_\rho(y,s)\subset2\Delta_r\).  Lemma~\ref{lem:cfms}
therefore gives the analogue of
\eqref{eq:localization-parabolic-measure} first for such subcubes.  A
parabolic Vitali covering argument and the inner and outer regularity of
the measures extend the comparison to every Borel subset of \(\Delta_r\).
\end{proof}

\begin{rem}
The parameter \(\kappa_P\) is the aperture of the forward pole region
\(\mathcal P_{\kappa_P,r}^+(x_0,t_0)\).  Thus it controls the spatial
and transverse location of \(P\) relative to its time separation from
\((x_0,t_0)\).  It is distinct from \(\kappa\), which determines the
size of the localization cylinder
\(\mathcal U_{\kappa r}(x_0,t_0)\).  The comparison constant in
\eqref{eq:localization-parabolic-measure} depends on both \(\kappa\)
and \(\kappa_P\).  Since
\(Z_r=A_{4r}^+(x_0,t_0)=(16r,x_0,t_0+256r^2)\), its distances from the
upper transverse face and the terminal time face
of \(\mathcal U_{\kappa r}(x_0,t_0)\) are \((\kappa-16)r\) and
 \((\kappa^2-256)r^2\), respectively.  Consequently, the comparison constant may deteriorate as
\(\kappa\downarrow16\).
\end{rem}

\begin{cor}
\label{cor:local-change-operator}
Fix \(\kappa>16\) and \(\kappa_P\geq1\).  For \(i\in\{1,2\}\), let
\[
 \cH_i
 =
 \partial_t-\operatorname{div}_{\lambda,x}
 \bigl(A_i\nabla_{\lambda,x}\bigr),
\]
where \(A_i\) is real, bounded, and uniformly elliptic with common
ellipticity constants.  Suppose that
\[
 A_1=A_2
 \quad\text{almost everywhere in }
 \mathcal U_{\kappa r}(x_0,t_0).
\]
Let
\[
 P_i\in\mathcal P_{\kappa_P,r}^+(x_0,t_0),
 \qquad
 t(P_i)>t_0+\kappa^2r^2,
 \qquad i=1,2.
\]
Then, for every Borel set \(E\subset\Delta_r(x_0,t_0)\),
\begin{equation}\label{eq:local-change-package}
 \frac{\omega_{\cH_1}^{P_1}(E)}
      {\omega_{\cH_1}^{P_1}(\Delta_r)}
 \simeq
 \frac{\omega_{\cH_2}^{P_2}(E)}
      {\omega_{\cH_2}^{P_2}(\Delta_r)}.
\end{equation}
The comparison constant depends only on \(n\), the common ellipticity
constants, \(\kappa\), and \(\kappa_P\).
\end{cor}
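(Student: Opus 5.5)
The corollary follows by chaining Lemma~\ref{lem:localization-parabolic-measure} twice through the common localized measure. The key observation is that the localized parabolic measure \(\omega_{\mathcal U}^{Z_r}\) in \(\mathcal U_{\kappa r}(x_0,t_0)\) with pole \(Z_r=A_{4r}^+(x_0,t_0)\) depends only on the coefficients inside \(\mathcal U_{\kappa r}(x_0,t_0)\).

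First, since \(A_1=A_2\) almost everywhere in \(\mathcal U_{\kappa r}(x_0,t_0)\), the weak formulations \eqref{eq:weak-solution} of \(\cH_1u=0\) and \(\cH_2u=0\) coincide for test functions supported in \(\mathcal U:=\mathcal U_{\kappa r}(x_0,t_0)\). Hence the continuous Dirichlet problems in \(\mathcal U\) with data on \(\partial_p\mathcal U\) have the same solutions, by uniqueness via the maximum principle. By the Riesz representation in the cylinder,
\[
 \omega_{\cH_1,\mathcal U}^{Z_r}=\omega_{\cH_2,\mathcal U}^{Z_r}
\]
as measures on \(\partial_p\mathcal U\). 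Since \(Z_r\in\mathcal U\), this is a single measure \(\omega_{\mathcal U}^{Z_r}\).

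Second, for each \(i\in\{1,2\}\), apply Lemma~\ref{lem:localization-parabolic-measure} to \(\cH_i\) with pole \(P_i\). Its hypotheses hold by assumption: \(P_i\in\mathcal P_{\kappa_P,r}^+(x_0,t_0)\) and \(t(P_i)>t_0+\kappa^2r^2\). For every Borel \(E\subset\Delta_r(x_0,t_0)\) this gives
\[
 \frac{\omega_{\cH_i}^{P_i}(E)}{\omega_{\cH_i}^{P_i}(\Delta_r)}
 \simeq
 \frac{\omega_{\mathcal U}^{Z_r}(E)}{\omega_{\mathcal U}^{Z_r}(\Delta_r)},
\]
with constants depending only on \(n\), the ellipticity constants, \(\kappa\) and \(\kappa_P\). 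Since the lemma's constants are structural, they are the same for \(i=1,2\), because the ellipticity constants are common. Dividing the two comparisons yields \eqref{eq:local-change-package}. The denominators are positive: \(\omega_{\mathcal U}^{Z_r}(\Delta_r)>0\) by the nondegeneracy argument of Lemma~\ref{lem:parabolic-measure-nondegeneracy}, carried out inside \(\mathcal U\), and the global denominators are positive by the same reasoning together with a forward Harnack chain to \(P_i\).

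The only delicate point is the first step, namely that the localized measure is truly operator-independent. This needs the cylinder Dirichlet problem to be well posed for bounded measurable coefficients, which holds since \(\mathcal U\) is a Lipschitz cylinder whose lateral boundary satisfies the parabolic exterior condition. It also needs the comparison in Lemma~\ref{lem:localization-parabolic-measure} to use only data in \(\mathcal U\) and the structural constants, not the coefficients outside \(\mathcal U\). The lemma's proof confirms this: \(g_{Z_r}\) is the Green function of \(\mathcal U\), and \(g_{P_i}\) enters only through boundary comparison and Lemma~\ref{lem:cfms}, whose constants are structural.
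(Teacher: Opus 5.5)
Your proposal is correct and matches the paper's proof: apply Lemma~\ref{lem:localization-parabolic-measure} to each \(\cH_i\) with pole \(P_i\), and identify the two localized measures \(\omega_{1,\mathcal U}^{Z_r}=\omega_{2,\mathcal U}^{Z_r}\) by uniqueness of the local Dirichlet problem in \(\mathcal U_{\kappa r}(x_0,t_0)\), on which \(A_1=A_2\). Your additional remarks on the positivity of the denominators and on the well-posedness of the cylinder problem are harmless refinements of what the paper leaves implicit.
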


\begin{proof}
Set \(\mathcal U=\mathcal U_{\kappa r}(x_0,t_0)\), \(Z_r=A_{4r}^+(x_0,t_0)\), and denote by \(\omega_{i,\mathcal U}^{Z_r}\) the parabolic measure for
\(\cH_i\) in \(\mathcal U\), evaluated at \(Z_r\).
Lemma~\ref{lem:localization-parabolic-measure} gives
\[
 \frac{\omega_{\cH_i}^{P_i}(E)}
      {\omega_{\cH_i}^{P_i}(\Delta_r)}
 \simeq
 \frac{\omega_{i,\mathcal U}^{Z_r}(E)}
      {\omega_{i,\mathcal U}^{Z_r}(\Delta_r)},
 \qquad i=1,2.
\]
Since \(A_1=A_2\) almost everywhere in \(\mathcal U\), the two local
Dirichlet problems have the same weak solutions for the same boundary
data.  Uniqueness of the local Dirichlet problem therefore implies
\[
 \omega_{1,\mathcal U}^{Z_r}
 =
 \omega_{2,\mathcal U}^{Z_r}.
\]
Combining these identities with the preceding comparisons proves
\eqref{eq:local-change-package}.
\end{proof}

\subsection{Reverse H\"older classes and the Dirichlet problem}
\label{subsec:RH-Dirichlet-tools}

\begin{defn}\label{defn:parabolic-Ainfty}
Let \(\Delta_0=\Delta_{r_0}(x_0,t_0)\) and
\(P_0=A_{4r_0}^+(x_0,t_0)\).  We say that
\(\omega_{\cH}^{P_0}\in A_\infty(\Delta_0,\mathrm d x\,\mathrm d t)\)
if, for every \(\varepsilon>0\), there is
\(\delta=\delta(\varepsilon)>0\) such that
\begin{equation}\label{eq:Ainfty-definition}
 \frac{\omega_{\cH}^{P_0}(E)}
      {\omega_{\cH}^{P_0}(\Delta)}<\delta
 \quad\Longrightarrow\quad
 \frac{|E|}{|\Delta|}<\varepsilon
\end{equation}
whenever \(\Delta\subset\Delta_0\) and \(E\subset\Delta\) is Borel.  We
write \(\omega_{\cH}\in A_\infty(\mathrm d x\,\mathrm d t)\) when this
holds for every \(\Delta_0\), with uniform constants.
\end{defn}

\begin{rem}
The direction of the implication in \eqref{eq:Ainfty-definition} is the
convention used in the parabolic-measure literature cited here.  Together
with local doubling, it is quantitatively equivalent to the usual
weight-theoretic formulations of \(A_\infty\).  See
\cite{CoifmanFefferman,Nystrom}.
\end{rem}

\begin{defn}\label{defn:parabolic-Bp}
Let \(\Delta_0=\Delta_{r_0}(x_0,t_0)\) and
\(P_0=A_{4r_0}^+(x_0,t_0)\).  For \(1<p<\infty\), we say that
\(\omega_{\cH}\in B_p(\mathrm d x\,\mathrm d t)\), equivalently
\(\omega_{\cH}\in \mathrm{RH}_p(\mathrm d x\,\mathrm d t)\), if
\(\omega_{\cH}^{P_0}\ll\mathrm d x\,\mathrm d t\) and its Poisson kernel
\[
 k^{P_0}(x,t)
 =\frac{\mathrm d \omega_{\cH}^{P_0}}
       {\mathrm d x\,\mathrm d t}(x,t)
\]
satisfies
\begin{equation}\label{eq:Bp-definition}
 \left(
 \frac1{|\Delta|}
 \iint_\Delta (k^{P_0}(x,t))^p\,\mathrm d x\,\mathrm d t
 \right)^{1/p}
 \leq
 \frac{C}{|\Delta|}
 \iint_\Delta k^{P_0}(x,t)\,\mathrm d x\,\mathrm d t
\end{equation}
whenever \(\Delta\subset\Delta_0\).  The constant is independent of
\(x_0,t_0,r_0\), and \(\Delta\).
\end{defn}

\begin{lem}
\label{lem:cube-conventions}
Fix \(1<p<\infty\).  The following assertions are quantitatively
equivalent, after a fixed adjustment of the pole-admissibility constants:
the reverse H\"older estimate of exponent \(p\) holds uniformly for every
center \((x,t)\in\R^n\times\R\), every \(r>0\), and every admissible pole
for the symmetric cube \(\Delta_r(x,t)\); or the corresponding estimate
holds with the same quantifiers for the backward cube
\(\Delta_r^-(x,t)\).  The same equivalence holds with all radii restricted
to \(0<r\leq r_0\), provided the harmless fixed enlargements occurring in
the covering argument are also allowed.
\end{lem}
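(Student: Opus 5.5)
The plan is to prove both implications by covering one family of cubes by the other and using doubling and change of pole to transfer averages of the Poisson kernel.

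\emph{Symmetric to backward.} Since \(\Delta_r^-(x,t)\subset\Delta_r(x,t)\), every pole admissible for \(\Delta_r^-(x,t)\) is, after a fixed adjustment of \(c_a,C_a\) (time separations from \(t\) and from the lower endpoint differ by at most \(r^2\)), admissible for \(\Delta_r(x,t)\). We then get
\[
\Bigl(\fint_{\Delta_r^-}(k^P)^p\Bigr)^{1/p}\le 2^{1/p}\Bigl(\fint_{\Delta_r}(k^P)^p\Bigr)^{1/p}\lesssim \fint_{\Delta_r}k^P\lesssim \fint_{\Delta_r^-}k^P .
\]
The last step needs \(\omega^P(\Delta_r)\lesssim\omega^P(\Delta_r^-)\). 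This follows from doubling of \(\omega^P\), namely Lemma~\ref{lem:parabolic-measure-doubling} transferred to admissible poles by Lemma~\ref{lem:change-of-pole}, since \(\Delta_r^-\) contains a cube \(\Delta_{r/2}(x,t-r^2/2)\) with \(\Delta_r\subset 4\Delta_{r/2}(x,t-r^2/2)\).

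\emph{Backward to symmetric.} I cover \(\Delta_r(x,t)\) by a bounded number \(N=N(n)\) of backward cubes \(\Delta_{r}^-(y_j,s_j)\) with \(d_p\)-distance of their centres to \((x,t)\) at most \(Cr\). For instance, take \(s_j\in\{t,\,t+r^2\}\) and centres \(y_j\) of a finite cover of \(Q_r(x)\) by balls of radius \(r/2\) rescaled, shrinking radii to \(r/2\) if needed; this is the harmless fixed enlargement allowed in the statement. An \(M'\)-admissible pole for \(\Delta_r(x,t)\), with \(M'\) large, is \(M\)-admissible for each piece after adjusting \(c_a,C_a\). This is where the fixed adjustment enters: we need \(c_aM^2>64\) and the additive \(O(r^2)\) time shift to be absorbed, which holds once \(M'\) is large. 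Then
\[
\int_{\Delta_r}(k^P)^p\le\sum_j\int_{\Delta^-_j}(k^P)^p\lesssim\sum_j|\Delta_j^-|^{1-p}\,\omega^P(\Delta_j^-)^p\lesssim |\Delta_r|^{1-p}\,\omega^P(C\Delta_r)^p ,
\]
and doubling again gives \(\omega^P(C\Delta_r)\lesssim\omega^P(\Delta_r)\). Absolute continuity on \(\Delta_r\) is inherited from the pieces.

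\emph{Restricted radii.} For \(0<r\le r_0\) the same argument applies verbatim. The covering uses cubes of radius comparable to \(r\), at most \(Cr\), and these are permitted by the allowed fixed enlargement of the range.

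The main obstacle is bookkeeping rather than analysis. One must check that the doubling constant is uniform for arbitrary admissible poles, including poles much farther away than \(r\). Lemma~\ref{lem:change-of-pole} gives this with constants depending only on the aperture, once all poles and the standard corkscrew \(A_{4Cr}^+\) lie in a common region \(\mathcal P^+_{\kappa,Cr}\), which Lemma~\ref{lem:universal-subcube-pole} arranges.
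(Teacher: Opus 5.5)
Your argument is correct and is essentially the paper's: you write the symmetric cube, up to a null set, as $\Delta_r^-(x,t)\cup\Delta_r^-(x,t+r^2)$, sum the local estimates using $\sum_j a_j^p\le(\sum_j a_j)^p$, and close with boundary doubling transferred to admissible poles by Lemma~\ref{lem:change-of-pole}; for the converse, your inclusion $\Delta_r^-\subset\Delta_r$ plus one doubling step is a slightly leaner substitute for the paper's covering of the backward cube by symmetric cubes. The spatial covering by the centres $y_j$ is unnecessary, and shrinking the radii to $r/2$ would break your time coverage by $s_j\in\{t,t+r^2\}$, but the radius-$r$ version you actually use works as written.
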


\begin{proof}
Each symmetric cube is, up to a set of Lebesgue measure zero, the union
of two backward cubes obtained by translating the time coordinate of the
center.
Conversely, every backward cube can be covered by a fixed number of
symmetric cubes of comparable radius contained in a fixed enlargement
of the backward cube.

Let \(E\) denote the cube on which the estimate is to be proved and let
\(\{E_j\}_{j=1}^{N_0}\) be the corresponding finite cover by cubes of
the other type.  The number \(N_0\) is structural,
\(|E_j|\simeq|E|\), and \(\bigcup_jE_j\) is contained in a fixed
enlargement \(E^\sharp\) of \(E\).  After adjusting the admissibility
constants and applying Lemma~\ref{lem:change-of-pole}, the assumed
reverse H\"older estimate may be applied to \(k^P\) on every \(E_j\),
with a uniform constant.  Hence
\begin{align*}
 \iint_E(k^P)^p\,\mathrm d x\,\mathrm d t
 \leq
 \sum_{j=1}^{N_0}
 \iint_{E_j}(k^P)^p\,\mathrm d x\,\mathrm d t&\lesssim
 |E|^{1-p}
 \sum_{j=1}^{N_0}
 \bigl(\omega_{\cH}^P(E_j)\bigr)^p\\
 &\leq
 |E|^{1-p}
 \biggl(
 \sum_{j=1}^{N_0}\omega_{\cH}^P(E_j)
 \biggr)^p\\
 &\lesssim
 |E|^{1-p}
 \bigl(\omega_{\cH}^P(E^\sharp)\bigr)^p\lesssim
 |E|^{1-p}
 \bigl(\omega_{\cH}^P(E)\bigr)^p.
\end{align*}
If \(E\) is a backward cube, choose a symmetric cube
\(E_0\subset E\) of comparable radius.  Since
\(E^\sharp\subset CE_0\), finitely many applications of
Lemma~\ref{lem:parabolic-measure-doubling}, together with the same
change-of-pole adjustment, give
\[
 \omega_{\cH}^P(E^\sharp)
 \lesssim
 \omega_{\cH}^P(E_0)
 \leq
 \omega_{\cH}^P(E).
\]
If \(E\) is symmetric, the same conclusion follows directly from
boundary doubling.  This justifies the last estimate above.  Taking the
\(p\)-th root and dividing by \(|E|^{1/p}\) gives the required reverse
H\"older estimate on \(E\).  The same argument applies in the opposite
direction.
\end{proof}

\begin{prop}
\label{prop:Ainfty-union-Bp}
Under the hypotheses of
Lemma~\ref{lem:parabolic-measure-doubling},
\begin{equation}\label{eq:Ainfty-union-Bp}
 A_\infty(\mathrm d x\,\mathrm d t)
 =\bigcup_{p>1}B_p(\mathrm d x\,\mathrm d t).
\end{equation}
In particular, parabolic measure belongs to
\(A_\infty(\mathrm d x\,\mathrm d t)\) if and only if it belongs to
\(B_p(\mathrm d x\,\mathrm d t)\) for some \(p>1\).  The
\(A_\infty\) property implies mutual absolute continuity of parabolic
measure and Lebesgue measure.
\end{prop}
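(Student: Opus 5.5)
The plan is to reduce Proposition~\ref{prop:Ainfty-union-Bp} to the classical Coifman--Fefferman theory for doubling measures on spaces of homogeneous type. The ambient space is \(\R^n\times\R\) with the parabolic quasi-distance \(d_p\) and Lebesgue measure; Lebesgue measure is doubling there with homogeneous dimension \(n+2\). Fix \(\Delta_0\) and \(P_0=A_{4r_0}^+(x_0,t_0)\), and write \(\omega=\omega_{\cH}^{P_0}\) restricted to \(\Delta_0\). By Lemma~\ref{lem:parabolic-measure-doubling}, \(\omega\) is doubling on all cubes contained in \(4\Delta_0\), with a structural constant. All statements are local to \(\Delta_0\), with constants uniform in \(\Delta_0\), so it suffices to prove the equivalence for one fixed \(\Delta_0\) with quantitative control.

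For \(B_p\subset A_\infty\), assume \(k=k^{P_0}\) satisfies \eqref{eq:Bp-definition} on every \(\Delta\subset\Delta_0\). Let \(E\subset\Delta\). Applying Hölder's inequality and then \eqref{eq:Bp-definition} gives
\[
\omega(E)\le\Bigl(\iint_\Delta k^p\Bigr)^{1/p}|E|^{1/p'}\le C\,\omega(\Delta)\Bigl(\frac{|E|}{|\Delta|}\Bigr)^{1/p'} .
\]
This is the reverse implication to \eqref{eq:Ainfty-definition}: it shows that \(\omega\) is small on sets of small Lebesgue measure. To obtain the stated direction, I will use the standard Gehring-type self-improvement. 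A measure \(\omega\) that is doubling and satisfies the above estimate belongs to \(A_\infty\) of Lebesgue measure in the Muckenhoupt sense. Since \(A_\infty\) is a symmetric relation between doubling measures \cite{CoifmanFefferman}, Lebesgue measure is then \(A_\infty\) with respect to \(\omega\), which is exactly \eqref{eq:Ainfty-definition}. The one-line argument for the required direction is as follows. Apply the \(\mathrm{RH}_p\) bound to \(F=\Delta\setminus E\). If \(\omega(E)<\delta\,\omega(\Delta)\), then \(\omega(F)\ge(1-\delta)\omega(\Delta)\), and the displayed estimate forces \(|F|/|\Delta|\ge c(1-\delta)^{p'}\). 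That lower bound alone does not give \(|E|/|\Delta|<\varepsilon\), so I will invoke the Coifman--Fefferman symmetry rather than a direct computation.

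For \(A_\infty\subset\bigcup B_p\), assume \eqref{eq:Ainfty-definition}. The first step is absolute continuity. Let \(|E|=0\) with \(E\subset\Delta\), and suppose \(\omega(E)>0\). By outer regularity and Lebesgue differentiation for the doubling measure \(\omega\) with respect to the parabolic Vitali basis, I can find a cube \(\Delta'\) in which \(\omega(E\cap\Delta')/\omega(\Delta')\) is close to \(1\). Then the complement \(F=\Delta'\setminus E\) has small \(\omega\)-ratio but full Lebesgue ratio, which contradicts \eqref{eq:Ainfty-definition}. Hence \(\omega\ll\mathrm dx\,\mathrm dt\). The same argument with the roles of the measures exchanged yields mutual absolute continuity, using \eqref{eq:Ainfty-definition} in the form \(|E|>0\Rightarrow\omega(E)>0\).

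The second step produces the reverse Hölder inequality. From \eqref{eq:Ainfty-definition} with \(\varepsilon=1/2\), together with doubling, I will run a Calderón--Zygmund decomposition of \(k\) relative to \(\omega\) on dyadic-type parabolic cubes (Christ cubes in the quasi-metric \(d_p\)). This gives a good-\(\lambda\) or level-set estimate of the form
\[
|\{k>2^{j+1}\alpha\}\cap\Delta|\le\eta\,|\{k>2^j\alpha\}\cap\Delta|
\]
for some \(\eta<1\), which integrates to \(k\in\mathrm{RH}_p\) for some \(p>1\) depending on \(\delta(1/2)\) and the doubling constant. This is the Gehring/Coifman--Fefferman lemma, and I expect it to be the main technical point. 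The doubling of \(\omega\) is needed only inside \(4\Delta_0\), and the stopping-time cubes stay in \(\Delta_0\), so the local doubling of Lemma~\ref{lem:parabolic-measure-doubling} suffices. Uniformity over \(\Delta_0\) follows because every constant depends only on the doubling constant and the function \(\delta(\cdot)\).
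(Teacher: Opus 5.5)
Your overall route is the paper's: its proof only notes that \(\omega_{\cH}^{P_0}\) (by Lemma~\ref{lem:parabolic-measure-doubling}) and Lebesgue measure are doubling for \(d_p\), and then cites Coifman--Fefferman. Your absolute-continuity argument is correct, and so is deferring \(B_p\subset A_\infty\) to Coifman--Fefferman symmetry. The complement computation you discard there is in fact the right first step. Read contrapositively, it gives \(|E|\ge(1-c(1-\delta)^{p'})|\Delta|\Rightarrow\omega(E)\ge\delta\,\omega(\Delta)\). This is the fixed-pair \(A_\infty\) condition, which the Coifman--Fefferman theory upgrades to \(k\in A_q\). Then \((|E|/|\Delta|)^q\lesssim\omega(E)/\omega(\Delta)\) yields \eqref{eq:Ainfty-definition} for every \(\varepsilon\).

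The one mechanism you actually spell out, for \(A_\infty\subset\bigcup_pB_p\), fails as written. A level-set decay \(|\{k>2^{j+1}\alpha\}\cap\Delta|\le\eta\,|\{k>2^j\alpha\}\cap\Delta|\) with merely \(\eta<1\) does not integrate to \(\mathrm{RH}_p\). The layer-cake sum is \(\sum_j2^{jp}\eta^j\), which requires \(\eta<2^{-p}\); for \(\eta\ge1/2\) it does not even give \(k\in\mathrm L^1\). Also, a Calder\'on--Zygmund decomposition relative to \(\omega\) naturally controls \(1/k=\mathrm dx/\mathrm d\omega\) in \(\mathrm L^q(\omega)\). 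That is an \(A_{q'}\)-type bound, not \(\mathrm{RH}_p\) for \(k\).

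The standard argument decomposes \(k\) with respect to Lebesgue averages and measures the decay in \(\omega\). Take \(\alpha\simeq\fint_Qk\), \(\Omega_j=\{M_dk>N^j\alpha\}\cap Q\), and stopping cubes \(Q_i^j\). For \(N\ge2C_d\) one has \(|Q_i^j\cap\Omega_{j+1}|\le(C_d/N)|Q_i^j|\le\frac12|Q_i^j|\). Apply \eqref{eq:Ainfty-definition} with \(\varepsilon=1/2\), contrapositively, to \(F=Q_i^j\setminus\Omega_{j+1}\); this gives \(\omega(F)\ge\delta\,\omega(Q_i^j)\), hence \(\omega(\Omega_{j+1})\le(1-\delta)\,\omega(\Omega_j)\). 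Then \(\int_Qk^{1+\epsilon}\,\mathrm dx\le\int_Q(M_dk)^\epsilon\,\mathrm d\omega\lesssim|Q|\,(\omega(Q)/|Q|)^{1+\epsilon}\) once \(N^\epsilon(1-\delta)<1\). Any decay rate \(1-\delta<1\) suffices precisely because the measure is \(\omega\). Passing between dyadic and parabolic cubes uses local doubling, as you indicate. In particular, this direction needs no symmetry: the paper's orientation of \(A_\infty\), read contrapositively at a single \(\varepsilon\), is already the classical fixed-pair condition.
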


\begin{proof}
Parabolic measure is doubling by
Lemma~\ref{lem:parabolic-measure-doubling}, and Lebesgue measure is
doubling by parabolic scaling.  The assertion is therefore
the standard reverse H\"older characterization and self-improvement of
\(A_\infty\) weights.  We refer to~\cite{CoifmanFefferman}.
\end{proof}

For a function \(F\) in the half-space, define
\begin{equation}\label{eq:NTmaxDef}
 N_\ast F(x,t)
 =\sup_{\lambda>0}
 \operatorname*{ess\,sup}_{\substack{
  \lambda/2<\lambda'<\lambda\\
  |y-x|<\lambda,\ |s-t|<\lambda^2}}
 |F(\lambda',y,s)|.
\end{equation}
Given \(\eta>0\), the parabolic cone with vertex \((x_0,t_0)\) is
\begin{equation}\label{eq:parabolic-cone}
 \Gamma^\eta(x_0,t_0)
 =\left\{(\lambda,x,t):
 d_p((x,t),(x_0,t_0))<\eta\lambda\right\}.
\end{equation}

\begin{defn}
\label{defn:Dq}
Let \(1<q<\infty\).  We say that the Dirichlet problem \(D_q\) for
\(\cH\) is solvable if, for every
\(f\in\mathrm L^q(\R^n\times\R)\), there exists a weak solution \(u\)
such that
\begin{align}
 \cH u&=0 &&\text{in }\R^{n+2}_+,
 \label{eq:Dq-equation}\\
 \|N_\ast u\|_{\mathrm L^q(\R^n\times\R)}
 &\leq C\|f\|_{\mathrm L^q(\R^n\times\R)},
 \label{eq:Dq-maximal}\\
 u(\lambda,\cdot,\cdot)&\longrightarrow f
 &&\text{in }\mathrm L^q(\R^n\times\R)
   \text{ as }\lambda\downarrow0.
 \label{eq:Dq-trace}
\end{align}
The solution is also required to converge to \(f\) almost everywhere as
the point approaches the boundary through
\(\Gamma^\eta(x_0,t_0)\), for one (equivalently, every) fixed aperture
\(\eta>0\).  We say that \(D_q\) is uniquely solvable if
this solution is unique among all weak solutions for which
\(N_\ast u\in\mathrm L^q(\R^n\times\R)\).
\end{defn}

\begin{thm}
\label{thm:Bp-Dq}
Let \(1<p<\infty\) and \(q=p'=p/(p-1)\).  Then
\begin{equation}\label{eq:Bp-Dq-equivalence}
 \omega_{\cH}\in B_p(\mathrm d x\,\mathrm d t)
 \quad\Longleftrightarrow\quad
 D_q\text{ is solvable}.
\end{equation}
The reverse H\"older constant and the Dirichlet solvability constant
determine one another, with additional dependence only on
\(p,n,\mu,\Lambda\) and the fixed geometric parameters.
\end{thm}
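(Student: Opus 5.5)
We prove the two implications separately. Throughout we use the doubling of parabolic measure (Lemma~\ref{lem:parabolic-measure-doubling}), the change-of-pole principle (Lemma~\ref{lem:change-of-pole}), and the Green-function/measure comparison (Lemma~\ref{lem:cfms} and \eqref{eq:cfms}).

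\emph{Step 1: $B_p$ implies solvability of $D_q$.} Assume $\omega_{\cH}\in B_p$. For $f\in C_0(\R^n\times\R)$ let $u$ be the continuous solution given by \eqref{eq:parabolic-measure-representation}.

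\begin{itemize}
\item \emph{Pointwise bound.} Fix $(x,t)$ and a point $(\lambda,y,s)$ in the Whitney region defining $N_\ast u(x,t)$. Decompose the boundary into the cube $\Delta_{C\lambda}(x,t)$ and the dyadic annuli $\Delta_{2^{j+1}C\lambda}\setminus\Delta_{2^jC\lambda}$. On the annuli, the kernel $k^{(\lambda,y,s)}$ decays: by boundary H\"older decay (Lemma~\ref{lem:local-parabolic-estimates}) applied to the adjoint Green function, together with Lemma~\ref{lem:cfms}, the normalized measure of the $j$-th annulus is at most $C2^{-j\beta}$. Using the reverse H\"older inequality on each enlarged cube, after changing pole to an admissible one via Lemma~\ref{lem:change-of-pole}, Hölder's inequality gives
\[
 |u(\lambda,y,s)|\lesssim\sum_j 2^{-j\beta}\Bigl(\fint_{\Delta_{2^jC\lambda}}|f|^{q'}\Bigr)^{1/q'}
\]
for some $q'<q$. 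The $q'<q$ comes from the Gehring self-improvement of $B_p$ to $B_{p+\varepsilon}$ (Proposition~\ref{prop:Ainfty-union-Bp}). Hence $N_\ast u\lesssim M_{q'}f$.
\item \emph{Norm bound.} The parabolic Hardy--Littlewood maximal function is bounded on $\mathrm L^{q/q'}$, which gives \eqref{eq:Dq-maximal}.
\item \emph{Extension and boundary behaviour.} Density of $C_0$ in $\mathrm L^q$ together with the maximal bound extends the solution operator to all of $\mathrm L^q$. Nontangential a.e.\ convergence holds for continuous data and transfers to general data by the standard maximal-function argument. The $\mathrm L^q$ trace \eqref{eq:Dq-trace} then follows by dominated convergence, with $N_\ast u$ as the dominating function.
\end{itemize}

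\emph{Step 2: solvability of $D_q$ implies $B_p$.} Fix $\Delta_0$, $P_0=A_{4r_0}^+$, a subcube $\Delta\subset\Delta_0$, and $0\le f\in C_0(\Delta)$. Let $u$ be the solution with data $f$.

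\begin{itemize}
\item \emph{Evaluating at a corkscrew.} By the Harnack inequality, $u(A_{4r}^+(\Delta))$ is bounded by the average of $N_\ast u$ over a cube of size comparable to $\Delta$. Therefore
\[
 u(A^+_{4r}(\Delta))\lesssim |\Delta|^{-1/q}\|N_\ast u\|_q\lesssim|\Delta|^{-1/q}\|f\|_{\mathrm L^q(\Delta)}.
\]
\item \emph{Changing pole.} By Lemma~\ref{lem:change-of-pole} and Lemma~\ref{lem:parabolic-measure-nondegeneracy},
\[
 \frac{u(P_0)}{\omega^{P_0}(\Delta)}\simeq u(A_{4r}^+(\Delta)).
\]
\item \emph{Duality.} Taking the supremum over $f$ with $\|f\|_q\le1$ yields
\[
 \Bigl(\fint_\Delta (k^{P_0})^p\Bigr)^{1/p}\lesssim\fint_\Delta k^{P_0},
\]
which is the $B_p$ estimate on $\Delta$.
\item \emph{Absolute continuity.} The same duality bound shows that $\omega^{P_0}(E)\lesssim|E|^{1/p}\cdots$ for $E\subset\Delta$. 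This gives $\omega^{P_0}\ll \mathrm d x\,\mathrm d t$ before $k^{P_0}$ is introduced.
\end{itemize}

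\emph{Main obstacle.} The delicate step is the tail decay in Step 1. Two issues must be handled there.
\begin{itemize}
\item The parabolic causal orientation: the pole $(\lambda,y,s)$ is not admissible for the distant annuli, whether they lie in its past or in other directions. Annuli in the future of the pole carry no mass, by \eqref{eq:measure-causal-support}.
\item For past annuli, the estimate must be routed through a corkscrew $A^+$ of the enlarged cube. The Green-function decay is then obtained from boundary H\"older continuity combined with Lemma~\ref{lem:green-boundary-time-comparison}. This is the step where the backward Harnack control of the balance parameters is needed.
\end{itemize}
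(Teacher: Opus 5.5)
The paper does not prove this theorem itself. It cites \cite[Theorem~6.2]{Nystrom} and \cite[Section~1.3]{AENDirichlet}, and your outline is the standard Kenig-type argument behind those references. Step~1 is sound in substance. Step~2, however, has a real gap with respect to the paper's Definition~\ref{defn:Dq}.

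That definition only asserts that for each $f\in\mathrm L^q$ \emph{some} weak solution with $\|N_\ast u\|_{\mathrm L^q}\lesssim\|f\|_{\mathrm L^q}$ exists. Uniqueness is deliberately not included. In Step~2 you take ``the solution with data $f$'' and combine two things that need not refer to the same function. The maximal bound is supplied by the hypothesis only for that unspecified solution. The other properties you use are known only for the parabolic-measure solution $u_f(X)=\iint f\,\mathrm d\omega^X$:
\begin{itemize}
\item nonnegativity for $f\geq0$, which you need for Harnack;
\item the representation $u(P_0)=\iint f\,\mathrm d\omega^{P_0}$;
\item the comparison $u(P_0)/\omega^{P_0}(\Delta)\simeq u(A_{4r}^+)$ via Lemma~\ref{lem:change-of-pole}.
\end{itemize}
Knowing that the $D_q$ solution with data $f\in C_0$ equals $u_f$ is a uniqueness statement, and it does not follow from the definition. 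The natural proofs of this identification (maximum principle or Green representation in $\{\lambda>\varepsilon\}$ and letting $\varepsilon\to0$) require integrating a truncation of $N_\ast u$ against $\omega^P$. That presupposes the absolute continuity you are trying to establish.

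You must do one of two things. Either formulate the direction $\Leftarrow$ as the a priori bound $\|N_\ast u_f\|_{\mathrm L^q}\lesssim\|f\|_{\mathrm L^q}$ for parabolic-measure solutions of $C_0$ data, or prove the identification separately. The first option is the convention in the cited sources. It is also why, when the paper applies this direction in Lemma~\ref{lem:lambda-independent-B2-openness}, it records that the solution from \cite{AEN} agrees with the canonical energy solution on a dense class of data. Under that convention your corkscrew, change-of-pole and duality steps are correct. Note that the absolute-continuity bound reads $\omega^{P_0}(E)\lesssim\omega^{P_0}(\Delta)\,(|E|/|\Delta|)^{1/q}$, with exponent $1/q$ rather than $1/p$.

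There are two smaller corrections in Step~1.

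\emph{Central cube.} The point $X=(\lambda,y,s)$ lies in no forward pole region for $\Delta_{C\lambda}(x,t)$, so Lemma~\ref{lem:change-of-pole} cannot be applied there directly. You must first pass from $X$ to a later corkscrew of a slightly enlarged cube by the forward Harnack inequality.

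\emph{Annuli.} Lemma~\ref{lem:cfms} cannot be applied with pole $X$ either, because $X$ lies outside the forward cone of annulus cubes at shallow time angles. The tail bound comes instead from the forward solution $X\mapsto\omega^X(E)$, with $E$ in the $j$-th annulus, which vanishes on $\Delta_{2^jC\lambda}(x,t)$. Boundary H\"older decay and the parabolic Carleson estimate give $\omega^X(E)\lesssim2^{-j\beta}\omega^{A^+}(E)$ for a future corkscrew $A^+$ of $\Delta_{2^{j+1}C\lambda}(x,t)$. Change of pole and Lemma~\ref{lem:parabolic-measure-nondegeneracy} then normalize this bound. Backward Harnack enters only through the change of pole, not through a Green-function decay estimate at $X$.
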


\begin{rem}
Theorem~\ref{thm:Bp-Dq} concerns solvability, not uniqueness.  If the
forward measure for \(\cH\) and the adjoint measure for \(\cH^*\) belong
to the corresponding \(B_p\) classes, then \(D_q\) is uniquely solvable.
See~\cite[Remark~1.4]{AENDirichlet}.  Nystr\"om proves the equivalence in
\cite[Theorem~6.2]{Nystrom} under a symmetry hypothesis.  The
potential-theoretic ingredients do not require symmetry, as observed in
\cite[Section~1.3]{AENDirichlet}.
\end{rem}

\subsection{Perturbations of parabolic measure}
\label{subsec:parabolic-measure-perturbations}

For coefficient matrices \(B_0=B_0(x,t)\) and
\(B_1=B_1(\lambda,x,t)\), define
\[
 \mathcal E_{B_1,B_0}(\lambda,x,t)
 :=
 \operatorname*{ess\,sup}_{(\lambda',y,s)\in W(\lambda,x,t)}
 |B_1(\lambda',y,s)-B_0(y,s)|,
\]
where \(W(\lambda,x,t)\) is a fixed parabolic Whitney region on which
\(\lambda'\simeq\lambda\), \(|y-x|\lesssim\lambda\), and
\(|s-t|\lesssim\lambda^2\).  Set
\begin{equation}\label{eq:carleson-discrepancy}
 \|B_1-B_0\|_*^2
 :=
 \sup_\Delta\frac1{|\Delta|}
 \iiint_{(0,\ell(\Delta))\times\Delta}
 \bigl(\mathcal E_{B_1,B_0}(\lambda,x,t)\bigr)^2
 \,\frac{\mathrm d\lambda\,\mathrm d x\,\mathrm d t}{\lambda},
\end{equation}
where \(\Delta\) ranges over boundary parabolic cubes.

\begin{prop}\label{prop:parabolic-Carleson-perturbation}
Let \(\cH_j=\partial_t-\operatorname{div}_{\lambda,x}
(B_j\nabla_{\lambda,x})\), \(j\in\{0,1\}\), have real, bounded,
uniformly elliptic coefficients in \(\R^{n+2}_+\), and assume that
\(B_0=B_0(x,t)\) is independent of \(\lambda\).
\begin{enumerate}[label=\textup{(\roman*)},leftmargin=2.5em]
\item If \(\omega_{\cH_0}\in B_2(\mathrm d x\,\mathrm d t)\), then there is
\(\varepsilon_*>0\), depending only on the structural constants and the
\(B_2\)-characteristic of \(\omega_{\cH_0}\), such that
\(\|B_1-B_0\|_*\leq\varepsilon_*\) implies
\(\omega_{\cH_1}\in B_2(\mathrm d x\,\mathrm d t)\), with quantitative
control.
\item If \(\omega_{\cH_0}\in A_\infty(\mathrm d x\,\mathrm d t)\) and
\(\|B_1-B_0\|_*<\infty\), then
\(\omega_{\cH_1}\in A_\infty(\mathrm d x\,\mathrm d t)\).  The
quantitative constants depend on the structural constants, the
\(A_\infty\)-characteristic of \(\omega_{\cH_0}\), and the discrepancy
norm.
\end{enumerate}
No symmetry assumption is required.
\end{prop}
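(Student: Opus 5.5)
This proposition is the parabolic form of the Fefferman--Kenig--Pipher perturbation theory, and I would prove it by adapting their argument to the time-oriented setting rather than starting from scratch. For part (i), I would work on the Dirichlet side. By Theorem~\ref{thm:Bp-Dq}, the hypothesis \(\omega_{\cH_0}\in B_2\) is equivalent to solvability of \(D_2\) for \(\cH_0\), with controlled constants, so it suffices to prove \(D_2\) for \(\cH_1\) with a constant depending only on the structure and the \(B_2\)-characteristic. I would first establish a priori estimates for \(f\in C_0^\infty\) and the bounded continuous solutions \(u_0,u_1\) with data \(f\); the smooth-approximation principle would be used at this stage. The difference \(F=u_1-u_0\) has zero boundary values and satisfies
\[
 \cH_0F=\operatorname{div}_{\lambda,x}\bigl((B_1-B_0)\nabla_{\lambda,x}u_1\bigr).
\]
By the Green representation (Lemma~\ref{lem:green-reciprocity}),
\[
 F(P)=-\iiint (B_1-B_0)\nabla u_1\cdot\nabla_Z G_{\cH_0}(P;Z)\,\mathrm dZ.
\]

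Next I would decompose the integration region into Whitney regions \(W\) and apply Cauchy--Schwarz on each one. Caccioppoli's inequality (Lemma~\ref{lem:local-parabolic-estimates}) for the adjoint solution \(Z\mapsto G_{\cH_0}(P;Z)\), together with \eqref{eq:cfms}, converts \(\lambda^{-1}\sup_W G\) into the density of \(\omega_{\cH_0}^P\). This gives a pointwise bound of the form
\[
 |F(P)|\lesssim \iint \Bigl(\iiint_{\Gamma(y,s)}\mathcal E^2\,|\nabla u_1|^2\,\lambda^{-n}\,\mathrm d\lambda\,\mathrm dx\,\mathrm dt\Bigr)^{1/2}\mathrm d\omega_{\cH_0}^P(y,s),
\]
and the analogous bound for the nontangential truncations. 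The function inside the \(\omega_{\cH_0}^P\)-integral is controlled through the Carleson lemma by \(\|B_1-B_0\|_*\) times an area-type functional \(\widetilde S(u_1)\). Taking the maximal function with respect to \(\omega_{\cH_0}\) and using the \(L^2\)-boundedness coming from \(\mathrm{RH}_2\) (duality with \(D_2\)) gives
\[
 \|N_\ast F\|_2\lesssim\|B_1-B_0\|_*\bigl(\|S u_1\|_2+\|N_\ast u_1\|_2\bigr).
\]
I would then invoke the \(L^2\) comparison \(\|Su_1\|_2\lesssim\|N_\ast u_1\|_2\) for weak solutions. This holds for arbitrary real bounded measurable coefficients via integration by parts against \(\lambda\) and a good-\(\lambda\) inequality.

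Combining these estimates with \(\|N_\ast u_0\|_2\lesssim\|f\|_2\) yields
\[
 \|N_\ast u_1\|_2\le C\|f\|_2+C\varepsilon_*\|N_\ast u_1\|_2,
\]
and for \(\varepsilon_*\) small the last term can be absorbed. To make the absorption legitimate, I would first truncate to the region \(\lambda>\delta\) or to compactly supported data, so that \(\|N_\ast u_1\|_2\) is finite, and then pass to the limit. The remaining requirements of \(D_2\), namely the a.e.\ nontangential convergence and \(L^2\) trace convergence, follow from density once the maximal estimate is in hand. Theorem~\ref{thm:Bp-Dq} then gives \(\omega_{\cH_1}\in B_2\).

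For part (ii), I would reduce to the small-norm case using the Fefferman--Kenig--Pipher and Hofmann--Lewis sawtooth extrapolation. Since \(\|B_1-B_0\|_*<\infty\), a stopping-time argument on dyadic parabolic cubes shows the following: for every \(\Delta_0\) and every \(\eta>0\), there is a sawtooth region over a subset \(E\subset\Delta_0\) with \(|E|\ge\tfrac12|\Delta_0|\) on which the discrepancy has Carleson norm less than \(\eta\). I would then apply the \(A_\infty\) analogue of part (i) in the sawtooth domain, where \(\omega_{\cH_0}\in A_\infty\) transfers to the sawtooth by the Dahlberg--Jerison--Kenig sawtooth lemma and the comparison principles of Section~\ref{sec:technical-tools}. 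This shows that \(\omega_{\cH_1}\) restricted to \(E\) is \(A_\infty\)-comparable to Lebesgue measure. A standard extrapolation lemma for doubling measures (Lemma~\ref{lem:parabolic-measure-doubling}) then upgrades this to \(A_\infty\) on \(\Delta_0\). Proposition~\ref{prop:Ainfty-union-Bp} identifies the classes throughout.

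The main obstacle is the parabolic sawtooth lemma and its causal orientation. Sawtooth domains have boundaries that are only Lip(1,1/2)-type, and they must be built from backward-oriented cubes so that the Harnack chains and the boundary comparison (Lemma~\ref{lem:boundary-comparison}) have controlled balance. I would follow the corresponding construction of Hofmann--Lewis for parabolic measure, verifying that only the structural comparison estimates recorded above are used. No symmetry is needed at any step, since only \(\cH_0\), \(\cH_0^*\), reciprocity and Theorem~\ref{thm:Bp-Dq} enter.
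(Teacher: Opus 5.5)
The paper does not prove this proposition; it quotes \cite[Proposition~2.25]{Ulmer} and \cite[Theorems~6.4--6.6]{Nystrom}. Reconstructing the Fefferman--Kenig--Pipher argument is a reasonable plan, but your part (i) rests on a false lemma. You assert \(\|Su_1\|_2\lesssim\|N_\ast u_1\|_2\) for solutions of \emph{arbitrary} real bounded measurable operators, and this is not true.

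Testing \(\cH_1u_1=0\) with \(u_1\lambda\psi\) leaves the term \(\iiint u_1\,(B_1\nabla u_1)\cdot e_\lambda\,\psi\). This is not a derivative of \(u_1^2\) when \(B_1\) is rough. Cauchy--Schwarz turns it into \(\iiint u_1^2\lambda^{-1}\psi\), which is infinite whenever the trace \(f\) is nonzero. The Dahlberg--Jerison--Kenig good-\(\lambda\) argument, in Nystr\"om's parabolic form, gives \(S\lesssim N\) in \(\mathrm L^2(\mathrm dx\,\mathrm dt)\) only when \(\omega_{\cH_1}\in A_\infty\).

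In fact, such a bound for a given operator already implies Carleson-measure (BMO) estimates for its bounded solutions. By Kenig--Kirchheim--Pipher--Toro \cite{KenigKirchheimPipherToro} and its parabolic analogue due to Dindo\v s--Petermichl--Pipher, this forces \(A_\infty\). That conclusion fails for time-independent coefficients with singular elliptic measure (Caffarelli--Fabes--Kenig). So, as written, your absorption presupposes part of the conclusion.

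The missing idea is to weight by the reference Green function instead of by \(\lambda\). Since \(\cH_0(u_1^2)=2u_1\operatorname{div}((B_1-B_0)\nabla u_1)-2B_0\nabla u_1\cdot\nabla u_1\) and \(\iiint G_{\cH_0}(P;\cdot)\,\cH_0v=v(P)-\iint v\,\mathrm d\omega_{\cH_0}^P\), one gets
\[
 \mu\iiint|\nabla u_1|^2G_{\cH_0}(P;\cdot)
 \le\tfrac12\iint u_1^2\,\mathrm d\omega_{\cH_0}^P
 +\Bigl|\iiint u_1(B_1-B_0)\nabla u_1\cdot\nabla_ZG_{\cH_0}(P;\cdot)\Bigr|.
\]
Here the discrepancy is paired with \(\nabla_ZG\). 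The last term is therefore bounded by \(\|B_1-B_0\|_*\|N_\ast u_1\|_{\mathrm L^2(\mathrm d\omega_{\cH_0}^P)}\) times the square root of the left side, because a \(\mathrm dx\,\mathrm dt\)-Carleson condition transfers to \(\omega_{\cH_0}\in A_\infty\). Running the argument in \(\mathrm L^2(\mathrm d\omega_{\cH_0})\) gives \(\mathrm d\omega_{\cH_1}/\mathrm d\omega_{\cH_0}\in\mathrm{RH}_2(\mathrm d\omega_{\cH_0})\) for small norm, hence \(\omega_{\cH_1}\in A_\infty\) with controlled constants. Only then is \(\|Su_1\|_2\lesssim\|N_\ast u_1\|_2\) available, and your absorption in \(\mathrm L^2(\mathrm dx\,\mathrm dt)\) closes.

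A smaller point: the Whitney and Cauchy--Schwarz step produces an \(\ell^1\) sum over each cone, not the square root you display. Also, \(\|B_1-B_0\|_*\) controls nothing pointwise. The Carleson condition must enter through tent-space duality on the whole pairing, giving \(|F(P)|\lesssim\|B_1-B_0\|_*\iint S(u_1)\,M(k_{B_0}^P)\).

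Part (ii) has a second gap. A finite Carleson norm cannot be made smaller than \(\eta\) on a sawtooth over a set of half measure. Take \(B_0=I\) and \(B_1=(1+\varepsilon(\lambda))I\) with \(0\le\varepsilon\le\tfrac12\), supported below \(\ell(\Delta_0)\), and \(\int_0^\infty\varepsilon^2\,\mathrm d\lambda/\lambda=K\) large. Every sawtooth over \(E\subset\Delta_0\) contains \((0,\ell(\Delta_0))\times E\), so its Carleson box over \(\Delta_0\) carries discrepancy mass \(\gtrsim K|E|\ge K|\Delta_0|/2\). A stopping time makes the truncated area function of the discrepancy bounded on large sets, not small.

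The finite-norm case needs a different mechanism. One option is the weight-theoretic \(A_\infty\) criterion of Fefferman--Kenig--Pipher. Another is an induction on the size of the norm in steps of a fixed small \(\gamma\) (Hofmann--Martell's extrapolation of Carleson measures). That induction uses sawtooths \(T(\Delta_0)\setminus\bigcup_jT(\Delta_j)\) cut at stopping cubes and projected measures \(P_{\mathcal F}\omega\), rather than restriction to a good set.

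Moreover, the small-norm input inside a sawtooth cannot be ``the \(A_\infty\) analogue of (i)''. The sawtooth's lateral boundary is only \(\mathrm{Lip}(1,1/2)\), where surface-measure statements are unavailable; even caloric measure can be singular there, by Kaufman--Wu. What is needed is \(A_\infty\) of the perturbed parabolic measure \emph{relative to} the reference parabolic measure of the sawtooth. That is precisely the \(G_{\cH_0}\)-weighted \(\mathrm L^2(\mathrm d\omega_{\cH_0})\) argument above.
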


\begin{proof} The precise small- and finite-Carleson Dirichlet perturbation conclusions used here are
recorded in \cite[Proposition~2.25]{Ulmer}, applied with \(p=2\) in
\textup{(i)} and specialized to the upper half-space.  For the original
argument in the symmetric setting, see
\cite[Theorems~6.4-6.6]{Nystrom}: Theorem~6.4 gives the
finite-Carleson \(A_\infty\) conclusion, while the small-norm version
of the argument proving Theorem~6.5, whose perturbative core is
Theorem~6.6, gives preservation of \(B_p\).   The
Carleson quantity in those references is \(\|B_1-B_0\|_*^2\), here we have
merely reparametrized the smallness threshold.
\end{proof}

\section{Transverse differences and boundary comparisons}
\label{sec:normal-comparison}

In this section we prove the estimates which reduce the large-scale
argument to a unit-height bound for an adjoint Green function.  We begin
with the elementary consequence of periodicity in the transverse variable.
This first estimate is diagnostic and is discussed further in
Remark~\ref{rem:normal-periodicity-obstruction}.  The proof of
Theorem~\ref{thm:large-scale} begins with the two-comparison estimate in
Subsection~\ref{subsec:two-comparison}.

\subsection{The transverse-difference estimate}\label{subsec:normal-difference}

For an interior point \(Z_0=(\lambda_0,x_0,t_0)\), write
\begin{equation}\label{eq:ambient-parabolic-cylinder}
 \cC_r(Z_0)=
 \bigl((\lambda_0-r,\lambda_0+r)\cap(0,\infty)\bigr)
 \times B(x_0,r)\times(t_0-r^2,t_0+r^2).
\end{equation}
This is only an ambient cylinder.  When local boundedness is applied to a
forward solution, the estimate uses a subcylinder lying in the past of the
evaluation point.  For an adjoint solution, it uses a subcylinder lying in
the future.  Both are contained in \eqref{eq:ambient-parabolic-cylinder}.
All boundary versions below concern the spatial boundary
\(\{\lambda=0\}\) or a lateral spatial face, not a terminal time face.

Define
\begin{equation}\label{eq:normal-difference}
 \delta_\lambda u(\lambda,x,t)
 =u(\lambda+1,x,t)-u(\lambda,x,t).
\end{equation}

\begin{lem}\label{lem:commute}
If \(u\) is a weak solution of \(\cH u=0\), then \(\delta_\lambda u\) is a weak
solution wherever both terms in \eqref{eq:normal-difference} are defined.  The same is true
for the adjoint equation.
\end{lem}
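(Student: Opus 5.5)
The plan is to prove Lemma~\ref{lem:commute} directly from the weak formulation \eqref{eq:weak-solution}, using the transverse periodicity \eqref{eq:periodicity} together with linearity. The argument has three steps.

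First, fix an open set \(\Omega\subset\R^{n+2}_+\) on which \(u\) solves \(\cH u=0\). Set
\[
 \Omega_1:=\{(\lambda,x,t):(\lambda+1,x,t)\in\Omega\}\cap\Omega ,
\]
which is the set where both terms of \(\delta_\lambda u\) are defined. Define the shifted function \(u_1(\lambda,x,t):=u(\lambda+1,x,t)\) on \(\{(\lambda,x,t):(\lambda+1,x,t)\in\Omega\}\). Translation preserves local integrability, so \(u_1\in\mathrm L^2_{\mathrm{loc}}\) and \(\nabla_{\lambda,x}u_1=(\nabla_{\lambda,x}u)(\lambda+1,x,t)\in\mathrm L^2_{\mathrm{loc}}\).

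Second, we show that \(u_1\) is itself a weak solution. Take any \(\varphi\in C_0^\infty\) supported in the shifted set, and define \(\psi(\lambda,x,t):=\varphi(\lambda-1,x,t)\), which belongs to \(C_0^\infty(\Omega)\). Change variables \(\lambda\mapsto\lambda-1\) in
\[
 \iiint A(\lambda,x,t)\nabla u_1\cdot\nabla\varphi-u_1\,\partial_t\varphi .
\]
This turns the integral into the same expression with \(u\) and \(\psi\) in place of \(u_1\) and \(\varphi\), except that the coefficient appears as \(A(\lambda-1,x,t)\). By \eqref{eq:periodicity}, \(A(\lambda-1,x,t)=A(\lambda,x,t)\) for almost every point, so the integral equals the weak form for \(u\) tested against \(\psi\). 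That weak form vanishes because \(u\) is a solution.

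Third, on \(\Omega_1\) both \(u_1\) and \(u\) are weak solutions, and the weak formulation is linear. Hence their difference \(\delta_\lambda u=u_1-u\) is a weak solution on \(\Omega_1\).

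For the adjoint equation the argument is identical: replace \(A\) by \(A^{\mathsf T}\), which is also one-periodic in \(\lambda\), and replace \(-u\,\partial_t\varphi\) by \(+u\,\partial_t\varphi\).

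There is one point that needs care rather than difficulty. Periodicity holds only almost everywhere, and in the half-space setting it refers to the periodic extension described after \eqref{eq:periodicity}. Since the change of variables acts only on the integrand and preserves null sets, this causes no trouble. There is no real obstacle in this proof.
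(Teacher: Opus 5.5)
Your proposal is correct and follows essentially the same route as the paper: the paper also observes that the translate \(u(\lambda+1,x,t)\) is a weak solution by the change of variables \(\rho=\lambda+1\) together with the periodicity \eqref{eq:periodicity}, then subtracts the equation for \(u\), and handles the adjoint case identically since \(A^{\mathsf T}\) has the same periodicity. Your version simply writes out the test-function bookkeeping (the shifted domain and \(\psi(\lambda,x,t)=\varphi(\lambda-1,x,t)\)) that the paper leaves implicit.
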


\begin{proof}
Let \(\tau_1u(\lambda,x,t)=u(\lambda+1,x,t)\).  By the change of variable
\(\rho=\lambda+1\) in the weak formulation and the periodicity
\eqref{eq:periodicity},
\[
 \cH(\tau_1u)=\tau_1(\cH u)=0.
\]
Subtracting the equation for \(u\) proves the assertion.  Since
\(A^{\mathsf T}\) has the same periodicity, the adjoint assertion follows
identically.
\end{proof}

\begin{lem}\label{lem:difference}
Let \(r\geq8\), and suppose that \(u\) solves either the forward or the
adjoint equation in an interior parabolic cylinder \(\cC_{2r}(Z_0)\).
Assume that the translated cylinders needed to define \(\delta_\lambda u\) are
contained in \(\cC_{2r}(Z_0)\).  Then
\begin{equation}\label{eq:difference-estimate}
 |\delta_\lambda u(Z_0)|
 \leq \frac Cr
 \left(
 \frac{1}{|\cC_{2r}(Z_0)|}
 \iiint_{\cC_{2r}(Z_0)}|u|^2
 \,\mathrm d \lambda\,\mathrm d x\,\mathrm d t
 \right)^{1/2}.
\end{equation}
The same estimate holds up to a flat or lateral boundary portion on which
both \(u\) and \(\tau_1u\) have zero trace.
\end{lem}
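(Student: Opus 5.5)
The plan is to view \(\delta_\lambda u\) as a first difference of \(u\) in the \(\lambda\) direction and to control it by the gradient of \(u\). Caccioppoli and local boundedness then convert the gradient bound into the \(\mathrm L^2\) average of \(u\). By Lemma~\ref{lem:commute}, \(\delta_\lambda u\) solves the same equation as \(u\), forward or adjoint, in every cylinder where both \(u\) and \(\tau_1u\) are defined. In particular it solves it in \(\cC_{r}(Z_0)\), after shrinking if necessary so that \(\cC_r(Z_0)+(1,0,0)\subset\cC_{2r}(Z_0)\); this is possible because \(r\geq8\).

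\textbf{Step 1 (local boundedness for the difference).} Apply \eqref{eq:local-boundedness-standard} to \(\delta_\lambda u\) on the correctly oriented half of \(\cC_{r/2}(Z_0)\): past-oriented for forward solutions, future-oriented for adjoint ones. This gives
\[
 |\delta_\lambda u(Z_0)|^2\lesssim \frac{1}{r^{n+3}}\iiint_{\cC_{r/2}(Z_0)}|\delta_\lambda u|^2 .
\]

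\textbf{Step 2 (difference quotient).} Write \(\delta_\lambda u(\lambda,x,t)=\int_0^1\partial_\lambda u(\lambda+\theta,x,t)\,\mathrm d\theta\), which holds for a.e.\ \((x,t)\) because \(\nabla u\in\mathrm L^2_{\mathrm{loc}}\). By Cauchy--Schwarz and Fubini,
\[
 \iiint_{\cC_{r/2}}|\delta_\lambda u|^2\leq\iiint_{\cC_{r/2+1}}|\nabla_X u|^2 ,
\]
where the right-hand cylinder is enlarged by one unit in \(\lambda\). Since \(r\geq8\), this cylinder lies inside \(\cC_{3r/4}(Z_0)\).

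\textbf{Step 3 (Caccioppoli).} Apply \eqref{eq:caccioppoli-standard} with radius comparable to \(r\), using a covering of \(\cC_{3r/4}\) by a bounded number of oriented cylinders of radius \(\simeq r\) whose doubles lie in \(\cC_{2r}\). This gives
\[
 \iiint_{\cC_{3r/4}}|\nabla_X u|^2\lesssim r^{-2}\iiint_{\cC_{2r}}|u|^2 .
\]

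Combining the three steps yields
\[
 |\delta_\lambda u(Z_0)|^2\lesssim r^{-2}\,r^{-(n+3)}\iiint_{\cC_{2r}}|u|^2 .
\]
Since \(|\cC_{2r}|\simeq r^{n+3}\), this is \eqref{eq:difference-estimate}.

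\textbf{Boundary version.} If \(u\) and \(\tau_1u\) both have zero trace on a flat or lateral portion, then so does \(\delta_\lambda u\). Steps 1 and 3 then use the boundary forms of local boundedness and Caccioppoli recorded in Lemma~\ref{lem:local-parabolic-estimates}. Step 2 is unchanged: \(u\) extended by zero across \(\{\lambda=0\}\) stays in \(H^1\) in \(X\), so the fundamental-theorem identity still holds, with the truncated cylinders \eqref{eq:ambient-parabolic-cylinder}.

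The only delicate point is the geometry. We need every enlargement (the unit \(\lambda\)-shift and the oriented sub-cylinders) to stay inside \(\cC_{2r}(Z_0)\) with the correct time orientation, which is exactly where \(r\geq8\) and the hypothesis on the translated cylinders enter. The analysis itself is routine.
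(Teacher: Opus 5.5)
Your proposal is correct and follows the paper's proof essentially step for step: local boundedness for \(\delta_\lambda u\) (a solution by Lemma~\ref{lem:commute}), then the fundamental theorem of calculus with Cauchy--Schwarz to pass to \(\partial_\lambda u\) on a cylinder enlarged by one unit, then Caccioppoli to return to the \(\mathrm L^2\) average of \(u\), with the boundary versions of the first and last steps substituted in the same places. The only differences are cosmetic: you work at radii \(r/2\) and \(3r/4\) and use a covering by oriented cylinders, whereas the paper uses \(\cC_r(Z_0)\) and a single cutoff equal to one on \(\cC_{r+2}(Z_0)\).
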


\begin{proof}
By Lemma~\ref{lem:commute}, \(\delta_\lambda u\) is a solution in
\(\cC_r(Z_0)\).  Interior local boundedness therefore gives
\begin{equation}\label{eq:moser-difference}
 |\delta_\lambda u(Z_0)|^2
 \leq \frac{C}{|\cC_r(Z_0)|}
 \iiint_{\cC_r(Z_0)}|\delta_\lambda u|^2
 \,\mathrm d \lambda\,\mathrm d x\,\mathrm d t.
\end{equation}
The fundamental theorem of calculus and Cauchy-Schwarz imply
\begin{align}
 |\delta_\lambda u(\lambda,x,t)|^2
 &=\left|\int_0^1\partial_\lambda
           u(\lambda+s,x,t)\,\mathrm d s\right|^2 \leq\int_0^1|\partial_\lambda
           u(\lambda+s,x,t)|^2\,\mathrm d s.        \label{eq:FTC-difference}
\end{align}
After integration and a change of variables,
\begin{equation}\label{eq:integrated-difference}
 \iiint_{\cC_r(Z_0)}|\delta_\lambda u|^2
 \,\mathrm d \lambda\,\mathrm d x\,\mathrm d t
 \leq C\iiint_{\cC_{r+2}(Z_0)}|\partial_\lambda u|^2
 \,\mathrm d \lambda\,\mathrm d x\,\mathrm d t.
\end{equation}
Caccioppoli's inequality, with a cutoff equal to one on
\(\cC_{r+2}(Z_0)\) and supported in \(\cC_{2r}(Z_0)\), gives
\begin{equation}\label{eq:caccioppoli-difference}
 \iiint_{\cC_{r+2}(Z_0)}|\partial_\lambda u|^2
 \,\mathrm d \lambda\,\mathrm d x\,\mathrm d t
 \leq \frac C{r^2}
 \iiint_{\cC_{2r}(Z_0)}|u|^2
 \,\mathrm d \lambda\,\mathrm d x\,\mathrm d t.
\end{equation}
Since \(|\cC_r(Z_0)|\simeq r^{n+3}\), inserting
\eqref{eq:integrated-difference}-\eqref{eq:caccioppoli-difference} into
\eqref{eq:moser-difference} proves \eqref{eq:difference-estimate}.  If the
cylinder meets a boundary portion on which both \(u\) and \(\tau_1u\)
have zero trace, the same proof uses boundary local boundedness in place of interior
local boundedness and the boundary Caccioppoli inequality in place of
\eqref{eq:caccioppoli-difference}.
\end{proof}

\begin{rem}
If \(G\) is a Green function whose singularity and the cylinder in
Lemma~\ref{lem:difference} have the correct causal order and are separated
by parabolic distance comparable to \(r\), then
Lemma~\ref{lem:green-upper} gives \(G\lesssim r^{-(n+1)}\) throughout a
fixed enlargement of the cylinder.  Lemma~\ref{lem:difference} therefore
gives
\begin{equation}\label{eq:difference-Green-scale}
 |\delta_\lambda G|\lesssim r^{-1}r^{-(n+1)}=r^{-(n+2)}.
\end{equation}
This one-power improvement is the quantitative core of Dahlberg's
large-scale argument.
\end{rem}

The next estimate combines two adjoint boundary comparisons.  The time ordering is included in the statement of the setup because it is essential in the parabolic argument.

\subsection{The two-comparison estimate}\label{subsec:two-comparison}

Let \(r\geq20\) and put \(z_0=(x_0,t_0)\).  Choose points
\(z_k=(x_k,t_k)\) such that the unit backward cubes
\[
 \Delta_k=\Delta_1^-(z_k)
 =B(x_k,1)\times(t_k-1,t_k)
\]
cover \(\Delta_r^-(z_0)\), are contained in a fixed enlargement of
\(\Delta_r^-(z_0)\), and have bounded overlap after a fixed dilation.
For each \(k\), set
\begin{equation}\label{eq:unit-corkscrews}
 A_k=A_1^+(z_k)=(4,x_k,t_k+16).
\end{equation}
Thus \(A_k\) is the future-oriented unit corkscrew associated with
\(\Delta_k\).  Let \(P\) be the \(M\)-admissible pole associated with
the Poisson kernel under consideration.

Choose two reference points \(P_1\) and \(P_2\) at spatial scale \(r\),
so that
\[
 \lambda(P_i)\simeq r,
 \qquad
 |x(P_i)-x_0|\lesssim r,
 \qquad i=1,2.
\]
Their time coordinates are chosen so that every \(A_k\) lies
quantitatively in the common causal past of \(P\), \(P_1\), and \(P_2\).
More precisely,
\begin{equation}\label{eq:time-order}
 \begin{aligned}
  &\min\{t(P),t(P_1)\}-\sup_k t(A_k)\gtrsim r^2,
  \qquad t(P_1)<t(P_2),\\
  &t(P_1)-t_0\simeq r^2,
  \qquad t(P_2)-t(P_1)\simeq r^2,
  \qquad t(P)-t_0\gtrsim M^2r^2.
 \end{aligned}
\end{equation}
The fixed geometric constants and \(M\)
are chosen so that, for some fixed \(c_0>0\),
\eqref{eq:time-order} yields
\[
 \sup_k t(A_k)+c_0r^2
 \leq
 t(P_1)
 \leq
 t(P_2)-c_0r^2,
 \qquad
 t(P_2)
 \leq
 t(P)-c_0r^2.
\]
Thus every \(A_k\) lies quantitatively in the common causal past of
\(P_1\), \(P_2\), and \(P\).  The spatial positions, transverse heights, and time separations are fixed
so that all the Harnack chains used below have uniformly controlled
geometry.

Choose two time-oriented boundary cylinders
\[
 \Omega_i=D_i\times(a_i,b_i),
 \qquad i=1,2,
\]
whose spatial diameters are comparable to \(r\) and whose time lengths
are comparable to \(r^2\).  We require
\(\Omega_1\subset\Omega_2\), with the artificial spatial and temporal
faces of \(\Omega_1\) quantitatively separated from the corresponding
faces of \(\Omega_2\).  The cylinders may share part of their flat
boundary on \(\{\lambda=0\}\).  We impose the following additional
conditions.

\begin{enumerate}[label=\textup{(\roman*)},leftmargin=2.5em]
\item
The flat boundary portion of each cylinder contains a fixed enlargement
of \(\Delta_r^-(z_0)\).
\item
The point \(P_1\) belongs to \(\Omega_1\), whereas
\(P_2\in\Omega_2\setminus\overline{\Omega_1}\).  The exclusion of
\(P_2\) from \(\Omega_1\) is due to its spatial position.  The time
coordinates of both \(P_1\) and \(P_2\) belong to both time intervals.
\item
The time intervals are chosen so that
\[
 a_i<t_0-2r^2<t_0-r^2<t(P_1)<t(P_2)<b_i,
\]
and
\[
 (t_0-r^2)-a_i\simeq r^2,
 \qquad
 b_i-t(P_2)\simeq r^2,
\]
for \(i=1,2\).  Thus every comparison region used below is quantitatively separated from
the artificial initial and terminal time faces.
\end{enumerate}
For example, choose fixed constants \(C_2\gg C_1\gg1\), depending only on
the comparison geometry, and take
\[
 D_1=(0,C_1r)\times B(x_0,C_1r),
 \qquad
 D_2=(0,C_2r)\times B(x_0,C_2r).
\]
Taking \(x(P_1)=x(P_2)=x_0\), choose the transverse coordinates in fixed
interior subintervals so that \(P_1\) has distance comparable to \(r\)
from the spatial faces of \(D_1\), while \(P_2\)
has distance comparable to \(r\) from both \(\overline{D_1}\) and the
spatial faces of \(D_2\).  The time intervals are chosen consistently
with \eqref{eq:time-order} and the separation conditions above.

Whenever a forward representation in
\(\Omega_i=D_i\times(a_i,b_i)\) is used, the representing measure is
supported on the incoming parabolic boundary
\[
 \partial_p^{\mathrm{in}}\Omega_i
 =
 \bigl(\overline D_i\times\{a_i\}\bigr)
 \cup
 \bigl(\partial D_i\times(a_i,b_i)\bigr).
\]
In particular, the terminal face \(D_i\times\{b_i\}\) does not belong to
the forward parabolic boundary.

We now define
\begin{equation}\label{eq:two-adjoint-green}
 v_i(Z)=G_{\cH}^{\Omega_i}(P_i;Z),
 \qquad i=1,2.
\end{equation}
Writing \(Z=(Y,s)\), Green-function reciprocity and causality give
\[
 v_i(Y,s)=0
 \qquad\text{when }s\geq t(P_i)
 \text{ and }(Y,s)\neq P_i.
\]
In the causal past of \(P_i\), the function \(v_i\) is positive and
satisfies
\[
 \cH^*v_i=0
 \qquad\text{in }\Omega_i\cap\{s<t(P_i)\}.
\]
All boundary comparisons involving \(v_i\) below are carried out in
subregions of this causal past that remain quantitatively separated from
\(P_i\).

Let \(\omega_{\cH,\Omega_1}^{P_1}\) denote the forward parabolic
measure for \(\cH\) in \(\Omega_1\), evaluated at \(P_1\).  For each
\(k\), put
\[
 \widehat z_k=(x_k,t_k-\tfrac12),
 \qquad
 \widehat\Delta_k=\Delta_{1/4}(\widehat z_k).
\]
Then
\[
 2\widehat\Delta_k=\Delta_{1/2}(\widehat z_k)
 \subset\Delta_k.
\]
Let
\[
 \widetilde A_k=A_{1/2}^+(\widehat z_k).
\]
The time-ordering assumptions and \(r\geq20\) imply, with a fixed
\(\kappa_u\) independent of \(r\) and \(k\), that
\[
 P_1\in
 \Omega_1\cap\mathcal P_{\kappa_u,1/2}^+(\widehat z_k).
\]
Moreover, the fixed enlargement required in
Lemma~\ref{lem:cfms} is contained in \(\Omega_1\) and is
quantitatively separated from its artificial spatial and temporal
faces.  The left-hand inequality in
\eqref{eq:cfms-forward-raw}, applied at scale \(1/2\), therefore gives
\begin{equation}\label{eq:v1-cfms-intermediate}
 v_1(\widetilde A_k)
 =G_{\cH}^{\Omega_1}(P_1;\widetilde A_k)
 \lesssim
 \omega_{\cH,\Omega_1}^{P_1}(\widehat\Delta_k).
\end{equation}
By construction,
\[
 t(\widetilde A_k)=t_k+\frac72
 <t_k+16=t(A_k).
\]
The points \(\widetilde A_k\) and \(A_k\) are joined by a uniformly
bounded chain of unit-scale cylinders contained in
\(\Omega_1\cap\{t<t(P_1)\}\).  The chain remains quantitatively
separated from \(P_1\) and from the artificial boundary faces.
Since \(v_1\) is an adjoint solution in this region, the correctly
oriented adjoint Harnack inequality gives
\[
 v_1(A_k)\lesssim v_1(\widetilde A_k).
\]
Combining this estimate with \eqref{eq:v1-cfms-intermediate} and using
\(\widehat\Delta_k\subset\Delta_k\), we obtain
\begin{equation}\label{eq:parabolic-v1-omega}
 v_1(A_k)
 \lesssim
 \omega_{\cH,\Omega_1}^{P_1}(\Delta_k).
\end{equation}

If \((y,s)\in\Delta_k\), then
\[
 16\leq t(A_k)-s\leq17,
 \qquad
 |x_k-y|\leq1.
\]
Consequently, \(A_k\) and \((1,y,s)\) are joined by a uniformly bounded
chain of unit-scale cylinders, with \(A_k\) occurring later in time.
The entire chain is contained in
\(\Omega_2\cap\{t<t(P_2)\}\) and remains quantitatively separated from
\(P_2\) and from the artificial boundary faces.  Applying the adjoint
Harnack inequality in its reversed time orientation gives
\begin{equation}\label{eq:parabolic-v2-boundary}
 v_2(A_k)
 \lesssim
 v_2(1,y,s),
 \qquad (y,s)\in\Delta_k.
\end{equation}
All implicit constants are independent of \(r\), \(k\), and
\((y,s)\in\Delta_k\).

Put
\begin{align}\label{Greena}
 w(Z)&=G_{\cH}(P;Z).
\end{align}
On every pole-free comparison cylinder used below, \(w\), \(v_1\), and
\(v_2\) are positive adjoint solutions and vanish continuously on the
common flat boundary portion.

Fix the interior reference point
\[
 A_r^*=(r,x_0,t_0-2r^2).
\]
The constants defining \(\Omega_1\) are chosen so that
\(A_r^*\in\Omega_1\).  In particular,
\begin{equation}\label{eq:adjoint-reference-point}
 \lambda(A_r^*)=r,
 \qquad x(A_r^*)=x_0,
 \qquad t(A_r^*)=t_0-2r^2.
\end{equation}
Let \(c_{\mathrm{bc},0}\) and \(c_{\mathrm{bc},1}\) be the structural
constants in Lemma~\ref{lem:boundary-comparison}, with
\(c_{\mathrm{bc},0}\) enlarged, if necessary, so that
\(c_{\mathrm{bc},0}\geq4\), and put
\[
 z_r^c=(x_0,t_0-r^2).
\]
Since the points \(A_k\) are unit-scale corkscrew points associated with
cubes contained in a fixed enlargement of \(\Delta_r^-(z_0)\), they
satisfy
\[
 \lambda(A_k)\lesssim1,
 \qquad
 |x(A_k)-x_0|\lesssim r,
 \qquad
 |t(A_k)-(t_0-r^2)|\lesssim r^2.
\]
Together with \eqref{eq:adjoint-reference-point}, these estimates imply
that there is a fixed constant \(L_0\geq1\), independent of \(r\) and
\(k\), such that
\begin{equation}\label{eq:common-reduced-region}
 \{A_k:k\}\cup\{A_r^*\}
 \subset T_{L_0r}(z_r^c).
\end{equation}

Set
\[
 \rho=L_0c_{\mathrm{bc},1}r,
 \qquad
 R=2c_{\mathrm{bc},0}c_{\mathrm{bc},1}\rho
   =2L_0c_{\mathrm{bc},0}c_{\mathrm{bc},1}^2r.
\]
When Lemma~\ref{lem:boundary-comparison} is applied with scale \(R\),
its two auxiliary scales are
\[
 \rho_0=\frac{R}{c_{\mathrm{bc},0}},
 \qquad
 \rho_1=\frac{R}{c_{\mathrm{bc},0}c_{\mathrm{bc},1}}
        =2\rho.
\]
In particular, \(\rho<\rho_1\), and
\[
 T_{\rho/c_{\mathrm{bc},1}}(z_r^c)
 =T_{L_0r}(z_r^c).
\]
Define
\[
 \mathcal Q_r=T_{2R}(z_r^c),
 \qquad
 \mathcal Q_r^{\mathrm{red}}
 =T_{\rho/c_{\mathrm{bc},1}}(z_r^c),
 \qquad
 A_r^{\mathrm{ref}}=A_\rho(z_r^c).
\]
It follows from \eqref{eq:common-reduced-region} that
\begin{equation}\label{eq:common-comparison-points}
 A_k,A_r^*\in\mathcal Q_r^{\mathrm{red}}
 \qquad\text{for every }k.
\end{equation}
The geometry of the construction is summarized in
Figure~\ref{fig:two-comparison-geometry} in
Appendix~\ref{app:comparison-geometry}.

The fixed constants in the preceding choices of
\(\Omega_1,\Omega_2,P_1,P_2\), and \(M\) are taken sufficiently large
so that \(T_{4R}(z_r^c)\subset\Omega_1\) and this enlarged cylinder is
quantitatively separated from the artificial spatial and temporal
faces.  We also require
\[
 \sup\{t:(X,t)\in T_{4R}(z_r^c)\}
 \leq
 \min\{t(P),t(P_1),t(P_2)\}-cR^2
\]
for a fixed \(c>0\).  Thus \(T_{4R}(z_r^c)\) lies in the strict common
causal past of the three points \(P_1,P_2,P\).  Moreover, \(P\), \(P_1\), and \(P_2\)
are chosen so that Lemma~\ref{lem:green-boundary-time-comparison}
applies relative to
\(\Delta_{\rho_0}(z_r^c)\), in the half-space for \(P\), in
\(\Omega_1\) for \(P_1\), and in \(\Omega_2\) for \(P_2\).  Consequently, \(w\), \(v_1\), and \(v_2\) are strictly positive adjoint
solutions in \(\mathcal Q_r\), and they vanish continuously on its
common flat boundary portion.

For \(i=1,2\), we also impose the scale-\(r\) Green-function geometry
\begin{equation}\label{eq:reference-green-geometry}
 t(P_i)-t(A_r^*)\simeq r^2,
 \qquad
 |X(P_i)-X(A_r^*)|\lesssim r.
\end{equation}
More precisely, choose a point \(Q_i\) in the causal future of \(A_r^*\)
and in the causal past of \(P_i\)
such that \(A_r^*\) and \(Q_i\) lie in a scale-\(r\) interior cylinder
whose fixed enlargement is contained in \(\Omega_i\), with
\[
 t(Q_i)-t(A_r^*)\simeq r^2,
 \qquad
 |X(Q_i)-X(A_r^*)|\simeq r.
\]
A neighborhood of \(Q_i\) is connected to \(P_i\) by a forward
scale-\(r\) Harnack chain in the first Green variable.  Every cylinder
in this chain has parabolic distance between \(cr\) and \(Cr\) from
\(A_r^*\), and remains quantitatively separated from the parabolic
boundary of \(\Omega_i\).

The constants in this construction are selected in the following order.
First fix the constants in Lemma~\ref{lem:boundary-comparison}, the fixed
enlargement factors, and the structural constants \(L_0\) and \(L_h\)
used here and in Lemma~\ref{lem:unit-height-decay}.  This determines the
ratios of all auxiliary radii to \(r\).
The constants defining \(\Omega_1\), \(\Omega_2\), \(P_1\), \(P_2\),
and \(M\) are then chosen sufficiently large to satisfy the containment,
boundary-clearance, and causal-separation conditions above.  All these
quantities are fixed multiples of \(r\) or \(r^2\), so the requirements
are compatible and the resulting constants are independent of \(r\).

\begin{lem}
\label{lem:two-comparison}
Under the preceding geometric assumptions, \(w\) in \eqref{Greena} satisfies
\begin{equation}\label{eq:correct-comparison}
 \sum_k w(A_k)^2
 \lesssim
 r^{2(n+1)}w(A_r^*)^2
 \sum_k v_1(A_k)v_2(A_k).
\end{equation}
\end{lem}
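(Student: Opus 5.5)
The plan is to compare \(w\) with \(v_1\) and with \(v_2\) separately, using adjoint boundary comparison (Lemma~\ref{lem:boundary-comparison} in its adjoint form) on the common region \(\mathcal Q_r=T_{2R}(z_r^c)\). Then multiply the two resulting pointwise bounds at each \(A_k\).

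The functions \(w,v_1,v_2\) are positive adjoint solutions in \(\mathcal Q_r\) and vanish on its flat boundary. The three poles \(P,P_1,P_2\) lie in the strict causal future of \(T_{4R}(z_r^c)\), with admissible aperture. So Lemma~\ref{lem:green-boundary-time-comparison}, applied relative to \(\Delta_{\rho_0}(z_r^c)\) in the half-space for \(w\), in \(\Omega_1\) for \(v_1\) and in \(\Omega_2\) for \(v_2\), bounds the adjoint balance parameters \(\mathcal B^*_{\rho_0}\) of each pair \((w,v_i)\) by a structural constant. Adjoint boundary comparison at the scale \(\rho<\rho_1\) with \(\widetilde z=z_r^c\) therefore gives, for every \(Z\in\mathcal Q_r^{\mathrm{red}}\),
\[
 \frac{w(Z)}{v_i(Z)}\simeq\frac{w(A_r^{\mathrm{ref}})}{v_i(A_r^{\mathrm{ref}})},\qquad i=1,2,
\]
with structural constants. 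By \eqref{eq:common-comparison-points} we may take \(Z=A_k\) and \(Z=A_r^*\). Eliminating the reference point gives
\[
 w(A_k)\lesssim \frac{w(A_r^*)}{v_i(A_r^*)}\,v_i(A_k),\qquad i=1,2.
\]
Multiplying the \(i=1\) and \(i=2\) inequalities yields
\[
 w(A_k)^2\lesssim \frac{w(A_r^*)^2}{v_1(A_r^*)v_2(A_r^*)}\,v_1(A_k)v_2(A_k).
\]

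It remains to show \(v_i(A_r^*)\gtrsim r^{-(n+1)}\), and for this I would use Lemma~\ref{lem:green-function-scale}. Take \(Z=A_r^*\) and \(P_1\) there to be the point \(Q_i\), inside a scale-\(r\) interior cylinder whose enlargement lies in \(\Omega_i\). The forward Harnack chain from \(Q_i\) to \(P_i\), which stays at distance \(\simeq r\) from \(A_r^*\), is exactly the setup prescribed in \eqref{eq:reference-green-geometry}. This gives \(v_i(A_r^*)=G^{\Omega_i}_{\cH}(P_i;A_r^*)\simeq r^{-(n+1)}\). Substituting, and summing over \(k\), proves \eqref{eq:correct-comparison}.

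The main obstacle is verifying uniform control of the balance parameters. The point is that the constant in adjoint boundary comparison must not depend on \(r\), even though \(P\) may be far away (\(t(P)-t_0\gtrsim M^2r^2\)). This needs the pole-separated Green-function form of the backward Harnack estimate, which holds uniformly in the distance to the pole, as in the remark after Lemma~\ref{lem:change-of-pole}. A second requirement is that \(\mathcal Q_r\) and its enlargement sit inside \(\Omega_1\subset\Omega_2\) with clearance from the artificial faces, so that the cylinder convention of Remark~\ref{rem:domain-convention} applies to \(v_1\) and \(v_2\). Both are guaranteed by the order in which the constants were chosen.
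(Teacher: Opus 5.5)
Your proposal is correct and follows the paper's proof essentially step by step. You compare \(w\) with each \(v_i\) by adjoint boundary comparison on \(\mathcal Q_r\), with the balances controlled by Lemma~\ref{lem:green-boundary-time-comparison}, evaluate at \(A_k\) and \(A_r^*\) through the common normalizing corkscrew \(A_r^{\mathrm{ref}}\), multiply, and then prove the lower bound \(v_i(A_r^*)\gtrsim r^{-(n+1)}\). The only cosmetic difference is in that last step: you cite Lemma~\ref{lem:green-function-scale} with \(Q_i\) as the intermediate point, whereas the paper reproves that lemma inline, using the interior Green-function lower bound for \(U_i(Q)=G_{\cH}^{\Omega_i}(Q;A_r^*)\) at \(Q_i\) followed by forward Harnack along the chain to \(P_i\).
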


\begin{proof}
Fix \(i\in\{1,2\}\).  The functions \(w\) and \(v_i\) are positive
solutions of the adjoint equation in \(\mathcal Q_r\), and they vanish
continuously on the same flat boundary portion.  Their singularities
lie in the strict future of \(T_{4R}(z_r^c)\).  At the balance scale \(\rho_0=R/c_{\mathrm{bc},0}\),
Lemma~\ref{lem:green-boundary-time-comparison}, applied separately to
\(w\) and \(v_i\) in their respective ambient domains, gives
\[
 \mathcal B_{\rho_0}^*(w,v_i)\leq C.
\]
The constant in the adjoint form of
Lemma~\ref{lem:boundary-comparison} is therefore uniform in \(r\), \(k\),
and \(i\).  Apply that lemma with scale \(R\),
\(\widetilde z=z_r^c\), and the interior scale \(\rho\).  Since
\(\rho<\rho_1\), it gives
\begin{equation}\label{eq:common-normalized-comparison}
 \frac{w(Z)}{v_i(Z)}
 \simeq
 \frac{w(A_r^{\mathrm{ref}})}
      {v_i(A_r^{\mathrm{ref}})}
 \qquad
 \text{for every }Z\in\mathcal Q_r^{\mathrm{red}}.
\end{equation}
By \eqref{eq:common-comparison-points}, both \(A_k\) and \(A_r^*\)
belong to this same reduced comparison region.  Taking successively
\(Z=A_k\) and \(Z=A_r^*\) in
\eqref{eq:common-normalized-comparison} yields
\begin{equation}\label{eq:two-comparison-one-factor}
 \frac{w(A_k)}{v_i(A_k)}
 \lesssim
 \frac{w(A_r^{\mathrm{ref}})}
      {v_i(A_r^{\mathrm{ref}})}
 \lesssim
 \frac{w(A_r^*)}{v_i(A_r^*)},
 \qquad i=1,2.
\end{equation}
Multiplying the two inequalities in
\eqref{eq:two-comparison-one-factor} gives
\begin{equation}\label{eq:two-comparison-pointwise}
 w(A_k)^2
 \lesssim
 w(A_r^*)^2
 \frac{v_1(A_k)v_2(A_k)}
      {v_1(A_r^*)v_2(A_r^*)}.
\end{equation}

It remains to estimate the denominator.  For \(i=1,2\), define
\[
 U_i(Q)=G_{\cH}^{\Omega_i}(Q;A_r^*).
\]
As a function of \(Q\), the function \(U_i\) is a forward solution away
from its second-variable pole \(A_r^*\).  The interior Green-function
lower estimate in the scale-\(r\) cylinder containing \(Q_i\) and
\(A_r^*\) gives
\[
 U_i(Q_i)\gtrsim r^{-(n+1)}.
\]
The Harnack chain from \(Q_i\) to \(P_i\) remains separated from
\(A_r^*\).  Forward Harnack along this chain therefore gives
\[
 U_i(Q_i)\lesssim U_i(P_i).
\]
Since
\[
 U_i(P_i)
 =G_{\cH}^{\Omega_i}(P_i;A_r^*)
 =v_i(A_r^*),
\]
we conclude that
\begin{equation}\label{eq:reference-green-lower}
 v_i(A_r^*)\gtrsim r^{-(n+1)},
 \qquad i=1,2.
\end{equation}
Substituting \eqref{eq:reference-green-lower} into
\eqref{eq:two-comparison-pointwise} gives
\[
 w(A_k)^2
 \lesssim
 r^{2(n+1)}w(A_r^*)^2v_1(A_k)v_2(A_k).
\]
Summing over \(k\) proves \eqref{eq:correct-comparison}.
\end{proof}

\begin{rem}
The implicit constant in \eqref{eq:correct-comparison} depends only on
\(n,\mu,\Lambda\), the fixed pole-admissibility parameters, and the fixed
comparison geometry.  In particular, it is independent of \(r\) and
\(k\).  The role of
Lemma~\ref{lem:green-boundary-time-comparison} is to control uniformly
the reference balances
\(\mathcal B_{\rho_0}^*(w,v_i)\), \(i=1,2\), occurring in normalized
boundary comparison.  Such uniform control need not hold for arbitrary
positive adjoint solutions.  It holds here because \(w\), \(v_1\), and
\(v_2\) are Green functions whose singularities are quantitatively
separated from the comparison region in the proper time direction.
Only the lower bounds in \eqref{eq:reference-green-lower} are used in
the proof.  The corresponding upper bounds follow directly from
Lemma~\ref{lem:green-upper}.
\end{rem}

Using \eqref{eq:parabolic-v1-omega},
\eqref{eq:parabolic-v2-boundary}, and the bounded overlap of the cubes
\(\Delta_k\), we obtain
\begin{align}
 \sum_k v_1(A_k)v_2(A_k)
 \lesssim
 \sum_k v_2(A_k)
 \omega_{\cH,\Omega_1}^{P_1}(\Delta_k)
 &\lesssim
 \sum_k
 \iint_{\Delta_k}v_2(1,y,s)\,
 \mathrm d\omega_{\cH,\Omega_1}^{P_1}(y,s)\notag\\
 &\lesssim
 \iint_{C\Delta_r^-(z_0)}v_2(1,y,s)\,
 \mathrm d\omega_{\cH,\Omega_1}^{P_1}(y,s).
 \label{eq:correct-sum-to-integral}
\end{align}
Here \(C\Delta_r^-(z_0)\) is a fixed enlargement containing all the
cubes occurring in the covering.  It therefore remains to prove
\begin{equation}\label{eq:forward-adjoint-defect}
 \iint_{C\Delta_r^-(z_0)}v_2(1,y,s)\,
 \mathrm d\omega_{\cH,\Omega_1}^{P_1}(y,s)
 \lesssim r^{-(n+2)},
 \qquad r\geq20.
\end{equation}
The scale-\(r\) Green-function estimate alone gives only $v_2(1,y,s)
 \lesssim r^{-(n+1)}$ for $(y,s)\in C\Delta_r^-(z_0)$,
which is one power of \(r\) too weak.  The key point is therefore to
gain an additional factor \(r^{-1}\) from the vanishing of \(v_2\) on
the flat boundary.  In Section~\ref{sec:height-coordinate}, we construct
a positive adjoint height coordinate and use boundary comparison to
prove the stronger pointwise estimate
\eqref{eq:unit-height-decay}.  The comparison supplies precisely this
additional factor, since the height coordinate is of order one at
height \(1\) and of order \(r\) at height \(r\).  Since the restriction
of \(\omega_{\cH,\Omega_1}^{P_1}\) to
\(C\Delta_r^-(z_0)\) has mass at most one,
\eqref{eq:unit-height-decay} implies
\eqref{eq:forward-adjoint-defect}.

\begin{rem}\label{rem:normal-periodicity-obstruction}
For genuinely time-dependent coefficients, transverse periodicity alone
does not justify a direct parabolic analogue of Dahlberg's representation
argument.  To see the
obstruction, put \(e_\lambda=(1,0,\ldots,0)\) and
\[
 F=\delta_\lambda v_2,
 \qquad
 F(Z)=v_2(Z+e_\lambda)-v_2(Z).
\]
The two terms have singularities at $\widetilde P_2:=P_2-e_\lambda$ and $P_2$, respectively.  Since \(P_2\) is spatially separated from
\(\overline{\Omega_1}\) by a distance comparable to \(r\) and
\(r\geq20\), both singularities lie outside
\(\overline{\Omega_1}\).  Consequently, on every subregion of
\(\Omega_1\) on which \(Z+e_\lambda\in\Omega_2\), transverse
periodicity gives \(\cH^*F=0\).  On the relevant flat boundary patch, \(\operatorname{Tr}_0F
 =F(0,\cdot)
 =v_2(1,\cdot)\), since \(v_2\) has zero trace on
\(\{\lambda=0\}\).  On the other hand,
\(\omega_{\cH,\Omega_1}^{P_1}\) represents solutions of the forward
equation \(\cH U=0\) from their data on the incoming parabolic boundary.
Consequently, even if the geometry were enlarged so that \(F\) were
defined and solved the adjoint
equation throughout \(\Omega_1\), one could not
assert
\begin{equation}\label{eq:false-representation}
 F(P_1)
 =
 \int_{\partial_p^{\mathrm{in}}\Omega_1}
 \operatorname{Tr}F\,
 \mathrm d\omega_{\cH,\Omega_1}^{P_1}.
\end{equation}
The right-hand side of \eqref{eq:false-representation} is a forward
representation formula, whereas \(F\) solves the adjoint equation.  An
adjoint representation, when applicable, uses adjoint parabolic measure
and the outgoing parabolic boundary.  Pointwise symmetry
\(A=A^{\mathsf T}\) does not by itself remove this distinction when
\(A\) depends on time, because the time derivatives in \(\cH\) and
\(\cH^*\) have opposite signs.  Green-function reciprocity and
reverse-time Harnack inequalities likewise do not turn an adjoint
solution into a forward solution.  The following scaling computation is
included only as heuristic motivation.  A scale-adapted trace-duality
argument makes the resulting quantitative loss explicit.  Let
\[
 \mathcal R_{2r}\subset\mathcal R_{3r}\subset\mathcal R_{4r}
\]
be fixed nested, pole-free boundary cylinders at scale \(r\), with flat
boundary portions containing the required enlargements of
\(C\Delta_r^-(z_0)\).  After localization at scale \(r\), the local
\(\mathrm{RH}_2\) estimate and the conormal-trace bound for \(v_1\),
together with the parabolic Dirichlet trace estimate for \(F\), would
imply \eqref{eq:forward-adjoint-defect} if one could prove
\begin{equation}\label{eq:missing-difference-energy}
 \|\nabla_{\lambda,x}F\|_{\mathrm L^2(\mathcal R_{2r})}
 +r^{-1}\|F\|_{\mathrm L^2(\mathcal R_{2r})}
 \lesssim r^{-(n+3)/2}.
\end{equation}
Indeed, the corresponding conormal and Dirichlet trace bounds would have
sizes \(r^{-(n+1)/2}\) and \(r^{-(n+3)/2}\), respectively, whose product
is \(r^{-(n+2)}\).  The zeroth-order term in \eqref{eq:missing-difference-energy} has the
required size.  The fundamental theorem of calculus in the transverse
variable, boundary Caccioppoli for \(v_2\), and the scale-\(r\)
Green-function upper bound give
\begin{align*}
 r^{-1}\|F\|_{\mathrm L^2(\mathcal R_{2r})}
 &\lesssim
 r^{-1}\|\partial_\lambda v_2\|_{\mathrm L^2(\mathcal R_{3r})}\lesssim
 r^{-2}\|v_2\|_{\mathrm L^2(\mathcal R_{4r})}
 \lesssim r^{-(n+3)/2}.
\end{align*}
By contrast, the estimates available from the transverse-difference
argument give only
\[
 \|\nabla_{\lambda,x}F\|_{\mathrm L^2(\mathcal R_{2r})}
 \lesssim r^{-(n+1)/2}.
\]
The boundary Caccioppoli estimate that would supply the additional
factor \(r^{-1}\) requires zero boundary trace and therefore cannot be
applied to \(F\), since
\[
 \operatorname{Tr}_0F=v_2(1,\cdot)\not\equiv0.
\]
Using the available gradient estimate in the trace-duality argument
gives only \(r^{-(n+1)}\), rather than the required \(r^{-(n+2)}\).  Thus this particular trace-duality approach remains one large-scale
power short under transverse periodicity alone.  This observation does
not exclude a different argument under that hypothesis.  The adjoint
height coordinate constructed in
Section~\ref{sec:height-coordinate}, using periodicity in all spatial
variables but no periodicity in time, proves
\eqref{eq:forward-adjoint-defect} directly and bypasses, rather than
establishes, \eqref{eq:missing-difference-energy}.
\end{rem}

\begin{rem}
A version of Lemma~\ref{lem:difference} appears in
\cite[Lemma~2.11]{CastroStromqvist}, which we cite only for this lemma.
Its subsequent large-scale argument uses time independence and symmetry
to rearrange the Green-function factors so that one has the form
required by forward parabolic measure while the other remains in the
forward Green variable.  This permits the transverse difference to be
represented by forward parabolic measure, but the mechanism is
unavailable for time-dependent coefficients and does not resolve the
forward-adjoint mismatch considered here.  Moreover, in the notation of their paper, the proof of
Theorem~2.12 applies Lemmas~2.4 and~2.5 to the solution \(u\) introduced
in equation~\textup{(2.25)} without first reducing to \(u\geq0\),
although both lemmas require nonnegative solutions.  We therefore do
not use that large-scale argument.  As noted in the introduction, the
corresponding conclusion follows instead by combining the elliptic
theorem of Kenig and Shen~\cite{KenigShen} with the structural theorem
of Litsg\aa rd and the author~\cite{LitsgardNystrom}.
\end{rem}

\section{The spatially periodic adjoint height coordinate}\label{sec:height-coordinate}

The forward-adjoint distinction prevents one from representing the
boundary datum in \eqref{eq:forward-adjoint-defect} by forward parabolic
measure.  Periodicity in all the spatial variables
\(Y=(\lambda,x)\) instead provides a positive adjoint comparison
function.  No periodicity in the time variable is needed in this
construction.

We write \(Y=(\lambda,x)\in\mathbb R^{n+1}\),
\(e_\lambda=(1,0,\ldots,0)\), and
\[
 \cH^*=-\partial_t-\operatorname{div}_{Y}
       (A^{\mathsf T}(Y,t)\nabla_Y).
\]
We identify the torus \(\mathbb T_Y^{n+1}
 =
 \mathbb R^{n+1}/\mathbb Z^{n+1}\) with the unit cell \([0,1)^{n+1}\),
whose opposite faces are identified.
It is a compact connected manifold without boundary.  Consequently,
periodic integration by parts produces no spatial boundary terms, and
Poincar\'e's inequality holds on the subspace of functions with zero
spatial mean.  Only the spatial factor is compact.  The product
\(\mathbb T_Y^{n+1}\times\mathbb R_t\) remains unbounded in time.

Let
\[
 \mathrm H^1_{\mathrm{per}}(\mathbb T_Y^{n+1})
 =
 \biggl\{
 f\in\mathrm H^1_{\mathrm{loc}}(\mathbb R^{n+1}):
 f(Y+k)=f(Y)\ \forall\, k\in\mathbb Z^{n+1},
 \int_{[0,1)^{n+1}}
 \bigl(|f|^2+|\nabla_Yf|^2\bigr)\,\mathrm dY<\infty
 \biggr\},
\]
where equality under translations is understood almost everywhere.  The
space \(\mathrm H^{-1}_{\mathrm{per}}(\mathbb T_Y^{n+1})\) denotes the dual
of \(\mathrm H^1_{\mathrm{per}}(\mathbb T_Y^{n+1})\).  Also,
\[
 \mathrm L^2_0(\mathbb T_Y^{n+1})
 =
 \biggl\{
 f\in\mathrm L^2(\mathbb T_Y^{n+1}):
 \int_{\mathbb T^{n+1}}f(Y)\,\mathrm dY=0
 \biggr\}.
\]
In what follows, a complete solution means a solution defined for every
\(t\in\mathbb R\).

\begin{lem}\label{lem:adjoint-periodic-corrector}
Assume \eqref{eq:ellipticity} and \eqref{eq:full-periodicity}, where
\eqref{eq:full-periodicity} concerns the spatial variables
\(Y=(\lambda,x)\) only.  Then there exists a unique bounded complete
weak solution
\[
 \chi^*\in
 \mathrm L^2_{\mathrm{loc}}
 \bigl(\mathbb R_t;
 \mathrm H^1_{\mathrm{per}}(\mathbb T_Y^{n+1})\bigr)
 \cap
 \mathrm C
 \bigl(\mathbb R_t;
 \mathrm L^2_0(\mathbb T_Y^{n+1})\bigr),\quad\mbox{with}\quad\partial_t\chi^*\in
 \mathrm L^2_{\mathrm{loc}}
 \bigl(\mathbb R_t;
 \mathrm H^{-1}_{\mathrm{per}}(\mathbb T_Y^{n+1})\bigr),
\]
which satisfies
\begin{equation}\label{eq:adjoint-cell}
 -\partial_t\chi^*
 -\operatorname{div}_Y
 \left(
 A^{\mathsf T}(Y,t)
 \bigl(e_\lambda+\nabla_Y\chi^*\bigr)
 \right)
 =0
 \quad\text{on }\mathbb T_Y^{n+1}\times\mathbb R_t
\end{equation}
and is normalized by
\begin{equation}\label{eq:corrector-normalization}
 \int_{\mathbb T^{n+1}}
 \chi^*(Y,t)\,\mathrm dY
 =0
 \quad\text{for every }t\in\mathbb R.
\end{equation}
Uniqueness is understood in the class of complete weak
solutions satisfying \eqref{eq:corrector-normalization} and
\begin{equation}\label{eq:corrector-energy}
 \sup_{t\in\mathbb R}
 \|\chi^*(\cdot,t)\|_{\mathrm L^2(\mathbb T^{n+1})}
 +
 \sup_{a\in\mathbb R}
 \left(
 \int_a^{a+1}
 \left(
 \|\nabla_Y\chi^*(\cdot,t)\|_2^2
 +
 \|\partial_t\chi^*(\cdot,t)\|_{\mathrm H^{-1}_{\mathrm{per}}}^2
 \right)
 \,\mathrm d t
 \right)^{1/2}
 \leq C.
\end{equation}
Moreover,
\begin{equation}\label{eq:corrector-infty}
 \|\chi^*\|_{\mathrm L^\infty(
 \mathbb T_Y^{n+1}\times\mathbb R_t)}
 \leq M_\chi,
\end{equation}
where \(C\) and \(M_\chi\geq1\) depend only on
\(n,\mu,\Lambda\).  Identifying \(\chi^*\) with its
\(\mathbb Z^{n+1}\)-periodic lift to
\(\mathbb R_Y^{n+1}\times\mathbb R_t\), define
\begin{equation}\label{eq:affine-corrector}
 p^*(\lambda,x,t)
 =\lambda+\chi^*(\lambda,x,t).
\end{equation}
Then
\begin{equation}\label{eq:pstar-properties}
 \cH^*p^*=0
 \quad\text{in }\mathbb R^{n+2},
 \qquad
 p^*(\lambda+1,x,t)
 =p^*(\lambda,x,t)+1.
\end{equation}
Moreover, \(p^*\) is one-periodic in every component of \(x\).
\end{lem}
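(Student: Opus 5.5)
The plan is to reverse time and solve the cell problem as a forward problem on the compact spatial torus. After that the complete solution is obtained as a limit of solutions of initial-value problems started at $-T$, uniformly in $T$, and the result is lifted to $\mathbb R^{n+2}$.

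Put $\tau=-t$, $\widetilde A(Y,\tau)=A^{\mathsf T}(Y,-\tau)$, $\psi(Y,\tau)=\chi^*(Y,-\tau)$. Then \eqref{eq:adjoint-cell} becomes the forward equation
\[
\partial_\tau\psi-\operatorname{div}_Y(\widetilde A\nabla_Y\psi)=\operatorname{div}_Y(\widetilde A e_\lambda)=:\operatorname{div}_Y F,
\qquad \|F\|_\infty\le\Lambda .
\]
The right-hand side is a divergence of a bounded periodic field, so it has zero spatial mean. Consequently the spatial mean of any solution is constant in $\tau$, which is why \eqref{eq:corrector-normalization} is preserved.

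\emph{Existence of a complete solution.} For $T>0$ let $\psi_T$ be the Lions (Galerkin) solution on $\mathbb T^{n+1}\times(-T,\infty)$ with $\psi_T(\cdot,-T)=0$. Testing with $\psi_T$ gives
\[
\tfrac12\tfrac{\mathrm d}{\mathrm d\tau}\|\psi_T\|_2^2+\mu\|\nabla\psi_T\|_2^2\le\Lambda\|\nabla\psi_T\|_2 .
\]
Poincar\'e on mean-zero functions, $\|\psi_T\|_2\le C_P\|\nabla\psi_T\|_2$, yields a differential inequality $y'\le -cy+C$ for $y=\|\psi_T\|_2^2$. Hence $\sup_\tau\|\psi_T(\tau)\|_2\le C$ uniformly in $T$. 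Integrating over unit intervals bounds $\int_a^{a+1}\|\nabla\psi_T\|_2^2$, and the equation then bounds $\partial_\tau\psi_T$ in $\mathrm L^2(a,a+1;\mathrm H^{-1}_{\mathrm{per}})$. This gives \eqref{eq:corrector-energy} uniformly in $T$.

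The same energy identity applied to the difference $\psi_T-\psi_{T'}$, which solves the homogeneous equation with mean zero, gives exponential decay:
\[
\|(\psi_T-\psi_{T'})(\tau)\|_2\le e^{-c(\tau+T)}C \qquad (T<T').
\]
So $\psi_T$ is Cauchy in $\mathrm C([a,b];\mathrm L^2_0)$ for every compact interval, and weakly convergent in the energy spaces. The limit $\psi$ is a complete weak solution satisfying \eqref{eq:corrector-normalization} and \eqref{eq:corrector-energy}; continuity into $\mathrm L^2$ follows from the Lions--Magenes lemma.

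\emph{Uniqueness.} Let $d$ be the difference of two complete solutions in the stated class. Then $d$ solves the homogeneous equation, has mean zero, and satisfies $\|d(\tau)\|_2\le e^{-c(\tau-\sigma)}\|d(\sigma)\|_2\le 2Ce^{-c(\tau-\sigma)}$ for all $\sigma<\tau$. Letting $\sigma\to-\infty$ gives $d\equiv0$.

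\emph{The $\mathrm L^\infty$ bound.} This is the step I expect to need the most care: De Giorgi--Nash--Moser is needed with constants independent of $T$ and of any initial layer. Lift $\psi$ periodically to $\mathbb R^{n+1}$. It is then a weak solution of an equation with bounded divergence-form right-hand side $\operatorname{div}F$. Local boundedness for inhomogeneous equations (Aronson--Serrin, with $F\in\mathrm L^\infty$) on a unit parabolic cylinder $\mathcal Q_1^-(Z)$ gives
\[
|\psi(Z)|\le C\Bigl(\fint_{\mathcal Q_2^-(Z)}|\psi|^2\Bigr)^{1/2}+C\|F\|_\infty .
\]
By periodicity, $\mathcal Q_2^-(Z)$ is covered by boundedly many unit cells times an interval of length $4$. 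The $\mathrm L^2$ average is therefore controlled by $\sup_\tau\|\psi(\tau)\|_{\mathrm L^2(\mathbb T)}\le C$, which gives \eqref{eq:corrector-infty} with $M_\chi=M_\chi(n,\mu,\Lambda)$. A complete solution has no initial face, so this interior estimate applies at every point. Returning to $t$ defines $\chi^*$.

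\emph{Properties of $p^*$.} Since $\nabla_Y\lambda=e_\lambda$ and $\partial_t\lambda=0$, equation \eqref{eq:adjoint-cell} for the periodic lift is exactly $\cH^*p^*=0$ in the weak sense on $\mathbb R^{n+2}$. Test functions with compact support can be handled by periodizing or by using a partition of unity on the cells. The identity $p^*(\lambda+1,x,t)=p^*(\lambda,x,t)+1$ and the periodicity of $p^*$ in each component of $x$ follow directly from the $\mathbb Z^{n+1}$-periodicity of $\chi^*$.
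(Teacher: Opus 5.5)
Your proposal is correct. The existence and uniqueness parts follow the paper's proof essentially step by step:
\begin{itemize}
\item time reversal to a forward problem;
\item zero-data initial-value problems started at $-T$;
\item the Poincar\'e-driven dissipation inequality on the mean-zero torus, giving bounds uniform in $T$;
\item exponential contraction of differences, yielding the pullback limit;
\item uniqueness by letting the initial time tend to $-\infty$.
\end{itemize}

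The one step where you use a different key lemma is the $\mathrm L^\infty$ bound. You apply inhomogeneous local boundedness (Aronson--Serrin) directly to $\psi$, treating the forcing as $\operatorname{div}_Y F$ with $\|F\|_\infty\leq\Lambda$. This is valid, needs no derivative of the coefficients, and gives $M_\chi=M_\chi(n,\mu,\Lambda)$.

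The paper instead observes that $p^*-j=(\lambda-j)+\chi^*$ solves the homogeneous adjoint equation. Its $\mathrm L^2$ norm on an enlarged cell $\widetilde{\mathcal Q}_{j,\ell}$ is uniformly bounded, because $|\lambda-j|$ is bounded there. The paper then applies only the homogeneous local boundedness estimate of Lemma~\ref{lem:local-parabolic-estimates}.

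The paper's device stays entirely within the homogeneous estimates it has already recorded. Your route is slightly more direct but imports the inhomogeneous De Giorgi--Nash--Moser theory for divergence-form right-hand sides.
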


\begin{proof}
The weak formulation of \eqref{eq:adjoint-cell} is
\begin{equation}\label{eq:adjoint-cell-weak}
 \iint_{\mathbb T_Y^{n+1}\times\mathbb R}
 \left[
 \chi^*\partial_t\varphi
 +
 A^{\mathsf T}
 \bigl(e_\lambda+\nabla_Y\chi^*\bigr)
 \cdot\nabla_Y\varphi
 \right]
 \,\mathrm dY\,\mathrm dt
 =0
\end{equation}
for every smooth test function \(\varphi\) which is periodic in \(Y\)
and compactly supported in time.  If \(\chi^*\) is such a weak solution
and
\[
 m(t)
 =
 \int_{\mathbb T^{n+1}}\chi^*(Y,t)\,\mathrm dY,
\]
then testing with functions depending only on \(t\) gives \(m'=0\) in
\(\mathcal D'(\mathbb R)\).  Thus \(m\) is constant almost everywhere.
Since adding a constant does not change the equation, we seek a
solution in the mean-zero class.

Reverse time by setting
\[
 \tau=-t,
 \qquad
 \widetilde A(Y,\tau)=A^{\mathsf T}(Y,-\tau),
 \qquad
 w(Y,\tau)=\chi^*(Y,-\tau).
\]
The function \(w\) must solve
\begin{equation}\label{eq:forward-cell-reversed}
 \partial_\tau w
 -\operatorname{div}_Y
 \bigl(\widetilde A\nabla_Yw\bigr)
 =
 \operatorname{div}_Y
 \bigl(\widetilde A e_\lambda\bigr)
 \quad\text{on }\mathbb T_Y^{n+1}\times\mathbb R_\tau.
\end{equation}
Fix \(T>0\) and consider the Gelfand triple
\[
 V\hookrightarrow H\cong H'\hookrightarrow V',
 \qquad
 H=\mathrm L^2_0(\mathbb T_Y^{n+1}),
 \qquad
 V=\mathrm H^1_{\mathrm{per}}(\mathbb T_Y^{n+1})\cap H.
\]
Boundedness and ellipticity of \(\widetilde A\), together with the
periodic Poincar\'e inequality on \(V\), show that the associated
bilinear forms are measurable in time, uniformly bounded, and
coercive on \(V\).  Moreover,
\(\operatorname{div}_Y(\widetilde A e_\lambda)\) belongs uniformly to
\(V'\).  Hence, for each \(\widehat T>-T\), the standard variational
theory for nonautonomous coercive forms gives a unique energy solution
\(w_{T,\widehat T}\) of \eqref{eq:forward-cell-reversed} on
\((-T,\widehat T)\), with \(w_{T,\widehat T}(\cdot,-T)=0\).
Uniqueness implies that these solutions agree on overlapping time
intervals.  They therefore determine a unique energy solution \(w_T\)
on \((-T,\infty)\), with \(w_T(\cdot,-T)=0\).  Since
\(w_T\in\mathrm C([-T,\infty);H)\), its spatial mean is zero at every
time.  The solution is defined on the spatial torus, so its lift to
\(\mathbb R_Y^{n+1}\) is periodic in \(Y\).

The variational construction initially gives
\[
 \iint_{\mathbb T_Y^{n+1}\times(-T,\infty)}
 \left[
 -w_T\partial_\tau\varphi_0
 +
 \widetilde A
 \bigl(\nabla_Yw_T+e_\lambda\bigr)
 \cdot\nabla_Y\varphi_0
 \right]
 \,\mathrm dY\,\mathrm d\tau
 =0
\]
for every smooth test function \(\varphi_0\) which has zero spatial
mean, is periodic in \(Y\), and is compactly supported in time.  To
recover the full weak formulation, let \(\varphi\) be any smooth
function which is periodic in \(Y\) and compactly supported in
\((-T,\infty)\), and set
\[
 \overline\varphi(\tau)
 =
 \int_{\mathbb T^{n+1}}\varphi(Y,\tau)\,\mathrm dY,
 \qquad
 \varphi_0(Y,\tau)
 =
 \varphi(Y,\tau)-\overline\varphi(\tau).
\]
Then \(\varphi_0(\cdot,\tau)\in V\) and
\(\nabla_Y\varphi_0=\nabla_Y\varphi\).  Moreover, since \(w_T\) has
zero spatial mean,
\[
 \iint_{\mathbb T_Y^{n+1}\times(-T,\infty)}
 w_T\,\partial_\tau
 \bigl(\varphi-\varphi_0\bigr)
 \,\mathrm dY\,\mathrm d\tau
 =
 \int_{-T}^{\infty}
 \overline\varphi'(\tau)
 \left(
 \int_{\mathbb T^{n+1}}w_T(Y,\tau)\,\mathrm dY
 \right)
 \,\mathrm d\tau
 =0.
\]
Thus the variational identity for \(\varphi_0\) is identical to the
one for \(\varphi\).  Consequently, \(w_T\) satisfies
\eqref{eq:forward-cell-reversed} against every smooth periodic test
function compactly supported in time.

 Testing the equation with \(w_T\), and then using
ellipticity, Young's inequality, and the Poincar\'e inequality for
mean-zero functions on the unit torus,
\[
 \|w_T(\cdot,\tau)\|_{\mathrm L^2(\mathbb T^{n+1})}
 \leq C_{\mathrm P}
 \|\nabla_Yw_T(\cdot,\tau)\|_{\mathrm L^2(\mathbb T^{n+1})},
\]
gives \begin{equation}\label{eq:corrector-dissipation}
 \frac{\mathrm d}{\mathrm d \tau}
 \|w_T(\cdot,\tau)\|_2^2
 +c_0\|w_T(\cdot,\tau)\|_2^2
 +c_0\|\nabla_Yw_T(\cdot,\tau)\|_2^2
 \leq C_0
\end{equation}
for almost every \(\tau>-T\).  The constants \(c_0>0\) and
\(C_0<\infty\) depend only on \(n,\mu,\Lambda\).  It follows that
\begin{equation}\label{eq:corrector-pullback-bounds}
 \sup_{T>0}\sup_{\tau>-T}
 \|w_T(\cdot,\tau)\|_2
 +
 \sup_{T>0}\sup_{a>-T}
 \left(
 \int_a^{a+1}
 \|\nabla_Yw_T(\cdot,\tau)\|_2^2
 \,\mathrm d \tau
 \right)^{1/2}
 \leq C.
\end{equation}

Let \(S>T\).  On \((-T,\infty)\), the difference
\(z_{S,T}=w_S-w_T\) solves the homogeneous equation and has zero spatial
mean.  Its energy inequality and Poincar\'e's inequality give exponential
decay from time \(-T\).  Since \(w_T(\cdot,-T)=0\) and
\(\|w_S(\cdot,-T)\|_2\leq C\) by
\eqref{eq:corrector-pullback-bounds}, it follows that
\begin{equation}\label{eq:corrector-pullback-contraction}
 \|w_S(\cdot,\tau)-w_T(\cdot,\tau)\|_2
 \leq
 C\exp\bigl(-c(\tau+T)\bigr),
 \qquad \tau\geq-T.
\end{equation}
For every compact interval \(I=[a,b]\), the homogeneous energy
inequality also gives, once \(T>-a\),
\[
 \mu\int_a^b
 \|\nabla_Y(w_S-w_T)(\cdot,\tau)\|_2^2\,\mathrm d\tau
 \leq
 \frac12\|(w_S-w_T)(\cdot,a)\|_2^2
 \longrightarrow0
\]
as \(S>T\to\infty\), by
\eqref{eq:corrector-pullback-contraction}.  Hence the whole family is
Cauchy in the local energy space, not merely along a subsequence.
Consequently, \(w_T\) converges as \(T\to\infty\), locally in time in
\(\mathrm C(\mathbb R;\mathrm L^2_0)\) and strongly in
\(\mathrm L^2_{\mathrm{loc}}(\mathbb R;\mathrm H^1_{\mathrm{per}})\),
to a complete solution \(w\) of
\eqref{eq:forward-cell-reversed}.  The equation gives
\[
 \|\partial_\tau w\|_{\mathrm H^{-1}_{\mathrm{per}}}
 \leq
 C\bigl(1+\|\nabla_Yw\|_2\bigr).
\]
Thus \(w\) satisfies the time-uniform estimates corresponding to
\eqref{eq:corrector-energy}.

Uniqueness holds in the class of complete mean-zero solutions satisfying
these time-uniform energy bounds.  Indeed, if \(w_1\) and \(w_2\) are two
such solutions, then, for \(s<\tau\),
\[
 \|w_1(\cdot,\tau)-w_2(\cdot,\tau)\|_2
 \leq
 e^{-c(\tau-s)}
 \|w_1(\cdot,s)-w_2(\cdot,s)\|_2.
\]
Since
\[
 \sup_{s\in\mathbb R}
 \|w_1(\cdot,s)-w_2(\cdot,s)\|_2<\infty,
\]
letting \(s\to-\infty\) shows that
\(w_1(\cdot,\tau)=w_2(\cdot,\tau)\).  Hence \(w_1=w_2\).
Reversing time now gives \(\chi^*\) and proves
\eqref{eq:corrector-energy}.

Identify \(A\) and \(\chi^*\) with their \(\mathbb Z^{n+1}\)-periodic
lifts in \(Y\) to \(\mathbb R_Y^{n+1}\times\mathbb R_t\), and define \(p^*\)
by \eqref{eq:affine-corrector}.  Since
\[
 \partial_tp^*=\partial_t\chi^*,
 \qquad
 \nabla_Yp^*=e_\lambda+\nabla_Y\chi^*,
\]
equation \eqref{eq:adjoint-cell} gives \(\cH^*p^*=0\).
Periodicity of \(\chi^*\) in \(\lambda\) gives the affine periodicity
in \eqref{eq:pstar-properties}.  Moreover, it follows that \(p^*\) is one-periodic in every component of \(x\).

It remains to obtain the pointwise bound in \eqref{eq:corrector-infty}.  For
\(j,\ell\in\mathbb Z\), let
\[
 \mathcal Q_{j,\ell}
 =
 (j,j+1)\times(0,1)^n\times(\ell,\ell+1),
\]
and let \(\widetilde{\mathcal Q}_{j,\ell}\) be a fixed enlargement.
The function \(p_j^*=p^*-j\) is a homogeneous adjoint solution.  On
\(\widetilde{\mathcal Q}_{j,\ell}\), one has
\[
 p_j^*=(\lambda-j)+\chi^*.
\]
The first term is uniformly bounded, and
\eqref{eq:corrector-energy} controls the space-time
\(\mathrm L^2\)-norm of the second term.  Hence
\begin{equation}\label{eq:pstar-cell-L2}
 \iiint_{\widetilde{\mathcal Q}_{j,\ell}}
 |p_j^*|^2
 \,\mathrm d \lambda\,\mathrm d x\,\mathrm d t
 \leq C,
\end{equation}
where \(C\) is independent of \(j\) and \(\ell\).  The adjoint form of the local boundedness estimate in
Lemma~\ref{lem:local-parabolic-estimates}, applied to a fixed finite
family of future-oriented parabolic cylinders covering
\(\mathcal Q_{j,\ell}\) and contained in
\(\widetilde{\mathcal Q}_{j,\ell}\), together with
\eqref{eq:pstar-cell-L2}, gives
\begin{equation}\label{eq:pstar-cell-Linfty}
 \|p_j^*\|_{\mathrm L^\infty(\mathcal Q_{j,\ell})}
 \leq C.
\end{equation}
It follows that \(|\chi^*|\leq C+1\) on every such cell.  This proves
\eqref{eq:corrector-infty}.

The use of \(p^*-j\) is important.  It is a homogeneous solution,
whereas the equation for \(\chi^*\) contains the distributional forcing
\(\operatorname{div}_Y(A^{\mathsf T}e_\lambda)\).  The local boundedness
argument therefore does not differentiate the merely measurable
coefficient matrix.
\end{proof}

The preceding lemma also identifies the expected behavior of the
half-space comparison function.  Indeed, at scales large compared with
the spatial period, periodic homogenization suggests that a normalized
positive adjoint solution vanishing on the flat boundary should behave
like the Euclidean height function \(\lambda\).  This behavior is
already encoded by
\[
p^*=\lambda+\chi^*,
\]
since \(\chi^*\) is bounded.  However, \(p^*\) need neither vanish on
\(\{\lambda=0\}\) nor be positive throughout the half-space.  The next
lemma modifies \(p^*\) so as to obtain these two properties while
preserving a uniformly bounded additive error from \(\lambda\).

\begin{lem}\label{lem:height-coordinate}
Assume \eqref{eq:ellipticity} and \eqref{eq:full-periodicity}, with no
periodicity assumption in the time variable.  Then there exists a weak
solution \(\Phi^*\) satisfying
\begin{equation}\label{eq:height-equation}
 \cH^*\Phi^*=0
 \quad\text{in }\mathbb R^{n+2}_+,
 \qquad
 \Phi^*=0
 \quad\text{on }\{\lambda=0\}.
\end{equation}
The function \(\Phi^*\) is one-periodic in each component of \(x\), is
strictly positive in \(\mathbb R^{n+2}_+\), and satisfies
\begin{equation}\label{eq:height-bounded-error}
 \bigl|\Phi^*(\lambda,x,t)-\lambda\bigr|
 \leq C_\Phi.
\end{equation}
Moreover, there exist constants \(c_1>0\) and \(C_1<\infty\) such
that
\begin{equation}\label{eq:height-comparison}
 c_1\lambda
 \leq
 \Phi^*(\lambda,x,t)
 \leq
 C_1\lambda,
 \qquad \lambda\geq1.
\end{equation}
The constants \(C_\Phi,c_1\), and \(C_1\) depend only on
\(n,\mu,\Lambda\).
\end{lem}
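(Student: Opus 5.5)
The plan is to correct \(p^*=\lambda+\chi^*\) from Lemma~\ref{lem:adjoint-periodic-corrector} by a bounded adjoint solution that fixes the boundary values. Set
\[
 \Phi^*=p^*-h,
\]
where \(h\) is the bounded solution of \(\cH^*h=0\) in \(\R^{n+2}_+\) with boundary data \(h=p^*(0,\cdot,\cdot)=\chi^*(0,\cdot,\cdot)\) on \(\{\lambda=0\}\). Since \(|\chi^*|\leq M_\chi\) by \eqref{eq:corrector-infty}, the data are bounded by \(M_\chi\). The data are also one-periodic in \(x\), but only measurable in \(t\).

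To construct \(h\) without continuity of the data, solve on finite slabs. Let \(h_N\) solve \(\cH^*h_N=0\) in \((0,N)\times\R^n\times\R\), with \(h_N=\chi^*\) on \(\{\lambda=0\}\) and \(h_N=p^*-N=\chi^*(N,\cdot,\cdot)\) on \(\{\lambda=N\}\), and with no time face. One can work on the spatial torus \(\mathbb T^n_x\times(0,N)\), truncate in time to \((-T',T)\) with zero terminal data at \(T\), and let \(T\to\infty\).

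On each slab, the difference \(g_N=p^*-h_N\) then solves \(\cH^*g_N=0\), vanishes at \(\lambda=0\), and equals \(N\) at \(\lambda=N\). By the maximum principle, \(|h_N|\leq M_\chi\), so \(|g_N-\lambda|\leq |\chi^*|+|h_N|\leq 2M_\chi\) uniformly in \(N\) and \(T\).

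Passing to the limit uses local H\"older estimates and the boundary decay of Lemma~\ref{lem:local-parabolic-estimates}. These give compactness, and weak energy compactness gives a limit \(\Phi^*\) along a subsequence. The limit solves \eqref{eq:height-equation}, vanishes on \(\{\lambda=0\}\) (uniformly, by the boundary H\"older decay applied to \(g_N\), which is bounded near the boundary), and is periodic in \(x\). Passing to the limit in the bound on \(g_N\) gives \eqref{eq:height-bounded-error} with \(C_\Phi=2M_\chi\). If the paper's Appendix~\ref{sec:smooth-coefficient-reduction} is used, one can first assume smooth coefficients and then pass to the limit.

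For positivity and \eqref{eq:height-comparison}, set \(\lambda_*=4C_\Phi\). For \(\lambda\geq\lambda_*\) we have \(\Phi^*\geq\lambda-C_\Phi\geq\tfrac34\lambda\) and \(\Phi^*\leq\lambda+C_\Phi\leq\tfrac54\lambda\). In the strip \(0<\lambda<\lambda_*\), first note \(\Phi^*\geq0\). Indeed, \(g_N\) has boundary values \(0\) and \(N>0\), so \(g_N\geq0\) by the maximum principle on the slab; this is rigorous via the time-truncated approximations with \(T\to\infty\).

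Strict positivity then needs a quantitative bound. On the plane \(\lambda=\lambda_*\) we have \(\Phi^*\geq\tfrac34\lambda_*\geq 3C_\Phi>0\). Apply the adjoint Harnack inequality (Lemma~\ref{lem:parabolic-harnack}), with chains running backward in time from the plane \(\lambda=\lambda_*\) down to height \(\lambda\). This gives \(\Phi^*(\lambda,x,t)\geq c(\lambda)>0\) for every \(\lambda>0\), uniformly in \((x,t)\), because the chains have bounded length at fixed \(\lambda\). In particular there is a uniform positive lower bound for \(1\leq\lambda\leq\lambda_*\), so \(\Phi^*\geq c\geq (c/\lambda_*)\lambda\) there. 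The upper bound on this range is \(\Phi^*\leq\lambda+C_\Phi\leq(1+C_\Phi)\lambda\).

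The main obstacle is the lower bound for \(1\leq\lambda\leq\lambda_*\). It must be uniform in \(t\) despite the absence of time periodicity, and it must not degrade. Harnack handles this because the constants are structural and the reference plane has a uniform positive bound. The only delicate point is orienting the chains correctly for an adjoint solution: the value at the later time controls the earlier point from below. This is fine, since the lower bound on \(\{\lambda=\lambda_*\}\) holds at all times. A secondary point is justifying the maximum principle on unbounded slabs; this is handled by the torus reduction in \(x\) together with time truncation, or by using the bounded-data uniqueness of solutions.
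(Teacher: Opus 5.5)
Your proposal is essentially the paper's proof: your \(g_N=p^*-h_N\) is exactly the slab solution \(\Phi_N^*\) of Step~1, your bound \(|h_N|\leq M_\chi\) is the paper's comparison \(p^*-M_\chi\leq\Phi_N^*\leq p^*+M_\chi\), and the compactness passage \(N\to\infty\) and the adjoint Harnack chains for positivity and the lower bound match Steps~2 and~3. One small repair is needed. With zero terminal data for \(h_{N,T}\), the truncated \(g_{N,T}\) has terminal value \(p^*(\cdot,T)\), which may be negative near \(\lambda=0\), so \(g_N\geq0\) does not follow directly from the truncated maximum principle. Either prescribe \(h_{N,T}(\cdot,T)=\chi^*(\cdot,T)\), which corresponds to the paper's terminal datum \(\lambda\) and gives \(0\leq g_{N,T}\leq N\) at once, or use the Poincar\'e contraction on the slab to show that the influence of the terminal data vanishes as \(T\to\infty\).
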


\begin{proof}
Let \(\chi^*\), \(p^*\), and \(M_\chi\) be as in
Lemma~\ref{lem:adjoint-periodic-corrector}.  We divide the proof into three steps.

\smallskip
\noindent
\emph{Step 1. Complete solutions in finite spatial slabs.}
For every positive integer \(N\), set
\[
 D_N=(0,N)\times\mathbb T_x^n.
\]
We first construct a complete solution of
\begin{equation}\label{eq:slab-height}
 \begin{cases}
  \cH^*\Phi_N^*=0
  &\text{in }D_N\times\mathbb R_t,\\
  \Phi_N^*=0
  &\text{on }\{\lambda=0\}\times\mathbb R_t,\\
  \Phi_N^*=N
  &\text{on }\{\lambda=N\}\times\mathbb R_t,
 \end{cases}
\end{equation}
where all functions are periodic in \(x\).  Recall that a complete solution
means a solution defined for every
\(t\in\mathbb R\).

As in the proof of Lemma~\ref{lem:adjoint-periodic-corrector}, reverse time and set
\[
 \widetilde A(Y,\tau)=A^{\mathsf T}(Y,-\tau),
 \qquad
 q(Y,\tau)=p^*(Y,-\tau).
\]
Then
\[
 \partial_\tau q
 -\operatorname{div}_Y
 \bigl(\widetilde A\nabla_Yq\bigr)=0
 \quad\text{in }\mathbb R_Y^{n+1}\times\mathbb R_\tau.
\]
For \(T>0\), write
\[
 \Psi_{N,T}=\lambda+u_{N,T}.
\]
We construct the correction \(u_{N,T}\) to have zero trace on
\(\{\lambda=0\}\cup\{\lambda=N\}\) and to satisfy
\[
 \partial_\tau u_{N,T}
 -\operatorname{div}_Y
 \bigl(\widetilde A\nabla_Yu_{N,T}\bigr)
 =
 \operatorname{div}_Y
 \bigl(\widetilde A e_\lambda\bigr)
 \quad\text{in }D_N\times(-T,\infty),
\]
with initial value \(u_{N,T}(\cdot,-T)=0\).  Since the right-hand side belongs to the dual of the Dirichlet energy
space, the standard nonautonomous variational theory again gives a unique
energy solution \(u_{N,T}\), first on every finite time interval and
hence on \((-T,\infty)\).  Indeed, let
\[
 V_N=
 \bigl\{
 v\in\mathrm H^1\bigl((0,N)\times\mathbb T_x^n\bigr):
 \operatorname{Tr}_{\lambda=0}v
 =
 \operatorname{Tr}_{\lambda=N}v
 =0
 \bigr\}.
\]
For almost every \(\tau\), the form
\[
 a_\tau(v,\varphi)
 =
 \int_{D_N}
 \widetilde A(Y,\tau)\nabla_Yv\cdot\nabla_Y\varphi\,\mathrm dY
\]
is measurable in \(\tau\), bounded, and coercive on \(V_N\), while
\[
 F_\tau(\varphi)
 =
 -\int_{D_N}
 \widetilde A(Y,\tau)e_\lambda\cdot\nabla_Y\varphi\,\mathrm dY
\]
belongs to \(V_N'\), uniformly in \(\tau\).  The nonautonomous
variational theorem therefore gives a unique energy solution on every
finite interval \((-T,\hat T)\).  Uniqueness on overlapping intervals yields
the asserted solution on \((-T,\infty)\).

Having constructed \(u_{N,T}\), \(\Psi_{N,T}\) is the unique
energy solution of
\[
 \partial_\tau\Psi_{N,T}
 -\operatorname{div}_Y
 \bigl(\widetilde A\nabla_Y\Psi_{N,T}\bigr)=0
 \quad\text{in }D_N\times(-T,\infty),
\]
with boundary values \(0\) at \(\lambda=0\), \(N\) at \(\lambda=N\),
and initial value \(\Psi_{N,T}(\cdot,-T)=\lambda\).  Since \(D_N=(0,N)\times\mathbb T_x^n\), its spatial boundary consists
only of the two faces \(\{\lambda=0\}\) and \(\{\lambda=N\}\) and there is
no boundary in the periodic \(x\)-variables.   The maximum principle gives
\begin{equation}\label{eq:slab-positive-approximation}
 0\leq\Psi_{N,T}\leq N.
\end{equation}

The functions \(q-M_\chi\) and \(q+M_\chi\) solve the same forward
homogeneous equation.  Since
\[
 |q(Y,\tau)-\lambda|\leq M_\chi,
\]
they bracket the prescribed datum \(\lambda\) at the initial time and
also bracket the boundary values \(0\) and \(N\) on the faces
\(\lambda=0\) and \(\lambda=N\), respectively.  The comparison
principle therefore yields
\begin{equation}\label{eq:slab-comparison-approximation}
 q-M_\chi
 \leq\Psi_{N,T}
 \leq q+M_\chi
 \quad\text{in }D_N\times(-T,\infty).
\end{equation}

For fixed \(N\), let \(S>T\).  The difference
\[
 z_{S,T}=\Psi_{N,S}-\Psi_{N,T}
\]
solves the homogeneous equation on \(D_N\times(-T,\infty)\) and has zero
trace on the two transverse faces.  Since \(z_{S,T}\) has zero trace on
\(\{\lambda=0\}\cup\{\lambda=N\}\), the Poincar\'e inequality in the
\(\lambda\)-variable gives
\[
 \|z_{S,T}(\cdot,\tau)\|_{\mathrm L^2(D_N)}
 \leq
 C N\|\partial_\lambda z_{S,T}(\cdot,\tau)\|_{\mathrm L^2(D_N)}
 \leq
 C N\|\nabla_Yz_{S,T}(\cdot,\tau)\|_{\mathrm L^2(D_N)}.
\]
Testing the homogeneous equation for \(z_{S,T}\) with \(z_{S,T}\) and
using ellipticity therefore yields
\[
 \frac{\mathrm d}{\mathrm d\tau}
 \|z_{S,T}(\cdot,\tau)\|_{\mathrm L^2(D_N)}^2
 +
 cN^{-2}
 \|z_{S,T}(\cdot,\tau)\|_{\mathrm L^2(D_N)}^2
 \leq0.
\]
Consequently, Gr\"onwall's inequality yields
\[
 \|z_{S,T}(\cdot,\tau)\|_{\mathrm L^2(D_N)}
 \leq
 e^{-c_N(\tau+T)}
 \|z_{S,T}(\cdot,-T)\|_{\mathrm L^2(D_N)},
 \qquad \tau\geq-T,
\]
where \(c_N>0\) depends only on \(N\) and the ellipticity constants.  The comparison estimate
\eqref{eq:slab-comparison-approximation} bounds the last norm by a
constant depending only on \(N\) and the structural parameters.
Consequently,
\[
 \|\Psi_{N,S}(\cdot,\tau)-\Psi_{N,T}(\cdot,\tau)\|_{\mathrm L^2(D_N)}
 \leq
 C_Ne^{-c_N(\tau+T)},
 \qquad \tau\geq-T.
\]
If \(I=[a,b]\) is compact and \(T>-a\), integration of the homogeneous
energy inequality gives
\[
 \mu\int_a^b
 \|\nabla_Y(\Psi_{N,S}-\Psi_{N,T})(\cdot,\tau)\|_2^2
 \,\mathrm d\tau
 \leq
 \frac12\|z_{S,T}(\cdot,a)\|_2^2
 \longrightarrow0
\]
as \(S>T\to\infty\).  Thus the whole family is Cauchy in the local
energy space and uniqueness follows by applying the same contraction to the
difference of two complete solutions with uniformly bounded
\(\mathrm L^2(D_N)\)-norm.
It follows that \(\Psi_{N,T}\) converges as \(T\to\infty\) to a
complete forward solution \(\Psi_N\).  This solution is unique among
complete solutions for which \(\Psi_N-\lambda\) is uniformly bounded
in \(\mathrm L^2(D_N)\).  Define
\[
 \Phi_N^*(Y,t)=\Psi_N(Y,-t).
\]
Then \(\Phi_N^*\) solves \eqref{eq:slab-height}.  Passing to the limit
in \eqref{eq:slab-positive-approximation} and
\eqref{eq:slab-comparison-approximation} gives
\begin{equation}\label{eq:slab-positive}
 0\leq\Phi_N^*\leq N
\end{equation}
and
\begin{equation}\label{eq:slab-comparison}
 p^*-M_\chi
 \leq\Phi_N^*
 \leq p^*+M_\chi.
\end{equation}
In particular,
\begin{equation}\label{eq:slab-linear}
 |\Phi_N^*(\lambda,x,t)-\lambda|
 \leq2M_\chi,
 \qquad 0<\lambda<N,
\end{equation}
uniformly in \(N\) and \(t\).

\smallskip
\noindent
\emph{Step 2. Passage to the half-space.}
For \(R,T>1\), set
\[
 D_R=(0,R)\times\mathbb T_x^n,
 \qquad
 I_T=(-T,T),
 \qquad
 \mathcal Q_{R,T}=D_R\times I_T.
\]
Choose a transverse cutoff
\(\eta_R\in C^\infty([0,\infty))\) satisfying
\[
 \eta_R=1\quad\text{on }[0,R],
 \qquad
 \eta_R=0\quad\text{on }[R+1,\infty),
 \qquad
 |\eta_R'|\leq C,
\]
and a temporal cutoff
\(\zeta_T\in C_0^\infty(I_{T+1})\) satisfying
\(\zeta_T=1\) on \(I_T\) and \(|\zeta_T'|\leq C\).  Testing the equation for \(\Phi_N^*\) with
\(\eta_R^2\zeta_T^2\Phi_N^*\), justified by Steklov averaging in time,
and then using ellipticity and Young's
inequality, gives
\[
 \int_{I_T}\int_{D_R}
 |\nabla_Y\Phi_N^*|^2\,\mathrm dY\,\mathrm d t
 \leq
 C\int_{I_{T+1}}\int_{D_{R+1}}
 |\Phi_N^*|^2\,\mathrm dY\,\mathrm d t
 \leq C_{R,T},
 \qquad N>R+1.
\]
The periodicity in \(x\) eliminates boundary terms in the tangential
variables, while the zero trace of \(\Phi_N^*\) on \(\{\lambda=0\}\)
makes the test function admissible at the flat boundary.  The last
inequality follows from \eqref{eq:slab-linear}, which also directly
controls the zeroth-order part of the \(\mathrm H^1(D_R)\)-norm.
Consequently,
\begin{equation}\label{eq:height-local-H1}
 \sup_{N>R+1}
 \|\Phi_N^*\|_{\mathrm L^2(I_T;\mathrm H^1(D_R))}
 \leq C_{R,T}.
\end{equation}

The equation gives
\[
 \partial_t\Phi_N^*
 =
 -\operatorname{div}_Y
 \bigl(A^{\mathsf T}\nabla_Y\Phi_N^*\bigr)
\]
in the sense of distributions.  Consequently,
\begin{equation}\label{eq:height-local-time}
 \sup_{N>R+1}
 \|\partial_t\Phi_N^*\|_{
 \mathrm L^2(I_T;\mathrm H^{-1}(D_R))}
 \leq C_{R,T}.
\end{equation}
Here \(\mathrm H^{-1}(D_R)\) denotes the dual of
\(\mathrm H_0^1(D_R)\), with periodicity in \(x\).  Since
\[
 \mathrm H^1(D_R)
 \Subset\mathrm L^2(D_R)
 \hookrightarrow\mathrm H^{-1}(D_R),
\]
the Aubin-Lions lemma and a diagonal extraction provide a sequence
\(N_j\to\infty\) and a function \(\Phi^*\) such that, for every
\(R,T>1\),
\begin{align}
 \Phi_{N_j}^*
 \rightharpoonup\Phi^*&\qquad\text{weakly in }
 \mathrm L^2(I_T;\mathrm H^1(D_R)),
 \label{eq:height-weak-convergence}\\
 \Phi_{N_j}^*
\longrightarrow\Phi^*
 &\qquad\text{strongly in }
 \mathrm L^2(\mathcal Q_{R,T}).
 \label{eq:height-strong-convergence}
\end{align}
We may also assume convergence almost everywhere.

The boundedness of the trace map on
\(\mathrm L^2(I_T;\mathrm H^1(D_R))\) and
\eqref{eq:height-weak-convergence} preserve the zero trace at
\(\lambda=0\).  Passing to the limit in the weak formulation shows
that
\[
 \cH^*\Phi^*=0
 \quad\text{on }(0,\infty)\times\mathbb T_x^n\times\mathbb R_t.
\]
The functions \(\Phi_N^*\) are periodic in \(x\), so the limit has the
same periodicity.  Extending in \(x\) and unfolding periodic test
functions proves \eqref{eq:height-equation} in
\(\mathbb R^{n+2}_+\).

The almost-everywhere convergence, \eqref{eq:slab-positive}, and
\eqref{eq:slab-linear} give
\begin{equation}\label{eq:height-limit-linear}
 \Phi^*\geq0,
 \qquad
 |\Phi^*(\lambda,x,t)-\lambda|
 \leq2M_\chi
\end{equation}
almost everywhere.  The interior and flat-boundary
De Giorgi-Nash-Moser estimates give a representative which is locally
H\"older continuous in the half-space and continuous up to the flat
boundary.  Hence the inequalities in \eqref{eq:height-limit-linear} then hold
pointwise.

Set \(C_\Phi=2M_\chi\).  The bounded-error estimate already gives
\(\Phi^*>0\) when \(\lambda>C_\Phi\).  Suppose that
\(\Phi^*(\lambda_0,x_0,t_0)=0\) for some \(\lambda_0>0\).  Set
\(\lambda_*=C_\Phi+1\).  A finite correctly oriented adjoint Harnack
chain contained in \(\{\lambda>0\}\) joins a point
\[
 (\lambda_*,x_0,t_0+\Theta)
\]
to \((\lambda_0,x_0,t_0)\), where
\(\Theta=\Theta(C_\Phi,\lambda_0)>0\).  Harnack's inequality gives
\[
 \Phi^*(\lambda_*,x_0,t_0+\Theta)
 \leq C\Phi^*(\lambda_0,x_0,t_0)=0.
\]
This contradicts
\[
 \Phi^*(\lambda_*,x_0,t_0+\Theta)
 \geq\lambda_*-C_\Phi=1.
\]
Thus \(\Phi^*>0\) throughout the half-space.

\smallskip
\noindent
\emph{Step 3. Quantitative comparison with height.}
Set \(H_0=2C_\Phi+2\).  If \(\lambda\geq H_0\), then
\[
 \frac12\lambda
 \leq\Phi^*(\lambda,x,t)
 \leq\frac32\lambda.
\]
It remains to prove the lower bound on
\(1\leq\lambda\leq H_0\).  For every \((x,t)\), the bounded-error estimate gives
\begin{equation}\label{eq:height-positive-reference}
 \Phi^*(H_0,x,t)
 \geq H_0-C_\Phi
 \geq\frac{H_0}{2}.
\end{equation}
Fix \(Z=(\lambda,x,t)\) in the bounded-height slab.  Choose
\(\Theta=\Theta(C_\Phi)>0\) so that a finite chain of overlapping
adjoint parabolic cylinders joins
\[
 Z_*=(H_0,x,t+\Theta)
\]
to \(Z\) inside
\[
 \biggl\{
 (\upsilon,y,s):\frac12<\upsilon<H_0+1
 \biggr\}.
\]
The time coordinates decrease along the chain, as required for
\(\cH^*\).  Let \(N_0\) be the number of cylinders and \(C_H\) the
corresponding Harnack constant.  The number \(N_0\) is controlled by
\(C_\Phi\), while \(C_H\) depends only on \(n,\mu,\Lambda\) and the
fixed chain geometry.
Iterating Harnack's inequality and using
\eqref{eq:height-positive-reference} at time \(t+\Theta\) gives
\[
 \frac{H_0}{2}
 \leq\Phi^*(Z_*)
 \leq C_H^{N_0}\Phi^*(Z).
\]
Consequently,
\[
 \Phi^*(\lambda,x,t)
 \geq m_0
 :=C_H^{-N_0}\frac{H_0}{2}>0
\]
on \(1\leq\lambda\leq H_0\).  Since
\(\lambda\leq H_0\) there,
\[
 \Phi^*(\lambda,x,t)
 \geq\frac{m_0}{H_0}\lambda.
\]
Together with the direct estimate for \(\lambda\geq H_0\), this
proves the lower inequality in \eqref{eq:height-comparison}.  Finally,
\[
 \Phi^*(\lambda,x,t)
 \leq\lambda+C_\Phi
 \leq(1+C_\Phi)\lambda,
 \qquad \lambda\geq1.
\]
This proves the upper inequality and completes the proof.
\end{proof}

We next establish the key estimate \eqref{eq:forward-adjoint-defect}.

\begin{lem}
\label{lem:unit-height-decay}
Assume the full spatial periodicity condition
\eqref{eq:full-periodicity} and retain the geometry fixed above.  Let
\(v_2\) be defined by \eqref{eq:two-adjoint-green}.  Then, for every
\(r\geq20\),
\begin{equation}\label{eq:unit-height-decay}
 \sup_{(y,s)\in C\Delta_r^-(z_0)}
 v_2(1,y,s)
 \lesssim r^{-(n+2)}.
\end{equation}
Here \(C\Delta_r^-(z_0)\) denotes the fixed enlargement occurring in
the preceding construction.  Consequently,
\eqref{eq:forward-adjoint-defect} holds.
\end{lem}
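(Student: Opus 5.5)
The plan is to compare \(v_2\) with the adjoint height coordinate \(\Phi^*\) of Lemma~\ref{lem:height-coordinate} by means of the adjoint form of Lemma~\ref{lem:boundary-comparison}, at the same scale \(R\simeq r\) and in the same region \(\mathcal Q_r=T_{2R}(z_r^c)\) that were used in the proof of Lemma~\ref{lem:two-comparison}. Both functions are positive adjoint solutions in \(\mathcal Q_r\). \(v_2\) vanishes on the flat boundary because it is a Dirichlet Green function in \(\Omega_2\) whose singularity \(P_2\) lies in the strict future of \(T_{4R}(z_r^c)\), and \(\Phi^*\) vanishes there by \eqref{eq:height-equation}. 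By enlarging the fixed constant \(L_0\) if needed, every point \((1,y,s)\) with \((y,s)\in C\Delta_r^-(z_0)\) lies in the reduced region \(\mathcal Q_r^{\mathrm{red}}=T_{\rho/c_{\mathrm{bc},1}}(z_r^c)\). This works because \(\lambda=1\lesssim r\) and \((y,s)\) lies within parabolic distance \(\lesssim r\) of \(z_r^c\). The reference point \(A_r^*\) also lies in this region.

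The first step is to control the adjoint balance \(\mathcal B_{\rho_0}^*(v_2,\Phi^*)\). For \(v_2\) this is supplied by Lemma~\ref{lem:green-boundary-time-comparison} in \(\Omega_2\), exactly as in Lemma~\ref{lem:two-comparison}. For \(\Phi^*\), the points \(A_{\rho_0}^\pm(z_r^c)\) have height \(4\rho_0\geq1\) because \(r\geq20\), so \eqref{eq:height-comparison} gives \(\Phi^*(A_{\rho_0}^\pm)\simeq\rho_0\) and the ratio is bounded. Boundary comparison then yields
\[
 \frac{v_2(Z)}{\Phi^*(Z)}\simeq\frac{v_2(A_r^{\mathrm{ref}})}{\Phi^*(A_r^{\mathrm{ref}})},
 \qquad Z\in\mathcal Q_r^{\mathrm{red}}.
\]
Applying this successively at \(Z=(1,y,s)\) and at \(Z=A_r^*\) gives
\[
 v_2(1,y,s)\lesssim v_2(A_r^*)\,\frac{\Phi^*(1,y,s)}{\Phi^*(A_r^*)}.
\]
The second step bounds the three factors. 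Lemma~\ref{lem:green-upper} together with \eqref{eq:reference-green-geometry} gives \(v_2(A_r^*)\lesssim r^{-(n+1)}\). Next, \(\Phi^*(1,y,s)\leq 1+C_\Phi\lesssim1\) by \eqref{eq:height-bounded-error}. Finally, \(\Phi^*(A_r^*)\geq c_1 r\) by \eqref{eq:height-comparison}, since \(\lambda(A_r^*)=r\geq1\). Combining these proves \eqref{eq:unit-height-decay}, with constants independent of \(r\), \(y\), and \(s\). Then \eqref{eq:forward-adjoint-defect} follows because the restriction of \(\omega_{\cH,\Omega_1}^{P_1}\) to \(C\Delta_r^-(z_0)\) has mass at most one.

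I expect the main obstacle to be the legitimacy of using \(\Phi^*\) in Lemma~\ref{lem:boundary-comparison}. \(\Phi^*\) is a global adjoint solution and not a pole-separated Green function, so its balance must be checked directly, as above. This is possible only at scales where the corkscrews have height at least one, which forces the threshold \(r\geq20\) and the choice \(\rho_0\geq1\). One also has to confirm that the height-\(1\) points are genuinely within the reduced comparison region for the fixed constants. This may require enlarging the geometric constants once more, and hence \(M\), before fixing \(\Omega_1\), \(\Omega_2\), \(P_1\), and \(P_2\). That is consistent with the stated order of choosing constants.
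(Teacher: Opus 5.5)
Your proof is correct and is essentially the paper's: adjoint boundary comparison of \(v_2\) with \(\Phi^*\), with the balances controlled by Lemma~\ref{lem:green-boundary-time-comparison} and \eqref{eq:height-comparison}, followed by the three bounds \(\Phi^*(1,y,s)\lesssim1\), \(\Phi^*\gtrsim r\) at height \(r\), and \(v_2\lesssim r^{-(n+1)}\) at the reference point. The only difference is that the paper uses a comparison cylinder \(T_{2R_h}(\zeta_{y,s})\) and a reference point \(A_{r,y,s}^*=(r,y,s-r^2)\) that depend on \((y,s)\), whereas you reuse the single cylinder \(\mathcal Q_r\) and point \(A_r^*\) from Lemma~\ref{lem:two-comparison}, after enlarging \(L_0\) so that \(\{1\}\times C\Delta_r^-(z_0)\subset\mathcal Q_r^{\mathrm{red}}\); the paper's remark following the proof explicitly allows this variant.
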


\begin{proof}
Fix \((y,s)\in C\Delta_r^-(z_0)\) and set
\[
 Z_{y,s}=(1,y,s).
\]
Let \(c_{\mathrm{bc},0}\) and \(c_{\mathrm{bc},1}\) be the constants in
Lemma~\ref{lem:boundary-comparison}.  Let \(L_h>2\) be the structural
constant fixed in the preceding construction, and put
\[
 \zeta_{y,s}=(y,s-\tfrac12r^2),
 \qquad
 A_{r,y,s}^*=(r,y,s-r^2),
\]
\[
 \rho_h=L_hc_{\mathrm{bc},1}r,
 \qquad
 R_h=2c_{\mathrm{bc},0}c_{\mathrm{bc},1}\rho_h,
 \qquad
 \mathcal Q_{r,y,s}=T_{2R_h}(\zeta_{y,s}).
\]
Both \(Z_{y,s}\) and \(A_{r,y,s}^*\) belong to
\[
 T_{\rho_h/c_{\mathrm{bc},1}}(\zeta_{y,s})
 =T_{L_hr}(\zeta_{y,s}).
\]
This is the reduced comparison region obtained by applying
Lemma~\ref{lem:boundary-comparison} with outer scale \(R_h\), center
\(\zeta_{y,s}\), and interior scale \(\rho_h\), because
\[
 \rho_h<\frac{R_h}{c_{\mathrm{bc},0}c_{\mathrm{bc},1}}.
\]
The reference geometry is therefore explicit, and it satisfies
\begin{equation}\label{eq:unit-height-reference-geometry}
 \lambda(A_{r,y,s}^*)=r,
 \qquad
 |X(P_2)-X(A_{r,y,s}^*)|\lesssim r,
 \qquad
 t(P_2)-t(A_{r,y,s}^*)\simeq r^2.
\end{equation}
Set
\[
 \rho_0=\frac{R_h}{c_{\mathrm{bc},0}}.
\]
The fixed constants defining \(\Omega_2\) and \(P_2\) are chosen after
\(L_h\) so that, uniformly for
\((y,s)\in C\Delta_r^-(z_0)\), the fixed enlargement
\(T_{4R_h}(\zeta_{y,s})\) is contained in \(\Omega_2\), lies in the
strict causal past of \(P_2\), and remains quantitatively separated from
the artificial spatial and temporal faces.  We also require
\[
 P_2\in
 \Omega_2\cap\mathcal P_{\kappa,\rho_0}^+(\zeta_{y,s})
\]
for one fixed \(\kappa\).  These requirements are compatible with the
preceding construction because all centers \(\zeta_{y,s}\) range over a
fixed scale-\(r\) enlargement of \((x_0,t_0)\), while every length and
time separation involved is a fixed multiple of \(r\) or \(r^2\).
In particular, the two balance corkscrews
\(A_{\rho_0}^-(\zeta_{y,s})\) and
\(A_{\rho_0}^+(\zeta_{y,s})\) belong to this pole-free enlargement.
Since \(r\geq20\), both \(\rho_0\) and
\(\lambda(A_{r,y,s}^*)\) are at least one.

It follows that \(v_2\) and \(\Phi^*\) are positive adjoint solutions
in \(\mathcal Q_{r,y,s}\), and both vanish continuously on its flat
boundary portion.  The reference balance in the adjoint form of
Lemma~\ref{lem:boundary-comparison} is uniformly controlled.  Indeed,
Lemma~\ref{lem:green-boundary-time-comparison} and
Lemma~\ref{lem:height-coordinate} give
\[
 \frac{v_2(A_{\rho_0}^-(\zeta_{y,s}))}
      {v_2(A_{\rho_0}^+(\zeta_{y,s}))}
 \simeq1,
 \qquad
 \frac{\Phi^*(A_{\rho_0}^-(\zeta_{y,s}))}
      {\Phi^*(A_{\rho_0}^+(\zeta_{y,s}))}
 \simeq1.
\]
For the second comparison, we have used
\(\Phi^*(A_{\rho_0}^\pm(\zeta_{y,s}))\simeq\rho_0\), which follows from
\eqref{eq:height-comparison}.

Let
\[
 A_h=A_{\rho_h}(\zeta_{y,s})
\]
be the normalizing corkscrew furnished by
Lemma~\ref{lem:boundary-comparison}.  Applying the adjoint form of that
lemma first at \(Z_{y,s}\) and then at \(A_{r,y,s}^*\), using the same
normalizing corkscrew, gives
\begin{equation}\label{eq:v2-height-comparison}
 \frac{v_2(Z_{y,s})}{\Phi^*(Z_{y,s})}
 \lesssim
 \frac{v_2(A_h)}{\Phi^*(A_h)}
 \lesssim
 \frac{v_2(A_{r,y,s}^*)}{\Phi^*(A_{r,y,s}^*)}.
\end{equation}

The bounded-error estimate
\eqref{eq:height-bounded-error} gives
\begin{equation}\label{eq:height-unit-upper}
 \Phi^*(Z_{y,s})
 =\Phi^*(1,y,s)
 \leq1+C_\Phi
 \lesssim1.
\end{equation}
Since \(\lambda(A_{r,y,s}^*)\simeq r\) and
\(\lambda(A_{r,y,s}^*)\geq1\), the lower bound in
\eqref{eq:height-comparison} gives
\begin{equation}\label{eq:height-reference-lower}
 \Phi^*(A_{r,y,s}^*)\gtrsim r.
\end{equation}
Finally, \eqref{eq:unit-height-reference-geometry} and
Lemma~\ref{lem:green-upper} imply
\begin{equation}\label{eq:v2-reference-upper}
 v_2(A_{r,y,s}^*)
 =
 G_{\cH}^{\Omega_2}(P_2;A_{r,y,s}^*)
 \lesssim r^{-(n+1)}.
\end{equation}
Substituting \eqref{eq:height-unit-upper},
\eqref{eq:height-reference-lower}, and
\eqref{eq:v2-reference-upper} into
\eqref{eq:v2-height-comparison} yields
\[
 v_2(1,y,s)\lesssim r^{-(n+2)}.
\]
The constants are uniform in \((y,s)\), which proves
\eqref{eq:unit-height-decay}.  The restriction of \(\omega_{\cH,\Omega_1}^{P_1}\) to
\(C\Delta_r^-(z_0)\) has mass at most one.  Therefore,
\begin{align*}
 \iint_{C\Delta_r^-(z_0)}
 v_2(1,y,s)\,
 \mathrm d\omega_{\cH,\Omega_1}^{P_1}(y,s)
 &\leq
 \biggl(
 \sup_{(y,s)\in C\Delta_r^-(z_0)}v_2(1,y,s)
 \biggr)
 \omega_{\cH,\Omega_1}^{P_1}
 \bigl(C\Delta_r^-(z_0)\bigr)\lesssim r^{-(n+2)}.
\end{align*}
This is \eqref{eq:forward-adjoint-defect}.
\end{proof}

\begin{rem}
The point \(A_{r,y,s}^*\) may depend on \((y,s)\).  This causes no loss
because all geometric and analytic constants are uniform.  A single
reference point \(A_r^*\) may be used only when one comparison cylinder
has a reduced region containing
\(\{1\}\times C\Delta_r^-(z_0)\).  At unit height, the proof uses only
the upper estimate \(\Phi^*(1,y,s)\lesssim1\).  The forward parabolic
measure enters only in the final integration of the positive pointwise
bound.  No representation of the adjoint difference
\(\delta_\lambda v_2(\lambda,x,t)
=v_2(\lambda+1,x,t)-v_2(\lambda,x,t)\)
by forward parabolic measure is asserted.
\end{rem}

\begin{prop}
\label{prop:summation}
Assume \eqref{eq:full-periodicity} and let \(w\) be as in
\eqref{Greena}.  Then, for every \(r\geq20\),
\begin{equation}\label{eq:discrete-green-sum}
 \sum_k w(A_k)^2
 \leq C r^nw(A_r^*)^2.
\end{equation}
In particular,
\begin{equation}\label{eq:sampling}
 \sum_k w(A_k)^2
 \leq \frac C{r^3}
 \iiint_{T_{Cr}^-(z_0)}w^2
 \,\mathrm d \lambda\,\mathrm d x\,\mathrm d t.
\end{equation}
\end{prop}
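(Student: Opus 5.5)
The first estimate \eqref{eq:discrete-green-sum} follows by chaining three results already established. Lemma~\ref{lem:two-comparison} gives
\[
 \sum_k w(A_k)^2
 \lesssim
 r^{2(n+1)}w(A_r^*)^2\sum_k v_1(A_k)v_2(A_k).
\]
The sum on the right is reduced by \eqref{eq:correct-sum-to-integral}. That reduction uses \eqref{eq:parabolic-v1-omega}, \eqref{eq:parabolic-v2-boundary}, and bounded overlap, and it bounds the sum by
\[
 \iint_{C\Delta_r^-(z_0)}v_2(1,y,s)\,\mathrm d\omega_{\cH,\Omega_1}^{P_1}(y,s).
\]
Lemma~\ref{lem:unit-height-decay} bounds this integral by \(r^{-(n+2)}\). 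Combining the three estimates,
\[
 \sum_k w(A_k)^2\lesssim r^{2(n+1)-(n+2)}w(A_r^*)^2=r^nw(A_r^*)^2 .
\]
All constants are uniform in \(r\geq20\) and in the pole \(P\), because the balance constants are controlled by Lemma~\ref{lem:green-boundary-time-comparison}.

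For \eqref{eq:sampling}, I would bound \(w(A_r^*)^2\) by an average of \(w^2\) over a scale-\(r\) interior region. Since \(w(Z)=G_{\cH}(P;Z)\) is an adjoint solution in \(Z\), the adjoint Harnack inequality compares \(w(A_r^*)\) with values of \(w\) at points slightly earlier in time. The plan has two steps.

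\begin{enumerate}[label=\textup{(\roman*)},leftmargin=2.5em]
\item Choose an interior cylinder \(\mathcal Q^\flat=B(X^\flat,cr)\times(\tau,\tau+cr^2)\) with three properties: it lies at height comparable to \(r\), it lies in the causal past of \(A_r^*\) by an amount comparable to \(r^2\), and it is contained in \(T_{Cr}^-(z_0)\). The point \(A_r^*=(r,x_0,t_0-2r^2)\) lies below the cube \(\Delta_r^-(z_0)\) in time, so \(T^-_{Cr}(z_0)\) with \(C\) large contains such a region.
\item Apply the adjoint Harnack inequality \eqref{eq:adjoint-harnack} along a bounded chain, which stays far from the singularity \(P\) because \(t(P)-t_0\gtrsim M^2r^2\). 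This gives
\[
 w(A_r^*)\lesssim \inf_{\mathcal Q^\flat}w .
\]
Hence
\[
 w(A_r^*)^2\lesssim \fint_{\mathcal Q^\flat}w^2\lesssim r^{-(n+3)}\iiint_{T^-_{Cr}(z_0)}w^2 .
\]
\end{enumerate}

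Multiplying by \(r^n\) gives the factor \(r^{-3}\) in \eqref{eq:sampling}.

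The main obstacle is the time orientation in step (ii). Adjoint Harnack bounds later values by earlier ones, so the averaging region must lie in the past of \(A_r^*\). If the fixed enlargement \(T_{Cr}^-(z_0)\) fails to contain such a region, I would instead replace \(A_r^*\) by a later reference point. To do so, I would use Lemma~\ref{lem:green-boundary-time-comparison} at scale \(r\) to move between the corkscrews \(A^\pm\), which have comparable Green values, together with interior Harnack at height comparable to \(r\). The lower bound direction needed here is exactly the comparison that Lemma~\ref{lem:green-boundary-time-comparison} provides.
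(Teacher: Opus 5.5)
Your proposal is correct and follows the paper's proof: the first estimate is the same chaining of Lemma~\ref{lem:two-comparison}, \eqref{eq:correct-sum-to-integral}, and Lemma~\ref{lem:unit-height-decay}, and the second uses the adjoint Harnack inequality in the correct direction, from the later point \(A_r^*\) to an earlier region, to reduce \(w(A_r^*)\) to a scale-\(r\) \(\mathrm L^2\) average. The only difference is cosmetic: the paper passes to the single earlier point \(A_r'=(r,x_0,t_0-3r^2)\) and then applies adjoint local boundedness in a small future cylinder, whereas you dominate \(w(A_r^*)\) by \(\inf_{\mathcal Q^\flat}w\) and bound that infimum by the average, which avoids local boundedness and makes the fallback in your last paragraph unnecessary.
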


\begin{proof}
Equations \eqref{eq:correct-comparison},
\eqref{eq:correct-sum-to-integral}, and
\eqref{eq:forward-adjoint-defect} give
\[
 \sum_k w(A_k)^2
 \lesssim r^{2(n+1)}w(A_r^*)^2 r^{-(n+2)}
 =r^nw(A_r^*)^2.
\]
This is \eqref{eq:discrete-green-sum}.  To obtain the stated bulk form,
put
\[
 A_r'=(r,x_0,t_0-3r^2).
\]
Thus \(A_r'\) precedes \(A_r^*=(r,x_0,t_0-2r^2)\) by exactly \(r^2\).
The two points are joined by a fixed adjoint Harnack chain contained in
\(T_{Cr}^-(z_0)\), after increasing the structural constant \(C\).
The chain and its fixed enlargements remain at parabolic distance at
least a fixed multiple of \(r\) from the singularity \(P\) and stay
inside the half-space.  Hence
\[
 w(A_r^*)\lesssim w(A_r').
\]
Fix \(0<\delta<1/8\) and set
\[
 Z_r'=\bigl((r,x_0),t_0-3r^2-\tfrac18\delta^2r^2\bigr),
 \qquad
 \mathcal C_r'=\mathcal Q_{\delta r}^+(Z_r').
\]
This cylinder and a fixed enlargement are contained in
\(T_{Cr}^-(z_0)\) and are pole-free.  Adjoint local boundedness at its
interior point \(A_r'\) gives the desired estimate.  Indeed,
\[
 \frac18\delta^2r^2<\left(\frac{\delta r}{2}\right)^2,
 \qquad
 A_r'\in\mathcal Q_{\delta r/2}^+(Z_r').
\]
Therefore
\[
 w(A_r^*)^2
 \lesssim w(A_r')^2
 \lesssim r^{-(n+3)}
 \iiint_{\mathcal C_r'}w^2
 \leq r^{-(n+3)}
 \iiint_{T_{Cr}^-(z_0)}w^2
 \,\mathrm d \lambda\,\mathrm d x\,\mathrm d t.
\]
Since \(n-(n+3)=-3\), the claimed estimate follows.
\end{proof}

\begin{rem}\label{rem:role-spatial-periodicity}
It is useful to identify precisely the role of full spatial periodicity
in the constructions in Lemma~\ref{lem:adjoint-periodic-corrector} and Lemma~\ref{lem:height-coordinate}.  This assumption is essential for the
compact-torus corrector argument used above, but it is not an abstractly
necessary condition for the existence of an adjoint height coordinate.
Periodicity in every component of \(Y=(\lambda,x)\) allows the corrector
equation \eqref{eq:adjoint-cell} to be posed on the compact spatial torus
\[
 \mathbb T_Y^{n+1}
 =
 \mathbb T_\lambda\times\mathbb T_x^n.
\]
For every \(h\in\mathrm H^1_{\mathrm{per}}(\mathbb T^{n+1})\) with zero
mean, Poincar\'e's inequality provides the spectral gap
\[
 \|h\|_{\mathrm L^2(\mathbb T^{n+1})}
 \leq
 C\|\nabla_Yh\|_{\mathrm L^2(\mathbb T^{n+1})}.
\]
Consequently, the difference of two trajectories for the time-reversed
corrector equation decays exponentially.  This gives the pullback limit
from the infinite past and hence the bounded complete corrector
\(\chi^*\).  The torus energy estimate is also uniform on every translated
spatial cell.  Local boundedness applied to
\[
 p^*(Y,t)=\lambda+\chi^*(Y,t)
\]
therefore gives the global estimate
\[
 \|\chi^*\|_{\mathrm L^\infty(\mathbb R^{n+2})}
 \leq M_\chi.
\]
If periodicity is assumed only in \(\lambda\), the corresponding spatial
domain is \(\mathbb T_\lambda\times\mathbb R_x^n\).  This cylinder is
noncompact and has no Poincar\'e spectral gap.  For
example, if \(\zeta\in C_0^\infty(\mathbb R^n)\) and
\[
 h_R(\lambda,x)
 =
 R^{-n/2}\zeta(x/R),
\]
then \(h_R\) is independent of \(\lambda\) and
\[
 \|h_R\|_2\simeq1,
 \qquad
 \|\nabla_Yh_R\|_2\simeq R^{-1}.
\]
Thus tangential modes of arbitrarily low frequency prevent a uniform
exponential contraction.  Neither the existence of a bounded complete
corrector nor a uniform bound for it follows from periodicity in
\(\lambda\) alone.  Once transverse periodicity is fixed, tangential
periodicity supplies the remaining compactness needed for the spatial
torus, the Poincar\'e inequality, and the global bound for \(\chi^*\).
Periodicity in \(\lambda\) also
gives
\[
 p^*(\lambda+N,x,t)=p^*(\lambda,x,t)+N,
 \qquad N\in\mathbb Z.
\]
This affine identity records the transverse periodic structure.  The slab
comparison itself uses only the bounded-error estimate
\(\|p^*-\lambda\|_\infty\leq M_\chi\), which yields
\[
 |\Phi_N^*(\lambda,x,t)-\lambda|
 \leq2M_\chi
\]
uniformly in \(N,x,t\).
\end{rem}
\begin{rem} The obstruction under transverse periodicity alone is genuine.  For
\(n=1\), let \(X=(\lambda,x)\) and consider the symmetric matrix
\[
 A(x)
 =
 \begin{pmatrix}
  1&\varepsilon\operatorname{sgn}x\\
  \varepsilon\operatorname{sgn}x&1
 \end{pmatrix},
 \qquad 0<\varepsilon<1.
\]
This coefficient is uniformly elliptic and is independent of both
\(\lambda\) and \(t\).  In particular, it is one-periodic in \(\lambda\).
Suppose that there existed an adjoint solution \(\Psi^*\) satisfying
\[
 \cH^*\Psi^*=0\quad\text{in }\mathbb R^3_+,
 \qquad
 \Psi^*=0\quad\text{on }\{\lambda=0\},
 \qquad
 |\Psi^*(\lambda,x,t)-\lambda|\leq C
 \quad\text{in }\mathbb R^3_+.
\]
For \(R>0\), define
\[
 \Psi_R^*(X,t)
 =
 \frac{1}{R}\Psi^*(RX,R^2t).
\]
Since \(A(Rx)=A(x)\), the function \(\Psi_R^*\) solves the same adjoint
equation.  Moreover,
\[
 |\Psi_R^*(X,t)-\lambda|\leq\frac{C}{R}.
\]
It follows that \(\Psi_R^*\to\lambda\) locally in
\(\mathrm L^2\).  Caccioppoli's inequality gives local weak compactness of
the gradients, so passage to the limit in the weak formulation would imply
that \(\lambda\) solves the adjoint equation.  This is impossible because
\[
 \cH^*\lambda
 =
 -\operatorname{div}_X(Ae_\lambda)
 =
 -\partial_x\bigl(\varepsilon\operatorname{sgn}x\bigr)
 =
 -2\varepsilon\delta_{\{x=0\}}.
\]
Thus transverse periodicity alone does not guarantee an adjoint height
coordinate satisfying \(\Psi^*=\lambda+O(1)\).  On the other hand, full spatial periodicity is not necessary in every
special case.  For example, if
\[
 \operatorname{div}_X(A^{\mathsf T}e_\lambda)=0
 \quad\text{in the sense of distributions},
\]
then \(\Phi^*=\lambda\) is itself an adjoint height coordinate, without
any tangential periodicity.  More generally, the full spatial periodicity
assumption may be replaced by any hypothesis that independently provides a
positive adjoint solution satisfying
\[
 \Phi^*=0\quad\text{on }\{\lambda=0\},
 \qquad
 |\Phi^*-\lambda|\leq C.
\]
Once such a function has been constructed, its use in
Lemma~\ref{lem:unit-height-decay} and in the subsequent large-scale
summation requires no further periodicity.
\end{rem}

\section{Proof of the large-scale reduction}
\label{sec:large-scale-proof}

\begin{proof}[Proof of Theorem~\ref{thm:large-scale}]
By Lemma~\ref{lem:cube-conventions}, it is enough to establish the
reverse H\"older estimate on backward parabolic cubes.  Let \(r>1\),
fix \(z_0=(x_0,t_0)\), and set \(\Delta=\Delta_r^-(z_0)\).  Let \(P\) be
an \(M\)-admissible pole for \(\Delta\), and let \(w\) be as in
\eqref{Greena}, that is,
\[
 w(Z)=G_{\cH}(P;Z).
\]
Here and below, \(20\) denotes a fixed structural large-scale threshold.
It has been enlarged, if necessary, so that all scale-\(r\) reference
corkscrew heights used in Lemma~\ref{lem:boundary-comparison} are at
least one.

Suppose first that \(r\geq20\).  Cover \(\Delta\) by unit backward
parabolic cubes \(\Delta_k=\Delta_1^-(z_k)\), where
\(z_k=(x_k,t_k)\), and choose the points
\(A_k=A_1^+(z_k)\), exactly as in
Subsection~\ref{subsec:two-comparison}.  After increasing \(M\) by a
fixed factor, \eqref{eq:admissible-pole} and the location of \(z_k\)
give
\[
 t(P)-t_k\simeq\lambda(P)^2,
 \qquad
 |x(P)-x_k|+\lambda(P)
 \lesssim\lambda(P)
 \lesssim\bigl(t(P)-t_k\bigr)^{1/2},
\]
and \(t(P)-t_k\gtrsim M^2r^2\).  For each \(k\), let
\(P_k=P_{\Delta_1(z_k)}^{\mathrm{far}}\) be the fixed
\(M\)-admissible pole supplied by
Lemma~\ref{lem:universal-subcube-pole}.  Thus \(P\) and \(P_k\) lie in
one common forward pole region at scale one, uniformly in \(r\) and
\(k\).  By Lemma~\ref{lem:cube-conventions}, the local reverse H\"older
estimate applies to \(\Delta_k\) at \(P_k\), and
Lemma~\ref{lem:change-of-pole} transfers it to \(P\).  In particular,
\(\omega^P\ll\mathrm d x\,\mathrm d t\) on each \(\Delta_k\), and hence
on \(\Delta\).  Let \(k^P\) denote the associated parabolic Poisson
kernel at \(P\), and put \(A_k^-=A_1^-(z_k)\).  The local reverse H\"older
hypothesis gives
\begin{equation}\label{eq:local-rh-unit-cubes}
 \iint_{\Delta_k}(k^P)^2\,\mathrm d x\,\mathrm d t
 \lesssim
 \frac{(\omega^P(\Delta_k))^2}{|\Delta_k|}
 \simeq
 (\omega^P(\Delta_k))^2.
\end{equation}

We next use the right-hand inequality in the
Green-function/parabolic-measure comparison
\eqref{eq:cfms-forward-raw}.  Choose a fixed finite family of symmetric
cubes \(\Theta_{k,m}=\Delta_{\rho_m}(\zeta_{k,m})\), with
\(\rho_m\simeq1\), such that their half-cubes cover \(\Delta_k\) and
their fixed enlargements lie in a fixed dilation of \(\Delta_k\).
Applying the right-hand inequality in \eqref{eq:cfms-forward-raw} to
each \(\Theta_{k,m}\), and comparing the resulting past-oriented
corkscrews with \(A_k^-\) by
Lemma~\ref{lem:green-boundary-time-comparison} and fixed, correctly
oriented Harnack chains, gives
\[
 \omega^P(\Delta_k)\lesssim w(A_k^-).
\]
Here the number of cubes, their radii, and all chain geometries are
uniform in \(k\).  Since \(w\), regarded as a function of \(Z\), is an
adjoint solution with a quantitatively separated singularity,
Lemma~\ref{lem:green-boundary-time-comparison}, together with fixed
correctly oriented Harnack chains, gives
\[
 w(A_k^-)\lesssim w(A_k).
\]
Consequently,
\begin{align}
 \iint_{\Delta}(k^P)^2\,\mathrm d x\,\mathrm d t
 &\leq
 \sum_k\iint_{\Delta_k}(k^P)^2\,
 \mathrm d x\,\mathrm d t\lesssim
 \sum_k(\omega^P(\Delta_k))^2
 \lesssim
 \sum_k w(A_k)^2.
 \label{eq:local-rh-cover}
\end{align}
Proposition~\ref{prop:summation} now yields
\begin{equation}\label{eq:large-scale-after-summation}
 \iint_{\Delta}(k^P)^2\,\mathrm d x\,\mathrm d t
 \lesssim
 r^n w(A_r^*)^2.
\end{equation}

To estimate the reference value \(w(A_r^*)\), put
\[
 \rho=\frac r2,
 \qquad
 \widetilde z=(x_0,t_0-\tfrac12r^2),
 \qquad
 \widetilde\Delta=\Delta_\rho(\widetilde z).
\]
Then \(\Delta_{\rho/2}(\widetilde z)\subset\Delta\).  Moreover,
\[
 A_\rho^-(\widetilde z)
 =(2r,x_0,t_0-\tfrac92r^2),
 \qquad
 A_r^*=(r,x_0,t_0-2r^2).
\]
Thus \(A_\rho^-(\widetilde z)\) precedes \(A_r^*\) by
\(5r^2/2\), and the two points are joined by a uniformly controlled
adjoint Harnack chain.  The constant \(M\) in the pole-admissibility
condition is chosen so that
\(P\in\mathcal P_{\kappa,\rho}^+(\widetilde z)\) and so that the chain
lies quantitatively in the causal past of \(P\).  The adjoint Harnack
inequality and Lemma~\ref{lem:green-boundary-time-comparison} give
\begin{equation}\label{eq:large-reference-comparison}
 w(A_r^*)
 \lesssim
 w(A_\rho^-(\widetilde z))
 \lesssim
 w(A_\rho^+(\widetilde z)).
\end{equation}
We now use the left-hand inequality in
\eqref{eq:cfms-forward-raw}.  It gives
\begin{align}
 w(A_\rho^+(\widetilde z))
 &\lesssim
 \rho^{-(n+1)}
 \omega^P\bigl(\Delta_{\rho/2}(\widetilde z)\bigr)\lesssim
 r^{-(n+1)}\omega^P(\Delta).
 \label{eq:large-scale-green-measure}
\end{align}
Combining
\eqref{eq:large-scale-after-summation}-%
\eqref{eq:large-scale-green-measure}, we obtain
\begin{align}
 \iint_{\Delta}(k^P)^2\,\mathrm d x\,\mathrm d t
 &\lesssim
 r^n\bigl(r^{-(n+1)}\omega^P(\Delta)\bigr)^2=r^{-(n+2)}(\omega^P(\Delta))^2\lesssim
 \frac{(\omega^P(\Delta))^2}{|\Delta|},
 \label{eq:global-rh-direct}
\end{align}
where the last step uses \(|\Delta|=|\Delta_r^-(z_0)|\simeq r^{n+2}\).  Equation~\eqref{eq:global-rh-direct} is precisely the squared
\(\mathrm{RH}_2\) estimate on \(\Delta\).

It remains to consider \(1<r<20\).  Cover \(\Delta\) by a uniformly
bounded family of backward parabolic cubes
\(\Delta_j=\Delta_{\rho_j}^-(z_j)\), with bounded overlap, whose radii
\(\rho_j\) satisfy \(\rho_j\in[1/2,1]\).  Let
\(P_j=P_{\Delta_{\rho_j}(z_j)}^{\mathrm{far}}\) be supplied by
Lemma~\ref{lem:universal-subcube-pole}.  The preceding calculation,
now using \(1<r<20\) and \(\rho_j\simeq1\), shows that \(P\) and
\(P_j\) lie in a common forward pole region at scale \(\rho_j\), with
uniform constants.  By Lemma~\ref{lem:cube-conventions}, the local reverse
H\"older estimate applies to \(\Delta_j\) at \(P_j\), and
Lemma~\ref{lem:change-of-pole} therefore gives
\begin{align}
 \iint_{\Delta}(k^P)^2\,\mathrm d x\,\mathrm d t
 &\lesssim
 \sum_j
 \frac{(\omega^P(\Delta_j))^2}{|\Delta_j|}\lesssim
 \biggl(\sum_j\omega^P(\Delta_j)\biggr)^2\lesssim
 (\omega^P(C\Delta))^2.
 \label{eq:intermediate-scale-cover}
\end{align}
Lemmas~\ref{lem:parabolic-measure-doubling}
and~\ref{lem:change-of-pole}, in their backward-cube formulations,
imply boundary doubling for the admissible pole \(P\).  Hence
\[
 \omega^P(C\Delta)\lesssim\omega^P(\Delta).
\]
Since \(|\Delta|\simeq r^{n+2}\simeq1\) for \(1<r<20\), it follows from \eqref{eq:intermediate-scale-cover} that
\[
 \iint_{\Delta}(k^P)^2\,\mathrm d x\,\mathrm d t
 \lesssim
 \frac{(\omega^P(\Delta))^2}{|\Delta|}.
\]
The range \(0<r\leq1\) is covered by the hypothesis of
Theorem~\ref{thm:large-scale}.  This completes the proof.
\end{proof}

We also record why the local absolute-continuity conclusions used above
give absolute continuity on the whole boundary for each fixed pole.
Fix \(P=(X_P,t_P)\).  The set
\(\mathbb R^n\times(-\infty,t_P)\) has a countable cover by boundary
cubes whose fixed comparison regions lie strictly below \(t_P\).  On
each such cube, boundary comparison and a finite, correctly oriented
Harnack chain compare the restriction of \(\omega_{\cH}^P\) with the
parabolic measure having an admissible local pole.  No uniform comparison
constant is needed for this qualitative assertion.  The latter measure
is absolutely continuous on the cube by the estimate just proved, and
hence so is \(\omega_{\cH}^P\).  Finally,
\eqref{eq:measure-causal-support} gives zero mass to
\(\mathbb R^n\times[t_P,\infty)\).  Thus
\(\omega_{\cH}^P\ll\mathrm d x\,\mathrm d t\) on the whole boundary.

\section{Proof of the small-scale theorem}\label{sec:parabolic-KS}

In this section we prove Theorem~\ref{thm:parabolic-KS}.  No periodicity is
used.  We assume throughout that \(A=A^{\mathsf T}\), that the boundary
trace \(A^0\) satisfies
\eqref{eq:boundary-time-modulus}-\eqref{eq:boundary-time-continuity}, and
that the transverse modulus satisfies \eqref{eq:normal-square-Dini}.  We use
the uniformly continuous representative of \(A^0\) fixed in
Subsection~\ref{subsec:small-assumptions}.

\subsection{A sufficient half-order BMO condition}

We first record that the half-order BMO condition
\eqref{eq:boundary-fiber-bmo} implies the temporal hypothesis used in the
main theorem.

\begin{lem}\label{lem:temporal-freezing}
Let \(a\in \mathrm L^\infty(\mathbb R)\) and suppose that
\(D_t^{1/2}a\in \mathrm{BMO}(\mathbb R)\).  Then \(a\) has a representative such
that
\begin{equation}\label{eq:bmo-holder}
 |a(t)-a(s)|
 \leq
 C\|D_t^{1/2}a\|_{\mathrm{BMO}(\mathbb R)}|t-s|^{1/2}.
\end{equation}
In particular, if \eqref{eq:boundary-fiber-bmo} holds, then \(A^0\) admits
the uniformly continuous representative required in
Subsection~\ref{subsec:small-assumptions}, and
\begin{equation}\label{eq:matrix-bmo-holder}
 \bigl\|A^0(\cdot,t)-A^0(\cdot,s)\bigr\|_{\mathrm L^\infty(\mathbb R^n)}
 \leq CK_0|t-s|^{1/2}.
\end{equation}
Consequently, \eqref{eq:bmo-time-modulus} holds.
\end{lem}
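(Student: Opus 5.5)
The plan is to write \(a=I_{1/2}(D_t^{1/2}a)\) up to a constant. Here \(I_{1/2}\) is the fractional integral of order \(1/2\) on \(\R\), with kernel \(c|t-u|^{-1/2}\). The key fact is that \(I_{1/2}\) maps \(\mathrm{BMO}(\R)\) into the homogeneous Hölder space \(\dot C^{1/2}(\R)\), modulo constants. Set \(g=D_t^{1/2}a\in\mathrm{BMO}\). On the Fourier side \(\widehat a(\tau)=|\tau|^{-1/2}\widehat g(\tau)\) away from \(\tau=0\). Since \(a\in \mathrm L^\infty\) is only a tempered distribution, I would work with the regularized difference kernel
\[
 k_{t,s}(u)=c\bigl(|t-u|^{-1/2}-|s-u|^{-1/2}\bigr).
\]
This kernel decays like \(|t-s|\,|u|^{-3/2}\) at infinity and has integral zero. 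I would then prove the identity \(a(t)-a(s)=\int k_{t,s}(u)g(u)\,\mathrm du\) for a.e.\ \(t,s\).

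To justify the identity, I would first pair both sides against test functions \(\psi\) with \(\int\psi=0\), so that the Fourier multiplier identity \(|\tau|^{-1/2}\cdot|\tau|^{1/2}=1\) holds on \(\widehat\psi\), which vanishes at the origin. The zero-mean property of \(k_{t,s}\), together with its \(|u|^{-3/2}\) tail, makes \(\int k_{t,s}g\) absolutely convergent for \(g\in\mathrm{BMO}\). Approximating the point evaluations by mollifiers then gives the identity a.e. Equivalently, one can use the duality between \(\mathrm{BMO}\) and the Hardy space \(\mathcal H^1\).

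The core estimate is
\[
 \Bigl|\int k_{t,s}g\Bigr|\le C\|g\|_{\mathrm{BMO}}|t-s|^{1/2}.
\]
Let \(h=|t-s|\) and let \(J\) be the interval of length \(4h\) centred at \(t\). Subtract the mean \(g_J\), which is allowed since \(\int k_{t,s}=0\). Split the integral into \(J\) and its complement. On \(J\), the bound \(|k_{t,s}|\le |t-u|^{-1/2}+|s-u|^{-1/2}\) and the John–Nirenberg inequality (Hölder with an \(L^2\) oscillation bound) give \(\lesssim h^{1/2}\|g\|_{\mathrm{BMO}}\). On the dyadic annuli \(2^{j+1}J\setminus 2^jJ\), the kernel is \(\lesssim h(2^jh)^{-3/2}\) and \(\fint_{2^{j+1}J}|g-g_J|\lesssim j\|g\|_{\mathrm{BMO}}\). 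Summing \(\sum_j j2^{-j/2}\,h^{1/2}\) gives the claim.

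This yields \eqref{eq:bmo-holder} for a.e.\ pair \(t,s\). I would then redefine \(a\) on a null set as the unique continuous extension of its restriction to a full-measure set on which the estimate holds; this representative is uniformly Hölder-\(\tfrac12\).

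For the matrix statement, apply the scalar result entrywise to \(a=A^0_{ij}(x,\cdot)\) for a.e.\ \(x\), with constant \(K_0\). The delicate point is the choice of a representative that is jointly measurable and satisfies \eqref{eq:matrix-bmo-holder} with an \(\mathrm L^\infty_x\) norm. For this I would define the representative via time mollification, \(A^0_\epsilon=A^0*_t\phi_\epsilon\). The mollified functions satisfy the same Hölder bound uniformly in \(x\), because the bound is stable under convolution, and they converge uniformly in \(t\) as \(\epsilon\to0\), giving the desired continuous \(\mathrm L^\infty(\R^n)\)-valued map. Symmetry and ellipticity pass to the limit, and \eqref{eq:bmo-time-modulus} follows from the definition \eqref{eq:boundary-time-modulus}, since \(|t-s|^{1/2}\le\rho\). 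I expect the main obstacle to be the rigorous justification of the representation identity for a merely bounded \(a\), in particular controlling the constant ambiguity and the zero frequency.
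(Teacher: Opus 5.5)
Your argument is correct and takes a different route from the paper. The paper works on the frequency side. With a homogeneous Littlewood--Paley decomposition \(\{\Delta_j\}\) in \(t\), it uses only that a mean-zero Schwartz kernel maps \(\mathrm{BMO}\) into \(\mathrm L^\infty\). This gives \(\|\Delta_ja\|_\infty\lesssim2^{-j/2}\|g\|_{\mathrm{BMO}}\) and \(\|\partial_t\Delta_ja\|_\infty\lesssim2^{j/2}\|g\|_{\mathrm{BMO}}\). The paper then splits at \(2^J\simeq|t-s|^{-1}\), using the mean value theorem for the low frequencies and the sup bound for the high ones. It uses \(a\in\mathrm L^\infty\) only to reduce the polynomial ambiguity of the homogeneous decomposition to a constant. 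You work in physical space instead: you prove the classical fact that \(I_{1/2}\) maps \(\mathrm{BMO}\) into \(\dot C^{1/2}\), via the cancellation \(\int k_{t,s}=0\), a local estimate on \(J\), and the logarithmic growth of \(|g_{2^jJ}-g_J|\) on dyadic annuli. Your route gives an explicit representation formula and an everywhere-defined H\"older representative, but it needs John--Nirenberg. The paper's frequency localization absorbs the singularity of the kernel and needs only the \(\mathrm L^1\) definition of \(\mathrm{BMO}\). Your time-mollification construction of the \(\mathrm L^\infty_x\)-valued representative is a more explicit version of the paper's one-line remark about changing the representative.

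Two points need adjustment. First, on \(J\) the kernel \(|t-u|^{-1/2}\) is not in \(\mathrm L^2(J)\), since \(\int_J|t-u|^{-1}\,\mathrm du=\infty\), so H\"older with an \(\mathrm L^2\) oscillation bound does not close. Use John--Nirenberg to get \(\bigl(\fint_J|g-g_J|^p\bigr)^{1/p}\lesssim\|g\|_{\mathrm{BMO}}\) for some \(p>2\), and put the kernel in \(\mathrm L^{p'}(J)\); this gives \(h^{1/p'-1/2}\cdot h^{1/p}=h^{1/2}\). Second, the obstacle you anticipate in the representation identity disappears if you test only against Schwartz \(\psi\) whose Fourier transforms vanish to infinite order at \(0\). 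For such \(\psi\), \(I_{1/2}\psi\) lies in the same class, so \(\langle a,\psi\rangle=\int g\,I_{1/2}\psi\) is exactly what \(D_t^{1/2}a=g\) means. Fubini, justified by the \(|u|^{-3/2}\) tail of \(k_{t,t_0}\), together with \(\int\psi=0\), shows that \(b(t)=\int k_{t,t_0}(u)g(u)\,\mathrm du\) satisfies the same identity. Hence \(a-b\) annihilates this test class and is a polynomial. It is constant because \(a\) is bounded and \(|b(t)|\lesssim\|g\|_{\mathrm{BMO}}|t-t_0|^{1/2}\) by your core estimate. So \(b\) plus a constant is the H\"older representative directly, with no mollifier limit and no passage from almost every pair \((t,s)\) to a continuous extension.
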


\begin{proof}
Put \(g=D_t^{1/2}a\), and let
\(\{\Delta_j\}_{j\in\mathbb Z}\) be a homogeneous
Littlewood-Paley decomposition in \(t\).  Convolution of a \(\mathrm{BMO}\)
function with a mean-zero Schwartz kernel gives
\[
 \|\Delta_jg\|_\infty\leq C\|g\|_{\mathrm{BMO}}.
\]
On the support of the multiplier defining \(\Delta_j\), the multiplier
\(D_t^{-1/2}\) has size \(2^{-j/2}\).  Bernstein's inequality therefore
gives
\begin{equation}\label{eq:LP-a-bounds}
 \|\Delta_ja\|_\infty
 \leq C2^{-j/2}\|g\|_{\mathrm{BMO}},
 \qquad
 \|\partial_t\Delta_ja\|_\infty
 \leq C2^{j/2}\|g\|_{\mathrm{BMO}}.
\end{equation}
Choose \(J\in\mathbb Z\) so that
\(2^J|t-s|\simeq1\).  For the low frequencies,
\begin{align*}
 \sum_{j\leq J}
 |\Delta_ja(t)-\Delta_ja(s)|
 &\leq
 C|t-s|\|g\|_{\mathrm{BMO}}\sum_{j\leq J}2^{j/2}\leq C\|g\|_{\mathrm{BMO}}|t-s|^{1/2}.
\end{align*}
For the high frequencies,
\begin{align*}
 \sum_{j>J}
 |\Delta_ja(t)-\Delta_ja(s)|
 &\leq
 C\|g\|_{\mathrm{BMO}}\sum_{j>J}2^{-j/2}\leq C\|g\|_{\mathrm{BMO}}|t-s|^{1/2}.
\end{align*}
The homogeneous decomposition determines \(a\) modulo a polynomial.
Boundedness leaves only an additive constant, which disappears in the
difference.  This proves \eqref{eq:bmo-holder}.  Applying the scalar
argument to the matrix entries and taking the essential supremum in
\(x\) proves \eqref{eq:matrix-bmo-holder}.  Changing the representative
to the \(\mathrm L_x^\infty\)-valued H\"older representative makes its value at
every \(t_0\) an \(\mathrm L_x^\infty\) limit of admissible matrices at Lebesgue
times.  Symmetry and the closed ellipticity inequalities pass to this
limit, so \(A^0(x,t_0)\) is admissible for every chosen \(t_0\).
\end{proof}

\begin{cor}\label{cor:bmo-sufficient}
The conclusions of Theorems~\ref{thm:parabolic-KS} and~\ref{thm:global},
and of Corollary~\ref{cor:global-D2}, remain valid if
\eqref{eq:boundary-time-modulus}-\eqref{eq:boundary-time-continuity} are
replaced by \eqref{eq:boundary-fiber-bmo}.  In this case the constants may
depend on \(K_0\).
\end{cor}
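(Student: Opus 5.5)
The corollary reduces to Lemma~\ref{lem:temporal-freezing}: under \eqref{eq:boundary-fiber-bmo} that lemma gives a representative of \(A^0\) satisfying every requirement of Subsection~\ref{subsec:small-assumptions}, with modulus \(\varpi(\rho)\leq CK_0\rho\). We then invoke the three earlier results verbatim. The plan is a short verification followed by a bookkeeping remark on constants.

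\textbf{Step 1: the representative and the hypotheses.} Apply Lemma~\ref{lem:temporal-freezing} entrywise to \(a=A^0_{ij}(x,\cdot)\) for almost every \(x\). This uses \(A^0\in\mathrm L^\infty\), which holds by boundedness and ellipticity. It yields the \(\mathrm L^\infty_x\)-valued H\"older representative satisfying \eqref{eq:matrix-bmo-holder}. For this representative we check three things.
\begin{enumerate}[label=\textup{(\alph*)},leftmargin=2.5em]
\item The map \(t\mapsto A^0(\cdot,t)\in\mathrm L^\infty(\R^n)\) is uniformly continuous.
\item For every \(t\), \(A^0(\cdot,t)\) is symmetric with the same ellipticity bounds; this is shown at the end of the lemma's proof.
\item The modulus \eqref{eq:boundary-time-modulus} obeys \(\varpi(\rho)\leq CK_0\rho\). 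Indeed, \(|t-s|^{1/2}\leq\rho\) gives \(\|A^0(\cdot,t)-A^0(\cdot,s)\|_{\mathrm L^\infty}\leq CK_0\rho\).
\end{enumerate}
Hence \eqref{eq:boundary-time-continuity} holds. Changing \(A^0\) on a null set of times does not affect \(\eta\) in \eqref{eq:normal-modulus}, which is an essential supremum over \((x,t)\). So the transverse square-Dini hypothesis \eqref{eq:normal-square-Dini} keeps its meaning and its value \(\mathfrak D\).

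\textbf{Step 2: invoke the theorems.} Theorem~\ref{thm:parabolic-KS} applies directly. Its constant depends on \(n,\mu,\Lambda,\varpi,\mathfrak D\) and the geometry. Replacing \(\varpi\) by the explicit majorant \(\rho\mapsto\min\{CK_0\rho,\,C(\Lambda)\}\) makes the dependence on \(\varpi\) a dependence on \(K_0\). This uses that the constants depend on \(\varpi\) only monotonically, through smallness thresholds of the form \(\varpi(2Lr)\leq\varepsilon\) in the freezing step. For Theorem~\ref{thm:global}, full spatial periodicity is unchanged. Its proof combines Theorem~\ref{thm:parabolic-KS} with Theorem~\ref{thm:large-scale}, and the latter uses no regularity of \(A^0\), so the conclusion follows with constants depending on \(K_0\).

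\textbf{Step 3: the Dirichlet corollary.} Corollary~\ref{cor:global-D2} also needs the time-reversed adjoint statement. We must check that \eqref{eq:boundary-fiber-bmo} is invariant under transposition and \(t\mapsto -t\). Transposition is harmless because \(A^0\) is symmetric. For time reversal, \(D_t^{1/2}\) commutes with reflection, since its multiplier \(|\tau|^{1/2}\) is even, and BMO norms are reflection invariant. So \(K_0\) is unchanged. Theorem~\ref{thm:Bp-Dq} and the forward-adjoint uniqueness criterion \cite[Remark~1.4]{AENDirichlet} then give unique solvability.

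\textbf{Main obstacle.} The only genuinely delicate point is the monotone dependence of the constants on \(\varpi\) in Step 2. The statements of the theorems only say that the constants ``depend on \(\varpi\)''. We would justify the monotone dependence by inspecting the small-scale argument: \(\varpi\) enters only through the threshold radius below which \(\varpi(2Lr)\) is small, and the number of cubes in the final finite covering.
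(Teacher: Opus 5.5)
Your proposal is correct and follows the paper's route. The paper's proof is the one-line reduction to Lemma~\ref{lem:temporal-freezing} and \eqref{eq:bmo-time-modulus}. Your Steps~1--3 spell out what it leaves implicit: the constants depend on \(\varpi\) only through the threshold \(r_0\), and the time-reversed adjoint in Corollary~\ref{cor:global-D2} has the same temporal modulus \(\varpi\). That second observation already gives \(\varpi(\rho)\leq CK_0\rho\) for the reversed coefficient, so your BMO reflection check in Step~3 is correct but not needed.
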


\begin{proof}
This follows from Lemma~\ref{lem:temporal-freezing} and
\eqref{eq:bmo-time-modulus}.
\end{proof}

\subsection{The two perturbation inputs}

The first perturbation input is the following consequence of the \(\lambda\)-independent theory
of~\cite{AEN}.  The second input is
Proposition~\ref{prop:parabolic-Carleson-perturbation}\textup{(i)}.

\begin{lem}\label{lem:lambda-independent-B2-openness}
There exists
\(\varepsilon_\infty=\varepsilon_\infty(n,\mu,\Lambda)>0\) with the
following property.  Let \(B=B(x)=B(x)^{\mathsf T}\) and let
\(B'=B'(x,t)\) be real and independent of \(\lambda\), with the fixed
ellipticity bounds.  If
\begin{equation}\label{eq:Linfty-openness}
 \|B'-B\|_\infty\leq\varepsilon_\infty,
\end{equation}
then \(\omega_{\cH_{B'}}\in B_2(\mathrm d x\,\mathrm d t)\), with a
characteristic depending only on \(n,\mu,\Lambda\).
\end{lem}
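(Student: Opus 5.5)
The plan is to reduce everything to the adjoint regularity problem, invoke the first-order theory of~\cite{AEN}, and then pass back to the Dirichlet side by duality.

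\emph{Step 1: the unperturbed coefficient.} For the time-independent, $\lambda$-independent symmetric matrix $B=B(x)$, I will use the classical fact (the parabolic analogue of Jerison--Kenig, available through the first-order framework of~\cite{AEN}, or through~\cite{LitsgardNystrom} combined with Dahlberg-type elliptic results for $t$- and $\lambda$-independent symmetric coefficients) that the $\mathrm L^2$ regularity problem for $\cH_B^*$ is solvable. Its constants depend only on $n,\mu,\Lambda$. Time reversal transforms $\cH_B^*$ into an operator of the same form with coefficient $B^{\mathsf T}=B$, so it suffices to cite the forward statement. In the language of~\cite{AEN}, the operator $DB$ built from $B$ has bounded $H^\infty$ calculus, with the spectral projections giving well-posedness of the regularity problem, all uniformly in $n,\mu,\Lambda$.

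\emph{Step 2: openness under $\mathrm L^\infty$ perturbation.} The main theorem of~\cite{AEN} on stability gives $\varepsilon_\infty>0$, depending only on the well-posedness constants of Step~1 and hence only on $n,\mu,\Lambda$. For any real $\lambda$-independent $B'(x,t)$ with $\|B'-B\|_\infty\le\varepsilon_\infty$, the adjoint regularity problem for $\cH_{B'}^*$ is then solvable in $\mathrm L^2$, with uniform constants. We apply this after time reversal, so that $B'(x,-t)^{\mathsf T}$ is a perturbation of $B^{\mathsf T}=B$ of the same size. This is the step where care is needed. The results of~\cite{AEN} are formulated for complex coefficients in terms of the operator with $\partial_t$ handled through $D_t^{1/2}$ and $H_t$, so I must check two things: that their perturbation result applies to arbitrary measurable $t$-dependence of $B'$, which it does since only $\|B'-B\|_\infty$ enters; and that the unperturbed operator is covered. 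I expect this verification, and the identification of their notion of solvability with ours, to be the main obstacle.

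\emph{Step 3: duality and conclusion.} Solvability of the regularity problem in $\mathrm L^2$ for $\cH_{B'}^*$ implies solvability of $D_2$ for $\cH_{B'}$. This is the standard regularity--Dirichlet duality, obtained by representing the Dirichlet solution via the adjoint Green function and estimating $N_\ast u$ by pairing the data with the conormal derivative of adjoint solutions, as in~\cite{Nystrom}, and~\cite{AENDirichlet} in the nonsymmetric setting. The constants stay uniform. Finally, Theorem~\ref{thm:Bp-Dq} with $p=q=2$ converts solvability of $D_2$ into $\omega_{\cH_{B'}}\in B_2(\mathrm d x\,\mathrm d t)$, with characteristic controlled by the $D_2$ constant. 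Hence the characteristic depends only on $n,\mu,\Lambda$.
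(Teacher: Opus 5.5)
Your proposal follows the paper's route: well-posedness of the \(\mathrm L^2\) regularity problem for the symmetric, time-independent \(B\) (the paper cites \cite[Theorem~2.32(vi)]{AEN}), then the \(\mathrm L^\infty\)-openness theorem \cite[Theorem~2.19]{AEN} applied to \(B'(x,-t)^{\mathsf T}\) around \(B\), then time reversal to obtain the adjoint regularity problem for \(\cH_{B'}^*\), then regularity--Dirichlet duality \cite[Corollary~2.38]{AEN} to get \((D)_2\) for \(\cH_{B'}\), and finally Theorem~\ref{thm:Bp-Dq}. The identification issue you flag is handled in the paper in two ways: it works with \emph{compatible} well-posedness throughout, which is the form the duality corollary requires, and it uses local boundedness and H\"older continuity on Whitney regions (available for real scalar equations) to upgrade the Whitney-averaged maximal estimate and convergence of \cite{AEN} to the pointwise \(N_\ast\) estimate and non-tangential convergence of Definition~\ref{defn:Dq}.
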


\begin{proof}
Set \(B'^\dagger(x,t)=B'(x,-t)^{\mathsf T}\).  Since
\(B=B^{\mathsf T}\) is independent of time,
\(\|B'^\dagger-B\|_\infty=\|B'-B\|_\infty\).
Theorem~2.32\textup{(vi)} of~\cite{AEN} gives compatible well-posedness of the
\(\mathrm L^2\) regularity problem for the coefficient \(B\), with
uniform Rellich bounds.  The \(\mathrm L^\infty\)-openness theorem
\cite[Theorem~2.19]{AEN}, applied at index \(s=0\), therefore gives the
same compatible well-posedness conclusion for \(B'^\dagger\) if \(\varepsilon_\infty\) is
sufficiently small.  Under the canonical time-reversal identification,
this is compatible well-posedness of the adjoint \(\mathrm L^2\) regularity problem
for \(\cH_{B'}^*\).  Corollary~2.38 of~\cite{AEN} then gives
compatible well-posedness of \((D)_2\) for \(\cH_{B'}\).  In particular,
the corresponding solution satisfies
\(u(\lambda,\cdot,\cdot)\to f\) in \(\mathrm L^2\) as
\(\lambda\downarrow0\), and for data in the energy trace class it agrees with the canonical
energy solution.  For real scalar
equations, local boundedness and local H\"older continuity on Whitney
regions convert the modified non-tangential maximal estimate and
Whitney-average convergence in that result into the maximal estimate and
non-tangential convergence in Definition~\ref{defn:Dq}, after a fixed
enlargement of the aperture.
Theorem~\ref{thm:Bp-Dq} now yields
\(\omega_{\cH_{B'}}\in B_2(\mathrm d x\,\mathrm d t)\).  All constants
are uniform over the stated class.
\end{proof}

\subsection{Proof of the reverse H\"older estimate}

\begin{proof}[Proof of Theorem~\ref{thm:parabolic-KS}]
Fix \(\kappa>16\).  Increase \(M\), if necessary, so that
\(c_aM^2>\max\{64,\kappa^2\}\), and put
\(\kappa_P=(C_a+1)c_a^{-1/2}\).  Then every \(M\)-admissible pole \(P\) for
\(\Delta_r=\Delta_r(x_0,t_0)\) satisfies
\[
 P\in\mathcal P_{\kappa_P,r}^+(x_0,t_0),
 \qquad
 t(P)>t_0+\kappa^2r^2.
\]
Fix \(\Delta_r\) and such a pole \(P\).  Choose a fixed geometric number
\(L>\max\{M,2\kappa\}\), also larger than all corkscrew constants used
below.  Choose
\(\vartheta_r\in C_0^\infty(\mathbb R)\) so that
\[
 0\leq\vartheta_r\leq1,\qquad
 \vartheta_r=1\quad\text{if }|t-t_0|\leq(Lr)^2,
 \qquad
 \vartheta_r=0\quad\text{if }|t-t_0|\geq(2Lr)^2.
\]
Define the time-independent coefficient
\begin{equation}\label{eq:time-frozen-B}
 B(x):=A^0(x,t_0)
\end{equation}
and the globally defined, \(\lambda\)-independent coefficient
\begin{equation}\label{eq:time-localized-reference}
 B_r(x,t)
 :=
 (1-\vartheta_r(t))B(x)+\vartheta_r(t)A^0(x,t).
\end{equation}
Both are symmetric and have the ellipticity bounds of \(A\).  Outside the support of \(\vartheta_r\), \(B_r(x,t)=B(x)\).  On the
support of \(\vartheta_r\), \(|t-t_0|^{1/2}\leq2Lr\).  Hence
\begin{equation}\label{eq:Br-close-B}
 \|B_r-B\|_\infty\leq\varpi(2Lr).
\end{equation}
Lemma~\ref{lem:lambda-independent-B2-openness} shows that
\begin{equation}\label{eq:first-smallness}
 \varpi(2Lr)\leq\varepsilon_\infty
\end{equation}
implies \(\omega_{\cH_{B_r}}\in B_2(\mathrm d x\,\mathrm d t)\), with a
uniform reverse H\"older constant.

We next restore the transverse dependence.  Let
\(\chi_r\in C^\infty([0,\infty))\) satisfy
\[
 0\leq\chi_r\leq1,\qquad
 \chi_r(\lambda)=1\quad(0\leq\lambda\leq Lr),
 \qquad
 \chi_r(\lambda)=0\quad(\lambda\geq2Lr),
\]
and put
\begin{equation}\label{eq:localized-coefficient-new}
 \widetilde A_r(\lambda,x,t)
 :=
 (1-\vartheta_r(t))B(x)
 +\vartheta_r(t)
 \left[(1-\chi_r(\lambda))A^0(x,t)
       +\chi_r(\lambda)A(\lambda,x,t)\right].
\end{equation}
This is a convex combination of \(B,A^0\), and \(A\), and hence remains
symmetric and uniformly elliptic.  Moreover,
\begin{equation}\label{eq:local-agreement-new}
 \widetilde A_r=A
 \quad\text{if}\quad
 0<\lambda<Lr,\qquad |t-t_0|<(Lr)^2,
\end{equation}
whereas
\begin{equation}\label{eq:normal-discrepancy-exact}
 \widetilde A_r(\lambda,x,t)-B_r(x,t)
 =
 \vartheta_r(t)\chi_r(\lambda)
 \bigl(A(\lambda,x,t)-A^0(x,t)\bigr).
\end{equation}

Let \(E_r^\sharp\) be the Whitney essential supremum in
\eqref{eq:carleson-discrepancy}.  If
\((\lambda',y,s)\in W(\lambda,x,t)\), then \(\lambda'\leq C\lambda\).  Moreover,
\eqref{eq:normal-discrepancy-exact} vanishes unless
\(\lambda'<2Lr\).  Hence
\begin{equation}\label{eq:Whitney-Dini-bound}
 E_r^\sharp(\lambda,x,t)
 \leq
 \mathbf 1_{\{\lambda<C_Lr\}}\eta(C\lambda).
\end{equation}
Here \(C_L\) depends only on \(L\) and the fixed Whitney aperture.
The two coefficients are evaluated at the same \((y,s)\), so no
regularity in \(x\) is used.  From
\eqref{eq:carleson-discrepancy},
\begin{align}
 \|\widetilde A_r-B_r\|_*^2
 &\leq
 C\int_0^{C_Lr}\eta(C\lambda)^2
 \,\frac{\mathrm d \lambda}{\lambda}\leq
 C_L\int_0^{C_Lr}\eta(s)^2\,\frac{\mathrm d s}{s}
 =
 C_L\mathfrak D(C_Lr)^2.
 \label{eq:Dini-Carleson-bound}
\end{align}

Whenever \(\varpi(2Lr)\leq\varepsilon_\infty\), the first perturbation
gives a uniform \(B_2\)-characteristic for \(\omega_{\cH_{B_r}}\).  We
fix the corresponding Carleson threshold \(\varepsilon_*\) in
Proposition~\ref{prop:parabolic-Carleson-perturbation}\textup{(i)}.

Choose \(r_0\in(0,(2C_L)^{-1})\) so small that
\begin{equation}\label{eq:choice-r0-new}
 \varpi(2Lr_0)\leq\varepsilon_\infty,
 \qquad
 C_L^{1/2}\mathfrak D(C_Lr_0)\leq\varepsilon_*.
\end{equation}
Such an \(r_0\) exists by
\eqref{eq:boundary-time-continuity} and \eqref{eq:Dini-tail-zero}.  For
every \(0<r\leq r_0\), Lemma~\ref{lem:lambda-independent-B2-openness},
\eqref{eq:Dini-Carleson-bound}-\eqref{eq:choice-r0-new}, and
Proposition~\ref{prop:parabolic-Carleson-perturbation}\textup{(i)} give
\(\omega_{\cH_{\widetilde A_r}}\in B_2(\mathrm d x\,\mathrm d t)\),
with a uniform reverse H\"older constant.  In particular, for
\(P_\Delta=A_{4r}^+(x_0,t_0)\),
\[
 \left(
 \frac{1}{|\Delta_r|}
 \iint_{\Delta_r}\bigl(k_{\widetilde A_r}^{P_\Delta}\bigr)^2\,
 \mathrm d x\,\mathrm d t
 \right)^{1/2}
 \leq
 \frac{C}{|\Delta_r|}
 \omega_{\widetilde A_r}^{P_\Delta}(\Delta_r).
\]
Since the fixed pole \(P\) belongs to
\(\mathcal P_{\kappa_P,r}^+(x_0,t_0)\),
Lemma~\ref{lem:change-of-pole} transfers this estimate from
\(P_\Delta\) to \(P\), with uniform constants.

It remains to return from \(\widetilde A_r\) to \(A\).  Since
\(L>\kappa\), \eqref{eq:local-agreement-new} shows that the two matrices
agree throughout \(\mathcal U_{\kappa r}(x_0,t_0)\).  Moreover, the
choice of \(M\) gives
\[
 P\in\mathcal P_{\kappa_P,r}^+(x_0,t_0),
 \qquad
 t(P)>t_0+\kappa^2r^2.
\]
We may therefore apply
Corollary~\ref{cor:local-change-operator} with \(A_1=A\),
\(A_2=\widetilde A_r\), and the same pole \(P_1=P_2=P\).  Thus, for
every Borel set \(E\subset\Delta_r\),
\begin{equation}\label{eq:local-change-operator}
 \frac{\omega_A^P(E)}{\omega_A^P(\Delta_r)}
 \simeq
 \frac{\omega_{\widetilde A_r}^P(E)}
      {\omega_{\widetilde A_r}^P(\Delta_r)}.
\end{equation}
The preceding change-of-pole argument gives the normalized
\(\mathrm{RH}_2\) estimate for \(\omega_{\widetilde A_r}^P\).  The
comparison \eqref{eq:local-change-operator} first yields absolute
continuity of \(\omega_A^P\) on \(\Delta_r\).  Lebesgue differentiation
then compares the normalized densities almost everywhere and transfers
\(\mathrm{RH}_2\) to \(A\), uniformly for \(0<r\leq r_0\).

Finally, let \(r_0<r\leq1\).  Choose
\(\rho\) with \(r_0/4\leq\rho\leq r_0/2\).  A standard equal-radius
covering provides boundary parabolic cubes
\(\Delta_j=\Delta_\rho(z_j)\), \(1\leq j\leq N\), such that
\[
 \Delta_r\subset\bigcup_{j=1}^N\Delta_j\subset C\Delta_r,
 \qquad
 \sum_{j=1}^N\mathbf 1_{\Delta_j}\leq C,
 \qquad
 N\simeq\left(\frac r\rho\right)^{n+2}
 \leq Cr_0^{-(n+2)}.
\]
For each \(j\), let
\(P_j=P_{\Delta_j}^{\mathrm{far}}\) be supplied by
Lemma~\ref{lem:universal-subcube-pole}.  The estimate proved above
applies to \(\omega_A^{P_j}\).  Since the centers
\(z_j=(x_j,t_j)\) lie in a fixed enlargement of \(\Delta_r\), the
admissibility of \(P\), after the fixed choice of \(M\), gives
\[
 t(P)-t_j\simeq\lambda(P)^2,
 \qquad
 |x(P)-x_j|+\lambda(P)
 \lesssim\bigl(t(P)-t_j\bigr)^{1/2}.
\]
Thus the original pole \(P\) and the poles \(P_j\) lie in a common forward pole region
\(\mathcal P_{\kappa',\rho}^+(z_j)\), with fixed \(\kappa'\), and are
quantitatively separated from the scale-\(\rho\) comparison region.
Lemma~\ref{lem:change-of-pole} therefore transfers the normalized
\(\mathrm{RH}_2\) estimate from \(P_j\) to \(P\), uniformly in \(j\).
Since
\(|\Delta_j|\simeq|\Delta_r|/N\), bounded overlap and boundary doubling
give
\begin{align*}
 \iint_{\Delta_r}(k_A^P)^2\,\mathrm d x\,\mathrm d t
 \leq
 C\sum_{j=1}^N
 \frac{(\omega_A^P(\Delta_j))^2}{|\Delta_j|}&\leq
 C\frac{N}{|\Delta_r|}
 \left(\sum_{j=1}^N\omega_A^P(\Delta_j)\right)^2\\
 &\leq
 C\frac{N}{|\Delta_r|}
 (\omega_A^P(C\Delta_r))^2
 \leq
 C\frac{N}{|\Delta_r|}
 (\omega_A^P(\Delta_r))^2.
\end{align*}
Consequently,
\[
 \left(
 \frac1{|\Delta_r|}
 \iint_{\Delta_r}(k_A^P)^2\,\mathrm d x\,\mathrm d t
 \right)^{1/2}
 \leq
 CN^{1/2}\frac{\omega_A^P(\Delta_r)}{|\Delta_r|}
 \leq
 Cr_0^{-(n+2)/2}
 \frac{\omega_A^P(\Delta_r)}{|\Delta_r|}.
\]
This proves \eqref{eq:KS-small-rh}.  The qualitative absolute continuity
on the whole boundary for each fixed pole follows from the countable-cover
argument recorded after the proof of Theorem~\ref{thm:large-scale}, using
cubes of radius at most \(r_0\); no uniform Harnack-chain constant is needed
for that conclusion.
\end{proof}

\subsection{The half-order BMO condition and the first-order framework}

The condition \eqref{eq:boundary-fiber-bmo} is neither required in
Theorem~\ref{thm:parabolic-KS} nor assumed in~\cite{AEN}, which allows
bounded measurable time-dependent coefficients.  It is nevertheless a
natural sufficient condition for the temporal modulus because half-order
time differentiation is intrinsic to the first-order framework
of~\cite{AEN}.  The Rellich calculation in
\cite[Sections~12.5-12.7 and Proposition~2.34]{AEN} gives the
mixed-order equivalence
\[
 \|h_r\|_2^2+
 \|h_r\|_{\dot{\mathrm H}^{-1/4}_{\partial_t-\Delta_x}}^2
 \simeq
 \|h\|_2^2+\|h\|_{\dot{\mathrm H}^{-1/2}_{P}}^2
 \simeq
 \|h_\perp\|_2^2+
 \|h_\perp\|_{\dot{\mathrm H}^{-1/4}_{\partial_t-\Delta_x}}^2,
\]
where \(h_\perp\) and \(h_r\) are the conormal and tangential parabolic
parts of the boundary conormal gradient \(h\), respectively, and
\(\dot{\mathrm H}^{-1/2}_{P}\) is the homogeneous Sobolev scale
associated with the parabolic Dirac operator \(P\).  The negative-order terms reflect the commutator
\[
 [A^0,H_tD_t^{1/2}],
 \qquad
 \|[A^0,H_tD_t^{1/2}]g\|_2
 \lesssim K_0\|g\|_2,
\]
whose bound follows from Murray's commutator theorem~\cite{Murray}.
Consequently, the mixed-order equivalence does not by itself imply a
pure global \(\mathrm L^2\) Rellich estimate.

Under the additional sufficient hypothesis
\eqref{eq:boundary-fiber-bmo}, the weighted maximal estimate in
\cite[Theorem~2.40]{AEN} is unweighted
at heights \(\lambda\leq1\), and so supplies an alternative small-height
description for the \(\lambda\)-independent boundary coefficient
\(A^0\).  For the subsequent transverse perturbation, however, the
scale-dependent coefficient \(B_r\) constructed above is the appropriate
reference: Lemma~\ref{lem:lambda-independent-B2-openness} gives a
uniform global \(B_2\)-estimate for its parabolic measure, and
Proposition~\ref{prop:parabolic-Carleson-perturbation}\textup{(i)} then
applies directly, without requiring an additional localized perturbation
theorem.

\begin{rem}
The proof of Theorem~\ref{thm:parabolic-KS} uses only
\eqref{eq:boundary-time-modulus}-\eqref{eq:boundary-time-continuity}.
Lemma~\ref{lem:temporal-freezing} shows that the condition
\eqref{eq:boundary-fiber-bmo} is sufficient.  A joint parabolic
\(\mathrm{BMO}\) condition on \(D_t^{1/2}A^0\) does not, by itself, imply
the required uniform \(\mathrm L_x^\infty\)-valued time modulus.
\end{rem}

\begin{rem}
The ordinary Dini condition
\[
 \int_0^1\eta(\rho)\,\frac{\mathrm d \rho}{\rho}<\infty
\]
implies \eqref{eq:normal-square-Dini}, since
\(\eta\leq2\|A\|_\infty\).  If transverse regularity is instead stated as
\[
 |\partial_\lambda A(\lambda,x,t)|
 \leq\frac{\omega(\lambda)}{\lambda},\quad\mbox{then}\quad
 \eta(\rho)
 \leq
 \int_0^\rho\omega(s)\,\frac{\mathrm d s}{s}.
\]
In that formulation one must verify the tail-square condition
\[
 \int_0^1
 \left(\int_0^\rho\omega(s)\,\frac{\mathrm d s}{s}\right)^2
 \,\frac{\mathrm d \rho}{\rho}<\infty.
\]
Mere integrability of \(\omega(s)/s\) does not automatically imply this
stronger tail-square condition.
\end{rem}

\begin{rem}
Neither transverse, tangential, nor time periodicity is used in
Theorem~\ref{thm:parabolic-KS}.  Periodicity enters only in the separate
large-scale theorem.
\end{rem}

\subsection{Proof of the global theorem}

\begin{proof}
Theorem~\ref{thm:parabolic-KS} gives the reverse H\"older estimate for
\(0<r\leq1\).  Theorem~\ref{thm:large-scale}, using the spatially periodic adjoint height
coordinate, propagates it to every \(r>1\).
\end{proof}

\begin{proof}[Proof of Corollary~\ref{cor:global-D2}]
Fix \(\Delta_0=\Delta_{r_0}(x_0,t_0)\) and the canonical pole
\(P_0=A_{4r_0}^+(x_0,t_0)\).  Let
\(P=P_{\Delta_0}^{\mathrm{far}}\) be supplied by
Lemma~\ref{lem:universal-subcube-pole}.  Thus \(P\) is an
\(M\)-admissible pole for every subcube of \(\Delta_0\), with uniform
admissibility constants.  The coordinate formulas for \(P\) and \(P_0\)
also place both points in a common forward pole region
\(\mathcal P_{\kappa_0,r_0}^+(x_0,t_0)\), with fixed \(\kappa_0\).
Theorem~\ref{thm:global} gives the \(\mathrm{RH}_2\) estimate for
\(\omega_{\cH}^P\) on every subcube of \(\Delta_0\).
Lemma~\ref{lem:change-of-pole}, applied at scale \(r_0\), compares the
normalized measures for \(P\) and \(P_0\) on every Borel subset of
\(\Delta_0\).  Lebesgue differentiation therefore compares their
normalized Poisson kernels almost everywhere on \(\Delta_0\), so the
reverse H\"older estimate transfers to \(\omega_{\cH}^{P_0}\).  Hence
\(\omega_{\cH}\in B_2(\mathrm d x\,\mathrm d t)\).  Theorem~\ref{thm:Bp-Dq}
with \(p=q=2\) gives solvability of \(D_2\).  For uniqueness, set
\[
 \widetilde A(Y,\tau)=A^{\mathsf T}(Y,-\tau).
\]
Time reversal identifies forward solutions for the coefficient
\(\widetilde A\) with adjoint solutions for \(A\).  The matrix
\(\widetilde A\) satisfies the same ellipticity, symmetry, full spatial
periodicity, transverse square-Dini, and uniform temporal continuity
hypotheses as \(A\).  In fact, the temporal modulus of its boundary trace is
again \(\varpi\).  Theorem~\ref{thm:global}
therefore gives the corresponding \(B_2\) estimate for the adjoint
parabolic measure.  The forward-adjoint uniqueness criterion recalled
after Theorem~\ref{thm:Bp-Dq} now gives uniqueness among weak solutions with
non-tangential maximal function in \(\mathrm L^2\).
\end{proof}

\section{Further results and remarks}\label{sec:completion}

We conclude with four extensions and consequences of the preceding
arguments: a nonsymmetric \(A_\infty\) theorem, a transfer result for
time-independent Lipschitz graph domains, an abstract criterion for the
existence of an adjoint height coordinate, and scale-uniform boundary
estimates for periodic homogenization families.

\subsection{Nonsymmetric coefficients and \texorpdfstring{\(A_\infty\)}{A-infinity}}
\label{subsec:nonsymmetric-Ainfty}

Symmetry is used in Theorem~\ref{thm:parabolic-KS} to obtain the
\(B_2\)-estimate.  If only \(A_\infty\) is sought, the local argument can
instead be based on \cite[Theorem~1.5]{AENDirichlet} and the
finite-Carleson perturbation theory for the parabolic Dirichlet problem.
The adjoint height coordinate then propagates the resulting
\(B_p\)-estimate to every scale.  We use the finite-Carleson statement in
Proposition~\ref{prop:parabolic-Carleson-perturbation}\textup{(ii)}.

The finite-norm conclusion is important here.  A small discrepancy
preserves a prescribed class \(B_p\), whereas a finite discrepancy
preserves \(A_\infty\), with an exponent that may change.  Let
\(A^0=A^0(x,t)\) be real, bounded, and uniformly elliptic, and set
\begin{equation}\label{eq:nonsymmetric-transverse-modulus}
 \eta_0(\rho)
 :=
 \operatorname*{ess\,sup}_{\substack{x\in\R^n,\ t\in\R\\
 0<\lambda\leq\rho}}
 \bigl|A(\lambda,x,t)-A^0(x,t)\bigr|,
 \qquad 0<\rho\leq1.
\end{equation}

\begin{lem}
\label{lem:nonsymmetric-local-Ainfty}
Assume that \(A\) and \(A^0\) are real, bounded, and uniformly elliptic,
that \(A^0=A^0(x,t)\), and that
\begin{equation}\label{eq:nonsymmetric-square-Dini-finite}
 \mathfrak D_0
 :=
 \left(
 \int_0^1\eta_0(\rho)^2\,
 \frac{\mathrm d \rho}{\rho}
 \right)^{1/2}<\infty.
\end{equation}
Then there are \(p_0>1\), \(M>4\), and \(C<\infty\) such that, for
every \((x_0,t_0)\in\R^n\times\R\), every \(0<r\leq1\), and every
\(M\)-admissible pole \(P\) for \(\Delta_r(x_0,t_0)\), one has
\(\omega_{\cH}^P\ll\mathrm d x\,\mathrm d t\) on
\(\Delta_r(x_0,t_0)\), and
\begin{equation}\label{eq:nonsymmetric-local-Bp}
 \left(
 \frac1{|\Delta_r|}
 \iint_{\Delta_r(x_0,t_0)}(k^P)^{p_0}\,
 \mathrm d x\,\mathrm d t
 \right)^{1/p_0}
 \leq
 C\,\frac{\omega_{\cH}^P(\Delta_r(x_0,t_0))}{|\Delta_r|}.
\end{equation}
The constants depend only on \(n,\mu,\Lambda,\mathfrak D_0\) and the
fixed geometric parameters.  No symmetry, temporal regularity, or
periodicity is assumed.
\end{lem}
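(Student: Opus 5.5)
The plan is to follow the proof of Theorem~\ref{thm:parabolic-KS} step by step, with two substitutions. The symmetric $\mathrm L^\infty$-openness input is replaced by the nonsymmetric $A_\infty$ theorem \cite[Theorem~1.5]{AENDirichlet}. The small-Carleson perturbation is replaced by Proposition~\ref{prop:parabolic-Carleson-perturbation}\textup{(ii)}. No temporal freezing is needed, because \cite[Theorem~1.5]{AENDirichlet} applies directly to any real, bounded, measurable, uniformly elliptic $\lambda$-independent coefficient $A^0(x,t)$. It gives $\omega_{\cH_{A^0}}\in A_\infty(\mathrm d x\,\mathrm d t)$ with constants depending only on $n,\mu,\Lambda$. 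For this reason the reference coefficient is taken to be $A^0$ itself, not $B_r$.

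\emph{Step 1 (transverse perturbation).} Fix $\kappa>16$, choose $M$ as in the proof of Theorem~\ref{thm:parabolic-KS}, and fix a large geometric constant $L$. Let $\chi$ be a transverse cutoff equal to $1$ on $[0,L]$ and vanishing on $[2L,\infty)$. Set
\[
\widetilde A=(1-\chi(\lambda))A^0+\chi(\lambda)A .
\]
This is a convex combination, so it is uniformly elliptic, and $\widetilde A=A$ for $0<\lambda<L$. Moreover $\widetilde A-A^0=\chi(A-A^0)$. The computation leading to \eqref{eq:Dini-Carleson-bound} then gives
\[
\mathcal E_{\widetilde A,A^0}(\lambda,x,t)\leq \mathbf 1_{\{\lambda<C_L\}}\eta_0(C\lambda)
\]
(capping $\eta_0$ at its value at $1$, which is bounded by $2\Lambda$). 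The Carleson sup over cubes of size $\ell\geq C_L$ is controlled by $\ell^{-(n+2)}\cdot\ell^{n+2}\int_0^{C_L}\eta_0^2\,\mathrm d\rho/\rho$. Hence $\|\widetilde A-A^0\|_*\lesssim C_L^{1/2}(\mathfrak D_0+1)<\infty$. Proposition~\ref{prop:parabolic-Carleson-perturbation}\textup{(ii)} now gives $\omega_{\cH_{\widetilde A}}\in A_\infty$ with quantitative constants. Proposition~\ref{prop:Ainfty-union-Bp} converts this to $B_{p_0}$ for some $p_0>1$ depending only on $n,\mu,\Lambda,\mathfrak D_0$. This holds with canonical poles $A_{4\rho}^+$ on all cubes, and in particular on all cubes of radius $r\leq 1$.

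\emph{Step 2 (return to $A$).} For $0<r\leq1$ we have $\mathcal U_{\kappa r}(x_0,t_0)\subset\{\lambda<L\}$ once $L>\kappa$, so $A=\widetilde A$ there. No time cutoff is needed because $\widetilde A$ is globally defined. Lemma~\ref{lem:change-of-pole} moves the $B_{p_0}$ estimate for $\omega_{\widetilde A}$ from $A_{4r}^+$ to the admissible pole $P$. Corollary~\ref{cor:local-change-operator}, with $P_1=P_2=P$, then gives comparability of the normalized measures of $A$ and $\widetilde A$ on Borel subsets of $\Delta_r$. This yields absolute continuity of $\omega_{\cH}^P$ on $\Delta_r$. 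Lebesgue differentiation compares the normalized densities almost everywhere, which transfers \eqref{eq:nonsymmetric-local-Bp}. Because $L$ is fixed independently of $r$ and the Carleson bound is uniform, there is no small-$r$ restriction. The covering step for $r_0<r\leq1$ is therefore unnecessary.

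\emph{Main obstacle.} The delicate point is checking that \cite[Theorem~1.5]{AENDirichlet} and Proposition~\ref{prop:parabolic-Carleson-perturbation}\textup{(ii)} really give constants uniform in the stated data, and that the $A_\infty$ characteristic of $\omega_{\cH_{A^0}}$ depends only on $n,\mu,\Lambda$. If uniformity in (ii) were only available for small Carleson norm, I would instead iterate the perturbation in dyadic transverse layers. Alternatively, I would split off $\lambda<\rho_*$ with $\mathfrak D_0(\rho_*)$ small, and treat the remaining compact range by Harnack chains and doubling, as in the covering argument of Theorem~\ref{thm:parabolic-KS}.
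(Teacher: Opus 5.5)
Your proposal is correct and follows essentially the paper's proof. It uses the same reference coefficient $A^0$ with \cite[Theorem~1.5]{AENDirichlet}, a transverse cutoff with a uniform Carleson bound, Proposition~\ref{prop:parabolic-Carleson-perturbation}\textup{(ii)} and Proposition~\ref{prop:Ainfty-union-Bp}, then change of pole and Corollary~\ref{cor:local-change-operator}; the one difference is that you take a single $r$-independent cutoff at height $L$ instead of the paper's family $\widetilde A_r=A^0+\chi_r(A-A^0)$ at height $Lr$, which is legitimate because only finiteness, not smallness, of the Carleson norm is needed (for $C\lambda>1$ bound the Whitney discrepancy by $2\Lambda$ rather than by $\eta_0(1)$, which still gives a finite norm).
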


\begin{proof}
Consider the transversely independent reference operator
\[
 \cH_0
 =
 \partial_t-\operatorname{div}_{\lambda,x}
 \bigl(A^0(x,t)\nabla_{\lambda,x}\bigr).
\]
By \cite[Theorem~1.5]{AENDirichlet},
\(\omega_{\cH_0}\in A_\infty(\mathrm d x\,\mathrm d t)\), with
constants depending only on \(n,\mu,\Lambda\).  This result allows
arbitrary real, nonsymmetric \(A^0\) measurable in \((x,t)\).  Fix
\(\kappa>16\), choose \(M>4\) so large that
\[
 c_aM^2>\max\{64,\kappa^2\},
\]
put \(\kappa_P=(C_a+1)c_a^{-1/2}\), and choose
\(L>\max\{M,2\kappa\}\).  Every \(M\)-admissible pole \(P\) for
\(\Delta_r(x_0,t_0)\) then satisfies
\[
 P\in\mathcal P_{\kappa_P,r}^+(x_0,t_0),
 \qquad
 t(P)>t_0+\kappa^2r^2.
\]
Indeed,
\[
 |x(P)-x_0|+\lambda(P)
 \leq(C_a+1)\lambda(P)
 \leq\kappa_P\bigl(t(P)-t_0\bigr)^{1/2},
\]
and
\[
 t(P)-t_0
 \geq c_a\lambda(P)^2
 \geq c_aM^2r^2
 >\kappa^2r^2.
\]
For \(0<r\leq1\), choose
\(\chi_r\in C^\infty([0,\infty))\) such that
\[
 0\leq\chi_r\leq1,
 \qquad
 \chi_r=1\ \text{on }[0,Lr],
 \qquad
 \chi_r=0\ \text{on }[2Lr,\infty),
\]
and define
\begin{equation}\label{eq:nonsymmetric-localized-coefficient}
 \widetilde A_r
 =
 A^0+\chi_r(A-A^0).
\end{equation}
Thus \(\widetilde A_r\) is real and uniformly elliptic, agrees with
\(A\) for \(0<\lambda<Lr\), and agrees with \(A^0\) for
\(\lambda>2Lr\).

Let \(E_r^\sharp\) be the Whitney essential supremum of
\(\widetilde A_r-A^0\).  Since the transverse coordinate in a Whitney
region is comparable to \(\lambda\),
\[
 E_r^\sharp(\lambda,x,t)
 \leq
 C\mathbf 1_{\{\lambda<C_Lr\}}
 \begin{cases}
  \eta_0(C\lambda),&C\lambda\leq1,\\
  2\Lambda,&C\lambda>1.
 \end{cases}
\]
where \(C_L\) depends only on \(L\) and the fixed Whitney aperture.
Both matrices are evaluated at the same tangential point in this
estimate.  Hence no regularity in \(x\) or \(t\) is used.  It follows
from \eqref{eq:carleson-discrepancy} that
\begin{equation}\label{eq:nonsymmetric-Carleson-bound}
 \|\widetilde A_r-A^0\|_*^2
 \leq
 C_L\bigl(\mathfrak D_0^2+\Lambda^2\bigr),
 \qquad 0<r\leq1.
\end{equation}
The bound is independent of \(r\).
Proposition~\ref{prop:parabolic-Carleson-perturbation}\textup{(ii)} therefore gives
\(\omega_{\cH_{\widetilde A_r}}\in A_\infty\), uniformly in \(r\).
By the quantitative reverse H\"older characterization and
self-improvement recorded in
Proposition~\ref{prop:Ainfty-union-Bp}, there exist an exponent
\(p_0>1\) and a constant, both independent of \(r\), such that
\[
 \omega_{\cH_{\widetilde A_r}}
 \in B_{p_0}(\mathrm d x\,\mathrm d t),
 \qquad 0<r\leq1.
\]

Let \(P\) be the \(M\)-admissible pole in the statement.  By the
preceding parameter choices,
\[
 P\in\mathcal P_{\kappa_P,r}^+(x_0,t_0),
 \qquad
 t(P)>t_0+\kappa^2r^2.
\]
Lemma~\ref{lem:change-of-pole} transfers the canonical
\(B_{p_0}\)-estimate for \(\cH_{\widetilde A_r}\) to
\(\omega_{\cH_{\widetilde A_r}}^P\).  Since \(L>\kappa\), the
localization cylinder in Corollary~\ref{cor:local-change-operator} is contained in
\(\{0<\lambda<Lr\}\), where \(A=\widetilde A_r\).  Applying that
corollary with the same geometric pole \(P\) for both operators and then
using Lebesgue differentiation gives, with the coefficient matrix
indicated in the subscript,
\[
 \frac{k_A^P}{\omega_{\cH}^P(\Delta_r)}
 \simeq
 \frac{k_{\widetilde A_r}^P}
      {\omega_{\cH_{\widetilde A_r}}^P(\Delta_r)}
 \quad\text{almost everywhere on }\Delta_r.
\]
The uniform \(B_{p_0}\)-estimate for the right-hand side therefore gives
\eqref{eq:nonsymmetric-local-Bp}.
\end{proof}

We next isolate the large-scale part of the argument.  It applies to
every exponent \(p>1\) and does not require symmetry.

\begin{lem}
\label{lem:height-global-Bp}
Let \(1<p<\infty\), and assume that \(A\) is real, bounded, and
uniformly elliptic.  Suppose that there is a positive adjoint solution
\(\Phi^*\), locally H\"older continuous up to the flat boundary, such
that
\begin{equation}\label{eq:abstract-height-coordinate}
 \cH^*\Phi^*=0\quad\text{in }\R^{n+2}_+,
 \qquad
 \Phi^*=0\quad\text{on }\{\lambda=0\},
\end{equation}
and
\begin{equation}\label{eq:abstract-height-comparison}
 c\lambda\leq\Phi^*(\lambda,x,t)\leq C\lambda,
 \qquad \lambda\geq1.
\end{equation}
Assume that, for some \(M>4\), the estimate
\begin{equation}\label{eq:abstract-local-Bp}
 \left(
 \frac1{|\Delta_r|}
 \iint_{\Delta_r}(k^P)^p\,
 \mathrm d x\,\mathrm d t
 \right)^{1/p}
 \leq
 C_p\,\frac{\omega_{\cH}^P(\Delta_r)}{|\Delta_r|}
\end{equation}
holds whenever \(0<r\leq1\) and \(P\) is an \(M\)-admissible pole.
After increasing \(M\) by a structural factor, the same estimate holds
for every \(r>0\), with a constant depending additionally on the
constants in \eqref{eq:abstract-height-comparison}.
\end{lem}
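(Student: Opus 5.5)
The proof will repeat the proof of Theorem~\ref{thm:large-scale}, with the exponent \(2\) replaced by \(p\). The only ingredient of that proof that used periodicity was Lemma~\ref{lem:height-coordinate}, and it entered only through the properties \eqref{eq:abstract-height-coordinate}--\eqref{eq:abstract-height-comparison}. I will also check that no other step used periodicity or exponent \(2\) in an essential way. By Lemma~\ref{lem:cube-conventions}, it suffices to work with backward cubes \(\Delta=\Delta_r^-(z_0)\), where \(r>1\) and \(P\) is an admissible pole. The intermediate range \(1<r<20\) is handled exactly as in \eqref{eq:intermediate-scale-cover}. The bounded covering, the change of pole through Lemma~\ref{lem:universal-subcube-pole} and Lemma~\ref{lem:change-of-pole}, the elementary inequality \(\sum_j a_j^p\le(\sum_j a_j)^p\), and boundary doubling together give
\[
\iint_\Delta (k^P)^p\lesssim |\Delta|^{1-p}\,\omega^P(\Delta)^p .
\]

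For \(r\ge20\), I cover \(\Delta\) by the unit backward cubes \(\Delta_k\) and apply the local hypothesis \eqref{eq:abstract-local-Bp} at \(P\) on each cube. This yields
\[
\iint_{\Delta_k}(k^P)^p\lesssim \omega^P(\Delta_k)^p\lesssim w(A_k)^p ,
\]
where the second inequality follows exactly as in the proof of Theorem~\ref{thm:large-scale}, from \eqref{eq:cfms-forward-raw}, Lemma~\ref{lem:green-boundary-time-comparison} and Harnack chains. The key point is that I need a bound for \(\sum_k w(A_k)^p\), whereas Proposition~\ref{prop:summation} controls \(\sum_k w(A_k)^2\).

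\emph{Reducing \(\sum_k w(A_k)^p\) to the quadratic sum.} I plan to avoid a new \(p\)-summation altogether. From \eqref{eq:two-comparison-one-factor} with \(i=1\), I get \(w(A_k)\lesssim w(A_r^*)\,v_1(A_k)/v_1(A_r^*)\). Moreover \(v_1(A_k)\lesssim \omega^{P_1}_{\cH,\Omega_1}(\Delta_k)\le1\) and \(v_1(A_r^*)\gtrsim r^{-(n+1)}\). Hence
\[
w(A_k)\lesssim r^{n+1}w(A_r^*)\,\omega^{P_1}_{\cH,\Omega_1}(\Delta_k).
\]
Summing over \(k\), bounded overlap and the fact that this measure has mass at most one give \(\sum_k w(A_k)\lesssim r^{n+1}w(A_r^*)\). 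Together with Proposition~\ref{prop:summation}, this controls \(\sum_k w(A_k)^2\) and also \(\sup_k w(A_k)\). By interpolation, I then get
\[
\sum_k w(A_k)^p\le \bigl(\sup_k w(A_k)\bigr)^{p-2}\sum_k w(A_k)^2 \quad\text{for } p\ge2,
\]
and H\"older's inequality between the \(\ell^1\) and \(\ell^2\) bounds for \(1<p<2\).

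I still need a pointwise bound \(\sup_k w(A_k)\lesssim r^{-1}w(A_r^*)\). This follows from the comparison of \(w\) with \(\Phi^*\) on the reduced region, exactly as in Lemma~\ref{lem:unit-height-decay}: there \(\Phi^*(A_k)\lesssim1\) and \(\Phi^*(A_r^*)\gtrsim r\). The boundedness \(\Phi^*(A_k)\lesssim 1\) at the height \(\lambda(A_k)=4\) comes from \eqref{eq:abstract-height-comparison}. Combining everything, for \(p\ge2\),
\[
\sum_k w(A_k)^p\lesssim r^{-(p-2)}w(A_r^*)^{p-2}\,r^n w(A_r^*)^2=r^{n+2-p}w(A_r^*)^p ,
\]
and the same exponent arises for \(p<2\) if the \(\ell^2\) bound is used with the pointwise bound in place of the \(\ell^1\) bound. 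Finally, as in \eqref{eq:large-reference-comparison}--\eqref{eq:large-scale-green-measure}, I use \(w(A_r^*)\lesssim r^{-(n+1)}\omega^P(\Delta)\). This gives
\[
\iint_\Delta (k^P)^p\lesssim r^{n+2-p-p(n+1)}\omega^P(\Delta)^p=|\Delta|^{1-p}\omega^P(\Delta)^p ,
\]
which is the claim.

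The step I expect to be the main obstacle is the verification that the two-comparison geometry and Lemma~\ref{lem:unit-height-decay} use only positivity, the vanishing trace, \eqref{eq:abstract-height-comparison} (through the balance \(\Phi^*(A^\pm_{\rho_0})\simeq\rho_0\)) and local H\"older continuity. In particular, the upper bound \(\Phi^*\lesssim1\) at unit heights must be derived from the comparison hypothesis alone, without the bounded-error estimate \eqref{eq:height-bounded-error}. The constant \(M\) is increased so that the causal separations needed there hold.
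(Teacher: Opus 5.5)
Your argument is correct, but it takes a longer route than the paper, and most of it is redundant.

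\textbf{The paper's route.} The paper never uses the two-comparison estimate, the auxiliary Green functions $v_1,v_2$, or Proposition~\ref{prop:summation}. It takes one flat-boundary comparison cylinder at scale comparable to $r$ whose reduced region contains every $A_k$ (at height $4$) and a reference point $A_r^\circ$ at height $r$. In that cylinder it compares $w$ with $\Phi^*$ directly. The balances are controlled by Lemma~\ref{lem:green-boundary-time-comparison} and \eqref{eq:abstract-height-comparison}, and the comparison gives $w(A_k)\lesssim r^{-1}w(A_r^\circ)$ for every $k$. Since there are $N\lesssim r^{n+2}$ unit cubes, $\sum_k w(A_k)^p\le N\sup_k w(A_k)^p\lesssim r^{n+2-p}w(A_r^\circ)^p$ follows at once, for every $p>1$.

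\textbf{What this means for your proof.} Your final step proves exactly this pointwise bound. Once you have it, the $\ell^2$ estimate, the $\ell^1$ estimate through $v_1$ and $\omega^{P_1}_{\cH,\Omega_1}$, and the interpolation can all be deleted. Pointwise bound plus counting reproduces both of those estimates, and Proposition~\ref{prop:summation} is just its case $p=2$. So the two-comparison apparatus ($\Omega_1,\Omega_2,P_1,P_2$, and the $v_2$ comparison) is not needed here, and in fact not even for Theorem~\ref{thm:large-scale}.

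\textbf{Your detour is nonetheless valid.} The inputs \eqref{eq:two-comparison-one-factor}, \eqref{eq:parabolic-v1-omega}, and \eqref{eq:reference-green-lower} use no periodicity. Lemma~\ref{lem:unit-height-decay} needs only $\Phi^*(1,y,s)\le C$, which is \eqref{eq:abstract-height-comparison} at $\lambda=1$. For $1<p<2$, the interpolation $\sum_k a_k^p\le(\sum_k a_k)^{2-p}(\sum_k a_k^2)^{p-1}$ does give the exponent $(n+1)(2-p)+n(p-1)=n+2-p$. The detour simply buys nothing beyond the direct comparison.

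\textbf{One small correction.} For $p<2$ you cannot combine the pointwise upper bound with the $\ell^2$ bound as you do for $p\ge2$, because $a\mapsto a^{p-2}$ is decreasing. Use either the $\ell^1$--$\ell^2$ interpolation or simply counting.
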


\begin{proof}
By Lemma~\ref{lem:cube-conventions}, it is enough to consider backward
cubes.  Let \(\Delta=\Delta_r^-(z_0)\), and let \(P\) be an
\(M\)-admissible pole.  Suppose first that \(r\geq R_0\), where \(R_0\)
is a sufficiently large structural constant.  Cover \(\Delta\) by unit
backward cubes \(\Delta_k=\Delta_1^-(z_k)\), where
\(z_k=(x_k,t_k)\), whose fixed enlargements have bounded
overlap.  Their number \(N\) satisfies
\begin{equation}\label{eq:nonsymmetric-number-unit-cubes}
 N\lesssim r^{n+2}.
\end{equation}
After the fixed adjustment of \(M\) in the statement,
\eqref{eq:admissible-pole} and the location of \(z_k\) give
\[
 t(P)-t_k\simeq\lambda(P)^2,
 \qquad
 |x(P)-x_k|+\lambda(P)
 \lesssim\lambda(P)
 \lesssim\bigl(t(P)-t_k\bigr)^{1/2},
\]
and \(t(P)-t_k\gtrsim M^2r^2\).  Let
\(P_k=P_{\Delta_1(z_k)}^{\mathrm{far}}\) be supplied by
Lemma~\ref{lem:universal-subcube-pole}.  Thus \(P\) and \(P_k\) lie in
a common forward pole region at scale one.  By
Lemma~\ref{lem:cube-conventions}, the local estimate applies to
\(\Delta_k\) at \(P_k\), and Lemma~\ref{lem:change-of-pole} gives it
for \(P\), uniformly in \(r\) and \(k\).  This estimate and
Lemma~\ref{lem:cfms}, together with
Lemma~\ref{lem:cube-conventions}, boundary doubling, and
Lemma~\ref{lem:green-boundary-time-comparison} for the chosen corkscrew
normalizations, give
\begin{equation}\label{eq:nonsymmetric-local-unit-Bp}
 \iint_{\Delta_k}(k^P)^p\,
 \mathrm d x\,\mathrm d t
 \lesssim
 (\omega_{\cH}^P(\Delta_k))^p
 \lesssim w(A_k)^p,
\end{equation}
where \(w(Z)=G_{\cH}(P;Z)\) and \(A_k\) is a future-oriented unit
corkscrew for \(\Delta_k\).  Put
\[
 \widehat z=(x_0,t_0-r^2/2),
 \qquad
 \rho=r/4,
 \qquad
 A_r^\circ=A_\rho^+(\widehat z).
\]
Then \(\lambda(A_r^\circ)=r\) and
\(\Delta_{\rho/2}(\widehat z)\subset\Delta\).  After increasing \(M\)
by a fixed amount, the left-hand inequality in
\eqref{eq:cfms-forward-raw} gives
\begin{equation}\label{eq:nonsymmetric-reference-green-measure}
 w(A_r^\circ)
 \lesssim
 r^{-(n+1)}\omega_{\cH}^P(\Delta).
\end{equation}
All the points \(A_k\) and \(A_r^\circ\) lie in
\(T_{L_0r}(x_0,t_0-r^2/4)\) for a fixed \(L_0\).  Choosing a sufficiently
large fixed multiple of this box produces one flat-boundary comparison
cylinder whose reduced region contains all these points and whose fixed
enlargement lies quantitatively in the causal past of \(P\).  This is
the same comparison-cylinder construction used before
Lemma~\ref{lem:two-comparison}.  The structural constants in
Lemma~\ref{lem:boundary-comparison} determine the required multiple.
In this cylinder, \(w\) and \(\Phi^*\) are positive adjoint solutions
vanishing on the same flat boundary portion.  Their reference balances
are uniform by Lemma~\ref{lem:green-boundary-time-comparison} and
\eqref{eq:abstract-height-comparison}.  Normalized adjoint boundary
comparison therefore gives
\[
 \frac{w(A_k)}{\Phi^*(A_k)}
 \lesssim
 \frac{w(A_r^\circ)}{\Phi^*(A_r^\circ)}.
\]
Our normalization gives \(\lambda(A_k)=4\), while
\(\lambda(A_r^\circ)=r\).  Hence
\eqref{eq:abstract-height-comparison} gives
\(\Phi^*(A_k)\simeq1\) and \(\Phi^*(A_r^\circ)\simeq r\), so
\begin{equation}\label{eq:nonsymmetric-unit-large-green}
 w(A_k)\lesssim r^{-1}w(A_r^\circ).
\end{equation}

Combining \eqref{eq:nonsymmetric-number-unit-cubes},
\eqref{eq:nonsymmetric-local-unit-Bp},
\eqref{eq:nonsymmetric-reference-green-measure}, and
\eqref{eq:nonsymmetric-unit-large-green}, we obtain
\begin{align}
 \iint_\Delta(k^P)^p\,\mathrm d x\,\mathrm d t
 &\lesssim
 \sum_k w(A_k)^p
 \lesssim
 r^{n+2-p}w(A_r^\circ)^p\notag\\
 &\lesssim
 r^{n+2-p-p(n+1)}(\omega_{\cH}^P(\Delta))^p\notag\\
 &=
 r^{(n+2)(1-p)}(\omega_{\cH}^P(\Delta))^p
 \simeq
 |\Delta|^{1-p}(\omega_{\cH}^P(\Delta))^p.
 \label{eq:nonsymmetric-global-Bp-estimate}
\end{align}
This is the \(p\)-th power of \eqref{eq:abstract-local-Bp}.  If
\(1<r<R_0\), cover \(\Delta\) by a structural number of backward
cubes \(\Delta_j=\Delta_{\rho_j}^-(z_j)\), with
\(\rho_j\in[1/2,1]\).  For each \(j\), let
\(P_j=P_{\Delta_{\rho_j}(z_j)}^{\mathrm{far}}\).  As above, after the
fixed adjustment of \(M\), the poles \(P\) and \(P_j\) lie in one
common forward pole region at scale \(\rho_j\).  By
Lemma~\ref{lem:cube-conventions}, the local estimate applies to
\(\Delta_j\) at \(P_j\), and Lemma~\ref{lem:change-of-pole} gives it for
\(P\), uniformly in \(j\).  Bounded overlap and the inequality
\(\sum_j a_j^p\leq(\sum_j a_j)^p\) give
\[
 \iint_\Delta(k^P)^p\,\mathrm d x\,\mathrm d t
 \lesssim
 (\omega_{\cH}^P(C\Delta))^p.
\]
Boundary doubling and change of pole control the right-hand side by
\(C\omega_{\cH}^P(\Delta)^p\).  Since \(1<r<R_0\), this is again
\eqref{eq:nonsymmetric-global-Bp-estimate}.  The range \(0<r\leq1\)
is the hypothesis.
\end{proof}

\begin{thm}
\label{thm:nonsymmetric-global-Ainfty}
Let \(A\) be real, bounded, measurable, and uniformly elliptic, but not
necessarily symmetric.  Assume the full spatial periodicity
\eqref{eq:full-periodicity}, and suppose that \(A\) has a real, bounded,
uniformly elliptic boundary trace
\(A^0=A^0(x,t)\) satisfying
\eqref{eq:nonsymmetric-square-Dini-finite}.  Then
\begin{equation}\label{eq:nonsymmetric-global-Ainfty}
 \omega_{\cH}\in A_\infty(\mathrm d x\,\mathrm d t).
\end{equation}
More precisely, there are \(p_0>1\), \(M>4\), and \(C<\infty\) such
that, for every \((x_0,t_0)\in\R^n\times\R\), every \(r>0\), and every
\(M\)-admissible pole \(P\) for \(\Delta_r(x_0,t_0)\),
\begin{equation}\label{eq:nonsymmetric-global-Bp}
 \left(
 \frac1{|\Delta_r|}
 \iint_{\Delta_r(x_0,t_0)}(k^P)^{p_0}\,
 \mathrm d x\,\mathrm d t
 \right)^{1/p_0}
 \leq
 C\,\frac{\omega_{\cH}^P(\Delta_r(x_0,t_0))}{|\Delta_r|}.
\end{equation}
The constants depend only on \(n,\mu,\Lambda,\mathfrak D_0\) and the
fixed geometric parameters.  No symmetry, periodicity in time, or
temporal regularity is required.
\end{thm}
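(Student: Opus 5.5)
The plan is to combine the two preceding lemmas. Lemma~\ref{lem:nonsymmetric-local-Ainfty} supplies the small-scale estimate. Lemma~\ref{lem:height-global-Bp} propagates it to all scales, once an adjoint height coordinate is available. Symmetry is used nowhere in either step.

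\emph{Step 1 (local estimate).} Since \(A\) has the trace \(A^0\) with \(\mathfrak D_0<\infty\), Lemma~\ref{lem:nonsymmetric-local-Ainfty} gives \(p_0>1\), \(M>4\) and \(C\) such that \eqref{eq:nonsymmetric-local-Bp} holds for every \(0<r\leq1\) and every \(M\)-admissible pole. This includes absolute continuity on \(\Delta_r\). The constants depend only on \(n,\mu,\Lambda,\mathfrak D_0\).

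\emph{Step 2 (height coordinate).} Lemmas~\ref{lem:adjoint-periodic-corrector} and~\ref{lem:height-coordinate} assume only \eqref{eq:ellipticity} and \eqref{eq:full-periodicity}; no symmetry is used in either. I will check this point explicitly, since the corrector is built from \(A^{\mathsf T}\) and the variational theory needs only coercivity, not symmetry of the form. They produce a strictly positive solution \(\Phi^*\) of \(\cH^*\Phi^*=0\) vanishing on \(\{\lambda=0\}\) and satisfying \eqref{eq:height-comparison} with structural constants. De Giorgi--Nash--Moser estimates up to the flat boundary, which hold for nonsymmetric real coefficients, give the local H\"older continuity required in Lemma~\ref{lem:height-global-Bp}.

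\emph{Step 3 (propagation and conclusion).} Apply Lemma~\ref{lem:height-global-Bp} with \(p=p_0\). After a structural enlargement of \(M\), this yields \eqref{eq:nonsymmetric-global-Bp} for every \(r>0\), with constants depending only on \(n,\mu,\Lambda,\mathfrak D_0\). Qualitative absolute continuity of \(\omega^P_{\cH}\) on the whole boundary follows from the countable-cover argument given after the proof of Theorem~\ref{thm:large-scale}. To pass to the canonical pole \(P_0=A_{4r_0}^+(x_0,t_0)\) of Definition~\ref{defn:parabolic-Bp}, I proceed as in the proof of Corollary~\ref{cor:global-D2}:
\begin{itemize}
\item take \(P=P^{\mathrm{far}}_{\Delta_0}\) from Lemma~\ref{lem:universal-subcube-pole}, which is admissible for every subcube of \(\Delta_0\);
\item use Lemma~\ref{lem:change-of-pole} together with Lebesgue differentiation to transfer the reverse H\"older estimate from \(P\) to \(P_0\).
\end{itemize}
This gives \(\omega_{\cH}\in B_{p_0}\), and hence \(\omega_{\cH}\in A_\infty\) by Proposition~\ref{prop:Ainfty-union-Bp}.

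The main obstacle is Step 2. Lemma~\ref{lem:height-global-Bp} requires the height coordinate, and I must verify that its construction (the corrector bound \(M_\chi\), the slab comparison principle, and the Harnack positivity argument) remains valid for nonsymmetric coefficients. The comparison principle and the energy estimates use only ellipticity in the form \(A\xi\cdot\xi\geq\mu|\xi|^2\), so I expect this verification to be routine. The remaining steps are direct citations.
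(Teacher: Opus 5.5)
Your proposal is correct and follows essentially the same route as the paper: the local \(B_{p_0}\) estimate from Lemma~\ref{lem:nonsymmetric-local-Ainfty}, the symmetry-free height coordinate of Lemma~\ref{lem:height-coordinate} fed into Lemma~\ref{lem:height-global-Bp}, and then the transfer from \(P^{\mathrm{far}}_{\Delta_0}\) to the canonical pole via Lemma~\ref{lem:change-of-pole} and Lebesgue differentiation, concluding with Proposition~\ref{prop:Ainfty-union-Bp}. The only difference is presentational: the paper writes the canonical-pole transfer as an explicit chain of inequalities, and it does not invoke the countable-cover absolute-continuity remark, which you add but do not need.
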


\begin{proof}
Lemma~\ref{lem:nonsymmetric-local-Ainfty} proves
\eqref{eq:nonsymmetric-global-Bp} for \(0<r\leq1\).  The construction
in Section~\ref{sec:height-coordinate} does not use symmetry.  Its cell
problem is formulated with \(A^{\mathsf T}\), so
Lemma~\ref{lem:height-coordinate} applies directly to \(A\) and gives
the adjoint height coordinate required in
Lemma~\ref{lem:height-global-Bp}.  That lemma yields
\eqref{eq:nonsymmetric-global-Bp} at every scale for \(M\)-admissible
poles.  To recover the canonical poles in
Definition~\ref{defn:parabolic-Bp}, fix
\(\Delta_0=\Delta_{r_0}(x_0,t_0)\) and let
\(P_0=A_{4r_0}^+(x_0,t_0)\).  Let
\(P=P_{\Delta_0}^{\mathrm{far}}\) be supplied by
Lemma~\ref{lem:universal-subcube-pole}.  Thus \(P\) is an
\(M\)-admissible pole for every subcube of \(\Delta_0\), with uniform
constants.  The coordinate formulas for \(P\) and \(P_0\) place both in
a common forward pole region at scale \(r_0\).  The estimate just proved therefore holds for
\(\omega_{\cH}^P\) on every \(\Delta\subset\Delta_0\).  Applied at scale
\(r_0\), Lemma~\ref{lem:change-of-pole} and Lebesgue differentiation give
\[
 \frac{k^{P_0}}{\omega_{\cH}^{P_0}(\Delta_0)}
 \simeq
 \frac{k^P}{\omega_{\cH}^P(\Delta_0)}
 \quad\text{almost everywhere on }\Delta_0.
\]
Consequently, for every \(\Delta\subset\Delta_0\),
\begin{align*}
 \left(
 \frac1{|\Delta|}\iint_\Delta(k^{P_0})^{p_0}
 \,\mathrm d x\,\mathrm d t
 \right)^{1/p_0}
 &\lesssim
 \frac{\omega_{\cH}^{P_0}(\Delta_0)}
      {\omega_{\cH}^{P}(\Delta_0)}
 \left(
 \frac1{|\Delta|}\iint_\Delta(k^P)^{p_0}
 \,\mathrm d x\,\mathrm d t
 \right)^{1/p_0}\\
 &\lesssim
 \frac{\omega_{\cH}^{P_0}(\Delta_0)}
      {\omega_{\cH}^{P}(\Delta_0)}
 \frac{\omega_{\cH}^{P}(\Delta)}{|\Delta|}
 \lesssim
 \frac{\omega_{\cH}^{P_0}(\Delta)}{|\Delta|}.
\end{align*}
Thus
\(\omega_{\cH}\in B_{p_0}(\mathrm d x\,\mathrm d t)\), and
Proposition~\ref{prop:Ainfty-union-Bp} gives
\eqref{eq:nonsymmetric-global-Ainfty}.
\end{proof}

\begin{rem}
The transpose in the construction of \(\Phi^*\) is essential, but
symmetry is not.  The square-Dini convergence to \(A^0\) gives local
\(A_\infty\), while full spatial periodicity is used only to construct
\(\Phi^*\) and globalize the estimate.  The localization in
\eqref{eq:nonsymmetric-localized-coefficient} is needed because a
nonconstant transversely periodic discrepancy generally has infinite
Carleson norm on the full half-space.
\end{rem}

\begin{rem}
By Theorem~\ref{thm:Bp-Dq},
Theorem~\ref{thm:nonsymmetric-global-Ainfty} yields solvability of
\(D_q\) for \(q=p_0'=p_0/(p_0-1)\).  No uniqueness conclusion is
asserted.  A standard sufficient route to uniqueness is to establish
the corresponding estimate for the adjoint parabolic measure.
\end{rem}

\begin{rem}\label{rem:ellipticresult}
The preceding argument also has an elliptic consequence.  Consider
\[
 \mathcal L
 =
 -\operatorname{div}_{\lambda,x}
 \bigl(A(\lambda,x)\nabla_{\lambda,x}\bigr)
 \quad\text{in }\mathbb R^{n+1}_+,
\]
where \(A\) is real, bounded, uniformly elliptic, not necessarily
symmetric, and periodic in every spatial variable
\(X=(\lambda,x)\).  Suppose that \(A\) has a transversely independent
boundary trace \(A^0=A^0(x)\) and that
\[
 \int_0^1
 \left(
 \operatorname*{ess\,sup}_{0<\lambda\leq\rho}
 \bigl\|A(\lambda,\cdot)-A^0\bigr\|_{\mathrm L^\infty(\mathbb R^n)}
 \right)^2
 \frac{\mathrm d\rho}{\rho}
 <\infty.
\]
Then our argument gives
\[
 \omega_{\mathcal L}\in A_\infty(\mathrm d x).
\]
Indeed, the result of
Hofmann, Kenig, Mayboroda, and Pipher
\cite{HofmannKenigMayborodaPipher} gives the local
\(A_\infty\)-estimate for the transversely independent reference
operator with coefficient \(A^0(x)\).  Finite-Carleson perturbation
transfers this estimate to \(A\) below the period scale.  Full spatial
periodicity then produces a positive adjoint elliptic height coordinate,
and the preceding large-scale argument propagates the estimate to every
scale.  The result in
\cite{HofmannKenigMayborodaPipher} allows nonsymmetric coefficients but
assumes independence of the transverse variable, whereas
\cite{KenigShen} treats periodic transverse dependence under a symmetry
hypothesis.  Recent related results include the work of David, Gloria,
Mayboroda, and Qi~\cite{DavidGloriaMayborodaQi}, which treats symmetric
coefficients whose increasingly fine periodic structure is imposed
locally on Whitney boxes, and the work of Shen and
Zhuge~\cite{ShenZhuge}, which establishes large-scale harmonic-measure
and non-tangential maximal-function estimates for symmetric periodic
homogenization families in bounded Lipschitz and \(C^1\) domains.  These
settings are distinct from the fixed-period, nonsymmetric conclusion
considered here.  Thus, to the best of our knowledge, the nonsymmetric
transversely dependent conclusion above appears to be new even in the
elliptic setting.  We emphasize that the conclusion is
\(A_\infty\), or equivalently \(B_p\) for some \(p>1\), rather than the
specific \(B_2\)-estimate obtained in the symmetric case.
\end{rem}

\subsection{Time-independent Lipschitz graph domains}
\label{subsec:Lipschitz-graph-domains}

We next explain how the half-space results transfer to time-independent
Lipschitz graph domains and identify the compatibility needed to
preserve full spatial periodicity.  Let
\[
 \Omega_\psi
 =
 \{(x,z,t)\in\R^n\times\R\times\R:
 z>\psi(x)\},
 \qquad
 \psi\in W^{1,\infty}(\R^n),
\]
and consider
\[
 \cH_B U
 =
 \partial_tU-\operatorname{div}_{x,z}
 \bigl(B(x,z,t)\nabla_{x,z}U\bigr).
\]
Here \(B\) is real, bounded, measurable, and uniformly elliptic, with ellipticity
constants \(\mu\) and \(\Lambda\).
Set
\[
 \rho(\lambda,x,t)
 =
 \bigl(x,\lambda+\psi(x),t\bigr),
 \qquad
 \widetilde B(\lambda,x,t)
 =
 B\bigl(x,\lambda+\psi(x),t\bigr),
\]
and let \(u=U\circ\rho\).  With the gradients ordered as
\(\nabla_{x,z}=(\nabla_x,\partial_z)^{\mathsf T}\) and
\(\nabla_{\lambda,x}=(\partial_\lambda,\nabla_x)^{\mathsf T}\),
\[
 \bigl(\nabla_{x,z}U\bigr)\circ\rho
 =
 C_\psi\nabla_{\lambda,x}u,
 \qquad
 C_\psi(x)
 =
 \begin{pmatrix}
  -\nabla\psi(x)&I_n\\
  1&0
 \end{pmatrix}.
\]
The spatial Jacobian of \(\rho\) has absolute determinant one.  Since
\(\rho\) is independent of \(t\), the weak pull-back equation is
\begin{equation}\label{eq:flattened-coefficient}
 \cH_A u
 =
 \partial_tu-\operatorname{div}_{\lambda,x}
 \bigl(A\nabla_{\lambda,x}u\bigr)=0,
 \qquad
 A=C_\psi^{\mathsf T}\widetilde B C_\psi.
\end{equation}
Thus no drift or lower-order term is introduced.  For a time-dependent
graph, the formal change of variables contains the additional term
\(-\partial_t\psi\,\partial_\lambda u\), provided the graph has enough
temporal regularity for this calculation.

Put \(L=\|\nabla\psi\|_{\mathrm L^\infty(\R^n)}\).  Since
\(|C_\psi\xi|\simeq_{n,L}|\xi|\), reality, boundedness, and uniform
ellipticity pass from \(B\) to \(A\), with constants depending on
\(n,L,\mu,\Lambda\).  Moreover, \(C_\psi\) is invertible and hence
\[
 A=A^{\mathsf T}
 \quad\Longleftrightarrow\quad
 B=B^{\mathsf T}.
\]

Suppose that there is a bounded matrix \(B^\partial=B^\partial(x,t)\)
such that
\[
 \lim_{\rho\downarrow0}
 \operatorname*{ess\,sup}_{\substack{x\in\R^n,\ t\in\R\\
 0<\lambda\leq\rho}}
 \bigl|\widetilde B(\lambda,x,t)-B^\partial(x,t)\bigr|=0.
\]
Thus \(B^\partial\) is the uniform essential boundary trace of
\(\widetilde B\), in the sense of
\eqref{eq:uniform-essential-trace}.  The corresponding trace of the
flattened coefficient is
\[
 A^0(x,t)
 =
 C_\psi(x)^{\mathsf T}B^\partial(x,t)C_\psi(x).
\]
For a uniformly continuous representative of \(B^\partial\), set
\[
 \varpi_B^\partial(\rho)
 =
 \sup_{\substack{s,t\in\R\\|t-s|^{1/2}\leq\rho}}
 \bigl\|B^\partial(\cdot,t)-B^\partial(\cdot,s)\bigr\|_{
 \mathrm L^\infty(\R^n)}
\]
and
\[
 \eta_B^\psi(\rho)
 =
 \operatorname*{ess\,sup}_{\substack{x\in\R^n,\ t\in\R\\
 0<\lambda\leq\rho}}
 \bigl|
 \widetilde B(\lambda,x,t)-B^\partial(x,t)
 \bigr|.
\]
The moduli of the flattened coefficient satisfy
\[
 \varpi(\rho)\simeq_{n,L}\varpi_B^\partial(\rho),
 \qquad
 \eta(\rho)\simeq_{n,L}\eta_B^\psi(\rho).
\]
Thus the temporal hypothesis in Theorem~\ref{thm:parabolic-KS} is
equivalent to
\(\varpi_B^\partial(\rho)\to0\) as \(\rho\downarrow0\), and the
transverse square-Dini condition is equivalent to
\begin{equation}\label{eq:graph-square-Dini}
 \int_0^1
 \bigl(\eta_B^\psi(\rho)\bigr)^2\,
 \frac{\mathrm d \rho}{\rho}<\infty.
\end{equation}
The condition
\[
 \operatorname*{ess\,sup}_{x\in\R^n}
 \bigl\|D_t^{1/2}B^\partial(x,\cdot)\bigr\|_{
 \mathrm{BMO}(\R_t)}<\infty
\]
is sufficient for the required temporal continuity.  Indeed,
\(D_t^{1/2}A^0=C_\psi^{\mathsf T}(D_t^{1/2}B^\partial)C_\psi\), and
Lemma~\ref{lem:temporal-freezing} applies.  This condition is automatic
when \(B\) is independent of \(t\).

For the global results, assume that \(B\) admits a real, bounded,
uniformly elliptic extension to all \(z\in\R\).  If this extension is
one-periodic in \(z\), then the flattened coefficient is one-periodic in
\(\lambda\):
\[
 B(x,z+1,t)=B(x,z,t)
 \quad\Longrightarrow\quad
 A(\lambda+1,x,t)=A(\lambda,x,t).
\]
Full spatial periodicity of \(A\) is equivalent to
\begin{equation}\label{eq:graph-periodicity-compatibility}
 \begin{aligned}
 &C_\psi(x+k)^{\mathsf T}
 B\bigl(x+k,\psi(x+k)+\lambda+j,t\bigr)C_\psi(x+k)\\
 &\hspace{2cm}=
 C_\psi(x)^{\mathsf T}
 B\bigl(x,\psi(x)+\lambda,t\bigr)C_\psi(x)
 \end{aligned}
\end{equation}
for every \(k\in\mathbb Z^n\) and \(j\in\mathbb Z\), and almost every
\((\lambda,x,t)\).  A convenient sufficient condition is
\[
 \psi(x+k)=\psi(x)+\ell\cdot k,
 \qquad
 B(x+k,z+\ell\cdot k+j,t)=B(x,z,t),
\]
for some \(\ell\in\R^n\), every \(k\in\mathbb Z^n\), and every
\(j\in\mathbb Z\).  The first identity implies
\(\nabla\psi(x+k)=\nabla\psi(x)\), and hence
\(C_\psi(x+k)=C_\psi(x)\), for almost every \(x\).  These two conditions
include periodic graphs when \(\ell=0\) and affine-periodic graphs
\(\psi(x)=\ell\cdot x+\varphi(x)\) when \(\varphi\) is periodic.

The compatibility condition is not automatic.  For \(B=I\),
\[
 A(\lambda,x,t)
 =
 \begin{pmatrix}
  1+|\nabla\psi(x)|^2&-\nabla\psi(x)^{\mathsf T}\\
  -\nabla\psi(x)&I_n
 \end{pmatrix},
\]
which is tangentially periodic if and only if \(\nabla\psi\) is
one-periodic in every coordinate.

Finally, with \(J_\psi=(1+|\nabla\psi|^2)^{1/2}\),
\[
 \mathrm d \sigma(x)\,\mathrm d t
 =J_\psi(x)\,\mathrm d x\,\mathrm d t.
\]
Let \(K_B^{\rho(P)}\) denote the Poisson kernel in \(\Omega_\psi\), with
respect to \(\mathrm d \sigma\,\mathrm d t\).  The change of variables
then gives
\[
 k_A^P(x,t)
 =K_B^{\rho(P)}\bigl(x,\psi(x),t\bigr)J_\psi(x).
\]
The boundary map is parabolically bi-Lipschitz and preserves
corkscrews, Harnack chains, and pole admissibility up to constants
depending on \(n\) and \(L\).  We therefore obtain the following
consequence.

\begin{cor}\label{cor:Lipschitz-graph-transfer}
Let \(\Omega_\psi\), \(B\), \(B^\partial\), and \(L\) be as above.
Assume that \(B\) admits a real, bounded, uniformly elliptic extension to
all \(z\in\R\), that \eqref{eq:graph-square-Dini} holds, and that the
compatibility condition \eqref{eq:graph-periodicity-compatibility} is
satisfied.
\begin{enumerate}[label=\textup{(\roman*)},leftmargin=2.5em]
\item If \(B=B^{\mathsf T}\) and
\[
 \varpi_B^\partial(\rho)\longrightarrow0
 \qquad\text{as }\rho\downarrow0,
\]
then the parabolic measure associated with \(\cH_B\) belongs to
\(B_2(\mathrm d\sigma\,\mathrm d t)\).  Consequently, the
\(\mathrm L^2\)-Dirichlet problem is uniquely solvable among weak
solutions whose graph non-tangential maximal function belongs to
\(\mathrm L^2(\mathrm d\sigma\,\mathrm d t)\).
\item Without assuming symmetry or temporal regularity beyond
measurability, the parabolic measure associated with \(\cH_B\) belongs
to \(A_\infty(\mathrm d\sigma\,\mathrm d t)\).  More precisely, it
belongs to \(B_{p_0}(\mathrm d\sigma\,\mathrm d t)\) for some \(p_0>1\),
and the Dirichlet problem is solvable in
\(\mathrm L^{p_0'}(\mathrm d\sigma\,\mathrm d t)\).
\end{enumerate}
In \textup{(i)}, the constants depend only on
\(n,L,\mu,\Lambda\), the fixed geometric parameters,
\(\varpi_B^\partial\), and the square-Dini norm in
\eqref{eq:graph-square-Dini}.  In \textup{(ii)}, \(p_0\) and the
constants have the same dependence, except that no temporal modulus
enters.
\end{cor}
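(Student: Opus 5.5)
The plan is to flatten $\Omega_\psi$ with $\rho$ and apply the half-space theorems to the pulled-back coefficient $A=C_\psi^{\mathsf T}\widetilde B C_\psi$ from \eqref{eq:flattened-coefficient}. The transfer back to the graph domain then rests on the parabolically bi-Lipschitz boundary map and the Poisson-kernel identity $k_A^P=(K_B^{\rho(P)}\circ\rho)\,J_\psi$.

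\emph{Step 1 (verify the hypotheses for $A$).} Boundedness and uniform ellipticity of $A$ hold with constants depending on $n,L,\mu,\Lambda$, since $|C_\psi\xi|\simeq|\xi|$. Symmetry in (i) passes from $B$ to $A$ because $C_\psi$ is invertible. Next, the boundary trace is $A^0=C_\psi^{\mathsf T}B^\partial C_\psi$, and $C_\psi$ is independent of $t$ and $\lambda$. Hence $|A(\lambda,x,t)-A^0(x,t)|\simeq_{n,L}|\widetilde B(\lambda,x,t)-B^\partial(x,t)|$ pointwise, and likewise $\|A^0(\cdot,t)-A^0(\cdot,s)\|_\infty\simeq\|B^\partial(\cdot,t)-B^\partial(\cdot,s)\|_\infty$. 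This gives $\eta\simeq\eta_B^\psi$ and $\varpi\simeq\varpi_B^\partial$. Therefore \eqref{eq:graph-square-Dini} yields \eqref{eq:normal-square-Dini}, with $\mathfrak D$ controlled by the graph square-Dini norm, and (in case (i)) $\varpi\to0$. Finally, using the extension of $B$ to all $z$, the extended $A$ is defined on $\R^{n+1}\times\R$. By \eqref{eq:graph-periodicity-compatibility}, it satisfies \eqref{eq:full-periodicity}.

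\emph{Step 2 (half-space conclusions).} In case (i), Theorem~\ref{thm:global} and Corollary~\ref{cor:global-D2} give $\omega_{\cH_A}\in B_2(\mathrm dx\,\mathrm dt)$ and unique $D_2$ solvability. For uniqueness, the time-reversed transposed coefficient $A^{\mathsf T}(\cdot,-\tau)$ again satisfies all hypotheses. In case (ii), Theorem~\ref{thm:nonsymmetric-global-Ainfty} applies with $\mathfrak D_0\lesssim$ the graph square-Dini norm. It gives $B_{p_0}$, and Theorem~\ref{thm:Bp-Dq} gives $D_{p_0'}$ solvability.

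\emph{Step 3 (transfer to $\Omega_\psi$).} Because $\rho$ is a time-independent, measure-preserving bi-Lipschitz map, weak solutions correspond: $U$ solves $\cH_BU=0$ if and only if $u=U\circ\rho$ solves $\cH_Au=0$. Parabolic measures are therefore push-forwards, $\omega_{\cH_B}^{\rho(P)}=(\rho|_{\partial})_\#\omega_{\cH_A}^P$. This gives the kernel identity, and $J_\psi\simeq1$. The boundary map $(x,t)\mapsto(x,\psi(x),t)$ sends the cube $\Delta_r$ into a graph surface cube of comparable size and contains a graph cube of comparable size; it also preserves corkscrews and admissible poles up to constants depending on $n,L$. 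Reverse Hölder estimates therefore transfer, after adjusting cubes by doubling (Lemma~\ref{lem:parabolic-measure-doubling}) and changing poles (Lemma~\ref{lem:change-of-pole}). This yields $B_2(\mathrm d\sigma\,\mathrm dt)$, respectively $B_{p_0}(\mathrm d\sigma\,\mathrm dt)$.

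For the Dirichlet statements, one needs graph nontangential regions and half-space cones to correspond up to a change of aperture, so that the graph $N_*$ and the half-space $N_*$ are comparable in $\mathrm L^q$. I expect this to be the main obstacle, only in the sense of bookkeeping: one must check that the maximal-function comparison and the nontangential convergence survive the aperture change, uniformly in $L$. The approach is to use the aperture-independence stated in Definition~\ref{defn:Dq} and standard comparability of $\mathrm L^q$ norms of maximal functions under changes of aperture. Uniqueness in (i) then follows from the corresponding uniqueness in the flattened half-space.
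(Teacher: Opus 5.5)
Your proposal is correct and follows essentially the same route as the paper: flatten via $\rho$, check that ellipticity, symmetry, the moduli $\varpi\simeq\varpi_B^\partial$ and $\eta\simeq\eta_B^\psi$, and full spatial periodicity (via \eqref{eq:graph-periodicity-compatibility}) pass to $A=C_\psi^{\mathsf T}\widetilde B C_\psi$, then apply Theorem~\ref{thm:global}, Corollary~\ref{cor:global-D2}, and Theorem~\ref{thm:nonsymmetric-global-Ainfty}, and transfer back through $k_A^P(x,t)=K_B^{\rho(P)}(x,\psi(x),t)J_\psi(x)$ and the parabolically bi-Lipschitz boundary map. The paper leaves the comparison of nontangential maximal functions under the change of aperture implicit, and you make it explicit; the resulting constants depend on $L$ (through $n$ and $L$ only), not uniformly in $L$ as your wording suggests.
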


\subsection{An abstract criterion for the height coordinate}
\label{subsec:minimal-height-hypothesis}

Full spatial periodicity is a convenient sufficient condition for the
height-coordinate construction, but it is not minimal.  Under transverse
periodicity, the relevant property has equivalent half-space and
whole-space formulations.

\begin{prop}\label{prop:minimal-height-hypothesis}
Assume \eqref{eq:ellipticity} and \eqref{eq:periodicity}, and extend
\(A\) one-periodically in \(\lambda\) to \(\R^{n+2}\).  The
following statements are equivalent.  All solutions in
\textup{(i)}-\textup{(iii)} are understood to be real-valued.
\begin{enumerate}[label=\textup{(\roman*)},leftmargin=2.5em]
\item There is a positive weak local-energy solution \(\Phi^*\) of
\(\cH^*\Phi^*=0\) in \(\R^{n+2}_+\) such that, for every \(R,T>0\),
\[
 \Phi^*\in
 \mathrm L^2\bigl((-T,T);
 \mathrm H^1((0,R)\times B(0,R))\bigr),
 \quad
 \partial_t\Phi^*\in
 \mathrm L^2\bigl((-T,T);
 \mathrm H^{-1}((0,R)\times B(0,R))\bigr),
\]
and whose Sobolev trace satisfies
\begin{equation}\label{eq:abstract-half-space-height}
 \operatorname{Tr}_0\Phi^*=0
 \quad\text{in }
 \mathrm L^2\bigl((-T,T);\mathrm H^{1/2}(B(0,R))\bigr),
 \qquad
 \|\Phi^*-\lambda\|_{\mathrm L^\infty(\R^{n+2}_+)}<\infty.
\end{equation}

\item There is a complete whole-space adjoint solution
\[
 p^*\in
 \mathrm L^2_{\mathrm{loc}}
 \bigl(\R_t;
 \mathrm H^1_{\mathrm{loc}}(\R_X^{n+1})\bigr)
\]
such that
\begin{equation}\label{eq:abstract-bounded-height}
 \cH^*p^*=0\quad\text{in }\R^{n+2},
 \qquad
 \|p^*-\lambda\|_{\mathrm L^\infty(\R^{n+2})}<\infty.
\end{equation}

\item There is a bounded complete weak solution \(\chi^*\), one-periodic
in \(\lambda\) in the sense that
\[
 \chi^*(\lambda+1,x,t)=\chi^*(\lambda,x,t)
 \quad\text{for almost every }(\lambda,x,t),
\]
such that
\[
 \chi^*\in
 \mathrm L^2_{\mathrm{loc}}
 \bigl(\R_t;\mathrm H^1_{\mathrm{loc}}(\R_X^{n+1})\bigr),
 \qquad
 \partial_t\chi^*\in
 \mathrm L^2_{\mathrm{loc}}
 \bigl(\R_t;\mathrm H^{-1}_{\mathrm{loc}}(\R_X^{n+1})\bigr),
\]
and
\begin{equation}\label{eq:abstract-bounded-corrector}
 -\partial_t\chi^*
 -\operatorname{div}_X\!\left(
 A^{\mathsf T}(X,t)
 \bigl(e_\lambda+\nabla_X\chi^*\bigr)
 \right)=0
 \quad\text{in }\R^{n+2}.
\end{equation}
\end{enumerate}
Here ``complete'' means defined for every \(t\in\R\), with no
initial or terminal time.

\noindent These equivalent conditions supply the
height-function input required in the large-scale reduction.  Every
height coordinate \(\Phi^*\) satisfying \textup{(i)} also satisfies
\begin{equation}\label{eq:abstract-height-comparison-minimal}
 c\lambda\leq\Phi^*(\lambda,x,t)\leq C\lambda,
 \qquad \lambda\geq1,
\end{equation}
where the constants depend only on \(n,\mu,\Lambda\) and quantitatively
on
\[
 K_\Phi
 =\|\Phi^*-\lambda\|_{\mathrm L^\infty(\R^{n+2}_+)}.
\]
\end{prop}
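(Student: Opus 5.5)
The plan is to prove the cycle (iii)$\Rightarrow$(ii)$\Rightarrow$(i)$\Rightarrow$(ii)$\Rightarrow$(iii), and then to derive \eqref{eq:abstract-height-comparison-minimal} exactly as in Step~3 of the proof of Lemma~\ref{lem:height-coordinate}.

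\emph{(iii)$\Rightarrow$(ii).} Put $p^*=\lambda+\chi^*$. As in the last part of the proof of Lemma~\ref{lem:adjoint-periodic-corrector}, equation \eqref{eq:abstract-bounded-corrector} says exactly that $\cH^*p^*=0$. Boundedness of $\chi^*$ then gives \eqref{eq:abstract-bounded-height}.

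\emph{(ii)$\Rightarrow$(i).} Reproduce Steps~1--2 of the proof of Lemma~\ref{lem:height-coordinate}, with $p^*\pm K$ in place of $p^*\pm M_\chi$, where $K=\|p^*-\lambda\|_\infty$. The slab domain $D_N$ is no longer periodic in $x$, so take $D_{N}=(0,N)\times B(0,N)$. On the lateral face $\partial B(0,N)$ prescribe the datum $\lambda$ (clipped to $[0,N]$). This datum is still bracketed by $p^*\pm K$, so the comparison principle gives $p^*-K\le\Psi\le p^*+K$. The pullback limit $T\to\infty$ uses the Dirichlet Poincar\'e inequality on the bounded domain $D_N$, which gives exponential contraction with a rate depending on $N$; this is harmless for fixed $N$. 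The resulting complete solutions satisfy $|\Phi^*_N-\lambda|\le 2K$ and $\Phi_N^*\ge0$. Local energy bounds, Aubin--Lions compactness and a diagonal extraction as $N\to\infty$ then produce $\Phi^*$ with zero trace and $|\Phi^*-\lambda|\le 2K$. Strict positivity follows from the Harnack-chain argument in Step~2. Both the energy bounds and the distributional bound on $\partial_t\Phi^*$ are local, so they are inherited in the stated spaces.

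\emph{(i)$\Rightarrow$(ii).} This is the main step. One cannot extend $\Phi^*$ by odd reflection, since the coefficient is not reflection-invariant. Instead use transverse periodicity: the functions $\Phi^*_j(\lambda,x,t)=\Phi^*(\lambda+j,x,t)-j$, for $j\in\mathbb N$, solve $\cH^*\Phi^*_j=0$ in $\{\lambda>-j\}$, by Lemma~\ref{lem:commute}. Each satisfies $|\Phi_j^*-\lambda|\le K_\Phi$ on its domain, uniformly in $j$. Local boundedness together with Caccioppoli's inequality gives uniform local energy bounds on every compact subset of $\R^{n+2}$, since $\Phi^*_j-\lambda$ is uniformly bounded and $\lambda$ is locally bounded. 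The equation then bounds $\partial_t\Phi^*_j$ in $\mathrm L^2_{\mathrm{loc}}(\mathrm H^{-1}_{\mathrm{loc}})$. By Aubin--Lions and a diagonal argument, a subsequence converges weakly in the local energy space to some $p^*$ with $\cH^*p^*=0$ in $\R^{n+2}$ and $\|p^*-\lambda\|_\infty\le K_\Phi$. The delicate point is passing to the limit in the weak formulation with merely weak gradient convergence. This works because the equation is linear, so weak convergence of $\nabla\Phi_j^*$ suffices.

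\emph{(ii)$\Rightarrow$(iii).} Set $\chi_0=p^*-\lambda$; it is bounded but need not be periodic. Average over integer transverse shifts: let
\[
\chi_m(\lambda,x,t)=\frac1m\sum_{j=0}^{m-1}\bigl(p^*(\lambda+j,x,t)-\lambda-j\bigr).
\]
Each summand solves the corrector equation \eqref{eq:abstract-bounded-corrector} (by periodicity of $A$) and is bounded by $K$. The averages satisfy $|\chi_m(\lambda+1)-\chi_m(\lambda)|\le 2K/m$, and they have uniform local energy bounds. These energy bounds come from Caccioppoli's inequality applied to the homogeneous solutions $\lambda+\chi_m$ minus constants, as in the proof of \eqref{eq:corrector-infty}. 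A weak limit along a subsequence is then a bounded complete corrector that is one-periodic in $\lambda$, which is (iii).

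\emph{Proof of \eqref{eq:abstract-height-comparison-minimal}.} For $\lambda\ge H_0=2K_\Phi+2$ we have $\lambda/2\le\Phi^*\le 3\lambda/2$ directly. For $1\le\lambda\le H_0$, a time-oriented adjoint Harnack chain from $(H_0,x,t+\Theta)$, whose length is controlled by $K_\Phi$, gives the lower bound. The upper bound $\Phi^*\le(1+K_\Phi)\lambda$ is immediate.

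I expect the compactness/limit steps in (i)$\Rightarrow$(ii) and (ii)$\Rightarrow$(iii) to be the main technical obstacles.
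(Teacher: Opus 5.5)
Your proposal is correct and follows the paper's proof essentially step for step. It uses the shifted functions $\Phi^*(\lambda+j,x,t)-j$ with local compactness for (i)$\Rightarrow$(ii), the averages over integer transverse shifts with the telescoping bound $2K/m$ for (ii)$\Rightarrow$(iii), the slab construction bracketed by $p^*\pm K$ for (ii)$\Rightarrow$(i), and the Harnack-chain argument for \eqref{eq:abstract-height-comparison-minimal}. The one difference is minor: you pass to $T\to\infty$ by Dirichlet--Poincar\'e pullback contraction on the bounded slab $(0,N)\times B(0,N)$, whereas the paper lets $T,R\to\infty$ on $(0,N)\times B(0,R)$ by compactness and diagonal extraction; both work.
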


\begin{proof}
Assume \textup{(iii)} and set \(p^*=\lambda+\chi^*\).  Then
\eqref{eq:abstract-bounded-corrector} gives \textup{(ii)}.  Conversely, assume \textup{(ii)}, put \(b^*=p^*-\lambda\), and let
\(K=\|b^*\|_{\mathrm L^\infty(\R^{n+2})}\).  For \(L\in\mathbb N\), define
\[
 q_L^*(\lambda,x,t)
 =\frac1L\sum_{j=0}^{L-1}
 \bigl(p^*(\lambda+j,x,t)-j\bigr).
\]
Transverse periodicity of \(A\) implies that every summand, and hence \(q_L^*\),
solves the adjoint equation.  Moreover,
\[
 \|q_L^*-\lambda\|_\infty\leq K
\]
and
\[
 q_L^*(\lambda+1,x,t)-q_L^*(\lambda,x,t)-1
 =\frac{b^*(\lambda+L,x,t)-b^*(\lambda,x,t)}{L}.
\]
On every compact parabolic cylinder, Caccioppoli's inequality and the
equation give uniform bounds for \(q_L^*\) in
\(\mathrm L^2_t\mathrm H^1_X\) and for \(\partial_tq_L^*\) in
\(\mathrm L^2_t\mathrm H^{-1}_X\).  The local Aubin-Lions lemma and a
diagonal extraction give an adjoint solution \(q^*\) such that, along a
subsequence, \(q_L^*\) converges weakly to \(q^*\) in the local energy space and
strongly in \(\mathrm L^2_{\mathrm{loc}}\).  After passing to a further
subsequence, the convergence also holds almost everywhere.  Hence
\(\|q^*-\lambda\|_\infty\leq K\).  The right-hand side in the preceding
identity is bounded by \(2K/L\), and therefore
\[
 q^*(\lambda+1,x,t)=q^*(\lambda,x,t)+1
\]
almost everywhere.  Thus \(\chi^*=q^*-\lambda\) is bounded,
one-periodic in \(\lambda\), and satisfies
\eqref{eq:abstract-bounded-corrector}.  This proves \textup{(iii)}.

Assume next \textup{(i)} and set, for \(m\in\mathbb N\),
\[
 p_m^*(\lambda,x,t)
 =\Phi^*(\lambda+m,x,t)-m,
 \qquad \lambda>-m.
\]
The functions \(p_m^*\) solve the same adjoint equation and satisfy
\[
 \|p_m^*-\lambda\|_\infty
 \leq\|\Phi^*-\lambda\|_\infty.
\]
The same local compactness argument on compact subsets of
\(\R^{n+2}\) gives a diagonal limit \(p^*\).  Since the boundary
\(\{\lambda=-m\}\) leaves every compact set as \(m\to\infty\), the
limit is a complete whole-space adjoint solution.  Almost-everywhere
convergence along a further subsequence preserves the displayed
\(\mathrm L^\infty\)-bound, so the limit satisfies \textup{(ii)}.

It remains to prove that \textup{(ii)} implies \textup{(i)}.  Put
\(K=\|p^*-\lambda\|_\infty\), reverse time, and write
\[
 \widetilde A(X,\tau)=A^{\mathsf T}(X,-\tau),
 \qquad
 q(X,\tau)=p^*(X,-\tau).
\]
Then \(q\) solves the corresponding forward equation.  For
\(N,R,T>0\), set
\[
 D_{N,R}=(0,N)\times B(0,R),
\]
and let \(\Psi_{N,R,T}\) be the forward energy solution in
\(D_{N,R}\times(-T,\infty)\), with boundary value \(\lambda\) on
\(\partial D_{N,R}\times(-T,\infty)\) and initial value \(\lambda\) at
\(\tau=-T\).  Existence follows from standard variational theory after
subtracting \(\lambda\).  The inequalities
\[
 0\leq\lambda\leq N,
 \qquad
 q-K\leq\lambda\leq q+K
\]
hold on the lateral and initial boundaries in the trace sense.  The weak
comparison principle gives
\[
 0\leq\Psi_{N,R,T}\leq N,
 \qquad
 q-K\leq\Psi_{N,R,T}\leq q+K.
\]
In particular,
\[
 |\Psi_{N,R,T}-\lambda|\leq2K.
\]
For fixed \(N\), interior Caccioppoli estimates, and flat-boundary
Caccioppoli estimates applied to \(\Psi_{N,R,T}-\lambda\), together
with the equation, give bounds independent of \(R\) and \(T\) on bounded
cylinders, including cylinders meeting \(\lambda=0\) or \(\lambda=N\).
In particular, the approximants are uniformly bounded there in
\(\mathrm L^2(I;\mathrm H^1)\).  Letting \(T,R\to\infty\) through
diagonal subsequences and then reversing time produces a complete
adjoint solution \(\Phi_N^*\) in
\((0,N)\times\R^n\times\R\).  On every bounded cylinder touching a
transverse face, the approximants converge weakly in
\(\mathrm L^2(I;\mathrm H^1)\).  Continuity of the Sobolev trace operator
therefore carries their prescribed transverse traces to the limit.
Consequently,
\[
 \operatorname{Tr}_0\Phi_N^*=0,
 \qquad
 \operatorname{Tr}_N\Phi_N^*=N,
 \qquad
 0\leq\Phi_N^*\leq N,
 \qquad
 |\Phi_N^*-\lambda|\leq2K.
\]
A further diagonal extraction as \(N\to\infty\) gives a nonnegative
adjoint solution \(\Phi^*\) in the half-space such that
\[
 \operatorname{Tr}_0\Phi^*=0,
 \qquad
 |\Phi^*-\lambda|\leq2K.
\]
Since \(\Phi^*\geq\lambda-2K\), it is positive at every sufficiently
large transverse height.  Given an arbitrary interior point \(Z\), join
it by a correctly oriented adjoint Harnack chain to a point of
sufficiently large height occurring later in time.  Such a
chain is available because \(\Phi^*\) is defined for all \(t\in\R\).
The adjoint Harnack inequality bounds the value at the later point by a
constant times \(\Phi^*(Z)\), and hence \(\Phi^*(Z)>0\).  The
flat-boundary De Giorgi-Moser estimate, applied using the zero Sobolev
trace, gives a locally H\"older representative up to
\(\{\lambda=0\}\) that vanishes there continuously.  This proves
\textup{(i)}.

Finally, let \(\Phi^*\) be any height coordinate satisfying
\textup{(i)}, set
\[
 K=\|\Phi^*-\lambda\|_{\mathrm L^\infty(\R^{n+2}_+)},
 \qquad H_0=2K+2,
\]
and choose its locally H\"older representative.  The bounded-error
estimate gives
\[
 \Phi^*(\lambda,x,t)\leq(1+K)\lambda,
 \qquad \lambda\geq1,
\]
and
\[
 \Phi^*(\lambda,x,t)\geq\frac12\lambda,
 \qquad \lambda\geq H_0.
\]
If \(1\leq\lambda\leq H_0\), connect
\(Z=(\lambda,x,t)\) by an adjoint Harnack chain to
\[
 \widehat Z=(H_0,x,t+\theta_0H_0^2),
\]
where the fixed constant \(\theta_0>0\) is large enough to give the
correct adjoint orientation.  The number of cylinders in the chain is
bounded in terms of \(H_0\), while each cylinder has fixed relative
interior clearance.  Thus its Harnack constant depends only on
\(n,\mu,\Lambda\) and \(H_0\).  Since
\(\Phi^*(\widehat Z)\geq H_0-K\geq H_0/2\), adjoint Harnack gives a
lower bound \(\Phi^*(Z)\geq c_0\), where \(c_0>0\) depends only on
\(n,\mu,\Lambda\) and \(H_0\).  Since \(\lambda\leq H_0\), it follows
that \(\Phi^*(Z)\geq(c_0/H_0)\lambda\).  This proves
\eqref{eq:abstract-height-comparison-minimal} and completes the proof.
\end{proof}

\begin{rem}
Full spatial periodicity verifies the equivalent conditions in
Proposition~\ref{prop:minimal-height-hypothesis}.  Indeed,
Lemma~\ref{lem:adjoint-periodic-corrector} constructs the complete
corrector on the compact spatial torus.  Poincar\'e's inequality gives
the energy bound, and local parabolic boundedness gives the required
\(\mathrm L^\infty\)-bound.  No symmetry, time periodicity, or temporal
regularity is needed.
\end{rem}
\begin{rem}
If \(A(X,t+1)=A(X,t)\), uniqueness of the complete mean-zero corrector
implies \(\chi^*(X,t+1)=\chi^*(X,t)\).  The slab solutions and the
resulting height coordinate inherit the same periodicity.  Thus time
periodicity is inherited when present, but is not used in the
construction.
\end{rem}
\begin{rem}
There are other elementary ways to verify the height condition.  If
\begin{equation}\label{eq:height-divergence-free}
 \operatorname{div}_X(A^{\mathsf T}e_\lambda)=0
 \quad\text{in the sense of distributions},
\end{equation}
then \(\Phi^*=\lambda\).  This holds, in particular, when \(A=A(t)\).
More generally, independence of \(\lambda\) alone is insufficient,
since for \(A=A(x,t)\),
\[
 \cH^*\lambda
 =-\operatorname{div}_X(A^{\mathsf T}e_\lambda)
\]
need not vanish.  Hence a uniform local \(B_p\)-estimate globalizes
without spatial periodicity whenever
\eqref{eq:height-divergence-free} holds.  For symmetric \(A(t)\), this
applies to the local \(B_2\)-estimate of
Theorem~\ref{thm:parabolic-KS} whenever the temporal hypothesis of that
theorem holds.  For general real \(A(t)\), it applies to
the local \(A_\infty\)-estimate of \cite{AENDirichlet}.
\end{rem}
\begin{rem}
Transverse periodicity alone gives the commuting difference
\(u(\lambda+1,x,t)-u(\lambda,x,t)\), but it does not by itself produce
a bounded corrector.  Remark~\ref{rem:normal-periodicity-obstruction}
identifies the resulting loss in the direct representation and
trace-duality approaches.  The present criterion shows precisely what
must replace full spatial periodicity: an independently constructed
bounded complete corrector, or equivalently the adjoint height coordinate
in \eqref{eq:abstract-half-space-height}.
\end{rem}

\subsection{Uniform estimates and homogenization}
\label{subsec:uniform-homogenization}

We conclude by recording some consequences of the preceding results for
periodic homogenization families.  The main observation is that the
estimates proved in the paper are stable under the natural parabolic rescaling
and are therefore uniform in the microscopic scale.  They provide the
uniform boundary control needed in a homogenization argument of the type
developed in \cite[Section~8]{LitsgardNystrom}.  For brevity, we do not
give complete proofs of all the statements below, but restrict ourselves
to outlining the main ideas.

For the symmetric \(B_2\) statements below, we assume that \(A\)
satisfies the hypotheses of Theorem~\ref{thm:global}; for the
nonsymmetric \(A_\infty\) statements, we use the hypotheses of
Theorem~\ref{thm:nonsymmetric-global-Ainfty}.

We first retain the general time dependence considered throughout the
paper and introduce
\[
 A_\varepsilon(X,t)
 =
 A\bigl({X}/{\varepsilon},t\bigr),
 \qquad
 \cH_\varepsilon
 =
 \partial_t-\operatorname{div}_X
 \bigl(A_\varepsilon\nabla_X\bigr),
 \qquad
 0<\varepsilon\leq1.
\]
Thus all spatial variables, including the transverse variable
\(\lambda\), are rescaled.  Under the parabolic change of variables
\(X=\varepsilon Y\), \(t=\varepsilon^2s\), the normalized coefficient is
\[
 B_\varepsilon(Y,s)=A(Y,\varepsilon^2s).
\]
Its spatial period is one, and its ellipticity constants agree with
those of \(A\).

This change of variables is used only to normalize the microscopic
spatial scale when applying the boundary estimates and it should not be
interpreted as the passage to the homogenization limit.  Indeed, a
rescaling centered at an arbitrary time \(t_0\) gives
\[
 B_{\varepsilon,t_0}(Y,s)
 =
 A(Y,t_0+\varepsilon^2s).
\]
This normalization is performed separately at each \(t_0\) and does
not average the dependence on time.  Since the family
\(A(X/\varepsilon,t)\) contains no rapidly oscillating time variable,
its homogenized coefficient generally retains its dependence on \(t\).

The transverse modulus of \(B_\varepsilon\) satisfies
\(\eta_{B_\varepsilon}(\rho)=\eta(\rho)\).  Moreover, its boundary trace is
\[B_\varepsilon^0(y,s)=A^0(y,\varepsilon^2s),
\]
and hence its temporal modulus is
\[
 \varpi_{B_\varepsilon}(\rho)
 =
 \varpi(\varepsilon\rho)
 \leq
 \varpi(\rho),
 \qquad
 0<\varepsilon\leq1.
\]
Consequently, Theorem~\ref{thm:global} and
Corollary~\ref{cor:global-D2}, followed by scaling back to the original
variables, give estimates with constants independent of
\(\varepsilon\).  In particular, the parabolic measure
\(\omega_\varepsilon^P\) associated with \(\cH_\varepsilon\) satisfies
\[
 \left(
 \frac{1}{|\Delta|}
 \iint_{\Delta}(k_\varepsilon^P)^2
 \,\mathrm d x\,\mathrm d t
 \right)^{1/2}
 \leq
 \frac{C}{|\Delta|}
 \iint_{\Delta}k_\varepsilon^P
 \,\mathrm d x\,\mathrm d t
\]
for every surface cube \(\Delta\) and every corresponding
\(M\)-admissible pole \(P\), where \(C\) is independent of
\(\varepsilon\).  The Dirichlet problem for \(\cH_\varepsilon\) is
therefore uniquely solvable in \(\mathrm L^2\), with the uniform estimate
\[
 \|N_\ast u_\varepsilon\|_{\mathrm L^2(\partial\R^{n+2}_+)}
 \leq
 C\|f\|_{\mathrm L^2(\partial\R^{n+2}_+)},
\]
where \(C\) is independent of \(\varepsilon\).

The same scaling also describes the adjoint height coordinate.  Write
\(Y=(\lambda',y)\), let \(\Phi_{B_\varepsilon}^*\) denote the height
coordinate associated with \(B_\varepsilon\), and set
\[
 \Phi_\varepsilon^*(X,t)
 =
 \varepsilon
 \Phi_{B_\varepsilon}^*
 \bigl({X}/{\varepsilon},
       {t}/{\varepsilon^2}\bigr).
\]
Then \(\Phi_\varepsilon^*\) is the height coordinate associated with
\(\cH_\varepsilon^*\).  Since
\[
 \bigl|\Phi_{B_\varepsilon}^*(Y,s)-\lambda'\bigr|\leq C
\]
with \(C\) independent of \(\varepsilon\), it follows that
\[
 \bigl|\Phi_\varepsilon^*(X,t)-\lambda\bigr|
 =
 \varepsilon
 \left|
 \Phi_{B_\varepsilon}^*
 \bigl({X}/{\varepsilon},
       {t}/{\varepsilon^2}\bigr)
 -
 \frac{\lambda}{\varepsilon}
 \right|
 \leq C\varepsilon.
\]
Thus the deviation from the Euclidean height function is of microscopic
size.  More precisely,
\[
 \sup_{x,t}
 \left|
 \frac{\Phi_\varepsilon^*(\lambda,x,t)}{\lambda}-1
 \right|
 \leq
 C\frac{\varepsilon}{\lambda},
 \qquad \lambda>0.
\]
Equivalently,
\[
 \sup_{\substack{\lambda\geq R\varepsilon,x,t}}
 \left|
 \frac{\Phi_\varepsilon^*(\lambda,x,t)}{\lambda}-1
 \right|
 \leq
 \frac{C}{R},
\]
which tends to zero as \(R\to\infty\), uniformly in
\(\varepsilon\).

For almost every \(t\), let
\(\chi_j(\cdot,t)\), \(1\leq j\leq n+1\), be the mean-zero periodic
solution of
\[
 -\operatorname{div}_Y
 \left(
 A(Y,t)
 \bigl(e_j+\nabla_Y\chi_j(Y,t)\bigr)
 \right)
 =0
 \quad\text{in }\mathbb T^{n+1},
\]
and define
\[
 \widehat A(t)e_j
 =
 \int_{\mathbb T^{n+1}}
 A(Y,t)
 \bigl(e_j+\nabla_Y\chi_j(Y,t)\bigr)
 \,\mathrm dY.
\]
The family \(\cH_\varepsilon\) then homogenizes locally in the interior,
in the usual weak sense described in Appendix~\ref{Homoderv}, to
\[
 \widehat\cH
 =
 \partial_t-\operatorname{div}_X
 \bigl(\widehat A(t)\nabla_X\bigr).
\]
For completeness, a derivation of this qualitative limit and of the
cell formula for \(\widehat A(t)\) is given in
Appendix~\ref{Homoderv}.

Since only the spatial variables are rapidly oscillating, the time \(t\) acts as a parameter in the cell problem.  There is therefore
no temporal cell problem, and the time derivative remains
\(\partial_t\) in the limiting operator.  This also explains why the
effective matrix generally retains its dependence on \(t\).  In
particular, neither time periodicity nor differentiation of the
correctors with respect to \(t\) is needed to identify the limit.  See
\cite[Chapters~9 and~11]{CioranescuDonato} for the classical periodic
framework and \cite[Section~2.1]{GengNiuLocallyPeriodic} for a locally
periodic formulation with macroscopic time dependence.

The uniform \(\mathrm L^2\) boundary estimate established above,
together with standard local energy compactness, provides the
scale-independent boundary control needed to pass to the limit for
Dirichlet data, as in \cite[Section~8]{LitsgardNystrom}.  We do not
pursue convergence rates here.

We next consider simultaneous rescaling of the spatial and temporal
variables.  No periodicity in time is needed for the uniform boundary
estimates in this discussion.  For \(\alpha>0\), set
\[
 A_{\varepsilon,\alpha}(X,t)
 =
 A\bigl({X}/{\varepsilon},
        {t}/{\varepsilon^\alpha}\bigr),
 \qquad
 \cH_{\varepsilon,\alpha}
 =
 \partial_t-\operatorname{div}_X
 \bigl(A_{\varepsilon,\alpha}\nabla_X\bigr),\qquad
 0<\varepsilon\leq1.
\]
The exponent \(\alpha=2\) corresponds to the parabolic scaling.  Under
the change of variables \(X=\varepsilon Y\), \(t=\varepsilon^2s\), the normalized coefficient is
\[
 B_{\varepsilon,\alpha}(Y,s)
 =
 A\bigl(Y,\varepsilon^{2-\alpha}s\bigr).
\]
Its transverse modulus is unchanged, while the temporal modulus of its
boundary trace is
\[
 \varpi_{\varepsilon,\alpha}(\rho)
 =
 \varpi\bigl(\varepsilon^{1-\alpha/2}\rho\bigr).
\]
If \(0<\alpha\leq2\), then
 \[\varpi_{\varepsilon,\alpha}(\rho)
 \leq
 \varpi(\rho).
\]
It follows from Theorem~\ref{thm:global} and
Corollary~\ref{cor:global-D2} that the reverse H\"older and
\(\mathrm L^2\)-Dirichlet estimates for
\(\cH_{\varepsilon,\alpha}\) are uniform in \(\varepsilon\).  When \(\alpha>2\), the factor
\(\varepsilon^{1-\alpha/2}\) tends to infinity as
\(\varepsilon\downarrow0\).  The theorem still applies for each fixed
\(\varepsilon\), since
\[
 \varpi_{\varepsilon,\alpha}(\rho)\longrightarrow0
 \qquad\text{as }\rho\downarrow0,
\]
but the present temporal-modulus argument does not give a
\(B_2\)-constant that is uniform in \(\varepsilon\).

There is nevertheless a uniform conclusion for every \(\alpha>0\) at
the \(A_\infty\) level.  Indeed,
Theorem~\ref{thm:nonsymmetric-global-Ainfty} does not use a temporal
modulus, and the transverse square-Dini quantity is invariant under
the normalization above.  After scaling back, that theorem therefore
yields \(p_0>1\), \(M>4\), and \(C<\infty\), independent of
\(\varepsilon\) and \(\alpha\), such that, for every surface cube
\(\Delta_r\) and every corresponding \(M\)-admissible pole \(P\),
\(\omega_{\cH_{\varepsilon,\alpha}}^P\ll\mathrm d x\,\mathrm d t\)
on \(\Delta_r\), and, writing
\[
 k_{\varepsilon,\alpha}^P
 =
 \frac{\mathrm d\omega_{\cH_{\varepsilon,\alpha}}^P}
      {\mathrm d x\,\mathrm d t},
\]
one has
\[
 \left(
 \frac1{|\Delta_r|}
 \iint_{\Delta_r}(k_{\varepsilon,\alpha}^P)^{p_0}
 \,\mathrm d x\,\mathrm d t
 \right)^{1/p_0}
 \leq
 C\frac{\omega_{\cH_{\varepsilon,\alpha}}^P(\Delta_r)}
        {|\Delta_r|}.
\]
In particular, the corresponding parabolic measures satisfy a uniform
quantitative \(A_\infty\) estimate for every \(\alpha>0\).  For
\(\alpha>2\), this conclusion should not be confused with a uniform
\(B_2\) estimate.

To identify the homogenized operators, we now assume in addition that
\(A\) is one-periodic in its time variable, i.e.,
\[
 A(Y,s+1)=A(Y,s),
 \qquad
 (Y,s)\in\mathbb R^{n+1}\times\mathbb R.
\]
Consequently,
\[
 A_{\varepsilon,\alpha}(X,t+\varepsilon^\alpha)
 =
 A_{\varepsilon,\alpha}(X,t).
\]
The homogenized matrix depends on the relative rate of the spatial and
temporal oscillations.  The following three regimes are the parabolic non-self-similar regimes
studied systematically in \cite{GengShenNonselfsimilar}.  We also refer
to that paper and the references therein for further background and
related results on parabolic homogenization with non-self-similar
scales.  Write \(\mathbb T_s=\mathbb R/\mathbb Z\).

Suppose first that \(0<\alpha<2\).  In this regime spatial
homogenization occurs first, with the fast time variable regarded as a
parameter.  For each \(s\in\mathbb T_s\) and
\(1\leq j\leq n+1\), let \(\chi_j^{<}(\cdot,s)\) be the spatially
periodic solution of
\[
 -\operatorname{div}_Y
 \left(
 A(Y,s)
 \bigl(e_j+\nabla_Y\chi_j^{<}(Y,s)\bigr)
 \right)
 =0
 \quad\text{in }\mathbb T^{n+1},
\]
normalized by
\[
 \int_{\mathbb T^{n+1}}\chi_j^{<}(Y,s)\,\mathrm dY=0.
\]
The normalization gives uniqueness, and hence the time periodicity of
\(A\) implies
\[
 \chi_j^{<}(Y,s+1)=\chi_j^{<}(Y,s).
\]
The spatially homogenized matrix is then averaged over one time period:
\[
 \widehat A_{<}e_j
 =
 \int_0^1\int_{\mathbb T^{n+1}}
 A(Y,s)
 \bigl(e_j+\nabla_Y\chi_j^{<}(Y,s)\bigr)
 \,\mathrm dY\,\mathrm ds.
\]

At the critical exponent \(\alpha=2\), the spatial and temporal
oscillations interact in a single parabolic cell problem.  For
\(1\leq j\leq n+1\), let \(\chi_j^{(2)}\) belong to the periodic energy
class
\[
 \chi_j^{(2)}\in
 \mathrm L^2\bigl(\mathbb T_s;
 \mathrm H^1_{\mathrm{per}}(\mathbb T^{n+1})\bigr),
 \qquad
 \partial_s\chi_j^{(2)}\in
 \mathrm L^2\bigl(\mathbb T_s;
 \mathrm H^{-1}_{\mathrm{per}}(\mathbb T^{n+1})\bigr),
\]
and solve, in the weak periodic sense,
\[
 \partial_s\chi_j^{(2)}
 -
 \operatorname{div}_Y
 \left(
 A(Y,s)
 \bigl(e_j+\nabla_Y\chi_j^{(2)}\bigr)
 \right)
 =0
 \quad\text{in }\mathbb T^{n+1}\times\mathbb T_s,
\]
subject to
\[
 \chi_j^{(2)}(Y+k,s+\ell)
 =
 \chi_j^{(2)}(Y,s),
 \qquad
 (k,\ell)\in\mathbb Z^{n+1}\times\mathbb Z,
\]
and the normalization
\[
 \int_{\mathbb T^{n+1}}
 \chi_j^{(2)}(Y,s)\,\mathrm dY
 =0
 \quad\text{for almost every }s.
\]
The effective matrix is then given by
\[
 \widehat A_2e_j
 =
 \int_0^1\int_{\mathbb T^{n+1}}
 A(Y,s)
 \bigl(e_j+\nabla_Y\chi_j^{(2)}(Y,s)\bigr)
 \,\mathrm dY\,\mathrm ds.
\]

Finally, suppose that \(\alpha>2\).  The temporal oscillation is then
faster than the diffusive scale, and one first forms the temporal
average
\[
 A_{\mathrm{av}}(Y)
 =
 \int_0^1 A(Y,s)\,\mathrm ds.
\]
For \(1\leq j\leq n+1\), let \(\chi_j^{>}\) be the spatially periodic
solution of
\[
 -\operatorname{div}_Y
 \left(
 A_{\mathrm{av}}(Y)
 \bigl(e_j+\nabla_Y\chi_j^{>}(Y)\bigr)
 \right)
 =0
 \quad\text{in }\mathbb T^{n+1},
\]
normalized by
\[
 \int_{\mathbb T^{n+1}}\chi_j^{>}(Y)\,\mathrm dY=0.
\]
The effective matrix is
\[
 \widehat A_{>}e_j
 =
 \int_{\mathbb T^{n+1}}
 A_{\mathrm{av}}(Y)
 \bigl(e_j+\nabla_Y\chi_j^{>}(Y)\bigr)
 \,\mathrm dY.
\]

To summarize, the limiting operator is
\[
 \partial_t-\operatorname{div}_X
 \bigl(\widehat A_\alpha\nabla_X\bigr),
 \qquad
 \widehat A_\alpha
 =
 \begin{cases}
  \widehat A_{<},&0<\alpha<2,\\
  \widehat A_2,&\alpha=2,\\
  \widehat A_{>},&\alpha>2.
 \end{cases}
\]
The three effective matrices need not coincide.  Notice also that the
family \(A(X/\varepsilon,t)\) considered first is formally the case
\(\alpha=0\), but lies outside this trichotomy: its time dependence
remains macroscopic and is not averaged.  For every fixed \(\alpha>0\), the
temporal period tends to zero as \(\varepsilon\downarrow0\), whereas
for \(\alpha=0\) the physical time dependence remains macroscopic.

In summary, the critical exponent \(\alpha=2\) governs both sides of
the discussion.  It is the exponent at which the coupled parabolic
cell problem appears, and it is the endpoint up to which the present
temporal-modulus comparison proves a uniform \(B_2\) constant under
parabolic normalization.  Under the additional time-periodicity
hypothesis, qualitative homogenization holds in all three regimes.
The uniform quantitative \(A_\infty\) control obtained from the present
argument, however, does not require time periodicity and holds for every
\(\alpha>0\).

\appendix

\section{Smooth approximation and stability}
\label{sec:smooth-coefficient-reduction}

We record a smooth-approximation principle.  Its purpose is first to
justify the auxiliary constructions for smooth coefficients and then to
remove the smoothness assumption by compactness.  Smoothness is used only qualitatively.  In
particular, none of the estimates may depend on derivatives of the
approximating matrices.

\begin{lem}
\label{lem:smooth-coefficient-reduction}
Let \(A\) be real, bounded, uniformly elliptic, and satisfy the full
spatial periodicity condition \eqref{eq:full-periodicity}.  Then there are
matrices
\[
 A_j\in
 C^\infty\bigl(\mathbb R^{n+2};
 \mathbb R^{(n+1)\times(n+1)}\bigr)
\]
such that each \(A_j\) has the same spatial periods and ellipticity bounds
as \(A\), and
\begin{equation}\label{eq:smooth-approximation-local}
 A_j\longrightarrow A
 \quad\text{in }\mathrm L^p_{\mathrm{loc}}(\mathbb R^{n+2})
 \quad\text{for every }1\leq p<\infty.
\end{equation}
After passing to a subsequence, the convergence also holds almost
everywhere.  If \(A=A^{\mathsf T}\), then the approximating matrices may
be chosen symmetric.  Suppose, in addition, that \(A\) has the boundary trace \(A^0\) and
satisfies the temporal and transverse assumptions of
Subsection~\ref{subsec:small-assumptions}.  The sequence may then be
chosen so that, with
\[
 A_j^0(x,t)=A_j(0,x,t),
\]
the temporal modulus \(\varpi_j\) of \(A_j^0\) satisfies
\begin{equation}\label{eq:smooth-time-modulus}
 \varpi_j(\rho)\leq\varpi(\rho),
 \qquad \rho>0,
\end{equation}
and the transverse modulus
\[
 \eta_j(\rho)
 =
 \operatorname*{ess\,sup}_{0<\lambda\leq\rho}
 \bigl\|
 A_j(\lambda,\cdot,\cdot)-A_j^0
 \bigr\|_{\mathrm L^\infty(\mathbb R^n\times\mathbb R)}
\]
satisfies
\begin{equation}\label{eq:smooth-transverse-majorant}
 \eta_j(\rho)\leq\widehat\eta(\rho),
 \qquad 0<\rho\leq1,
\end{equation}
where
\begin{equation}\label{eq:common-smooth-Dini-majorant}
 \widehat\eta(\rho)
 =
 \begin{cases}
  \eta(2\rho),&0<\rho\leq1/4,\\
  2\Lambda,&1/4<\rho\leq1.
 \end{cases}
\end{equation}
In particular,
\begin{equation}\label{eq:smooth-Dini-control}
 \int_0^1\widehat\eta(\rho)^2\,
 \frac{\mathrm d\rho}{\rho}
 \leq
 \mathfrak D(1)^2+4\Lambda^2\log4,
\end{equation}
and, for \(0<R\leq1/4\),
\begin{equation}\label{eq:smooth-Dini-tail}
 \int_0^R\widehat\eta(\rho)^2\,
 \frac{\mathrm d\rho}{\rho}
 =
 \mathfrak D(2R)^2.
\end{equation}

\noindent Suppose that the conclusion of Theorem~\ref{thm:global} has been proved
for smooth coefficients with constants depending only on
\(n,\mu,\Lambda\), the fixed geometric parameters, and the common moduli
\(\varpi\) and \(\widehat\eta\), but not on derivatives of the
coefficients.  Then the same conclusion holds for \(A\), after a harmless
fixed increase of the admissibility constant \(M\).

\noindent
The same approximation principle applies to the complete correctors,
finite-slab solutions, and height coordinates constructed in
Section~\ref{sec:height-coordinate}.  Consequently, those constructions
may first be carried out for smooth coefficients, provided that all
estimates used in passing to the limit depend only on
\(n,\mu,\Lambda\).
\end{lem}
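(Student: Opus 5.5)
The construction splits into three parts: an approximation step, a verification that the moduli are controlled, and a limiting argument for parabolic measures and auxiliary objects.

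\emph{Construction of \(A_j\).} Mollification in \((\lambda,x,t)\) alone would destroy the boundary trace structure near \(\lambda=0\), so the mollification is arranged to respect the flat boundary. Fix a nonnegative even mollifier \(\phi_\delta\) in \((x,t)\) with parabolic support of size \(\delta\), and a one-sided transverse mollifier \(\psi_\delta\) supported in \((0,\delta)\). Extend \(A\) to \(\lambda\in\mathbb R\) by its one-periodic extension, which makes sense under \eqref{eq:full-periodicity}. For \(\lambda\ge0\) set
\[
 A_j(\lambda,x,t)=\int\!\!\int A\bigl(\lambda+\lambda'\theta(\lambda),x-y,t-s\bigr)\psi_\delta(\lambda')\phi_\delta(y,s)\,\mathrm d\lambda'\,\mathrm dy\,\mathrm ds,
\]
where \(\theta\) is a smooth transition. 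For small \(\lambda\), \(A_j\) samples \(A\) only at heights in \((\lambda,\lambda+\delta)\subset(0,2\rho)\) when \(\lambda\le\rho\) and \(\delta\le\rho\); away from \(\lambda=0\), the mollification is an ordinary periodic one. Convolution with probability kernels preserves the following properties:
\begin{itemize}
\item spatial periodicity, since the kernel is translation invariant;
\item ellipticity bounds, since \eqref{eq:ellipticity} defines a convex closed set;
\item symmetry;
\item \(\mathrm L^p_{\mathrm{loc}}\) convergence, by the standard approximation-of-identity argument.
\end{itemize}
Smoothness of \(A_j(0,\cdot)\) follows from the \((x,t)\)-mollification. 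A subsequence then converges almost everywhere.

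\emph{Moduli.} Define \(A_j^0=A_j(0,\cdot)\). It equals the \((x,t)\)-mollification of \(\lambda'\)-averages of \(A(\lambda',\cdot)\) over \(\lambda'\in(0,\delta)\). To keep \(\varpi_j\le\varpi\) exactly, it is cleaner to redefine the approximation as
\[
 A_j=A_j^{0}+\bigl[\text{mollified }(A-A^0)\bigr],\qquad A_j^0=\phi_\delta*A^0 .
\]
Time translation commutes with convolution, and convolution does not increase \(\mathrm L^\infty\) norms. Hence
\[
 \|A_j^0(\cdot,t)-A_j^0(\cdot,s)\|_\infty\le\sup_{(y,\sigma)}\|A^0(\cdot,t-\sigma)-A^0(\cdot,s-\sigma)\|_\infty\le\varpi(|t-s|^{1/2}),
\]
which gives \eqref{eq:smooth-time-modulus}. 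For the transverse modulus, \(A_j(\lambda)-A_j^0\) is a convolution of \(A(\lambda+\lambda')-A^0\) with \(\lambda+\lambda'\le2\rho\) whenever \(\rho\le1/4\) and \(\delta\le\rho\). Choosing the one-sided mollifier scale \(\delta\) below \(\lambda\) (a \(\lambda\)-dependent scale \(\delta(\lambda)=\min(\delta_j,\lambda)\), kept smooth) gives \(\eta_j(\rho)\le\eta(2\rho)\) for \(\rho\le1/4\). The trivial bound \(2\Lambda\) holds for \(\rho>1/4\). Convexity ensures ellipticity is preserved after this recombination.

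The integrals follow by substitution: \(\int_0^{1/4}\eta(2\rho)^2\,\mathrm d\rho/\rho=\mathfrak D(1/2)^2\) and \(\int_{1/4}^1 4\Lambda^2\,\mathrm d\rho/\rho=4\Lambda^2\log4\), which gives \eqref{eq:smooth-Dini-control} and \eqref{eq:smooth-Dini-tail}.

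\emph{Passage to the limit.} I expect this step to be the main obstacle. Fix \(\Delta_r\), an admissible pole \(P\), and \(f\in C_0(\Delta_r)\). Let \(u_j\) be the solutions for \(A_j\). They are uniformly bounded and uniformly Hölder continuous (interior and boundary De Giorgi--Nash--Moser estimates, with constants depending only on ellipticity), and they have uniform local energy bounds. By \(\mathrm L^p_{\mathrm{loc}}\) and almost-everywhere convergence of the coefficients, \(A_j\nabla u_j\rightharpoonup A\nabla u\), so the limit \(u\) is the solution for \(A\). The total-mass argument in the Remark following Lemma~\ref{lem:parabolic-measure-representation} controls the far tails uniformly. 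Hence \(\omega_{A_j}^P\to\omega_A^P\) weakly.

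The uniform \(\mathrm{RH}_2\) bound gives uniformly bounded \(\mathrm L^2(\Delta_r)\) norms of \(k_j^P\), so \(k_j^P\rightharpoonup k\) in \(\mathrm L^2\) along a subsequence, and \(k=\mathrm d\omega_A^P/\mathrm dx\,\mathrm dt\) there. Weak lower semicontinuity of the \(\mathrm L^2\) norm then yields the estimate for \(A\). The right-hand side needs care because \(\omega_j(\Delta_r)\) might converge only to \(\omega(\overline{\Delta_r})\). This is handled by applying the estimate on slightly larger cubes and using doubling, which accounts for the harmless fixed increase of \(M\).

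The same compactness argument handles the auxiliary objects: the correctors \(\chi^*\), the slab solutions \(\Phi^*_N\), and the height coordinate \(\Phi^*\). Their bounds \eqref{eq:corrector-energy}, \eqref{eq:corrector-infty} and \eqref{eq:slab-linear} are structural. Aubin--Lions gives convergence, and uniqueness of complete solutions identifies the limits.
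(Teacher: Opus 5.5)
There is a genuine gap in your construction of \(A_j\). The flat boundary \(\{\lambda=0\}\) is also a seam of the one-periodic extension in \(\lambda\), and your boundary-adapted mollifier treats it differently from the other integer levels.

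\emph{Periodicity fails.} On \((0,\delta_j)\) you average upward over \((\lambda,\lambda+\min(\delta_j,\lambda))\). On \((k,k+\delta_j)\), \(k\geq1\), you use the ordinary scale \(\delta_j\). Hence \(A_j(\lambda+1,x,t)\neq A_j(\lambda,x,t)\) in general. A \(\lambda\)-dependent kernel is not invariant under \(\lambda\mapsto\lambda+1\), so your justification of periodicity ``since the kernel is translation invariant'' does not apply. Full spatial periodicity, including in \(\lambda\), is a hypothesis of the smooth-coefficient version of Theorem~\ref{thm:global} you want to invoke. It is also needed to pose the corrector problem on \(\mathbb T^{n+1}\). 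Without it the reduction does not go through.

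\emph{Smoothness fails.} With a transverse scale \(\simeq\lambda\), the \(\lambda\)-derivatives of \(A_j\) near \(\lambda=0\) are in general unbounded, so \(A_j\) is not even \(C^1\) up to \(\{\lambda=0\}\). For instance, take \(A=A^0+\eta(\lambda)E\) near \(\lambda=0\) with a small fixed matrix \(E\) and \(\eta(\lambda)=1/\log(1/\lambda)\), which is square-Dini. Then \(\partial_\lambda A_j\) blows up like \(\eta'(\lambda)\).

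The underlying difficulty is that any smoothing across an integer level mixes in the values of \(A\) just below the seam (the values near \(\lambda=1^-\) in the unit cell), and \(\eta\) does not control these. Your one-sided device avoids this only at \(\lambda=0\), and only at the cost of periodicity and smoothness.

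The missing idea is to pin the approximant to the smoothed trace in thin periodic strips around \emph{every} integer. This removes the need for a \(\lambda\)-dependent scale.
\begin{itemize}
\item Let \(B_j\) be an ordinary fixed-scale periodic mollification of \(A\), with scales \(\varepsilon_j\) in \(\lambda,x\) and \(\varepsilon_j^2\) in \(t\).
\item Let \(A_j^0\) be the \((x,t)\)-mollification of \(A^0\) with the same tangential and temporal kernels. It is periodic in \(x\) because \(A^0\) is.
\item Choose a smooth one-periodic \(\theta_j\) with \(\theta_j=0\) where \(\operatorname{dist}(\lambda,\mathbb Z)\leq2\varepsilon_j\) and \(\theta_j=1\) where \(\operatorname{dist}(\lambda,\mathbb Z)\geq3\varepsilon_j\).
\item Put \(A_j=\theta_jB_j+(1-\theta_j)A_j^0\).
\end{itemize}
This matrix is smooth and fully periodic. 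It is a pointwise convex combination, so ellipticity and symmetry persist, and its trace is exactly \(A_j^0\). It differs from \(B_j\) only on strips of vanishing width, so \(\mathrm L^p_{\mathrm{loc}}\) convergence survives. For the transverse modulus, \(A_j-A_j^0=\theta_j(B_j-A_j^0)\) vanishes for \(\lambda\leq2\varepsilon_j\). For \(2\varepsilon_j<\lambda\leq\rho\leq1/4\), the mollifier samples only heights in \((\lambda-\varepsilon_j,\lambda+\varepsilon_j)\subset(0,2\rho)\), and the same \((x,t)\)-kernels occur in \(B_j\) and \(A_j^0\). This gives \(\eta_j(\rho)\leq\eta(2\rho)\).

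The rest of your proposal is sound. The temporal-modulus and Dini computations match the paper. Your handling of \(\omega_j^P(\Delta_r)\) via a slightly larger cube and doubling of \(\omega^P\) is a valid alternative to the paper's argument. The paper instead notes that the \(\mathrm L^2\)-density of \(\omega^P\) on \(\Delta_{(1+\vartheta)r}\) forces \(\omega^P(\partial\Delta_r)=0\), hence \(\omega_j^P(\Delta_r)\to\omega^P(\Delta_r)\).
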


\begin{proof}
We first construct the approximating matrices.  Regard \(A\) as a
measurable function on
\(\mathbb T_X^{n+1}\times\mathbb R_t\), where
\(X=(\lambda,x)\).  Let
\(0<\varepsilon_j<1/12\), \(\varepsilon_j\downarrow0\), and let
\(\rho_{\varepsilon_j}^{\lambda}\),
\(\rho_{\varepsilon_j}^{x}\), and
\(\rho_{\varepsilon_j^2}^{t}\) be nonnegative smooth mollifiers in
\(\lambda\), \(x\), and \(t\), respectively.  The spatial mollifiers are
periodic and all three mollifiers have integral one.  Set
\[
 B_j
 =
 \rho_{\varepsilon_j}^{\lambda}*_{\lambda}
 \rho_{\varepsilon_j}^{x}*_{x}
 \rho_{\varepsilon_j^2}^{t}*_{t}A.
\]
Then \(B_j\) is smooth and periodic in every spatial variable.  Convex
averaging preserves reality and the ellipticity bounds, as well as
symmetry when \(A\) is symmetric.  The approximation-of-the-identity
property gives
\[
 B_j\longrightarrow A
 \quad\text{in }\mathrm L^p_{\mathrm{loc}}(\mathbb R^{n+2})
\]
for every finite \(p\).  This approximation is sufficient for the
corrector and height-coordinate constructions.

To preserve the one-sided boundary trace and its square-Dini modulus, a
minor modification is needed.  Indeed, direct periodic convolution at
\(\lambda=0\) may mix values near \(\lambda=0^+\) with values near
\(\lambda=1^-\), which are not controlled by
\eqref{eq:normal-modulus}.  Full spatial periodicity and
\eqref{eq:uniform-essential-trace} imply that \(A^0\) is one-periodic in
\(x\).  Define
\begin{equation}\label{eq:smoothed-boundary-trace}
 A_j^0
 =
 \rho_{\varepsilon_j}^{x}*_{x}
 \rho_{\varepsilon_j^2}^{t}*_{t}A^0.
\end{equation}
Choose a smooth, one-periodic function
\(\theta_j=\theta_j(\lambda)\) such that
\[
 0\leq\theta_j\leq1,
 \qquad
 \theta_j(\lambda)=0
 \quad\text{if }
 \operatorname{dist}(\lambda,\mathbb Z)\leq2\varepsilon_j,
\]
and
\[
 \theta_j(\lambda)=1
 \quad\text{if }
 \operatorname{dist}(\lambda,\mathbb Z)\geq3\varepsilon_j.
\]
Replacing \(B_j\) by
\begin{equation}\label{eq:boundary-pinned-approximation}
 A_j(\lambda,x,t)
 =
 \theta_j(\lambda)B_j(\lambda,x,t)
 +
 \bigl(1-\theta_j(\lambda)\bigr)A_j^0(x,t)
\end{equation}
gives a smooth spatially periodic matrix satisfying
\[
 A_j(0,x,t)=A_j^0(x,t).
\]
It remains real, symmetric when \(A\) is symmetric, and has the same
ellipticity bounds.  Moreover, \(A_j-B_j\) is supported in periodic
transverse strips whose widths tend to zero.  Since the matrices are
uniformly bounded, \eqref{eq:smooth-approximation-local} follows.

Convolution in the spatial variables is contractive in
\(\mathrm L^\infty\).  Hence
\eqref{eq:smoothed-boundary-trace} and translation invariance give
\[
 \begin{aligned}
 \bigl\|
 A_j^0(\cdot,t)-A_j^0(\cdot,s)
 \bigr\|_{\mathrm L^\infty(\mathbb R^n)}
 &\leq
 \int_{\mathbb R}\rho_{\varepsilon_j^2}^{t}(\tau)
 \bigl\|
 A^0(\cdot,t-\tau)-A^0(\cdot,s-\tau)
 \bigr\|_{\mathrm L^\infty(\mathbb R^n)}
 \,\mathrm d\tau\\
 &\leq\varpi\bigl(|t-s|^{1/2}\bigr),
 \end{aligned}
\]
which proves \eqref{eq:smooth-time-modulus}.

We next verify the transverse estimate.  By construction,
\[
 A_j(\lambda,x,t)=A_j^0(x,t)
 \qquad\text{when }0\leq\lambda\leq2\varepsilon_j.
\]
Suppose that \(0<\rho\leq1/4\) and
\(2\varepsilon_j<\lambda\leq\rho\).  Whenever
\(\theta_j(\lambda)\neq0\), every transverse level sampled in the
definition of \(B_j(\lambda,x,t)\) belongs to
\[
 (\lambda-\varepsilon_j,\lambda+\varepsilon_j)
 \subset(0,2\rho).
\]
Since the same tangential and temporal mollifiers occur in
\(B_j\) and \(A_j^0\), it follows that
\[
 \bigl\|
 B_j(\lambda,\cdot,\cdot)-A_j^0
 \bigr\|_{\mathrm L^\infty(\mathbb R^n\times\mathbb R)}
 \leq\eta(2\rho).
\]
Equation \eqref{eq:boundary-pinned-approximation} therefore gives
\[
 \eta_j(\rho)\leq\eta(2\rho),
 \qquad 0<\rho\leq1/4.
\]
For \(1/4<\rho\leq1\), uniform boundedness gives
\(\eta_j(\rho)\leq2\Lambda\).  This proves
\eqref{eq:smooth-transverse-majorant}.  Equations
\eqref{eq:smooth-Dini-control} and
\eqref{eq:smooth-Dini-tail} follow by integration and the change of
variables \(s=2\rho\).

We now prove stability of parabolic measure.  Let
\[
 \cH_j
 =
 \partial_t-\operatorname{div}_{\lambda,x}
 \bigl(A_j\nabla_{\lambda,x}\bigr),
\]
and let \(\omega_j^P\) and \(\omega^P\) denote the parabolic measures
associated with \(\cH_j\) and \(\cH\), respectively.  For
\(f\in C_0(\mathbb R^n\times\mathbb R)\), let \(u_j\) be the bounded
continuous Dirichlet solution for \(\cH_j\) with boundary values \(f\).
The maximum principle gives
\[
 \|u_j\|_\infty\leq\|f\|_\infty.
\]
Caccioppoli's inequality, the local and boundary De Giorgi-Moser
estimates, and \eqref{eq:smooth-approximation-local} give weak energy
compactness and local uniform compactness.  Since
\(A_j\to A\) strongly in \(\mathrm L^2_{\mathrm{loc}}\), one may pass to
the limit in the weak formulation.  Uniqueness of the bounded continuous
Dirichlet solution identifies every subsequential limit with the
solution \(u\) for \(\cH\).  Consequently,
\begin{equation}\label{eq:parabolic-measure-weak-convergence}
 \int_{\mathbb R^n\times\mathbb R}
 f\,\mathrm d\omega_j^P
 \longrightarrow
 \int_{\mathbb R^n\times\mathbb R}
 f\,\mathrm d\omega^P
 \qquad
 \text{for every }f\in C_0(\mathbb R^n\times\mathbb R).
\end{equation}

Assume that the smooth-coefficient theorem holds with admissibility
constant \(M_0\) and a reverse H\"older constant independent of \(j\).
Set \(M=2M_0\), fix \(\Delta_r=\Delta_r(x_0,t_0)\), and let \(P\) be
\(M\)-admissible for \(\Delta_r\).  For any fixed
\(0<\vartheta<1\), the pole \(P\) is \(M_0\)-admissible for the larger
concentric cube \(\Delta_{(1+\vartheta)r}(x_0,t_0)\).  The uniform
reverse H\"older estimate and
\eqref{eq:parabolic-measure-total-mass} imply
\[
 \left|
 \iint_{\mathbb R^n\times\mathbb R}
 g\,\mathrm d\omega_j^P
 \right|
 \leq
 C|\Delta_{(1+\vartheta)r}|^{-1/2}
 \|g\|_{\mathrm L^2}
\]
whenever
\(g\in C_0^\infty(\Delta_{(1+\vartheta)r})\).  Passing to the limit in
this inequality shows that \(\omega^P\) has an
\(\mathrm L^2\)-density in
\(\Delta_{(1+\vartheta)r}\).  Since
\(\partial\Delta_r\) is contained in this larger cube and has Lebesgue
measure zero, \(\omega^P(\partial\Delta_r)=0\).  It follows from \eqref{eq:parabolic-measure-weak-convergence} that
\[
 \omega_j^P(\Delta_r)\longrightarrow\omega^P(\Delta_r).
\]

Let \(k_j^P\) be the Poisson kernel for \(\cH_j\).  The uniform estimate
on \(\Delta_r\) gives
\[
 \|k_j^P\|_{\mathrm L^2(\Delta_r)}
 \leq
 C|\Delta_r|^{-1/2}\omega_j^P(\Delta_r).
\]
Weak compactness in \(\mathrm L^2(\Delta_r)\), together with
\eqref{eq:parabolic-measure-weak-convergence}, identifies the weak limit
with the density \(k^P\) of \(\omega^P\) on \(\Delta_r\).  Weak lower
semicontinuity and convergence of the masses give
\[
 \|k^P\|_{\mathrm L^2(\Delta_r)}
 \leq
 C|\Delta_r|^{-1/2}\omega^P(\Delta_r).
\]
This is the required reverse H\"older estimate.

Finally, consider the auxiliary functions in
Section~\ref{sec:height-coordinate}.  For each smooth matrix \(A_j\),
construct the normalized complete corrector \(\chi_j^*\) and, for each
fixed positive integer \(N\), the complete slab solution
\(\Phi_{N,j}^*\).  Their energy, local boundedness, and time-derivative
estimates depend only on \(n,\mu,\Lambda\).  On every bounded time
interval they are therefore uniformly bounded in
\(\mathrm L^2_t\mathrm H^1_X\), while their time derivatives are
uniformly bounded in \(\mathrm L^2_t\mathrm H^{-1}_X\).  The local
Aubin-Lions lemma and a diagonal extraction give strong local
\(\mathrm L^2\)-convergence and weak local
\(\mathrm L^2_t\mathrm H^1_X\)-convergence.  Together with
\eqref{eq:smooth-approximation-local}, this permits passage to the weak
corrector and slab equations.  Periodicity and the zero-mean
normalization of the corrector pass to the limit.  Weak continuity of
the trace preserves the Dirichlet values on the transverse faces of the
slabs.  The maximum-principle and bounded-error estimates also pass to
the limit.

After this limit has been taken for every fixed \(N\), the
\(N\to\infty\) argument in Section~\ref{sec:height-coordinate} produces
a function \(\Phi^*\) satisfying
\[
 \cH^*\Phi^*=0,
 \qquad
 \Phi^*=0\quad\text{on }\{\lambda=0\},
 \qquad
 \Phi^*\geq0,
 \qquad
 |\Phi^*-\lambda|\leq C.
\]
The limit is not identically zero, since
\(\Phi^*(\lambda,x,t)\geq\lambda-C\).  The strong maximum principle then
gives
\[
 \Phi^*(\lambda,x,t)>0
 \qquad\text{when }\lambda>0.
\]
Thus the complete corrector, slab solutions, and height coordinate exist
for coefficients which are merely measurable in time.
\end{proof}

\begin{rem}
The preliminary modification near the periodic seam is essential for
the reduction of the full theorem.  Direct periodic convolution in
\(\lambda\) may destroy the uniform square-Dini control at
\(\lambda=0\).  For the height-coordinate construction alone, ordinary
periodic mollification is sufficient because no boundary trace modulus
is involved.  Lemma~\ref{lem:smooth-coefficient-reduction} does not, by itself, reduce
the conditional statement of Theorem~\ref{thm:large-scale} to smooth
coefficients.  Indeed, a local \(\mathrm{RH}_2\) estimate assumed for
\(A\) need not automatically hold uniformly for arbitrary
approximations of \(A\).  The lemma instead reduces
Theorem~\ref{thm:global} and the height-coordinate construction to the
smooth setting.  Once the height coordinate has been obtained for the
original coefficient, the proof of Theorem~\ref{thm:large-scale} applies
directly.
\end{rem}

\section{The effective model}
\label{Homoderv}

We give a brief derivation of the effective operator associated with
the family \(A_\varepsilon(X,t)=A(X/\varepsilon,t)\).  Here and below, by the elliptic flux associated with a function \(v\)
we mean the vector field \(A\nabla_Xv\) appearing inside the spatial
divergence.

Fix a time \(t_0\) for which \(A(\cdot,t_0)\) satisfies the stated
assumptions, and let \(\xi\in\mathbb R^{n+1}\).  To determine the
effective response of the medium to the spatial gradient \(\xi\),
consider the affine function \(\ell_\xi(X)=\xi\cdot X\).  For a constant coefficient matrix, \(\ell_\xi\) solves the corresponding
spatial equation and produces the constant flux \(A\xi\).  For the
oscillatory coefficient \(A(X/\varepsilon,t_0)\), however,
\[
 -\operatorname{div}_X
 \bigl(A(X/\varepsilon,t_0)\nabla_X\ell_\xi\bigr)
 =
 -\frac{1}{\varepsilon}
 \left[
 \operatorname{div}_Y
 \bigl(A(Y,t_0)\xi\bigr)
 \right]_{Y=X/\varepsilon}
\]
in the sense of distributions, and the right-hand side need not
vanish.  We therefore correct the affine function at the microscopic scale by
setting
\[
 \ell_{\varepsilon,\xi}^{t_0}(X)
 =
 \xi\cdot X
 +
 \varepsilon\chi_\xi(X/\varepsilon,t_0).
\]
The factor \(\varepsilon\) makes the correction lower order on the
macroscopic scale, while its gradient contributes at order one,
\[
 \nabla_X\ell_{\varepsilon,\xi}^{t_0}(X)
 =
 \xi+\nabla_Y\chi_\xi(X/\varepsilon,t_0).
\]
Consequently,
\[
 \begin{split}
 &-\operatorname{div}_X
 \left(
 A(X/\varepsilon,t_0)
 \nabla_X\ell_{\varepsilon,\xi}^{t_0}
 \right) =
 -\frac{1}{\varepsilon}
 \left[
 \operatorname{div}_Y
 \left(
 A(Y,t_0)
 \bigl(\xi+\nabla_Y\chi_\xi(Y,t_0)\bigr)
 \right)
 \right]_{Y=X/\varepsilon}.
 \end{split}
\]
This leads to the cell problem
\[
 -\operatorname{div}_Y
 \left(
 A(Y,t_0)
 \bigl(\xi+\nabla_Y\chi_\xi(Y,t_0)\bigr)
 \right)
 =0
 \quad\text{in }\mathbb T^{n+1}.
\]
With this choice, \(\ell_{\varepsilon,\xi}^{t_0}\) is an exact weak
solution of the frozen-time spatial equation.  This calculation is
used only to identify the spatial effective response; it does not
assert that \(\ell_{\varepsilon,\xi}^{t_0}\) solves the full parabolic
equation.

More precisely, for almost every \(t\), the Lax-Milgram theorem gives
a unique function
\[
 \chi_\xi(\cdot,t)
 \in
 \mathrm H^1_{\mathrm{per}}(\mathbb T^{n+1}),
 \qquad
 \int_{\mathbb T^{n+1}}\chi_\xi(Y,t)\,\mathrm dY=0,
\]
such that
\[
 \int_{\mathbb T^{n+1}}
 A(Y,t)
 \bigl(\xi+\nabla_Y\chi_\xi(Y,t)\bigr)
 \cdot\nabla_Y\vartheta(Y)\,\mathrm dY
 =0
\]
for every
\(\vartheta\in\mathrm H^1_{\mathrm{per}}(\mathbb T^{n+1})\).
Moreover,
\[
 \|\nabla_Y\chi_\xi(\cdot,t)\|_{\mathrm L^2(\mathbb T^{n+1})}
 \leq C|\xi|,
\]
uniformly in \(t\).  A standard Galerkin construction permits the
correctors to be chosen measurably in \(t\).  The dependence of \(\chi_\xi\) on \(\xi\) is linear.  Hence, if
\(\chi_j=\chi_{e_j}\), then
\[
 \chi_\xi(Y,t)
 =
 \sum_{j=1}^{n+1}\xi_j\chi_j(Y,t).
\]
The corresponding microscopic flux is
\[
 q_\xi(Y,t)
 =
 A(Y,t)
 \bigl(\xi+\nabla_Y\chi_\xi(Y,t)\bigr).
\]
The cell equation states that
\[
\operatorname{div}_Yq_\xi(\cdot,t)=0,
\]
so the flux is balanced within the periodic cell.

The corrector estimate and the boundedness of \(A\) give
\[
 q_\xi
 \in
 \mathrm L^2_{\mathrm{loc}}
 \bigl(
 \mathbb R_t;
 \mathrm L^2_{\mathrm{per}}(\mathbb T_Y^{n+1})
 \bigr).
\]
Consequently, the periodic averaging lemma, applied in the spatial
variables only, yields
\[
 q_\xi(X/\varepsilon,t)
 \rightharpoonup
 \overline q_\xi(t)
 \quad\text{weakly in }
 \mathrm L^2_{\mathrm{loc}}
 \bigl(\mathbb R_X^{n+1}\times\mathbb R_t\bigr),
\]
where
\[
 \overline q_\xi(t)
 =
 \int_{\mathbb T^{n+1}}q_\xi(Y,t)\,\mathrm dY.
\]
Since \(q_\xi\) depends linearly on \(\xi\), so does
\(\overline q_\xi(t)\).  We therefore define the candidate effective
matrix by
\[
 \widehat A(t)\xi
 :=
 \overline q_\xi(t).
\]
This averaging observation identifies the candidate response to corrected
affine functions.  The two-scale argument below separately identifies the
flux for arbitrary locally energy-bounded solution sequences, see
\cite{Allaire} for the periodic averaging and two-scale compactness
principles used here.
Equivalently,
\[
 \widehat A(t)e_j
 =
 \int_{\mathbb T^{n+1}}
 A(Y,t)
 \bigl(e_j+\nabla_Y\chi_j(Y,t)\bigr)
 \,\mathrm dY,
 \qquad 1\leq j\leq n+1.
\]
The energy estimate for the correctors shows that \(\widehat A(t)\) is
measurable and bounded.  It is also uniformly elliptic.  Indeed,
testing the cell problem with \(\chi_\xi\) gives
\[
 \begin{split}
 \widehat A(t)\xi\cdot\xi
 &=
 \int_{\mathbb T^{n+1}}
 A(Y,t)
 \bigl(\xi+\nabla_Y\chi_\xi\bigr)
 \cdot
 \bigl(\xi+\nabla_Y\chi_\xi\bigr)
 \,\mathrm dY\geq
 \mu
 \int_{\mathbb T^{n+1}}
 \bigl|\xi+\nabla_Y\chi_\xi\bigr|^2
 \,\mathrm dY
 \geq
 \mu|\xi|^2.
 \end{split}
\]
If \(A\) is symmetric, then \(\widehat A(t)\) is symmetric as well.

It is important that one averages the corrected flux rather than the
coefficient itself.  Indeed, although
\[
 \int_{\mathbb T^{n+1}}
 \nabla_Y\chi_\xi(Y,t)\,\mathrm dY=0,
\]
the correlation between \(A\) and \(\nabla_Y\chi_\xi\) generally gives
\[
 \widehat A(t)\xi
 \neq
 \left(
 \int_{\mathbb T^{n+1}}A(Y,t)\,\mathrm dY
 \right)\xi.
\]
The corrector accounts for the microscopic redistribution of the
imposed macroscopic gradient within the periodic medium.

We next verify that \(\widehat A(t)\) is indeed the coefficient matrix
of the limiting operator.  Let \(Q=U\times I\) be a bounded parabolic
cylinder compactly contained in the space-time domain, where \(U\) is a
bounded Lipschitz domain, and let
\(u_\varepsilon\) be a sequence of weak solutions of
\[
 \partial_tu_\varepsilon
 -
 \operatorname{div}_X
 \left(
 A(X/\varepsilon,t)\nabla_Xu_\varepsilon
 \right)
 =0
 \quad\text{in }Q,
\]
satisfying uniform local energy bounds.  The equation also bounds
\(\partial_tu_\varepsilon\) locally in
\(\mathrm L^2\bigl(I;\mathrm H^{-1}(U)\bigr)\).  By the Aubin-Lions lemma,
after passing to a subsequence,
\(u_\varepsilon\to u\) strongly in \(\mathrm L^2(Q)\), while spatial
two-scale compactness gives
\[
 \nabla_Xu_\varepsilon
 \stackrel{2}{\rightharpoonup}
 \nabla_Xu+\nabla_Yu_{\mathrm{corr}},
\]
where \(\stackrel{2}{\rightharpoonup}\) denotes weak spatial two-scale
convergence, and
\[
 u_{\mathrm{corr}}
 \in
 \mathrm L^2
 \left(
 Q;
 \mathrm H^1_{\mathrm{per}}(\mathbb T^{n+1})/\mathbb R
 \right).
\]
We choose the representative of
\(u_{\mathrm{corr}}=u_{\mathrm{corr}}(X,t,Y)\) such that
\[
 \int_{\mathbb T^{n+1}}
 u_{\mathrm{corr}}(X,t,Y)\,\mathrm dY=0
\]
for almost every \((X,t)\).  This function represents the first-order
microscopic correction.  Moreover, periodicity and boundedness imply
\[
 A(X/\varepsilon,t)
 \stackrel{2}{\longrightarrow}
 A(Y,t)
\]
strongly in the spatial two-scale sense in \(\mathrm L^2(Q)\).  The
weak-strong product rule for two-scale convergence therefore gives
\[
 A(X/\varepsilon,t)\nabla_Xu_\varepsilon
 \stackrel{2}{\rightharpoonup}
 A(Y,t)
 \bigl(\nabla_Xu+\nabla_Yu_{\mathrm{corr}}\bigr).
\]
To identify \(u_{\mathrm{corr}}\), test the weak equation with \(\zeta_\varepsilon(X,t)
 =
 \varepsilon\varphi(X,t)\psi(X/\varepsilon)\), where
\(\varphi\in C^\infty_0(Q)\) and
\(\psi\in C^\infty_{\mathrm{per}}(\mathbb T^{n+1})\).  Since
\(\partial_t\zeta_\varepsilon
 =
 \varepsilon\partial_t\varphi\,
 \psi(X/\varepsilon)\) and
\[
 \nabla_X\zeta_\varepsilon
 =
 \varphi\,\nabla_Y\psi(X/\varepsilon)
 +
 \varepsilon\psi(X/\varepsilon)\nabla_X\varphi,
\]
the time term and the term containing
\(\varepsilon\nabla_X\varphi\) vanish as
\(\varepsilon\to0\).  Passing to the limit yields
\[
 \iint_Q\int_{\mathbb T^{n+1}}
 \varphi(X,t)
 A(Y,t)
 \bigl(\nabla_Xu(X,t)+\nabla_Yu_{\mathrm{corr}}(X,t,Y)\bigr)
 \cdot\nabla_Y\psi(Y)
 \,\mathrm dY\,\mathrm dX\,\mathrm dt
 =0.
\]
Since \(\varphi\) is arbitrary, it follows that
\[
 -\operatorname{div}_Y
 \left(
 A(Y,t)
 \bigl(\nabla_Xu(X,t)+\nabla_Yu_{\mathrm{corr}}(X,t,Y)\bigr)
 \right)
 =0
\]
for almost every \((X,t)\).  By uniqueness of the mean-zero solution
of the cell problem,
\[
 u_{\mathrm{corr}}(X,t,Y)
 =
 \sum_{j=1}^{n+1}
 \chi_j(Y,t)\,\partial_{X_j}u(X,t).
\]
Consequently,
\[
 A(X/\varepsilon,t)\nabla_Xu_\varepsilon
 \rightharpoonup
 \widehat A(t)\nabla_Xu
 \quad\text{weakly in }\mathrm L^2_{\mathrm{loc}}.
\]
Passing to the limit in the weak formulation with test functions
independent of the fast variable gives
\[
 \partial_tu
 -
 \operatorname{div}_X
 \bigl(\widehat A(t)\nabla_Xu\bigr)
 =0.
\]
Thus every locally convergent subsequence has a limit governed by
\[
 \widehat\cH
 =
 \partial_t-\operatorname{div}_X
 \bigl(\widehat A(t)\nabla_X\bigr).
\]
Whenever the initial-boundary data converge and the corresponding limiting
initial-boundary value problem has a unique solution, this identification
yields convergence of the full family.

Only the spatial variables are rapidly oscillating in
\(A(X/\varepsilon,t)\).  The physical time \(t\) therefore acts as a
parameter in the cell problem and is not averaged out.  This explains
why no temporal cell problem appears and why the effective matrix
\(\widehat A(t)\) may retain its dependence on \(t\).  The argument does
not differentiate the correctors with respect to time and hence requires
neither time periodicity nor temporal regularity of \(A\) beyond
measurability.

\section{Geometry of the two-comparison argument}
\label{app:comparison-geometry}

The geometry used in Subsection~\ref{subsec:two-comparison} is illustrated
in Figure~\ref{fig:two-comparison-geometry}.  The figure is schematic and
is not drawn to scale.

\begin{figure}[htbp]
\centering
\begin{tikzpicture}[
 x=0.80cm,
 y=0.81cm,
 font=\footnotesize,
 point/.style={circle,fill=black,inner sep=1.2pt},
 pole/.style={circle,draw=black,fill=white,inner sep=1.55pt,line width=.6pt},
 every path/.style={line width=.55pt},
 >={Latex[length=1.8mm]}
]
\begin{scope}
 \node[anchor=west,font=\small] at (0.35,6.28)
   {\textup{(a)} Projection onto \(X=(\lambda,x)\)};
 \fill[black!3] (0.20,0) rectangle (7.55,5.75);
 \draw (0.20,0) rectangle (7.55,5.75);
 \node[anchor=north west] at (0.35,5.65) {\(D_2=\pi_X\Omega_2\)};
 \fill[black!7] (0.70,0) rectangle (6.85,4.65);
 \draw (0.70,0) rectangle (6.85,4.65);
 \node[anchor=north west] at (0.86,4.56) {\(D_1=\pi_X\Omega_1\)};
 \draw[densely dashed] (1.05,0) rectangle (6.45,4.08);
 \node[anchor=north west,fill=white,inner sep=1pt]
   at (1.17,4.00) {\(\pi_XT_{4R}(z_r^c)\)};
 \fill[black!10] (1.45,0) rectangle (6.05,3.34);
 \draw (1.45,0) rectangle (6.05,3.34);
 \node[anchor=north west,fill=black!10,inner sep=1pt]
   at (1.58,3.25) {\(\pi_X\mathcal Q_r\)};
 \fill[black!17] (2.18,0) rectangle (5.35,1.58);
 \draw (2.18,0) rectangle (5.35,1.58);
 \node[anchor=north west,fill=black!17,inner sep=1pt]
   at (2.30,1.50) {\(\pi_X\mathcal Q_r^{\mathrm{red}}\)};
 \draw[->] (0.12,0)--(7.85,0) node[below] {\(x\)};
 \draw[->] (0.20,-0.08)--(0.20,5.98) node[left] {\(\lambda\)};
 \node[anchor=north] at (1.10,-0.13) {\(\{\lambda=0\}\)};
 \node[point,label={[xshift=-1mm]above left:\(A_k\)}] at (2.72,0.27) {};
 \node[point] at (3.02,0.31) {};
 \node[point] at (3.30,0.24) {};
 \node[point,label=below right:\(A_r^*\)] at (4.35,0.91) {};
 \node[point,label=below right:\(A_r^{\mathrm{ref}}\)] at (4.83,2.43) {};
 \node[pole,label=below right:\(P_1\)] at (5.35,2.88) {};
 \node[pole] (p2) at (6.15,5.43) {};
 \node[anchor=west] at (6.31,5.43) {\(P_2\)};
\end{scope}
\begin{scope}[xshift=7.30cm]
 \node[anchor=west,font=\small] at (0.35,6.28)
   {\textup{(b)} Time ordering};
 \draw[->] (0.65,0.25)--(0.65,5.98) node[left] {\(t\)};
 \fill[black!8] (1.05,0.58) rectangle (4.35,2.18);
 \draw[densely dashed] (1.05,0.58) rectangle (4.35,2.18);
 \node[anchor=north west,font=\scriptsize] at (1.13,2.08)
   {common comparison region};
 \node[point,label=right:\(A_r^*\)] at (3.30,1.42) {};
 \node[point] at (1.68,0.91) {};
 \node[point] at (1.95,1.00) {};
 \node[point] at (2.22,0.86) {};
 \node[anchor=north] at (1.95,0.78) {\(\{A_k\}\)};
 \draw (0.53,2.18)--(4.45,2.18);
 \node[anchor=south west] at (1.05,2.22)
   {\(\sup t\bigl(T_{4R}(z_r^c)\bigr)\)};
 \foreach \yy in {3.17,4.27,5.42}
   \draw (0.53,\yy)--(0.77,\yy);
 \node[anchor=west] at (0.88,3.17) {\(t(P_1)\)};
 \node[anchor=west] at (0.88,4.27)
   {\(t(P_2)\)};
 \node[anchor=west] at (0.88,5.42) {\(t(P)\)};
 \draw[<->] (4.62,2.20)--(4.62,3.15)
   node[midway,right] {\(\geq cR^2\)};
 \draw[<->] (4.62,3.19)--(4.62,4.25)
   node[midway,right] {\(\simeq r^2\)};
\end{scope}
\end{tikzpicture}
\caption{Geometry of the two-comparison argument.  The figure is not to
scale.  Panel~\textup{(a)} shows the projection onto the transverse
variable and one tangential spatial variable.  The spatial sections
\(D_1\subset D_2\), the common comparison cylinder \(\mathcal Q_r\), and
its reduced region are nested.  Every \(A_k\) and \(A_r^*\) belongs to the
same reduced region, whereas \(A_r^{\mathrm{ref}}\) is the common
normalizing corkscrew.  The point \(P_2\) belongs to
\(D_2\setminus\overline{D_1}\).  Panel~\textup{(b)}
records the causal geometry.  The apparent overlap of \(P_1\) with
\(\pi_X\mathcal Q_r\) in Panel~\textup{(a)} is only spatial.  The fixed
enlargement \(T_{4R}(z_r^c)\)
lies quantitatively in the strict common past of the three poles, and
\(t(P_1)<t(P_2)<t(P)\).  The forward Harnack chains in
the first Green variable, which are not shown, remain separated from the
second-variable pole \(A_r^*\).}
\label{fig:two-comparison-geometry}
\end{figure}


\medskip
\noindent
\textbf{Declaration on the use of generative AI.}
The author used ChatGPT, developed by OpenAI, as an interactive tool
during the preparation and revision of this manuscript.  The tool was
used for language editing, organization, LaTeX formatting, and checks of
clarity and internal consistency.  The mathematical arguments, results,
and conclusions were developed and verified by the author, who also
checked the references and takes full responsibility for the content of
the manuscript.

\end{document}